\newcommand{\myparagraph}[1]{\vspace{0.1cm}\noindent \textbf{\textit{#1.}}}
\newcommand{\bsX}{\boldsymbol{X}}
\newcommand{\bsomega}{\boldsymbol{\omega}}
\newcommand{\bsw}{\boldsymbol{w}}
\newcommand{\bsY}{\boldsymbol{Y}}
\newcommand{\bsy}{\boldsymbol{y}}
\newcommand{\bsv}{\boldsymbol{v}}
\newcommand{\bsxi}{\boldsymbol{\xi}}
\newcommand{\bsa}{\boldsymbol{a}}
\newcommand{\bsc}{\boldsymbol{c}}
\newcommand{\bstheta}{\boldsymbol{\theta}}
\newcommand{\bsx}{\boldsymbol{x}}
\newcommand{\bsb}{\boldsymbol{b}}
\newcommand{\bsB}{\boldsymbol{B}}
\newcommand{\bsV}{\boldsymbol{V}}
\newcommand{\bbeta}{\boldsymbol{\beta}}
\newcommand{\Probf}{\mathbf{P}}
\newcommand{\Qprobf}{\mathbf{Q}}
\newcommand{\bfW}{\mathbf{W}}
\newcommand{\bfX}{\mathbf{X}}
\newcommand{\bfA}{\mathbf{A}}
\newcommand{\bfO}{\mathbf{O}}
\newcommand{\bfPsi}{\mathbf{\Psi}}
\newcommand{\Prob}{\mathbb{P}}
\newcommand{\Exp}{\mathbb{E}}
\newcommand{\bbR}{\mathbb{R}}
\newcommand{\bbN}{\mathbb{N}}
\newcommand{\risk}{\mathcal{R}}
\newcommand{\class}[1]{\mathcal{#1}}
\newcommand{\Expf}{\mathbf{E}}
\newcommand{\1}{\boldsymbol 1}
\newcommand{\eqdef}{\vcentcolon=}
\newcommand{\bsdelta}{\boldsymbol \Delta}
\newcommand{\bszeta}{\boldsymbol{\zeta}}
\newcommand{\bfSigma}{\mathbf{\Sigma}}
\newcommand{\parent}[1]{\left( #1 \right)}
\newcommand{\ens}[1]{\left\{ #1\right\}}
\newcommand{\norm}[1]{\left\lVert#1\right\rVert}
\newcommand{\normin}[1]{\lVert#1\rVert}
\newcommand{\abs}[1]{\left\lvert #1\right\rvert}
\newcommand{\absin}[1]{\lvert #1\rvert}
\newcommand{\enscond}[2]{\left\{ #1 \, : \, #2\right\}}
\newcommand{\ind}[1]{\mathbb{I}{\left\{#1\right\}}}
\newcommand{\scalar}[2]{\left\langle #1, #2\right\rangle}
\newcommand{\scalarin}[2]{\langle #1, #2\rangle}
\newcommand{\data}{\class{D}}
\newcommand{\sW}{\mathsf{W}}
\newcommand{\hbsb}{\boldsymbol{\hat b}}
\newcommand{\hbeta}{{\boldsymbol{\hat \beta}}}
\newcommand{\hb}{\hat{b}}
\renewcommand{\d}{\,\mathrm{d}}
\newcommand{\hbsdelta}{\boldsymbol{\hat\Delta}}
\DeclareMathOperator*{\op}{op}
\DeclareMathOperator{\nur}{NUR}
\DeclareMathOperator{\test}{test}
\DeclareMathOperator{\train}{train}
\DeclareMathOperator*{\Var}{Var}
\DeclareMathOperator*{\diag}{diag}
\DeclareMathOperator*{\Id}{Id}
\DeclareMathOperator*{\unlab}{unlab}
\DeclareMathOperator*{\argmin}{arg\,min}
\DeclareMathOperator{\KL}{KL}
\DeclareMathOperator{\TV}{TV}
\DeclareMathOperator{\KS}{KS}
\DeclareMathOperator{\Law}{Law}
\DeclareMathOperator{\PF}{PF}
\newcommand{\ie}{{\em i.e.,~}}
\newcommand{\eg}{{\em e.g.,~}}
\newcommand{\resp}{{\em resp.~}}
\newcommand{\rhs}{{\em r.h.s.~}}
\newcommand{\wrt}{{\em w.r.t.~}}
\newcommand{\iid}{{\rm i.i.d.~}}
\newcommand{\simiid}{\overset{\text{\iid}}{\sim}}
\newtheorem{theorem}{Theorem}[section]
\newtheorem{definition}[theorem]{Definition}
\newtheorem{proposition}[theorem]{Proposition}
\newtheorem{lemma}[theorem]{Lemma}
\newtheorem{assumption}[theorem]{Assumption}
\newtheorem{remark}[theorem]{Remark}
\newtheorem{corollary}[theorem]{Corollary}
\newtheorem*{theorem*}{Theorem}
\newtheorem*{proposition*}{Proposition}
\newtheorem*{lemma*}{Lemma}
\definecolor{green}{rgb}{0.0, 0.5, 0.0}
\definecolor{red}{rgb}{0.8, 0.0, 0.0}
\definecolor{blue}{rgb}{0.01, 0.28, 1.0}
\definecolor{yellow}{rgb}{0.98, 0.93, 0.36}
\definecolor{red}{rgb}{0.0, 0.0, 0.0}
\definecolor{orange}{rgb}{0.76, 0.23, 0.13}
\DeclareMathOperator*{\MSE}{MSE} 
    \newlength{\leftstackrelawd}
    \newlength{\leftstackrelbwd}
    \def\leftstackrel#1#2{\settowidth{\leftstackrelawd}%
    {${{}^{#1}}$}\settowidth{\leftstackrelbwd}{$#2$}%
    \addtolength{\leftstackrelawd}{-\leftstackrelbwd}%
    \leavevmode\ifthenelse{\lengthtest{\leftstackrelawd>0pt}}%
    {\kern-.5\leftstackrelawd}{}\mathrel{\mathop{#2}\limits^{#1}}}
\begin{document}
\begin{frontmatter}
\title{A minimax framework for quantifying risk-fairness trade-off in regression}
\runtitle{Risk-fairness trade-off in regression}


\begin{aug}
\author{\fnms{Evgenii} \snm{Chzhen}\thanksref{orsay}\ead[label=e1]{evgenii.chzhen@universite-paris-saclay.fr}}
\and
\author{\fnms{Nicolas}
\snm{Schreuder}\thanksref{crest}\ead[label=e2]{nicolas.schreuder@ensae.fr}}

\runauthor{E. Chzhen and N. Schreuder}

\address[orsay]{Universit\'e Paris-Saclay, CNRS, Laboratoire de math\'ematiques d’Orsay\\ \printead{e1}}
\address[crest]{Università di Genova, MaLGa, DIBRIS\\
Institut Polytechnique de Paris, ENSAE, CREST\\
\printead{e2}}


\end{aug}

\begin{abstract}

We propose a theoretical framework for the problem of learning a real-valued function which meets fairness requirements.
This framework is built upon the notion of $\alpha$-relative (fairness) improvement of the regression function which we introduce using the theory of optimal transport.
Setting $\alpha = 0$ corresponds to the regression problem under the Demographic Parity constraint, while $\alpha = 1$ corresponds to the classical regression problem without any constraints.
For $\alpha \in (0, 1)$ the proposed framework allows to continuously interpolate between these two extreme cases and to study partially fair predictors.
Within this framework we precisely quantify the cost in risk induced by the introduction of the fairness constraint.
We put forward a statistical minimax setup and derive a general problem-dependent lower bound on the risk of any estimator satisfying $\alpha$-relative improvement constraint.
We illustrate our framework on a model of linear regression with Gaussian design and systematic group-dependent bias, deriving matching (up to absolute constants) upper and lower bounds on the minimax risk under the introduced constraint.
{\color{red}We provide a general post-processing strategy which enjoys fairness, risk guarantees and can be applied on top of any black-box algorithm.} Finally, we perform a simulation study of the {\color{red}linear model and numerical experiments of benchmark data, validating our theoretical contributions.}

\end{abstract}


\begin{keyword}
\kwd{Algorithmic fairness}
\kwd{risk-fairness trade-off}
\kwd{regressions}
\kwd{Demographic Parity}
\kwd{least-squares}
\kwd{optimal transport}
\kwd{minimax analysis}
\kwd{statistical learning}
\end{keyword}

\end{frontmatter}

\maketitle

\section{Introduction}
Data driven algorithms are deployed in almost all areas of modern daily life and it becomes increasingly more important to adequately address the fundamental issue of historical biases present in the data \citep{barocas-hardt-narayanan}.
The goal of algorithmic fairness is to bridge the gap between the statistical theory of decision making and the understanding of justice, equality, and diversity.
The literature on fairness is broad and its volume increases day by day, we refer the reader to~\citep{mehrabi2019survey,barocas-hardt-narayanan} for a general introduction on the subject and to~\citep{oneto2020fairness,del2020review} for reviews of the most recent theoretical advances.

Basically, the mathematical definitions of fairness can be divided into two groups~\citep{dwork2012fairness}: \emph{individual fairness} and \emph{group fairness}.
The former notion reflects the principle that similar individuals must be treated similarly, which translates into Lipschitz type constraints on possible prediction rules.
The latter defines fairness on population level via (conditional) statistical independence of a prediction from a sensitive attribute (\eg gender, ethnicity).
A popular formalization of such notion is through the \emph{Demographic Parity} constraint, initially introduced in the context of binary classification \citep{calders2009building}. Despite of some limitations~\citep{hardt2016equality}, the concept of Demographic Parity is natural and suitable for a range of applied problems~\citep{Koeppen_Yoshida_Ohnishi14,Zink_Rose19}.

In this work we study the regression problem of learning a real-valued prediction function, which complies with an approximate notion of Demographic Parity while minimizing expected squared loss.

Unlike its classification counterpart, the problem of fair regression has received far less attention in the literature.
However, as argued by~\cite{agarwal2019fair}, classifiers only provide binary decisions, while in practice final decisions are taken by humans based on predictions from the machine. In this case a continuous prediction is more informative than a binary one and justifies the need for studying fairness in the regression framework.

\myparagraph{Notation}
For any univariate probability measure $\mu$ we denote by $F_{\mu}$ (\emph{resp.} $F_{\mu}^{-1}$) the cumulative distribution function (\emph{resp.} the quantile function) of $\mu$.
For two random variables $U$ and $V$ we denote by $\Law( U \mid V {=} v)$ the conditional distribution of the random variable $U \mid V {=} v$ and we write $U \stackrel{d}{=} V$ to denote their equality in distribution. For any integer $K \geq 1$, we denote by $\Delta^{K - 1}$ the probability simplex in $\bbR^K$ and we write $[K] = \{1, \dots, K\}$. For any $a, b \in \bbR$ we denote by $a \vee b$ (\resp $a \wedge b$) the maximum (\resp the minimum) between $a, b$. We denote by $\mathcal{P}_2(\mathbb{R}^d)$ the space of probability measures on $\mathbb{R}^d$ with finite second-order moment.


\section{Problem statement and contributions}
\label{sec:problem_statement_and_contributions}

We study the regression problem when a sensitive attribute is available. The statistician observes triplets $(\bsX_1, S_1, Y_1), \ldots, (\bsX_n, S_n, Y_n) \in \bbR^p \times [K] \times \bbR$, which are connected by the following regression-type relation
\begin{align}
    \label{eq:model_general}
    Y_i = f^*(\bsX_i, S_i) + \xi_i\enspace,\qquad i \in [n]\enspace,
\end{align}
where $\xi_i \in \bbR$ is such that $\Expf[\xi_i \mid \bsX_i] = 0$ and $f^* : \bbR^p \times [K] \to \bbR$ is the regression function.
Here for each $i \in [n]$, $\bsX_i$ is a feature vector taking values in $\bbR^p$, $S_i$ is a sensitive attribute taking values in $[K]$, and $Y_i$ is a real-valued dependent variable.
A prediction is any measurable function of the form $f : \bbR^p \times [K] \to \bbR$.
We define the risk of a prediction function $f$ via the $\ell_2$ distance\footnote{{\color{red}The extension to $\ell_q$ losses is provided in Appendix~\ref{sec:q_losses}}.} to the regression function $f^*$ as
\begin{align*}
\tag{\textbf{Risk measure}}
    \risk(f) \eqdef \|f - f^*\|_{2}^2 \eqdef \sum_{s = 1}^K w_s\Exp\left[(f(\bsX, S) - f^*(\bsX, S))^2 \mid S = s \right]\enspace,
\end{align*}
where $\Exp[ \cdot \mid S {=} s]$ is the expectation \wrt the distribution of the features $\bsX$ in the group $S = s$ and $\bsw = (w_1, \ldots, w_K)^\top \in \Delta^{K-1}$ is a probability vector, which weights the group-wise risks.

For any $s\in[K]$ define $\nu^*_s$ as $\Law(f^*(\bsX, S) \mid S {=} s)$ --
the distribution of the optimal prediction inside the group $S = s$.
Throughout this work we make the following assumption on those measures, which is, for instance, satisfied in linear regression with Gaussian design.
\begin{assumption}\label{as:atomless}
    Measures $\{\nu^*_s\}_{s \in [K]}$ are non-atomic with finite second moments.
\end{assumption}

\subsection{Regression with fairness constraints}
\label{subsec:regression_with_fairness_constraints}

 Any predictor $f$ induces a group-wise distribution of the predicted outcomes
 $\Law(f(\bsX, S) \mid S{=}s)$ for $s \in [K]$.
 The high-level idea of \emph{group fairness} notions is to bound or diminish an eventual discrepancy between these distributions.

We define the \emph{unfairness} of a predictor $f$ as the sum of the weighted distances between $\{\Law(f(\bsX, S) \mid S{=}s)\}_{s \in [K]}$ and 
their common barycenter \wrt the Wasserstein-2 distance\footnote{See Appendix~\ref{sec:Wassersteinreminder} for a reminder on Wasserstein distances.}:
\begin{align}
        \class{U}(f) \eqdef \min_{\nu \in \class{P}_2(\bbR)} \sum_{s = 1}^K w_s \sW_2^2\big(\Law(f(\bsX, S) \mid S{=}s),\, \nu\big)\enspace.\tag{{\bf Unfairness measure}}
\end{align}
In particular, since the Wasserstein-2 distance is a metric on the space probability distributions with finite second-order moment $\mathcal{P}_2(\mathbb{R}^d)$, a predictor $f$ is such that $\class{U}(f)=0$ if and only if it satisfies the Demographic Parity (DP) constraint defined as
\begin{align}
    \big(f(\bsX, S) \mid S = s\big) \leftstackrel{d}{=} \big(f(\bsX, S) \mid S = s'\big), \quad \forall s, s' \in [K] \enspace.\tag{{\bf DP}}
\end{align}
Exact DP is not necessarily desirable in practice and it is common in the literature to consider \emph{relaxations} of this constraint. In this work we introduce the \emph{$\alpha$-Relative Improvement} ($\alpha$-RI) constraint -- a novel DP relaxation based on our unfairness measure.
We say that a predictor $f$ satisfies the $\alpha$-RI constraint for some $\alpha \in [0, 1]$ if its unfairness is at most an $\alpha$ fraction of the unfairness of the regression function $f^*$, that is, $\class{U}(f) \leq \alpha\,\class{U}(f^*)$.
Importantly, the fairness requirement is stated relatively to the unfairness of the regression function $f^*$, which allows to make a more informed choice of $\alpha$.

Formally, for a fixed $\alpha \in [0, 1]$, the goal of a statistician in our framework is to build an estimator $\hat f$ using data, which enjoys two guarantees (with high probability)
\begin{align*}
    &\textbf{$\alpha$-RI guarantee:}\quad\class{U}(\hat f) \leq \alpha\, \class{U}(f^*)\qquad\text{and}\qquad
    \textbf{Risk guarantee:}\quad\risk(\hat f) \leq r_{n, \alpha, f^*}\enspace.
\end{align*}
The former ensures that $\hat f$ satisfies the $\alpha$-RI constraint.
In the latter guarantee we seek the sequence $r_{n, \alpha, f^*}$ being as small as possible in order to quantify \emph{two effects}: the introduction of the $\alpha$-RI \emph{fairness constraint} and the \emph{statistical estimation}.
We note that $r_{n, \alpha, f^*}$ depends on the sample size $n$, the fairness parameter $\alpha$, as well as the regression function $f^*$ to be estimated, we clarify the reason for this dependency later in the text.
\subsection{Contributions}
\label{subsec:contributions}

The first natural question that we address is: assuming that the underlying distribution of $X \mid S$ and the regression function $f^*$ are known, which prediction rule $f_{\alpha}^*$ minimizes the expected squared loss under the $\alpha$-RI constraint $\class{U}(f^*_{\alpha}) \leq \alpha \, \class{U}(f^*)$?
To answer this question we shift the discussion to the population level and define a collection $\{f^*_{\alpha}\}_{\alpha \in [0, 1]}$ of \emph{oracle $\alpha$-RI} indexed by the parameter $\alpha$ as
\begin{align}
    \tag{{\bf Oracle $\alpha$-RI}}
    f^*_{\alpha} \in \argmin\enscond{\risk(f)}{\class{U}(f) \leq \alpha\,\class{U}(f^*)}
    \enspace,\qquad \forall \alpha \in [0, 1]\enspace.
\end{align}
For $\alpha=0$ the predictor $f^*_0$ corresponds to the optimal fair predictor in the sense of DP while for $\alpha=1$ the corresponding predictor $f^*_1$ coincides with the regression function $f^*$. Those two extreme cases have been previously studied but, up to our knowledge, nothing is known about those ``partially fair'' predictors.
Our study of the family $\{f^*_{\alpha}\}_{\alpha \in [0, 1]}$
serves as a basis for our statistical framework and analysis. It also reveals the intrinsic interplay of the fairness constraint with the risk measure.

The contributions of this work can be roughly split into three interconnected groups:
\begin{enumerate}
    \item We provide a theoretical study of the family of oracle $\alpha$-RI $\{f^*_{\alpha}\}_{\alpha \in [0, 1]}$ on the population level;
    \item We introduce {a minimax statistical} framework and derive a general problem-dependent minimax lower bound for the problem of regression under the $\alpha$-RI constraint;
    \item
    We derive minimax optimal rate of convergence for the statistical model of linear regression with systematic group-dependent bias and Gaussian design under the $\alpha$-RI constraint.
\end{enumerate}
\myparagraph{Properties of oracle $\alpha$-RI $\{f^*_{\alpha}\}_{\alpha \in [0, 1]}$}
\begin{figure}[t!]
\centering
\includegraphics[width=\textwidth]{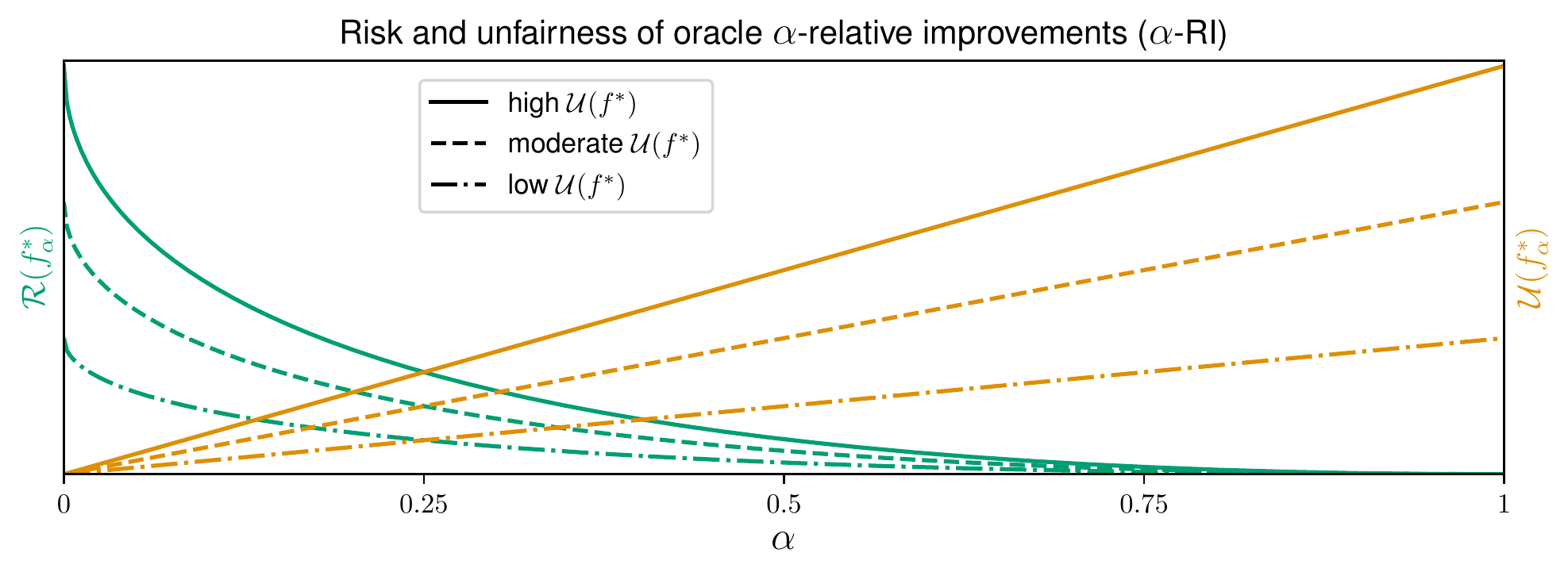}%
\caption{Risk $\risk$ and unfairness $\class{U}$ of $\alpha$-RI oracles $\{f^*_{\alpha}\}_{\alpha \in [0, 1]}$. Green curves (decreasing, convex) correspond to the risk, while orange curves (increasing, linear) correspond to the unfairness. Each pair of curves (solid, dashed, dashed dotted) corresponds to three regimes: high, moderate, and low unfairness of the regression function $f^*$ respectively.}
\label{fig:illustration1}
\end{figure}
It has been shown that, under the squared loss, the optimal fair predictor $f^*_0$ can be obtained as the solution of a Wasserstein-2 barycenter problem~\citep{gouic2020price,chzhen2020fair}.
In Section~\ref{SUBSEC:GENERAL} we study the whole family $\{f^*_{\alpha}\}_{\alpha \in [0, 1]}$ for arbitrary choice of $\alpha \in [0, 1]$.
To provide complete characterization of $\{f^*_{\alpha}\}_{\alpha \in [0, 1]}$ we derive Lemma~\ref{lem:geometric_general}, which could be of independent interest.
This result can be summarized as follows:
given a fixed collection of points $a_1, \ldots, a_K$ in an abstract metric space $(\class{X}, d)$, if one walks along the (constant speed) geodesics starting from $a_s$ and leading to their (weighted) barycenter until it reaches a proportion $\alpha$ of the full path, then these intermediate points $b_1, \ldots, b_K$ minimize the weighted distance to the initial points while being $\alpha$-closer to their own barycenter.
This abstract result enables us to characterize explicitly oracle $\alpha$-RI $\{f^*_\alpha\}_{\alpha \in [0, 1]}$. In particular, we show that the family of oracle $\alpha$-RI $\{f^*_{\alpha}\}_{\alpha \in [0, 1]}$ admits a simple structure: for any $\alpha \in [0, 1]$ the prediction $f^*_{\alpha}$ is the point-wise convex combination of the regression function $f^* \equiv f^*_1$ and the optimal fair predictor $f^*_{0}$, that is,
\begin{align*}
    f^*_{\alpha}(\bsx, s) = \sqrt{\alpha}f^*_{1}(\bsx, s) + (1 {-} \sqrt{\alpha})f_0^*(\bsx, s),\qquad \forall\,(\bsx, s) \in \bbR^p \times [K]\enspace.
\end{align*}
The final contribution of Section~\ref{SUBSEC:GENERAL} is the quantification of the risk-fairness trade-off on the population level. In particular, Lemma~\ref{lem:distance_fair_and_almost} establishes that for every $\alpha \in [0, 1]$ it holds that
\begin{align*}
    \risk(f^*_{\alpha}) = (1 {-} \sqrt{\alpha})^2\risk(f^*_{0})\quad\text{and}\quad \class{U}(f^*_{\alpha}) = \alpha\,\class{U}(f^*)\enspace.
\end{align*}
Observe that $f^*_0$, which is the optimal fair predictor in terms of DP, has the highest risk and the lowest unfairness, while the situation is reversed for $f^*_1 \equiv f^*$ -- the risk is the lowest and the unfairness is the highest.
Since the function $\alpha \to (1 {-} \sqrt{\alpha})^2$ grows rapidly in the vicinity of zero, even a mild relaxation of the exact fairness constraint $(\alpha = 0)$ yields a noticeable improvement in terms of the risk while having a low unfairness inflation.
For instance, the risk of $f^*_{\sfrac{1}{2}}$ is only around $8.5\%$ of the risk of $f^*_0$, while its fairness is two times better than that of $f^*$. This observation is illustrated in Figure~\ref{fig:illustration1}.


\myparagraph{Minimax framework}
In order to quantify the \emph{statistical} price of fairness, in Section~\ref{sec:minimax_setup} we propose a minimax framework and in Section~\ref{sec:general_lower} we derive a general problem-dependent lower bound on the minimax risk of estimators satisfying the $\alpha$-RI constraint.
%
Statistical study of the model in Eq.~\eqref{eq:model_general} typically requires additional assumptions to provide meaningful statistical guarantees.
Classically, one chooses a set $\class{F}$ of possible candidates for the regression function $f^*$ (\eg linear functions) and, possibly, introduces additional conditions on nuisance parameters of the model via some set $\Theta$ (\eg variance of the noise).
The goal of our lower bound is to understand fundamental limits of the problem of prediction under $\alpha$-RI constraint in arbitrary statistical model for Eq.~\eqref{eq:model_general}.
To this end, we show in Theorem~\ref{THM:GENERAL_LOWER} that \emph{any estimator $\hat f$ satisfying the $\alpha$-RI} constraint with high probability must incur
 \begin{align*}
     \risk(\hat f) \geq \delta_n(\class{F}, \Theta) \vee (1 {-} \sqrt{\alpha})^2\class{U}({f^*})\enspace,
 \end{align*}
where $\delta_n(\class{F}, \Theta)$ is the rate one would obtain \emph{without} restricting the set of possible estimators.

\myparagraph{Application to linear model}
The goal of Section~\ref{SEC:LINEAR} is to demonstrate that the general problem-dependent lower bound does indeed yield minimax optimal rates.
To this end, we apply our machinery to the problem of linear regression with systematic bias formalized by the following linear model
\begin{align*}
    Y_i = \scalar{\bsX_i}{\bbeta^*} + b_{S_i}^* + \xi_i,\quad i = 1, \dots, n\enspace,
\end{align*}
where the $\xi_i$'s are \iid zero mean Gaussian with variance $\sigma^2$ and the $p$-dimensional covariates $\{\bsX_i\}_{i = 1}^n$ are \iid Gaussian random vectors.
We propose an estimator $\hat f$ which, with probability at least $1 - \delta$, satisfies $\class{U}(\hat f) \leq \alpha\, \class{U}(f^*)$ and achieves the following minimax optimal rate
\begin{align*}
    &\risk(\hat f) \asymp \left\{\sigma^2\parent{{\frac{p + K}{n}} +
    {\frac{\log (\sfrac{1}{\delta})}{n}}} \right\} \bigvee \bigg\{(1 {-} \sqrt{\alpha})^2\class{U}(f^*) \bigg\}\enspace.
\end{align*}
Finally, we conduct a simulation study of the proposed estimator $\hat f$ and compare its performance with more straightforward approaches in terms of unfairness and risk.

\myparagraph{Ad-hoc procedure and experiments on \texttt{CRIME} dataset} {\color{red}The estimator that will be developed in the context of linear model with systematic bias relies heavily on the linear model and Gaussian features assumption. Thus, in Section~\ref{sec:estimation}, we propose a general post-processing estimator, which enjoys fairness and risk guarantees. Unlike the case of linear model, the optimality of these guarantees remains open. In Section~\ref{sec:real_exps}, we provide empirical study of estimators from Sections~\ref{SEC:LINEAR} and~\ref{sec:estimation}, validating our theoretical claims numerically.
}

\section{Prior and related works}
\label{sec:prior_and_related_works}
Until very recently, contributions on  fair regression were almost exclusively focused on the practical incorporation of proxy fairness constraints in classical learning methods, such as random forest, ridge regression, kernel based methods to name a few~\citep{calders2013controlling,komiyama2017two,berk2017convex,perez2017fair,raff2017fair,fitzsimons2018equality}.
Several works empirically study the impact of (relaxed) fairness constraints on the risk~\citep{bertsimas2012efficiency,zliobaite2015relation,haas2019price,Wick_Panda_Tristan19,zafar2017fairness}.
Yet, the problem of precisely quantifying the effect of such constraints on the risk has not been tackled.

More recently, statistical and learning guarantees for fair regression were derived \citep{agarwal2019fair,gouic2020price,chzhen2020fair,chiappa2020general,fitzsimons2019general,plevcko2019fair,chzhen2020fairTV}.
The closest works to our contribution are that of~\cite{gouic2020price,chzhen2020fair,chiappa2020general}, who draw a connection between the problem of exactly fair regression of demographic parity and the multi-marginal optimal transport formulation~\citep{gangbo1998optimal,agueh2011barycenters}.

As already mentioned in the previous section, considering predictors which satisfy the DP constraint incurs an unavoidable price in terms of the risk.
Depending on the application at hand, this price might or might not be reasonable.
However, since the notion of DP is completely fairness driven, it does not allow to quantify the price of considering ``fairer'' predictions than the regression function $f^*$.
For this reason, several contributions relax this constraint, forcing a milder fairness requirement.
A natural idea is to define a functional $\class{U}$ which quantifies the violation of the DP constraint and to declare a prediction approximately fair if this functional does not exceed a user pre-specified threshold.
In recent years a large variety of such relaxations has been proposed: correlation based~\citep{baharlouei2019r,mary2019fairness,komiyama2018nonconvex}; Kolmogorov-Smirnov distance~\citep{agarwal2019fair}; Mutual information~\citep{steinberg2020fast,steinberg2020fairness}; Total Variation distance~\citep{oneto2019general,oneto2019learning}; Equality of means and higher moment matching~\citep{raff2017fair,fitzsimons2019general,calders2013controlling,berk2017convex,olfat2020covariance,Donini_Oneto_Ben-David_Taylor_Pontil18}; Maximum Mean Discrepancy~\citep{quadrianto2017recycling,madras2018learning}; Wasserstein distance~\citep{chiappa2020general,gouic2020price,chzhen2020fair,gordaliza2019obtaining}.

\subsection{Other notions of unfairness}
The most common relaxations of the Demographic Parity constraint are based on the Total Variation (TV) and the Kolmogorov-Smirnov (KS) distances~\citep{agarwal2019fair, oneto2019general,Agarwal_Beygelzimer_Dubik_Langford_Wallach18,chzhen2020fairTV}. There are various ways to use the TV or KS in order to build a functional $\class{U}$, which quantifies the violation of the DP constraint.
To compare those measures of discrepancy with the one that we introduce in our work, we define $\class{U}_{\TV}$ and $\class{U}_{\KS}$ as follows
\begin{align*}
    &\textbf{TV unfairness:} &&\class{U}_{\TV}(f) \eqdef \sum_{s \in [K]} \TV\left(\Law(f(\bsX, S) \mid S = s),\, \Law(f(\bsX, S))\right)\enspace,\\
    &\textbf{KS unfairness:} &&\class{U}_{\KS}(f) \eqdef \sum_{s \in [K]} \KS\left(\Law(f(\bsX, S) \mid S = s),\, \Law(f(\bsX, S))\right)\enspace.
\end{align*}
Using these notions, one wishes to study those predictors $f$ which satisfy relaxed fairness constraint $\class{U}_{\square}(f) \leq \varepsilon$, where $\square$ is $\KS$ or $\TV$ and $\varepsilon \geq 0$ is a user specified parameter.
Note that since both $\KS$ and $\TV$ are metrics, setting $\varepsilon = 0$ is equivalent to the DP constraint.  Meanwhile, for $\varepsilon > 0$ these formulations allow some slack.
It is known that the TV distance is rather strong and extremely sensitive to small changes in distributions which is the major drawback of the TV unfairness.
This limitation can be addressed by the KS unfairness due to an obvious relation $\class{U}_{\KS}(f) \leq \class{U}_{\TV}(f)$.

In our work we argue that the introduced notion of unfairness $\class{U}$ is better suited for the problem of regression with squared loss under fairness constraint. 
Indeed, we prove in Lemma~\ref{lem:distance_fair_and_almost} that $\class{U}$ can be naturally connected to the squared risk and allows to give a precise quantification of the risk-fairness trade-off.
This result is the major advantage of $\class{U}$ over both $\class{U}_{\KS}$ and $\class{U}_{\TV}$. Nevertheless, it is still interesting to understand whether a more popular $\KS$ unfairness can be related to $\class{U}$ that we introduce.
In Appendix we prove the following connection.
\begin{proposition}
    Fix some predictor $f : \bbR^p \times [K] \to \bbR$.
    Assume that $\Law(f(\bsX, S) \mid S {=} s) \in \class{P}_2(\bbR)$ and it admits density bounded by $C_{f, s} > 0$ for all $s \in [K]$, then\footnote{One can erase $\|\sfrac{1}{\bsw}\|_{\infty}$ from the bound introducing these weights into the definition of $\class{U}_{\KS}(f)$.}
    \begin{align*}
        \class{U}_{\KS}(f) \leq  \|\sfrac{1}{\bsw}\|_{\infty}\sqrt{ 8\bar{C}_{f} }\cdot\class{U}^{1/4}(f)\enspace,
    \end{align*}
    where $\bar{C}_f = \sum_{s = 1}^Kw_sC_{f, s}$ and $\sfrac{1}{\bsw} = (\sfrac{1}{w_1}, \ldots, \sfrac{1}{w_K})^\top$.
\end{proposition}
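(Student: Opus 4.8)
The plan is to bound each summand $\KS\big(\Law(f(\bsX,S)\mid S{=}s),\Law(f(\bsX,S))\big)$ separately by a power of a Wasserstein-2 quantity, and then to glue the per-group estimates together using the joint convexity of $\sW_2^2$. Write $\mu_s\eqdef\Law(f(\bsX,S)\mid S{=}s)$ for $s\in[K]$ and $\bar\mu\eqdef\Law(f(\bsX,S))=\sum_{s=1}^K w_s\mu_s$, and let $\nu$ attain the minimum in the definition of $\class{U}(f)$, so that $\class{U}(f)=\sum_{s=1}^K w_s\sW_2^2(\mu_s,\nu)$ (on $\bbR$ such a $\nu$ exists explicitly, with quantile function $\sum_s w_s F_{\mu_s}^{-1}$).

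\emph{Step 1: a one-dimensional KS--Wasserstein inequality under a density bound.} I would first show that if $\mu,\rho\in\mathcal{P}_2(\bbR)$ and $\mu$ has density bounded by $C$, then $\KS(\mu,\rho)\le\sqrt{2C\,\sW_1(\mu,\rho)}\le\sqrt{2C\,\sW_2(\mu,\rho)}$. The point is that $F_\mu$ is $C$-Lipschitz while $F_\rho$ is merely nondecreasing: if $\abs{F_\mu-F_\rho}$ is (arbitrarily close to) $\delta\eqdef\KS(\mu,\rho)$ at some $t_0$, then on the adjacent interval of length $\delta/C$ the function $\abs{F_\mu-F_\rho}$ dominates the segment decaying linearly from $\delta$ to $0$, whence $\sW_1(\mu,\rho)=\int_{\bbR}\abs{F_\mu-F_\rho}\ge\delta^2/(2C)$; the inequality $\sW_1\le\sW_2$ is Cauchy--Schwarz applied to any coupling. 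Some care is needed when the supremum defining $\KS$ is not attained (pass to a limit) and for the two signs of $F_\mu-F_\rho$ (use the interval to the left, resp.\ to the right, of $t_0$).

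\emph{Step 2: from $\bar\mu$ to the barycenter $\nu$.} By joint convexity of $(\mu,\rho)\mapsto\sW_2^2(\mu,\rho)$ (mix optimal couplings), $\sW_2^2(\bar\mu,\nu)=\sW_2^2\!\left(\sum_s w_s\mu_s,\sum_s w_s\nu\right)\le\sum_s w_s\sW_2^2(\mu_s,\nu)=\class{U}(f)$. Combining this with $\sW_2^2(\mu_s,\bar\mu)\le 2\sW_2^2(\mu_s,\nu)+2\sW_2^2(\nu,\bar\mu)$ and averaging over $s$ gives $\sum_s w_s\sW_2^2(\mu_s,\bar\mu)\le 4\,\class{U}(f)$, hence by Jensen $\sum_s w_s\sW_2(\mu_s,\bar\mu)\le 2\sqrt{\class{U}(f)}$. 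Next, applying Step 1 with $(\mu,\rho,C)=(\mu_s,\bar\mu,C_{f,s})$, then Cauchy--Schwarz with weights $w_s$, then the previous bound,
\begin{align*}
\sum_{s=1}^K w_s\KS(\mu_s,\bar\mu)
&\le \sqrt{2}\sum_{s=1}^K w_s\sqrt{C_{f,s}}\,\sqrt{\sW_2(\mu_s,\bar\mu)}\\
&\le \sqrt{2}\left(\sum_{s=1}^K w_s C_{f,s}\right)^{1/2}\!\!\left(\sum_{s=1}^K w_s\,\sW_2(\mu_s,\bar\mu)\right)^{1/2}
\le 2\sqrt{\bar{C}_f}\;\class{U}^{1/4}(f)\le\sqrt{8\bar{C}_f}\;\class{U}^{1/4}(f)\enspace.
\end{align*}
The claim then follows from $\class{U}_{\KS}(f)=\sum_s\KS(\mu_s,\bar\mu)=\sum_s w_s^{-1}\cdot w_s\KS(\mu_s,\bar\mu)\le\|\sfrac{1}{\bsw}\|_\infty\sum_s w_s\KS(\mu_s,\bar\mu)$; if the weights $w_s$ are built into $\class{U}_{\KS}$ as in the footnote, this last step and the factor $\|\sfrac{1}{\bsw}\|_\infty$ disappear.

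\emph{Main obstacle.} Everything after Step 1 is convexity/triangle-inequality bookkeeping. The only genuinely non-routine ingredient is Step 1: converting the sup-norm control of the CDF gap into an $L^1$ (hence $\sW_1\le\sW_2$) estimate by exploiting the Lipschitz regularity of $F_\mu$, together with handling the non-attained supremum and the two signs cleanly. I also note that the resulting constant is not tight --- the argument above yields $2$ in place of $\sqrt{8}$.
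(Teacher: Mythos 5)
Your proof is correct and establishes the stated bound with a strictly better constant ($2$ in place of $\sqrt{8} = 2\sqrt{2}$), but it departs from the paper's route in a couple of places worth noting. The paper relies on a stated lemma $\KS(\mu,\rho)\le 2\sqrt{C_\mu\,\sW_1(\mu,\rho)}$, then stays entirely in the $\sW_1$ metric: it uses convexity of $\sW_1(a_s,\cdot)$ in the second argument to write $\sW_1(a_s,\bar a)\le\sum_{s'}w_{s'}\sW_1(a_s,a_{s'})$, pushes through the barycenter $\nu$ by the triangle inequality, performs two rounds of Cauchy--Schwarz, and only invokes $\sW_1\le\sW_2$ in the final step when identifying the result with $\class{U}(f)$. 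You instead (a) prove a sharper pointwise KS--Wasserstein estimate $\KS(\mu,\rho)\le\sqrt{2C_\mu\,\sW_1(\mu,\rho)}$ by exploiting the Lipschitz CDF directly (the gain of a $\sqrt{2}$ here is exactly what shaves the constant at the end), (b) immediately pass to $\sW_2$ and work in squared distances, and (c) replace the paper's convexity-of-$\sW_1(a_s,\cdot)$-plus-triangle step with the cleaner combination of joint convexity of $\sW_2^2$ (to bound $\sW_2^2(\bar\mu,\nu)\le\class{U}(f)$) and the parallelogram-type bound $\sW_2^2(\mu_s,\bar\mu)\le2\sW_2^2(\mu_s,\nu)+2\sW_2^2(\nu,\bar\mu)$, followed by Jensen. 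The net effect is one application of Cauchy--Schwarz rather than two, a tidier argument, and a constant of $2$; your Step~1, including the treatment of the non-attained supremum and the two signs of $F_\mu-F_\rho$, is sound. The approaches are thus closely related in spirit --- both go through the Wasserstein barycenter and a Cauchy--Schwarz step with the density bounds --- but differ in which convexity/triangle tools they use, and yours is marginally sharper.
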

The latter result indicates that if one can control the unfairness $\class{U}$ introduced in this work, one also has some control over the $\KS$ unfairness.
Note that the leading constant of the previous bound depends on the predictor $f$. More precisely, this constant corresponds to the upper bound on the density of $f(\bsX, S)$.

Another advantage of the introduced unfairness measure, and, in particular, the notion of $\alpha$-relative improvement is the fact that the parameter $\alpha$ has a clear practical interpretation, while the interpretation of $\varepsilon$ is not intuitive. Of course, using $\class{U}_{\KS}$ or $\class{U}_{\TV}$ one can also define unfairness of a predictor $f$ relatively to the regression function $f^*$.
{\color{red}However, the interpretation of $\KS$ or $\TV$ unfairness relative to the unfairness of the Bayes rule is less meaningful. Indeed, intuitively, if a prediction function $f : \bbR^p \times [K] \to \bbR$ introduces some group-wise disparities, then $c \cdot f$ for $c \gg 1$ should be even further \emph{amplifying} these disparities.
Yet, for all $c > 0$ we have $\class{U}_{\KS/\TV}(c\cdot f) = \class{U}_{\KS/\TV}(f)$, while the introduced notion of unfairness satisfies $\class{U}(c\cdot f) = c^2 \class{U}(f)$ for all $c > 0$.}
Due to completely different geometries induced by $\risk$ in the space of functions and by $\class{U}_{\KS {/} \TV}$ in the space of distributions, precise theoretical study of such formulations is notoriously complicated if possible.

\subsection{Optimal transport and fair regression}
    The use of optimal transport tools in the study of fairness is relatively recent. Initially, contributions in this direction were mainly dealing with the problem of binary classification~\citep{gordaliza2019obtaining,jiang2019wasserstein}.
    Later on, the tools of the optimal transport theory migrated to the setup of fair regression~\citep{chiappa2020general,chzhen2020fair,gouic2020price}.
The main theoretical motivation to consider $\class{U}$ instead of the KS and TV unfairnesses lies in the following recent result.
\begin{theorem}[\cite{gouic2020price,chzhen2020fair}]
    \label{thm:basic}
    Let Assumption~\ref{as:atomless} be satisfied, then
    \begin{align}
        \label{eq:motivation1}
         \min\enscond{\risk(f)}{\big(f(\bsX, S) \mid S = s\big) \leftstackrel{d}{=} \big(f(\bsX, S) \mid S = s'\big)\,\,\forall s, s' \in [K]} = \class{U}(f^*)\enspace.
    \end{align}
    Moreover, the distribution of the minimizer of the problem on the \emph{l.h.s.} is given by
    \begin{align*}
        \argmin_{\nu \in \class{P}_2(\bbR)} \sum_{s = 1}^K w_s \sW_2^2\left(\Law(f^*(\bsX, S) \mid S {=} s),\, \nu\right)\enspace.
    \end{align*}
\end{theorem}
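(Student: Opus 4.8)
The plan is to establish the identity by exhibiting a matching lower bound and upper bound, and to read off the characterization of the minimizer from the same argument. Throughout write $\nu^*_s = \Law(f^*(\bsX, S) \mid S {=} s)$ as in the text. For the \textbf{lower bound}, I would fix an arbitrary predictor $f$ satisfying the DP constraint and set $\nu \eqdef \Law(f(\bsX, S) \mid S{=}s)$, which by DP does not depend on $s$. For each $s$, the conditional law of the pair $\parent{f(\bsX, S), f^*(\bsX, S)}$ given $S = s$ is a coupling of $\nu$ and $\nu^*_s$, so by the very definition of $\sW_2$ as an infimum over couplings,
\begin{align*}
\Exp\left[(f(\bsX,S) - f^*(\bsX,S))^2 \mid S = s\right] \geq \sW_2^2(\nu, \nu^*_s)\enspace.
\end{align*}
Averaging against $\bsw$ and then minimizing the right-hand side over $\nu \in \class{P}_2(\bbR)$ gives $\risk(f) \geq \class{U}(f^*)$, hence the left-hand side of~\eqref{eq:motivation1} is at least $\class{U}(f^*)$.

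For the \textbf{upper bound and the minimizer}, let $\bar\nu$ be a Wasserstein barycenter of $\{\nu^*_s\}_{s\in[K]}$ with weights $\bsw$. Using the isometric embedding $\mu \mapsto F_\mu^{-1}$ of $(\class{P}_2(\bbR), \sW_2)$ into $L^2([0,1])$, one has $\sum_s w_s \sW_2^2(\nu^*_s, \nu) = \sum_s w_s \normin{F_{\nu^*_s}^{-1} - F_\nu^{-1}}_{L^2([0,1])}^2$, a strictly convex functional of $F_\nu^{-1}$ whose unique $L^2$-minimizer is $\sum_s w_s F_{\nu^*_s}^{-1}$; since a convex combination of non-decreasing left-continuous functions is again of this type, this is a valid quantile function, so the barycenter $\bar\nu$ exists, is unique, and $F_{\bar\nu}^{-1} = \sum_s w_s F_{\nu^*_s}^{-1}$. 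Under Assumption~\ref{as:atomless} each $\nu^*_s$ is non-atomic, hence $F_{\nu^*_s}$ is continuous, so $T_s \eqdef F_{\bar\nu}^{-1} \circ F_{\nu^*_s}$ pushes $\nu^*_s$ onto $\bar\nu$ and realizes the optimal monotone coupling on $\bbR$, that is $\Exp[(T_s(f^*(\bsX,S)) - f^*(\bsX,S))^2 \mid S{=}s] = \sW_2^2(\nu^*_s, \bar\nu)$. Setting $\tilde f(\bsx, s) \eqdef T_s(f^*(\bsx, s))$ then produces a genuine (non-randomized) prediction function with $\Law(\tilde f(\bsX,S) \mid S{=}s) = \bar\nu$ for every $s$, so $\tilde f$ satisfies DP and $\risk(\tilde f) = \sum_s w_s \sW_2^2(\nu^*_s, \bar\nu) = \class{U}(f^*)$, which together with the lower bound proves~\eqref{eq:motivation1}. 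For the ``moreover'' claim, any minimizer $f$ must turn every inequality in the lower-bound argument into an equality, forcing $\Law(f(\bsX,S))$ to be a barycenter of $\{\nu^*_s\}$; by the uniqueness just established it equals $\argmin_{\nu\in\class{P}_2(\bbR)}\sum_s w_s \sW_2^2(\nu^*_s, \nu)$.

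The step I expect to require the most care is the transport-map construction: one must produce a \emph{deterministic} optimal coupling of $\nu^*_s$ and $\bar\nu$ (so the construction returns a bona fide prediction rather than a randomized rule) and verify its pushforward is exactly $\bar\nu$. This is precisely where non-atomicity from Assumption~\ref{as:atomless} enters — continuity of $F_{\nu^*_s}$ makes $F_{\nu^*_s}(f^*(\bsX,S)) \mid S{=}s$ uniform on $[0,1]$, so that $F_{\bar\nu}^{-1}$ applied to it has law exactly $\bar\nu$ — while optimality of the monotone rearrangement on $\bbR$ is the Hoeffding--Fréchet bound. The finite-second-moment part of the assumption ensures $\class{U}(f^*) < \infty$, that $F_{\bar\nu}^{-1} \in L^2([0,1])$, and hence that all displayed quantities are finite and the manipulations above are legitimate.
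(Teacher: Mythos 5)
Your proof is correct and is essentially the standard argument from the cited references: the lower bound via the coupling interpretation of $\sW_2$ is exactly the content of Lemma~\ref{lem:squared_loss_wasserstein} in the appendix, and the explicit quantile-based construction $\tilde f(\bsx,s) = F_{\bar\nu}^{-1}\circ F_{\nu^*_s}\circ f^*(\bsx,s)$ together with the $L^2([0,1])$ isometry is precisely how the paper identifies $f^*_0$ in Proposition~\ref{prop:optimal_alpha} (specialized to $\alpha=0$). Since Theorem~\ref{thm:basic} is a cited result with no proof given in the paper, there is no in-paper argument to contrast with; your proposal matches the approach of \citet{gouic2020price,chzhen2020fair} and fills in the details cleanly, including the two places where non-atomicity is genuinely needed (to make $F_{\nu^*_s}(f^*(\bsX,S))\mid S=s$ uniform, and hence the pushforward equal to $\bar\nu$) and where finite second moments are needed (to make $\class{U}(f^*)$ and the $L^2$ quantities finite).
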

An important consequence of Theorem~\ref{thm:basic} is that it puts the risk $\risk$ and the unfairness $\class{U}$ -- two conflicting quantities -- on the same scale. In particular, it allows to measure both fairness and risk using the same unit measurements, hence, study the trade-off between the two.
In order to build our framework, we remark that since $\sW_2$ is a metric then the problem on the \emph{l.h.s.} of Eq.~\eqref{eq:motivation1} can be equivalently written as $\min\enscond{\risk(f)}{\mathcal{U}(f) \leq 0 \times \mathcal{U}(f^*)}$. Moreover, one can observe that the regression function $f^* \in \min\enscond{\risk(f)}{\mathcal{U}(f) \leq 1 \times \mathcal{U}(f^*)}$.
Thus, a natural relaxation of the above formulation
is the introduced notion of \emph{$\alpha$-relative improvement}, which interpolates between the exactly fair predictor $f^*_0$ and the regression function $f^*_1 \equiv f^*$. In this retrospect, the result of~\cite{gouic2020price,chzhen2020fair} provides characterization of $f^*_0$ but says nothing about the whole family of oracle $\alpha$-RI $\{f^*_\alpha\}_{\alpha \in [0, 1]}$.
\section{Oracle \texorpdfstring{$\alpha$}{Lg}-relative improvement}
\label{SUBSEC:GENERAL}
This section is devoted to the study of the $\alpha$-relative improvement $f^*_\alpha$ on population level, that is, in this section we study 
\begin{align}
    \label{eq:alpha_improvement}
    f^*_{\alpha} \in \argmin\enscond{\risk(f)}{\class{U}(f) \leq \alpha\,\class{U}(f^*)}\enspace,\qquad\forall\alpha\in[0,1]
    \enspace.
\end{align}
The next result establishes a closed form solution to the minimization Problem~\eqref{eq:alpha_improvement} under Assumption~\ref{as:atomless} for any value of $\alpha \in [0, 1]$.
\begin{proposition}
    \label{prop:optimal_alpha}
    Let Assumption~\ref{as:atomless} be satisfied, then for all $\alpha \in [0, 1]$ and all $(\bsx, s) \in \bbR^p \times [K]$ (up to a set of null measure) it holds that
    \begin{align*}
        f_{\alpha}^*(\bsx, s)
        &=
        \sqrt{\alpha} f^*(\bsx, s) + \big(1{-}\sqrt{\alpha}\big) \sum_{s' = 1}^K w_{s'} F_{\nu^*_{s'}}^{-1} \circ F_{\nu^*_s} \circ f^*(\bsx, s)\nonumber\\
        &= \sqrt{\alpha}f^*_{1}(\bsx, s) + (1 {-} \sqrt{\alpha})f_0^*(\bsx, s)\enspace.
    \end{align*}
\end{proposition}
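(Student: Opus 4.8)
The plan is to reduce the minimization over prediction rules to a minimization over the collection of group-wise output distributions $\{\Law(f(\bsX,S)\mid S{=}s)\}_s$, to solve that distributional problem using the geodesic structure of the Wasserstein space together with Lemma~\ref{lem:geometric_general}, and then to check that the announced $f^*_\alpha$ is the essentially unique prediction rule realising the distributional optimum through a monotone (hence $\sW_2$-optimal) coupling with $f^*$.

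\emph{Lower bound on the risk.} Fix a feasible $f$, i.e. $\class{U}(f)\le\alpha\,\class{U}(f^*)$, and set $\mu_s\eqdef\Law(f(\bsX,S)\mid S{=}s)$. Conditionally on $S{=}s$ the pair $(f(\bsX,S),f^*(\bsX,S))$ is a coupling of $(\mu_s,\nu^*_s)$, so $\Exp[(f(\bsX,S)-f^*(\bsX,S))^2\mid S{=}s]\ge\sW_2^2(\mu_s,\nu^*_s)$, and summing with the weights $\bsw$ gives $\risk(f)\ge\sum_s w_s\sW_2^2(\mu_s,\nu^*_s)$. Meanwhile the feasibility constraint reads $\min_{\nu}\sum_s w_s\sW_2^2(\mu_s,\nu)=\class{U}(f)\le\alpha\,\class{U}(f^*)$, and by definition $\class{U}(f^*)=\sum_s w_s\sW_2^2(\nu^*_s,\bar\nu)$, where $\bar\nu$ is the (under Assumption~\ref{as:atomless}, unique) Wasserstein-2 barycenter of $\{\nu^*_s\}_s$. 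Since $(\class{P}_2(\bbR),\sW_2)$ is a geodesic space whose geodesics emanating from the non-atomic $\nu^*_s$ are unique, Lemma~\ref{lem:geometric_general} applied with initial points $a_s=\nu^*_s$ yields $\sum_s w_s\sW_2^2(\mu_s,\nu^*_s)\ge(1-\sqrt\alpha)^2\class{U}(f^*)$, with equality when each $\mu_s$ equals the point $\mu^\alpha_s$ reached after travelling a proportion $1-\sqrt\alpha$ along the constant-speed geodesic from $\nu^*_s$ to $\bar\nu$. Hence $\risk(f)\ge(1-\sqrt\alpha)^2\class{U}(f^*)$ for every feasible $f$.

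\emph{The minimiser.} Since $\nu^*_s$ is non-atomic, the monotone optimal transport map from $\nu^*_s$ to $\bar\nu$ is $T_s\eqdef F_{\bar\nu}^{-1}\circ F_{\nu^*_s}=\sum_{s'}w_{s'}F_{\nu^*_{s'}}^{-1}\circ F_{\nu^*_s}$ (using $F_{\bar\nu}^{-1}=\sum_{s'}w_{s'}F_{\nu^*_{s'}}^{-1}$ in dimension one), and the corresponding McCann geodesic is $t\mapsto\big((1{-}t)\Id+tT_s\big)_{\#}\nu^*_s$; thus $\mu^\alpha_s=(G_s)_{\#}\nu^*_s$ with $G_s\eqdef\sqrt\alpha\,\Id+(1{-}\sqrt\alpha)T_s$. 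I would then simply take $f^*_\alpha(\bsx,s)\eqdef G_s\big(f^*(\bsx,s)\big)$, which is precisely the announced expression (and, for $\alpha{=}0$, identifies the second summand with $f^*_0$, giving both displayed forms); it is measurable and square-integrable, and $\Law(f^*_\alpha(\bsX,S)\mid S{=}s)=\mu^\alpha_s$. Because $T_s$ is non-decreasing (a c.d.f. composed with a quantile function), so is $G_s$, hence the coupling $(f^*(\bsX,S),f^*_\alpha(\bsX,S))\mid S{=}s$ is the monotone one, which is $\sW_2$-optimal in dimension one; therefore $\risk(f^*_\alpha)=\sum_s w_s\sW_2^2(\mu^\alpha_s,\nu^*_s)=(1-\sqrt\alpha)^2\class{U}(f^*)$, matching the lower bound. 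A short quantile-function computation shows that $\bar\nu$ is also the barycenter of $\{\mu^\alpha_s\}_s$ and that $\sW_2(\mu^\alpha_s,\bar\nu)=\sqrt\alpha\,\sW_2(\nu^*_s,\bar\nu)$, so $\class{U}(f^*_\alpha)=\alpha\,\class{U}(f^*)$ and $f^*_\alpha$ is feasible; combined with the lower bound, $f^*_\alpha$ solves Problem~\eqref{eq:alpha_improvement}.

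\emph{Uniqueness and the main difficulty.} If $f$ is any other solution then both inequalities above are equalities, so $\sum_s w_s\sW_2^2(\mu_s,\nu^*_s)=(1-\sqrt\alpha)^2\class{U}(f^*)$ and $\Exp[(f(\bsX,S)-f^*(\bsX,S))^2\mid S{=}s]=\sW_2^2(\mu_s,\nu^*_s)$ for all $s$ with $w_s>0$. Identifying $\class{P}_2(\bbR)$ isometrically with the cone of quantile functions in $L^2([0,1])$, the distributional problem becomes minimisation of the strictly convex functional $\sum_s w_s\|F_{\mu_s}^{-1}-F_{\nu^*_s}^{-1}\|^2_{L^2}$ over a convex set, so its minimiser is unique and $\mu_s=\mu^\alpha_s$; and since $\nu^*_s$ is non-atomic the $\sW_2$-optimal coupling of $(\nu^*_s,\mu^\alpha_s)$ is unique and monotone, which forces $f(\bsx,s)=(F_{\mu^\alpha_s}^{-1}\circ F_{\nu^*_s})(f^*(\bsx,s))=f^*_\alpha(\bsx,s)$ for $\Law(\bsX\mid S{=}s)$-almost every $\bsx$, i.e. $f=f^*_\alpha$ up to a set of null measure. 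I expect the main obstacle to be the first, ``lifting'', step: passing between the functional problem in $f$ and the distributional problem governed by Lemma~\ref{lem:geometric_general} while certifying that the distributional optimum is actually attained by a bona fide prediction rule whose coupling with $f^*$ is optimal -- this is exactly where the monotonicity of $G_s$ and the one-dimensionality of the outputs are used; the boundary cases $\alpha\in\{0,1\}$ (where $G_s$ need not be strictly increasing, yet the monotone coupling stays optimal) and the measurability/null-set bookkeeping require only routine care.
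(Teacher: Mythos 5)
Your proposal follows essentially the same route as the paper's own proof: apply Lemma~\ref{lem:geometric_general} in $(\class{P}_2(\bbR),\sW_2)$ with $a_s=\nu^*_s$, realise the optimal $b_s$ as the point at parameter $1-\sqrt\alpha$ on the constant-speed geodesic to the barycenter, and use the explicit quantile-composition form of the transport map together with the Wasserstein lower bound on the squared risk (the paper invokes Lemma~\ref{lem:squared_loss_wasserstein}; you re-derive it via the coupling argument) to lift the distributional optimum back to a prediction rule. The one genuine addition is your a.e.-uniqueness argument via strict convexity of the quantile-space functional and uniqueness of monotone couplings, which the paper's proof tacitly assumes but does not spell out in establishing the "up to a null set" equality.
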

Recall that $f^* = f^*_1$, hence the $\alpha$-relative improvement $f^*_\alpha$ is the point-wise convex combination of exactly fair prediction $f^*_0$ and the regression function $f^*_1$.
Besides, setting $\alpha = 0$ we recover the result of~\cite{chzhen2020fair,gouic2020price} as a particular case of our framework.
\begin{figure}[!t]
\centering
\includegraphics[width=0.48\textwidth]{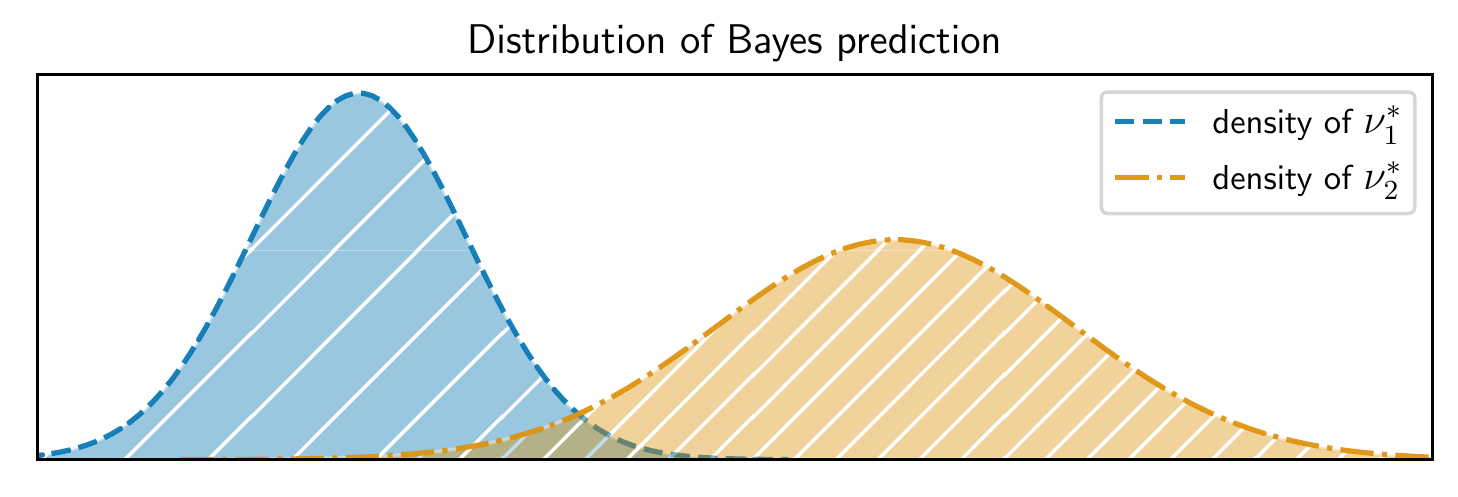}\hfill
\includegraphics[width=0.48\textwidth]{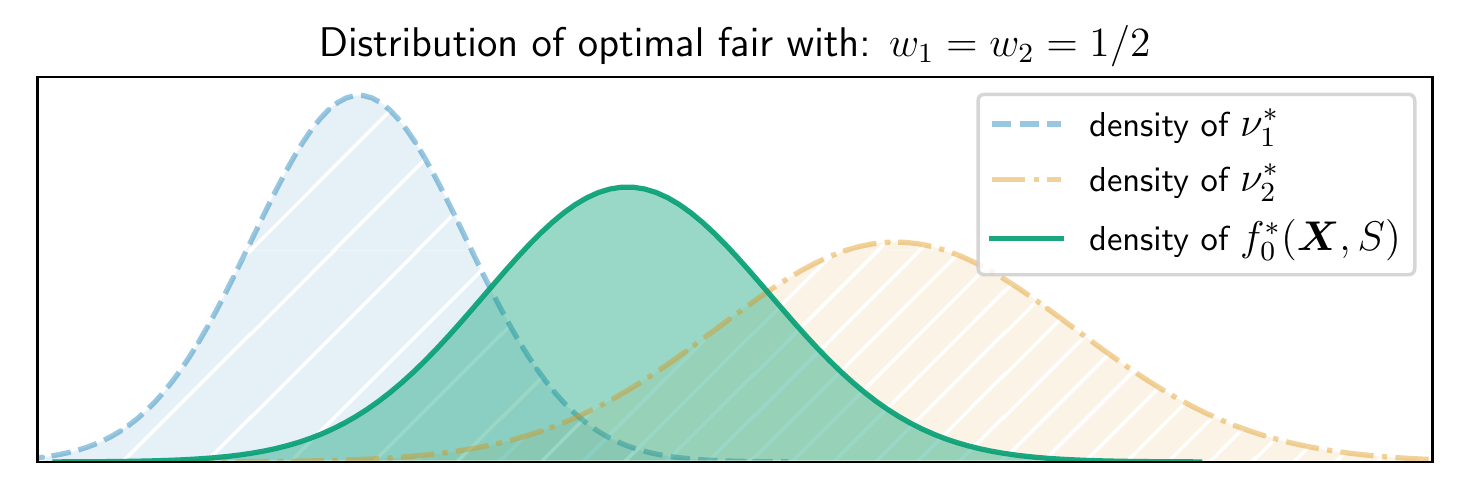}\\
\includegraphics[width=0.48\textwidth]{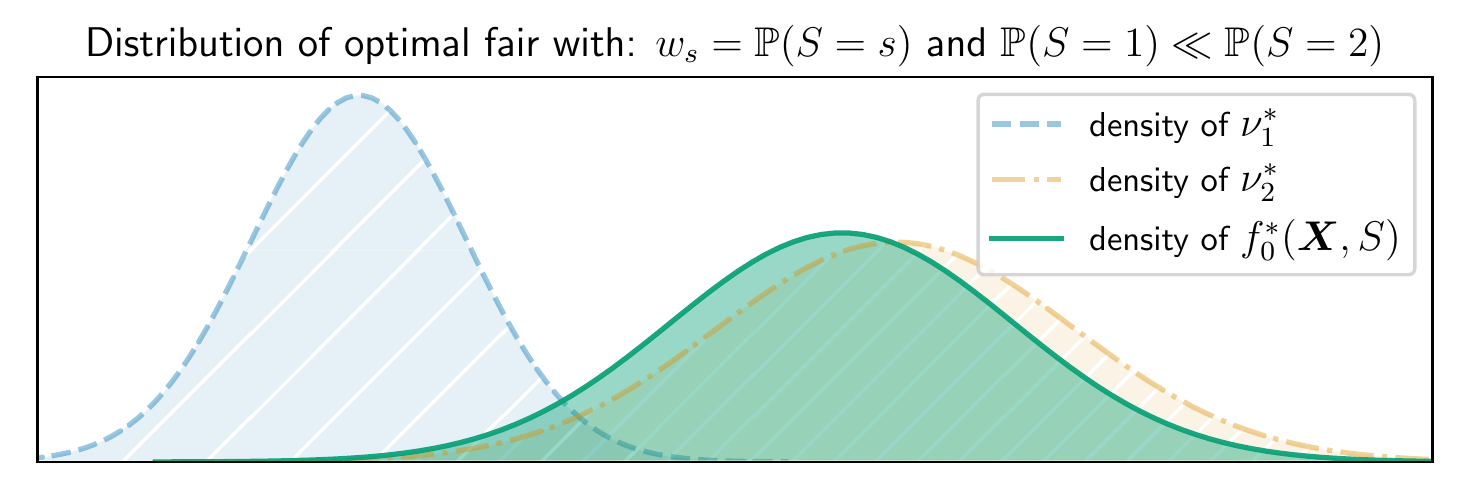}\hfill
\includegraphics[width=0.48\textwidth]{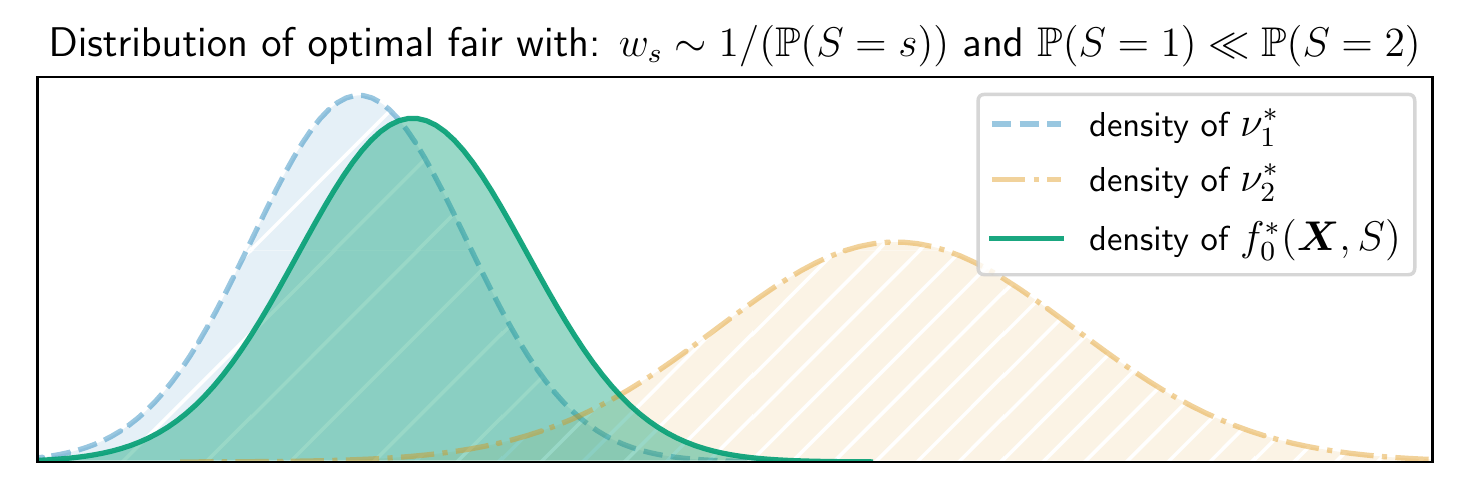}
\caption{Impact of the weights $\bsw \in \Delta^{K-1}$ on the distribution of $f^*_{0}$}
\label{fig:weights}
\end{figure}
{\color{red}Let Assumption~\ref{as:atomless} be satisfied, then} the set of oracle $\alpha$-RI $\{f^*_{\alpha}\}_{\alpha \in [0, 1]}$ satisfies the following properties.
    \begin{enumerate}
        \item{\bf Risk and fairness monotonicity:} if $\alpha \leq \alpha'$, then $\risk(f_{\alpha}^*) \geq \risk(f^*_{\alpha'})$ and $\class{U}(f^*_{\alpha}) \leq \class{U}(f^*_{\alpha'})$.
        \item{\bf Point-wise convexity:} for all $\alpha, \alpha' \in [0, 1]$ and all $\tau \in [0, 1]$ it holds that $ \tau f^*_{\alpha} + (1 - \tau)f^*_{\alpha'} \in \{f^*_{\alpha}\}_{\alpha \in [0, 1]}$. Moreover $\tau f^*_{\alpha} + (1 - \tau)f^*_{\alpha'} = f^*_{\bar{\alpha}}$ with $\bar\alpha = (\tau \sqrt{\alpha} + (1 - \tau) \sqrt{\alpha'})^2$.
        \item{\bf Order preservation:} for all $s \in [K], \bsx, \bsx' \in \bbR^p$, if $f^*(\bsx, s) \geq f^*(\bsx', s)$, then for all $\alpha \in [0, 1]$ it holds that $f^*_{\alpha}(\bsx, s) \geq f^*_{\alpha}(\bsx', s)$.
        {\color{red}
        \item{\bf Average stability:} let $\mu^*_s$ be the expected value of $\nu^*_s$.
        For all $\alpha \in [0, 1]$ and all $\bsw' = (w_1', \ldots, w_K')^\top \in \Delta^{K - 1}$ it holds that
        \begin{align*}
            \sum_{s = 1}^K w_s'\Exp[f^*_{\alpha}(\bsX, S) \mid S = s] = \sqrt{\alpha}\parent{\sum_{s = 1}^Kw_{s}' \mu^*_s} + (1 {-} \sqrt{\alpha})\parent{\sum_{s = 1}^Kw_{s}\mu_{s}^*}\enspace.
        \end{align*}
        In particular, setting $\bsw' = \bsw$, we get for all $\alpha \in [0, 1]$ the average stability:
        \begin{align*}
            \sum_{s = 1}^K w_s\Exp[f^*_{\alpha}(\bsX, S) \mid S = s] = \sum_{s = 1}^K w_s\Exp[f^*(\bsX, S) \mid S = s]\enspace.
        \end{align*}
        }
    \end{enumerate}
The first property is intuitive and does not require the result of Proposition~\ref{prop:optimal_alpha}.
The second property can be directly derived using the expression of $f^*_{\alpha}$ and it describes additional algebraic structure of the family $\{f^*_{\alpha}\}_{\alpha \in [0, 1]}$ .
The third group-wise order preserving property of $f^*_{\alpha}$ is particularly attractive.
Its proof is straightforward after the observation that $F_{\nu_{s}^*}$ and $\sum_{s' = 1}^Kw_{s'}F^{-1}_{\nu^*_{s'}}$ are non-decreasing functions and the fact that the composition of two non-decreasing functions is non-decreasing.
For the special case of $\alpha = 0$, this observation has already been made in~\citep{chzhen2020fair} and a practical algorithm that follows the group-wise order preservation property was proposed by~\cite{plevcko2019fair}.
{\color{red}In the context of classification~\cite{lipton2018does} refer to this property as ``rational ordering''.}
In words, this property says: given any two individuals $\bsx, \bsx' \in \bbR^p$ from the same sensitive group $s \in [K]$, if the optimal prediction $f^*(\bsx, s)$ for $\bsx$ is larger than that for $\bsx'$, then across all levels $\alpha$ of fairness parameter the oracle $\alpha$-RI $f^*_{\alpha}$ is not changing this order. {\color{red} The last property of average stability admits an interesting interpretation. Let us interpret $f : \bbR^p \times \class{S} \to \bbR_+$ as a salary assignment function (making it naturally positive). In that case setting $\bsw' = \bsw = (\sfrac{1}{K}, \ldots, \sfrac{1}{K})^\top$, we can interpret $\Exp[f^*(\bsX, S) \mid S = s]$ as the average amount of money allocated for salaries within group $s \in \class{S}$. Thus, in this context, the average stability property states that by enforcing fairness improvement property with equally distributed weights across groups $\bsw = (\sfrac{1}{K}, \ldots, \sfrac{1}{K})^\top$, we do not need to augment the budget allocated for these salaries.
The proof of the last property is also rather straightforward: for every $s \in \class{S}$ define the mean of the distribution $\nu^*_s$ as $m_s^* \eqdef \Exp[f^*(\bsX, S) \mid S = s]$, then
\begin{align*}
    \Exp[f_{\alpha}^*(\bsX, S) \mid S = s]
        &=
        \sqrt{\alpha} m^*_s + \big(1{-}\sqrt{\alpha}\big) \sum_{s' = 1}^K w_{s'} \Exp \left [ F_{\nu^*_{s'}}^{-1} \circ F_{\nu^*_s} \circ f^*(\bsX, S) \mid S = s \right]\enspace.
\end{align*}
To conclude, it suffices to notice that under Assumption~\ref{as:atomless} the random variable $(F_{\nu^*_{s'}}^{-1} \circ F_{\nu^*_s} \circ f^*(\bsX, S) \mid S = s)$ is distributed according to $\nu^*_{s'}$.

}

{\color{red}
\subsection{Influence of the choice of weights}\label{sec:weights_choice} Note that the considered framework permits the statistician to pick different $\bsw \in \Delta^{K-1}$.
To study this additional level of flexibility and freedom, we provide in this section the intuition for three natural choices:
\begin{itemize}[noitemsep,topsep=0.2ex]
     \item \textbf{Proportional}: $w_s = \Prob(S = s)$;
     \item \textbf{Inverse}: $w_s \sim 1/\Prob(S = s)$;
     \item \textbf{Equal weights}: $w_s = \tfrac{1}{K}$.
\end{itemize}
Since all of the $\alpha$-RI prediction functions can be obtained as convex combination of $f^*_0$ and the Bayes rule, it is sufficient to understand the underlying principle behind $f^*_0$. We focus on the case of two groups ($K=2$), with the first group representing the minority $(\Prob(S = 1) \ll \Prob(S=2))$.
As already mentioned, the Bayes optimal prediction $f^*$ induces two distributions $\nu^*_1$ and $\nu^*_2$ (\eg distribution of salaries for minority and majority sub-populations). We schematically illustrate these distributions on top left plot of Fig.~\ref{fig:weights}.

The choice of $w_s = \Prob(S = s)$ (bottom left on Fig.~\ref{fig:weights}) leads to equalization of minority to majority---that is, the situation of the majority group is modified only slightly, while the minority gets pushed towards the majority; the choice $w_s \sim 1 / \Prob(S = s)$ (bottom right on Fig.~\ref{fig:weights}) leads to an inverse situation---majority is equalized to minority; the choice $w_s = 1/K$ (top right on Fig.~\ref{fig:weights}) leads to a ``middle ground'' compromise, where both the majority and the minority are pushed toward their barycenter.
The actual choice of the weights clearly depends on the given application and the social aspects of thereof. We hope that the provided intuition in conjunction with domain expertise can help the practitioner to make a more informed choice for the weights.

}

\subsection{An abstract geometric lemma}
The proof of Proposition~\ref{prop:optimal_alpha} relies on an abstract geometric result, Lemma~\ref{lem:geometric_general}, which might be interesting on its own. First, let us introduce the following definition, which asks for existence of finitely supported barycenters in a metric space $(\class{X}, d)$.
\begin{definition}[Barycenter property]
    \label{def:barycenter_space}
    We say that a metric space $(\class{X}, d)$ satisfies the barycenter property if for any weights $\bsw \in \Delta^{K - 1}$ and tuple $\bsa = (a_1, \ldots, a_K) \in \class{X}^K$ there exists a barycenter
    \begin{align*}
        C_{\bsa_{\bsw}} \in \argmin_{C \in \class{X}} \sum_{s = 1}^K w_s d^2(a_s, C)\enspace.
    \end{align*}
    Moreover, for any tuple $\bsa = (a_1, \ldots, a_K) \in \class{X}^K$ we denote\footnote{When there is no ambiguity in the weights $\bsw$ we simply write $C_{\bsa}$.} by $C_{\bsa_{\bsw}}$ a barycenter of $\bsa$ weighted by $\bsw \in \Delta^{K - 1}$.
\end{definition}

\begin{figure}[!t]
\centering
\includegraphics[width=0.32\textwidth]{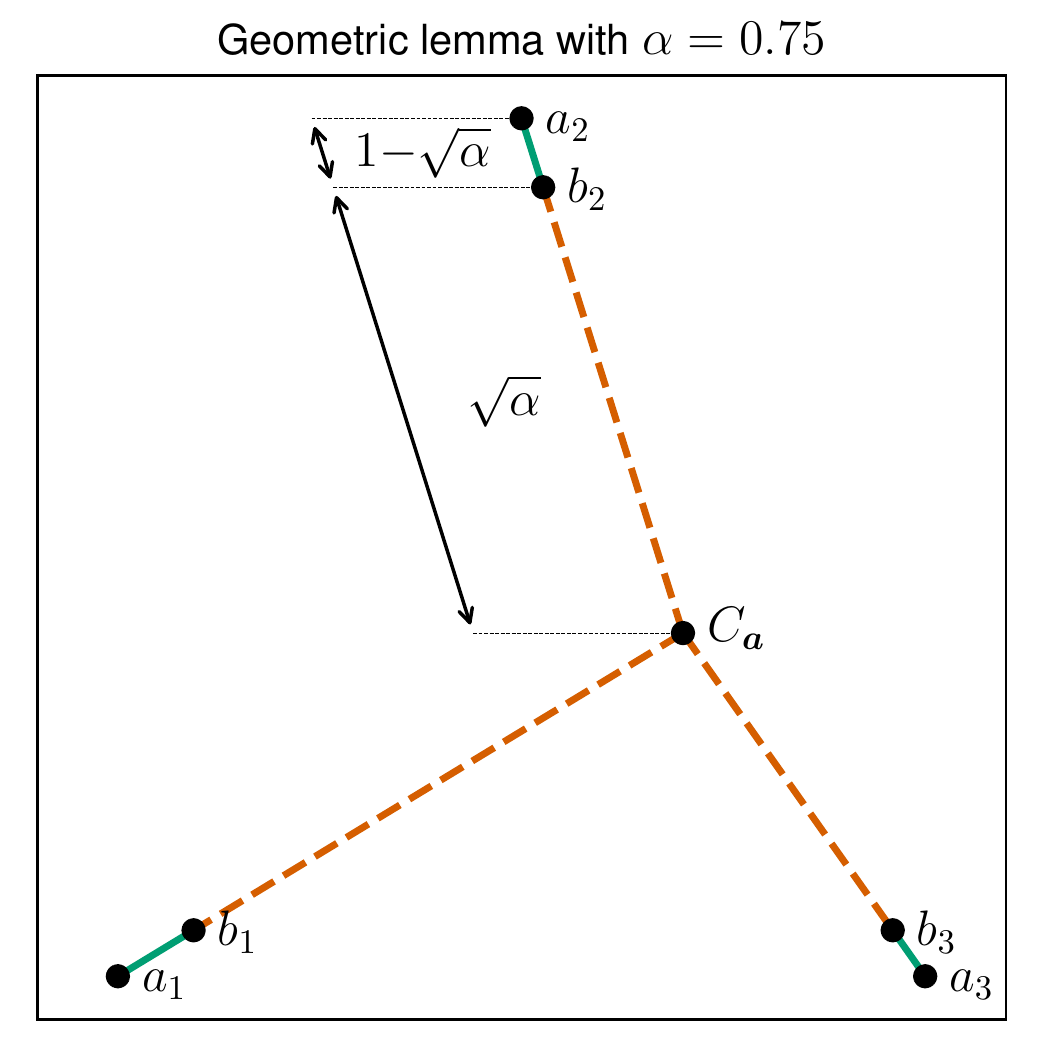}\hfill
\includegraphics[width=0.32\textwidth]{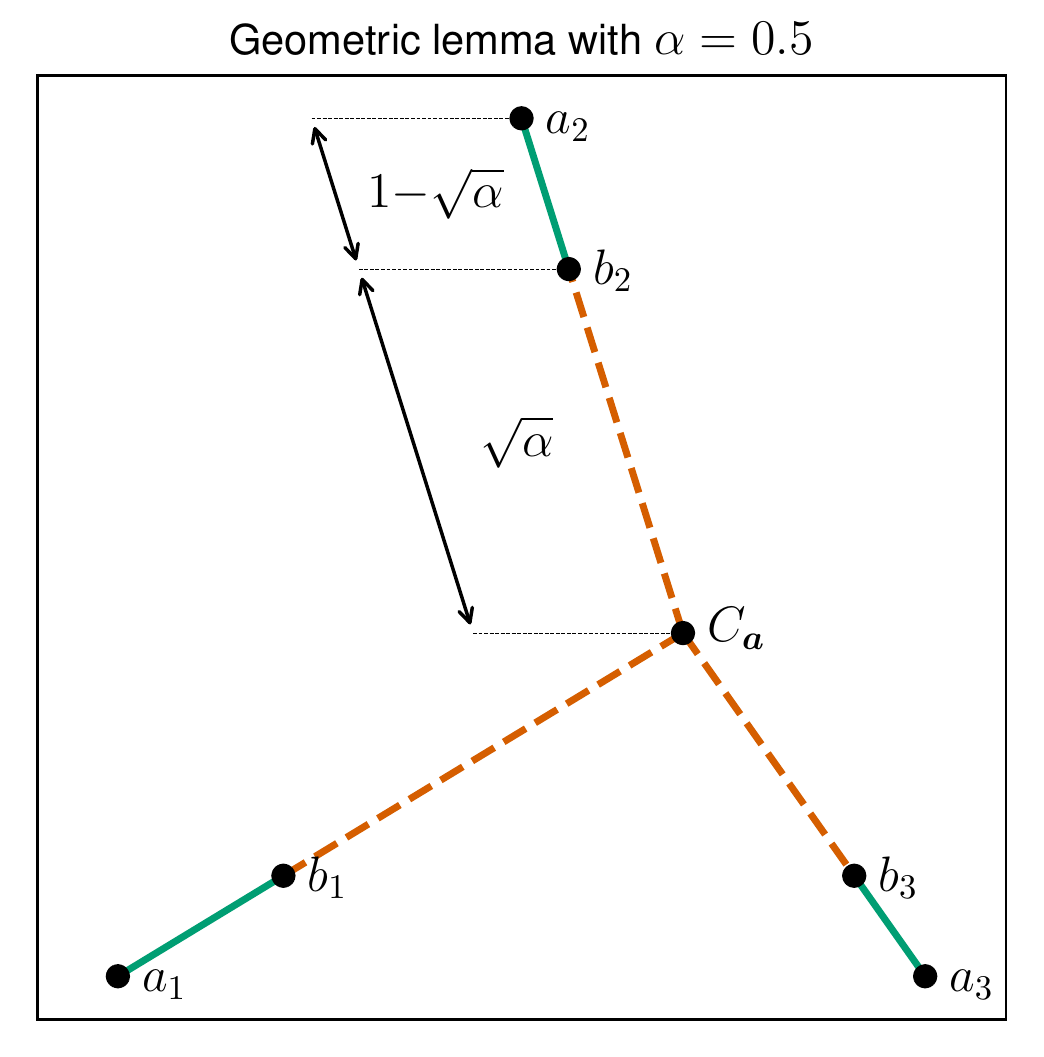}\hfill
\includegraphics[width=0.32\textwidth]{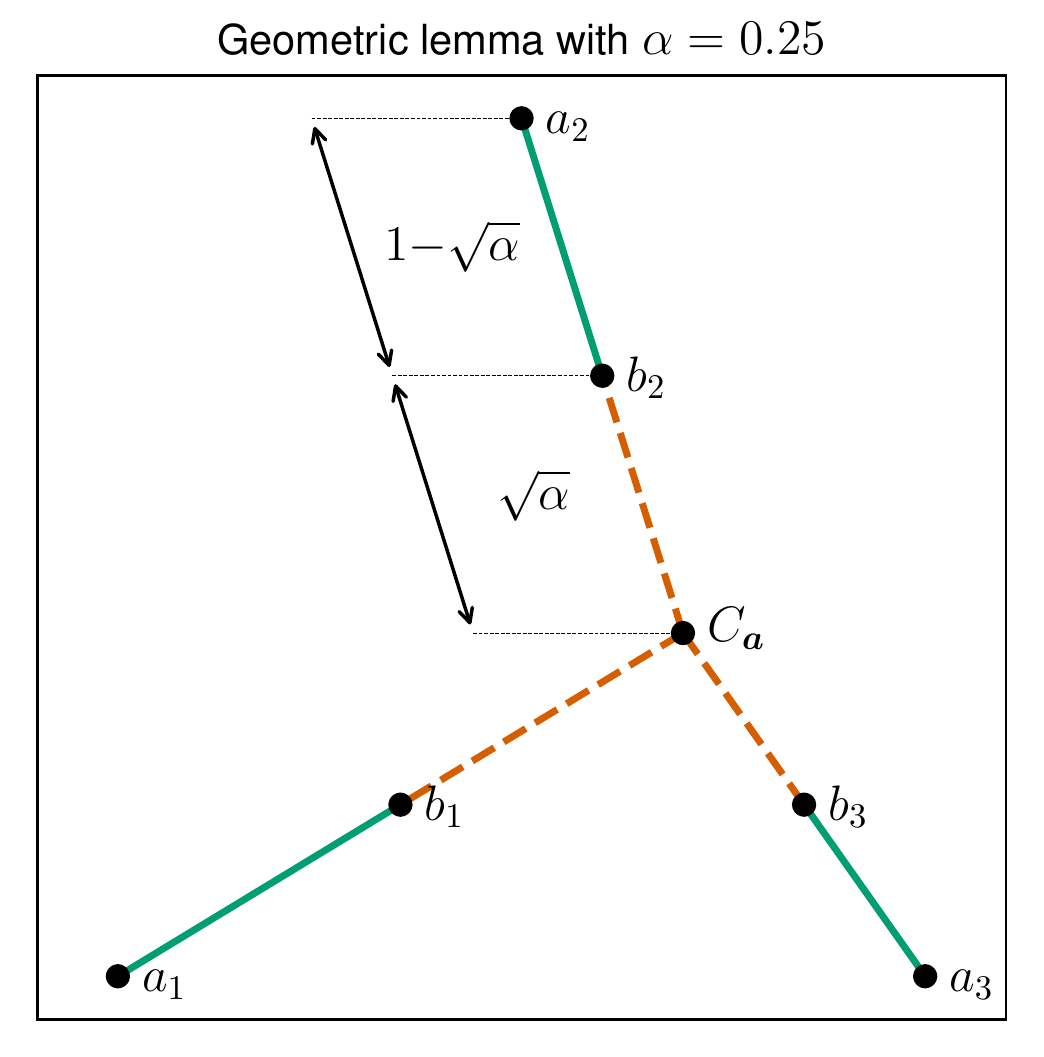}
\caption{Illustration of Lemma~\ref{lem:geometric_general} for $(\class{X}, d) = (\bbR^2, \|\cdot\|_2)$ and $\alpha \in \{0.25, 0.5, 0.75\}$. The initial points $a_1, a_2, a_3$ are the vertices of an isosceles triangle. The weights are set as follows: $w_1 = 0.1$, $w_2 = 0.4$ and $w_3 = 0.5$.}
\label{fig:geom}
\end{figure}
{
\begin{lemma}[Abstract geometric lemma]\label{lem:geometric_general}
    Let $(\class{X}, d)$ be a metric space satisfying the $q$-barycenter property.
    Let $\bsa = (a_1, \ldots, a_K) \in \class{X}^K$, $\bsw = (w_1, \ldots, w_K)^\top \in \Delta^{K - 1}$ and let $C_{\bsa}$ be a barycenter of $\bsa$ with respect to weights $\bsw$.
    For a fixed $\alpha \in [0, 1]$ assume that there exists $\bsb = (b_1, \ldots, b_K) \in \class{X}^K$ which satisfies
    \begin{alignat}{3}
        &d(a_s, C_{\bsa}) = d(a_s, b_s) + d(b_s, C_{\bsa})\enspace, \qquad &&s = 1,\dots,K\enspace, \tag{$P_1$}\label{eq:prop_1}\\
        &d(b_s, a_s) = \big(1 {-} \alpha^{\sfrac{1}{2}}\big)d(a_s, C_{\bsa})\enspace, \qquad &&s = 1,\dots,K\enspace. \tag{$P_2$}\label{eq:prop_2}
    \end{alignat}
    Then, $\bsb$ is a solution of
    \begin{align}
        \label{eq:geom_lemma_the_problem}
        \inf_{\bsb \in \class{X}^K}\enscond{\sum_{s = 1}^K w_s d^2(b_s, a_s)}{\sum_{s = 1}^K w_s d^2(b_s, C_{\bsb}) \leq {\alpha} \sum_{s = 1}^K w_s d^2(a_s, C_{\bsa}) }\enspace.
    \end{align}
\end{lemma}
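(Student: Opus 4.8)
The plan is to first verify that the prescribed tuple $\bsb$ is feasible for Problem~\eqref{eq:geom_lemma_the_problem} and compute its objective value, and then to show that this value is in fact a lower bound on the objective over \emph{all} feasible tuples, so that $\bsb$ is a minimizer. Throughout I would write $D^2 \eqdef \sum_{s = 1}^K w_s\, d^2(a_s, C_{\bsa})$ for the weighted dispersion of $\bsa$ around its barycenter (this is the quantity playing the role of $\class{U}(f^*)$ in the intended application).

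\emph{Feasibility and value of the candidate.} Subtracting \eqref{eq:prop_2} from \eqref{eq:prop_1} and using $d(a_s, b_s) = d(b_s, a_s)$ gives $d(b_s, C_{\bsa}) = \sqrt{\alpha}\, d(a_s, C_{\bsa})$ for every $s$, whence $\sum_{s=1}^K w_s\, d^2(b_s, C_{\bsa}) = \alpha D^2$. Since, by the barycenter property (Definition~\ref{def:barycenter_space}), $C_{\bsb}$ minimizes $C \mapsto \sum_{s=1}^K w_s\, d^2(b_s, C)$ over $\class{X}$, we get $\sum_{s=1}^K w_s\, d^2(b_s, C_{\bsb}) \le \sum_{s=1}^K w_s\, d^2(b_s, C_{\bsa}) = \alpha D^2$, so $\bsb$ satisfies the constraint. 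Its objective value follows directly from \eqref{eq:prop_2}: $\sum_{s=1}^K w_s\, d^2(b_s, a_s) = (1-\sqrt{\alpha})^2 \sum_{s=1}^K w_s\, d^2(a_s, C_{\bsa}) = (1-\sqrt{\alpha})^2 D^2$.

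\emph{Matching lower bound.} Fix any $\bsb' = (b_1', \ldots, b_K') \in \class{X}^K$ feasible for \eqref{eq:geom_lemma_the_problem} and let $C' \eqdef C_{\bsb'}$ be a barycenter of $\bsb'$, which exists by the barycenter property. Because $C_{\bsa}$ is a barycenter of $\bsa$, it is optimal in particular against $C'$, so $D^2 \le \sum_{s=1}^K w_s\, d^2(a_s, C')$. The triangle inequality gives $d(a_s, C') \le d(a_s, b_s') + d(b_s', C')$ termwise, and Minkowski's inequality for the weighted $\ell_2$-norm, together with the feasibility bound $\sum_{s=1}^K w_s\, d^2(b_s', C') \le \alpha D^2$, yields
\begin{align*}
    D \;&\le\; \Big(\sum_{s=1}^K w_s\, d^2(a_s, C')\Big)^{1/2}
    \;\le\; \Big(\sum_{s=1}^K w_s\, d^2(a_s, b_s')\Big)^{1/2} + \Big(\sum_{s=1}^K w_s\, d^2(b_s', C')\Big)^{1/2}\\
    &\le\; \Big(\sum_{s=1}^K w_s\, d^2(a_s, b_s')\Big)^{1/2} + \sqrt{\alpha}\, D\enspace.
\end{align*}
Rearranging gives $(1-\sqrt{\alpha})\, D \le \big(\sum_{s=1}^K w_s\, d^2(a_s, b_s')\big)^{1/2}$, and squaring (legitimate since $1-\sqrt{\alpha} \ge 0$ for $\alpha \in [0,1]$) gives $\sum_{s=1}^K w_s\, d^2(a_s, b_s') \ge (1-\sqrt{\alpha})^2 D^2$. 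Thus no feasible tuple attains a value below $(1-\sqrt{\alpha})^2 D^2$, and $\bsb$ attains it, so $\bsb$ solves \eqref{eq:geom_lemma_the_problem}.

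The only non-routine step is the lower bound: the idea is to bound $D$ from below via the defining optimality of the barycenter $C_{\bsa}$, then insert the auxiliary point $b_s'$ through the triangle inequality and control the resulting sum by Minkowski's inequality rather than a crude termwise estimate — the constraint is precisely what caps the correction term at $\sqrt{\alpha}\, D$. Everything else is bookkeeping; note in particular that properties \eqref{eq:prop_1}–\eqref{eq:prop_2} are used only to evaluate the explicit competitor $\bsb$, whereas the lower bound argument applies to every feasible tuple and uses nothing about the metric geometry beyond the triangle inequality and the existence of barycenters.
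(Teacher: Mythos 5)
Your proof is correct and follows essentially the same strategy as the paper's: establish feasibility of $\bsb$ from \eqref{eq:prop_1}--\eqref{eq:prop_2}, evaluate its objective as $(1-\sqrt{\alpha})^2\sum_s w_s d^2(a_s,C_{\bsa})$, and then derive the matching lower bound by combining the optimality of $C_{\bsa}$ with Minkowski's inequality for the weighted $\ell_2$-quasi-norm (Lemma~\ref{lem:fancy_triangle}) and the constraint. Your version is marginally tidier in that you lower-bound the objective over all feasible tuples directly rather than passing through a minimizing sequence, and for feasibility you need only the trivial inequality $\sum_s w_s d^2(b_s,C_{\bsb}) \le \sum_s w_s d^2(b_s,C_{\bsa})$ rather than the equality of Lemma~\ref{lem:barycenters_coincide}.
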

}
\begin{remark}
    Property~(\ref{eq:prop_1}) essentially requires that each $b_i$ lies on the geodesic between $a_i$ and $C_{\bsa}$ while Property~(\ref{eq:prop_2}) specifies the location of $b_i$ on this geodesic: $b_i$ should be $(1 {-} \alpha^{\sfrac{1}{q}})$ times closer to $a_i$, than $C_{\bsa}$ to $a_i$. An illustration provided on Figure~\ref{fig:geom} describes these properties in Euclidean geometry. For general case, the straight lines should be replaced by geodesics. {\color{red}In Appendix~\ref{sec:q_losses} we extend our framework to $\ell_q$-risks and provides an extension of Lemma~\ref{lem:geometric_general} to handle losses other than $\ell_2$.}
\end{remark}
The setting of Lemma~\ref{lem:geometric_general} is quite general and only requires existence of barycenters (also known as the Fr\'echet means) for any finite weighted combination of points in accordance with Definition~\ref{def:barycenter_space}.
For our purposes, Lemma~\ref{lem:geometric_general} will be applied to the metric space $(\class{X}, d) = (\class{P}_2(\bbR), \sW_2)$. We refer to~\citep{agueh2011barycenters,le2017existence} who investigate and prove the existence of Wasserstein barycenters of random probabilities defined on geodesic spaces.

\begin{proof}[Proof of Lemma~\ref{lem:geometric_general}]
      Fix some $\bsa = (a_1, \ldots, a_K) \in \class{X}^K$,  $\bsw = (w_1, \ldots, w_K)^\top \in \Delta^{K - 1}$ and let $C_{\bsa}$ be a barycenter of $\bsa$ with respect to weights $\bsw$. Fix $\alpha \in [0, 1]$ and any $\bsb = (b_1, \ldots, b_K) \in \class{X}^K$ which satisfies properties~\eqref{eq:prop_1}--\eqref{eq:prop_2}.
    Let $\bsb_k = (b_1^k, \ldots, b_K^k) \in \class{X}^K$ be a minimizing sequence of the problem~\eqref{eq:geom_lemma_the_problem} and for any $\bsb' = (b_1',\ldots,b'_K) \in \class{X}^K$ denote by $G(\bsb') = \sum_{s = 1}^K w_s d^2(b_s', a_s)$ the objective function of the problem~\eqref{eq:geom_lemma_the_problem}.
    Then, by the definition of a minimizing sequence, the following two properties hold
    \begin{align}
        &\lim_{k \rightarrow \infty} G(\bsb_k) = \inf_{\bsb \in \class{X}^K}\enscond{G(\bsb)}{\sum_{s = 1}^K w_s d^2(b_s, C_{\bsb}) \leq {\alpha} \sum_{s = 1}^K w_s d^2(a_s, C_{\bsa}) }\enspace,\\
        &\sum_{s = 1}^K w_s d^2(b^k_s, C_{\bsb_k}) \leq \alpha\sum_{s = 1}^K w_s d^2(a_s, C_{\bsa})\enspace,\quad\forall k\in \bbN\enspace.\label{eq:geom_proof_contradiction_1.2}
    \end{align}
    Furthermore, using properties~\eqref{eq:prop_1}--\eqref{eq:prop_2} we deduce that
    \begin{align*}
        \sum_{s = 1}^K w_s d^2(b_s, C_{\bsb}) \stackrel{(a)}{=} \sum_{s = 1}^K w_s d^2(b_s, C_{\bsa})
        &\stackrel{\eqref{eq:prop_1}}{=} \sum_{s = 1}^K w_s \left(d(a_s, C_{\bsa}) - d(a_s, b_s)\right)^2\\ &\stackrel{\eqref{eq:prop_2}}{=} \alpha \sum_{s = 1}^K w_s d^2(a_s, C_{\bsa})\enspace,
    \end{align*}
    where $(a)$ follows from Lemma~\ref{lem:barycenters_coincide} in appendix.
    Therefore, $\bsb = (b_1, \ldots, b_s) \in \class{X}^K$ is feasible for the  problem~\eqref{eq:geom_lemma_the_problem}.
    By Lemma~\ref{lem:fancy_triangle} it holds for all $k \in \bbN$ that
    \begin{align*}
        \left\{\sum_{s = 1}^K w_s d^2(a_s, C_{\bsb_k})\right\}^{\sfrac{1}{2}}
        &\leq
        \left\{\sum_{s = 1}^K w_s d^2(a_s, b^k_s)\right\}^{\sfrac{1}{2}} + \left\{\sum_{s = 1}^K w_s d^2(b^k_s, C_{\bsb_k})\right\}^{\sfrac{1}{2}}\\
        &=
        {G^{\sfrac{1}{2}}(\bsb_k)} +  \left\{\sum_{s = 1}^K w_s d^2(b^k_s, C_{\bsb_k})\right\}^{\sfrac{1}{2}}\enspace.
    \end{align*}
    We continue using the definition of $C_{\bsa}$ and Eq.~\eqref{eq:geom_proof_contradiction_1.2} to obtain for all $k \in \bbN$
    \begin{align*}
        \left\{\sum_{s = 1}^K w_s d^2(a_s, C_{\bsa})\right\}^{\sfrac{1}{2}}
        \leq
        \left\{\sum_{s = 1}^K w_s d^2(a_s, C_{\bsb_k})\right\}^{\sfrac{1}{2}}
        \leq
        { G^{\sfrac{1}{2}}(\bsb_k)} {+} {\alpha}^{\sfrac{1}{2}}\left\{\sum_{s = 1}^K w_s d^2(a_s, C_{\bsa})\right\}^{\sfrac{1}{2}}\enspace,
    \end{align*}
    which after rearranging implies that
    \begin{align*}
        (1 {-} {\alpha}^{\sfrac{1}{2}})\left\{\sum_{s = 1}^K w_s d^2(a_s, C_{\bsa})\right\}^{\sfrac{1}{2}} \leq { G^{\sfrac{1}{2}}(\bsb_k)},\qquad\forall k \in \bbN\enspace.
    \end{align*}
    Finally, using property~\eqref{eq:prop_2} we deduce that $ G(\bsb) \leq  G(\bsb_k)$ for all $ k \in \bbN$.
    Recall that we have already shown that $\bsb$ is feasible for the problem~\eqref{eq:geom_lemma_the_problem}, hence taking the limit \wrt to $k$ concludes the proof of Lemma~\ref{lem:geometric_general}.
\end{proof}
The complete proof of Proposition~\ref{prop:optimal_alpha} is omitted in the main body.
We only provide a short intuition.
\begin{proof}[Sketch of the proof]
The idea of the proof is to apply Lemma~\ref{lem:geometric_general} with $(\mathcal{X}, d) = (\mathcal{P}_2(\bbR), \sW_2)$ and with measures $a_s  \eqdef \nu_s^*$, which belong to $\class{P}_2(\bbR)$ due to Assumption~\ref{as:atomless}.
    Then, we need to construct {measures} $\bsb = (b_1, \ldots, b_K)^\top \in \class{P}_2^K(\bbR)$, which satisfy the properties~\eqref{eq:prop_1}--\eqref{eq:prop_2}.
    To this end, let $\gamma_s$ be the (constant-speed) geodesic between $a_s$ and $C_{\bsa}$ \ie $\gamma_s(0) = a_s$, $\gamma_s(1)=C_{\bsa}$.
    We define $b_s \eqdef \gamma_s(1 {-} \sqrt{\alpha})$ for $s \in[K]$, similarly to the intuition provided by Figure~\ref{fig:geom}.
    One can verify that that $\bsb = (b_s)_{s \in [K]}$ satisfies \eqref{eq:prop_1} and \eqref{eq:prop_2}. Then, by Lemma~\ref{lem:geometric_general} we know that $\bsb$ solves the minimization problem in Eq.~\eqref{eq:geom_lemma_the_problem}. For the final part of the proof we propagate the optimality of $\bsb$ in the space of distributions to the optimality of $f^*_{\alpha}$ in the space of predictions using the assumption that $\bsa$ admits a density and an explicit construction of the geodesic $\gamma_s$.
\end{proof}


\subsection{Risk-fairness trade-off on the population level}
The next key result of our framework establishes the risk-fairness trade-off provided by the parameter $\alpha \in [0, 1]$ on the population level.
In particular, it establishes a simple user-friendly relation between the risk and unfairness of $\alpha$-relative improvement. Note that such a result is not available neither for $\class{U}_{\TV}$ nor for $\class{U}_{\KS}$, due to fundamentally different geometries of the squared risk and the aforementioned distances.
\begin{lemma}
    \label{lem:distance_fair_and_almost}
     Let Assumption~\ref{as:atomless} be satisfied, then for any $\alpha \in [0, 1]$ it holds that
    \begin{align}
        \label{eq:risk_alpha}
        \risk(f^*_{\alpha}) = (1 {-} \sqrt{\alpha})^2\risk(f^*_0) = (1 {-} \sqrt{\alpha})^2\class{U}(f^*)\enspace.
    \end{align}
\end{lemma}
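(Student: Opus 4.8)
The plan is to read off both equalities essentially for free from the closed form for $f^*_\alpha$ provided by Proposition~\ref{prop:optimal_alpha} together with the identification of the optimal fair risk coming from Theorem~\ref{thm:basic}. First I would dispose of the second equality $\risk(f^*_0) = \class{U}(f^*)$: applying Proposition~\ref{prop:optimal_alpha} with $\alpha = 0$ shows that $f^*_0(\bsx, s) = \sum_{s' = 1}^K w_{s'} F_{\nu^*_{s'}}^{-1} \circ F_{\nu^*_s} \circ f^*(\bsx, s)$ is a solution of $\argmin\{\risk(f) : \class{U}(f) \le 0\}$. Since $\sW_2$ is a metric, the constraint $\class{U}(f) \le 0$ is equivalent to the DP constraint, so $f^*_0$ minimizes $\risk$ over DP-compliant predictors, and one checks that the group-wise law of $f^*_0(\bsX, S) \mid S = s$ equals the Wasserstein barycenter $\argmin_{\nu} \sum_{s'} w_{s'} \sW_2^2(\nu^*_{s'}, \nu)$ — exactly the distribution of the minimizer identified in Theorem~\ref{thm:basic}. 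Hence $\risk(f^*_0)$ coincides with the value $\class{U}(f^*)$ given by Theorem~\ref{thm:basic}.

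Next I would handle the first equality. By Proposition~\ref{prop:optimal_alpha}, for almost every $(\bsx, s) \in \bbR^p \times [K]$,
\begin{align*}
    f^*_\alpha(\bsx, s) - f^*(\bsx, s) &= \sqrt{\alpha} f^*(\bsx, s) + (1 {-} \sqrt{\alpha}) f^*_0(\bsx, s) - f^*(\bsx, s)\\
    &= (1 {-} \sqrt{\alpha})\big(f^*_0(\bsx, s) - f^*(\bsx, s)\big)\enspace.
\end{align*}
Squaring this identity, integrating against the feature distribution within each group $S = s$, and summing with the weights $\bsw$ — that is, plugging into the definition of the risk measure — yields
\begin{align*}
    \risk(f^*_\alpha) = \norm{f^*_\alpha - f^*}_2^2 = (1 {-} \sqrt{\alpha})^2 \norm{f^*_0 - f^*}_2^2 = (1 {-} \sqrt{\alpha})^2 \risk(f^*_0)\enspace.
\end{align*}
Combining with the first step gives $\risk(f^*_\alpha) = (1 {-} \sqrt{\alpha})^2 \risk(f^*_0) = (1 {-} \sqrt{\alpha})^2 \class{U}(f^*)$, which is Eq.~\eqref{eq:risk_alpha}.

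Honestly there is no genuinely hard step here: all the work has already been done in Proposition~\ref{prop:optimal_alpha} (the affine-in-$\sqrt{\alpha}$ structure of $f^*_\alpha$) and in Theorem~\ref{thm:basic} (the value $\class{U}(f^*)$). The only points deserving a line of care are that the ``up to a null set'' caveat in Proposition~\ref{prop:optimal_alpha} is immaterial because $\risk$ is defined through integrals, and the brief verification that the $f^*_0$ produced by Proposition~\ref{prop:optimal_alpha} is indeed the minimizer whose risk Theorem~\ref{thm:basic} evaluates — which, as noted above, is immediate from the fact that $F_{\nu^*_{s'}}^{-1} \circ F_{\nu^*_s}$ pushes $\nu^*_s$ onto $\nu^*_{s'}$, so the mixture law of $f^*_0(\bsX, S)\mid S = s$ is the claimed barycenter.
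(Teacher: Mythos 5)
Your proof is correct and follows essentially the same route as the paper's: read the affine-in-$\sqrt{\alpha}$ representation of $f^*_\alpha$ off Proposition~\ref{prop:optimal_alpha}, substitute it into $\risk(f^*_\alpha) = \|f^*_\alpha - f^*\|_2^2$ to get the factor $(1-\sqrt{\alpha})^2$, and invoke Theorem~\ref{thm:basic} for $\risk(f^*_0) = \class{U}(f^*)$. The only difference is that you spell out in a bit more detail why the $f^*_0$ from Proposition~\ref{prop:optimal_alpha} coincides with the minimizer whose risk Theorem~\ref{thm:basic} computes, which the paper treats as immediate.
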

\begin{proof}
    Proposition~\ref{prop:optimal_alpha} gives the following explicit expression for the best $\alpha$-improvement of $f^*$:
    \begin{align*}
        f_{\alpha}^*(\bsx, s) = \sqrt{\alpha}f^*(\bsx, s) + (1{-}\sqrt{\alpha})f_0^*(\bsx, s)\enspace.
    \end{align*}
    Plugging it in the risk gives
    \begin{align*}
        \risk(f_{\alpha}^*) = \lVert f_{\alpha}^* - f^*\rVert_2^2 = (1{-}\sqrt{\alpha})^2 \lVert f_0^* - f^*\rVert_2^2 = (1{-}\sqrt{\alpha})^2 \risk(f_0^*)\enspace.
    \end{align*}
    This proves the first equality. Given the definition of $f_0^*$, the second equality is exactly the result stated in Theorem~\ref{thm:basic}.
\end{proof}

Recall that thanks to Theorem~\ref{thm:basic} we have $\risk(f^*_0) = \class{U}(f^*)$.
Hence, the $\alpha$-relative improvement $f^*_{\alpha}$ enjoys the following two properties
\begin{align*}
    \risk(f^*_{\alpha}) = (1 {-} \sqrt{\alpha})^2\risk(f^*_{0})\quad\text{and}\quad \class{U}(f^*_{\alpha}) = \alpha\,\class{U}(f^*)\enspace.
\end{align*}
For instance, if $\alpha = 1/2$, that is, we want to half the unfairness of $f^*$, it incurs the risk which is equal to $\approx 8.5 \%$ of the risk of exactly fair predictor $f^*_0$.
We illustrate this general behaviour in Figure~\ref{fig:illustration1} (Section~\ref{sec:problem_statement_and_contributions}), where the risk and the unfairness of $f^*_{\alpha}$ are shown for different levels of $\class{U}(f^*)$.
A striking observation we can make from this plot is that, letting $\alpha$ vary between $0$ and $1$, the risk of $f^*_{\alpha}$ growth rapidly in the vicinity of zero, while it behaves almost linearly in a large neighbourhood of one.
That is, one can find a prediction $f$ whose unfairness $\class{U}(f)$ is smaller than that of $f^*$ by a constant multiplicative factor, without a large increase in risk.


\subsection{Pareto efficiency: a systematic way to select \texorpdfstring{$\alpha$}{Lg}}
\label{sec:pareto}
\begin{figure}[!t]
\centering
\includegraphics[width=0.32\textwidth]{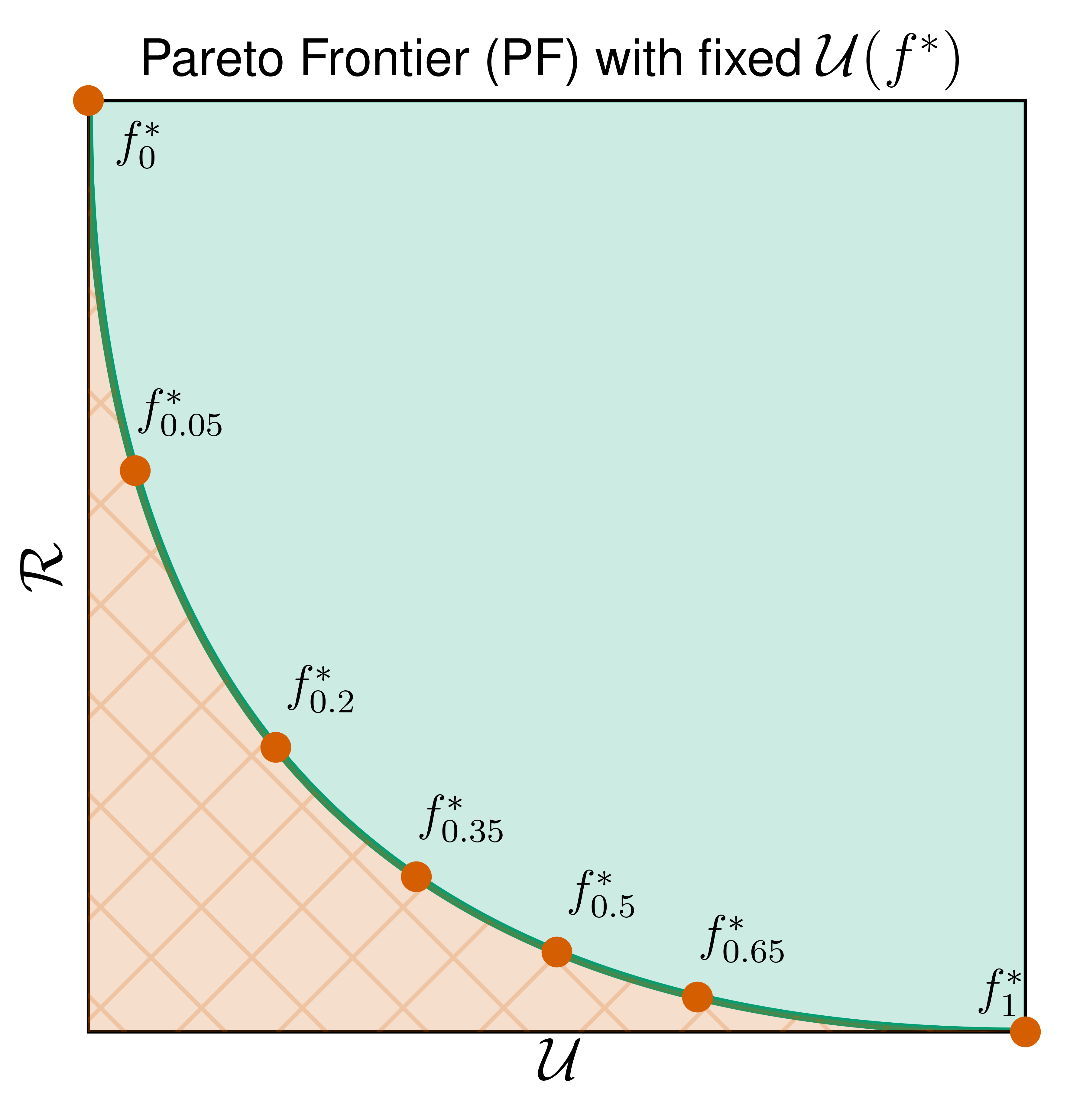}
\includegraphics[width=0.32\textwidth]{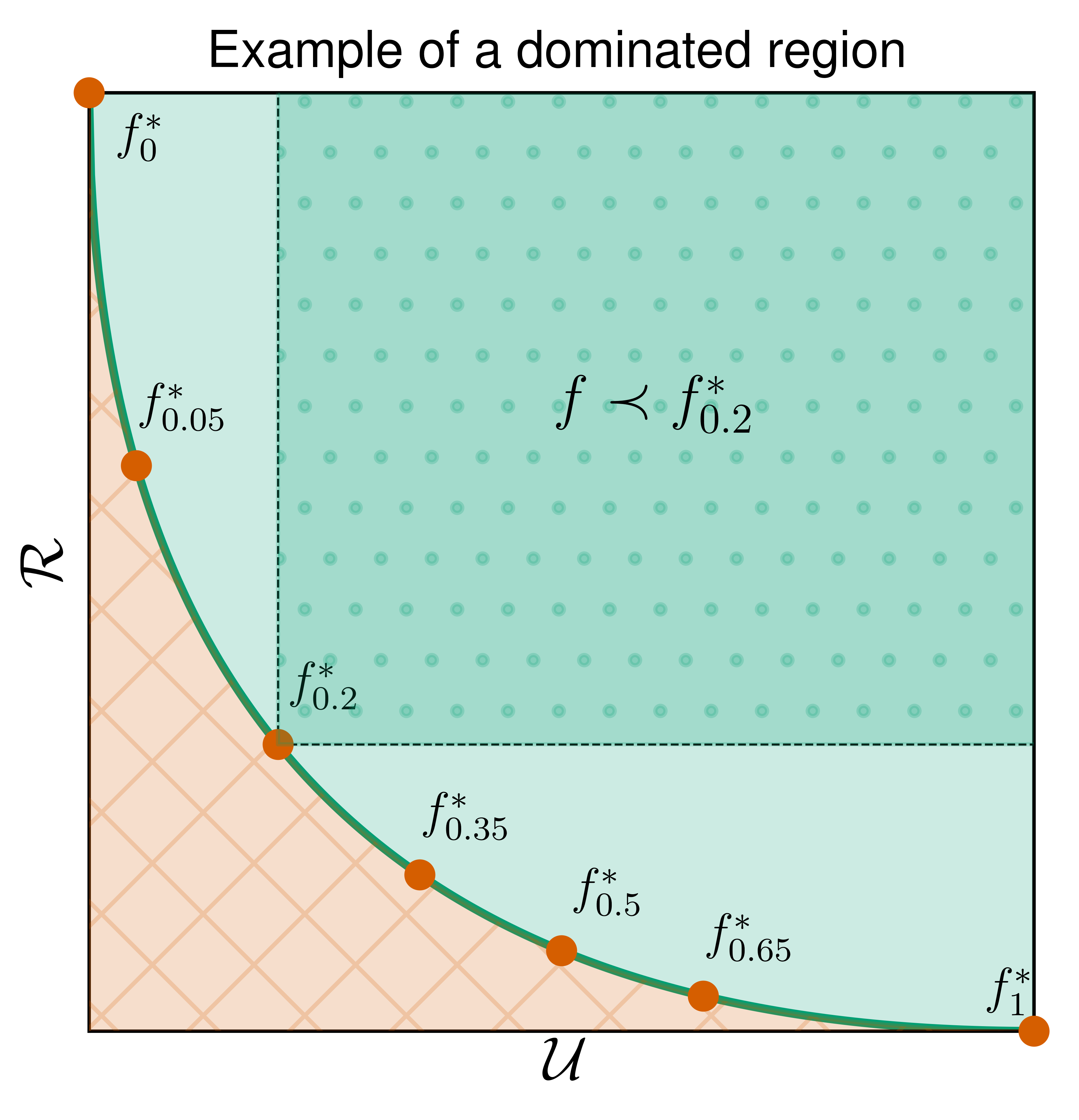}
\includegraphics[width=0.32\textwidth]{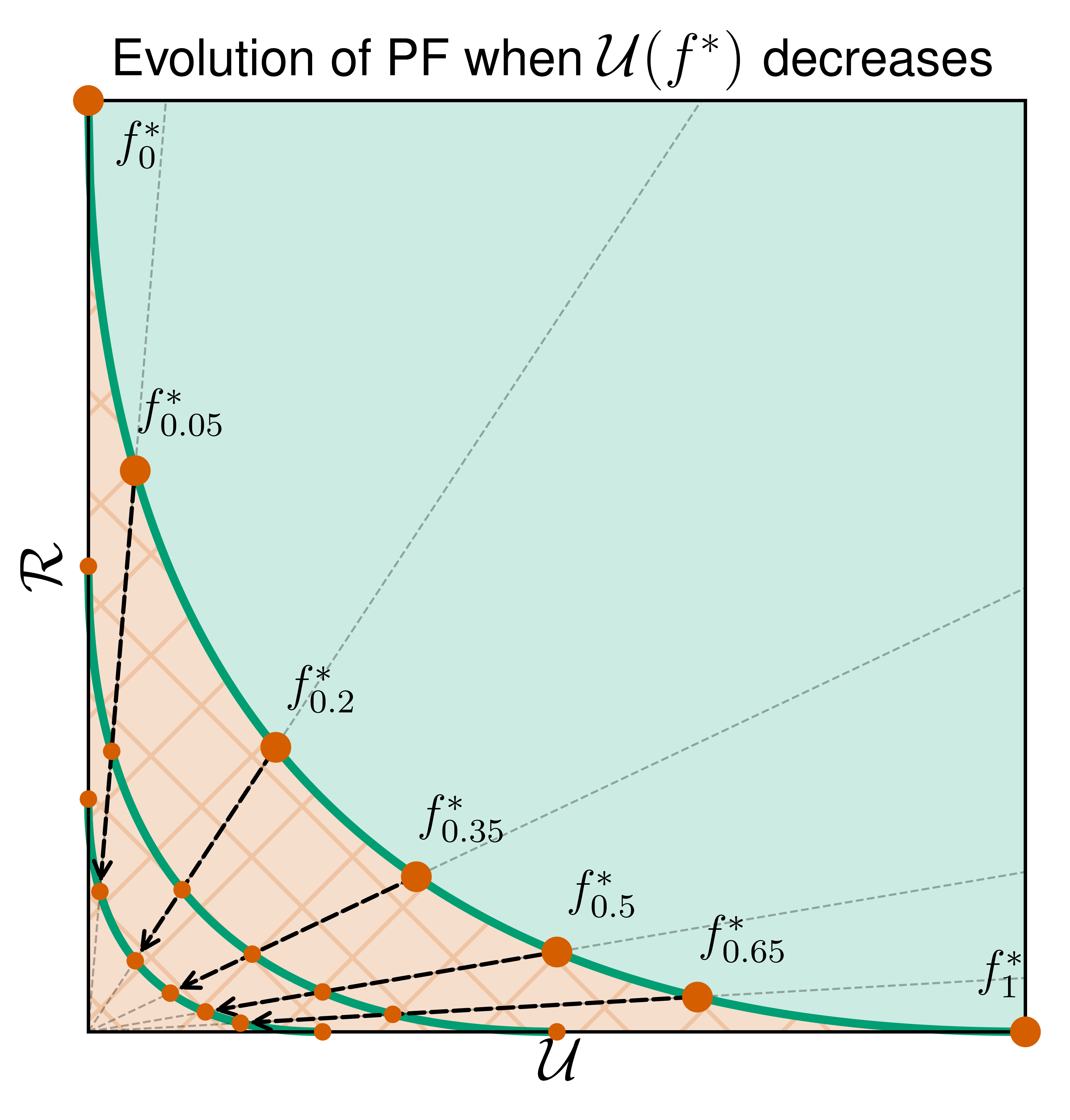}
\caption{Illustration of Pareto frontiers and Pareto dominance. \emph{Left}: Orange (hatched) part is not realisable by any prediction $f$; Each point of green (not hatched) part is realizable by some prediction $f$; The curve that separates the two is the Pareto frontier. \emph{Center}: The darker green (dotted) rectangle in the upper right corner is the set of predictors dominated by $f^*_{0.2}$. \emph{Right}: Evolution of the Pareto frontier when $\class{U}(f^*)$ decreases.}
\label{fig:pareto}
\end{figure}
Even though the parameter $\alpha \in [0, 1]$ has a clear interpretation in our framework, one still might have to figure out which $\alpha$ to pick in practice.
The ultimate theoretical goal is to find a prediction $f$ which simultaneously minimizes the risk $\risk$ and the unfairness $\class{U}$.
Yet, unless $f^*$ satisfies $\class{U}(f^*) = 0$, this goal is unreachable and some trade-offs must be examined.
A standard approach to study such \emph{multi-criteria optimization problems} is via the notion of \emph{Pareto dominance} and \emph{Pareto efficiency}~\citep{osborne1994course}.
In words, the idea of Pareto analysis is to restrict the attention of a practitioner to some set of ``good'' predictors, termed \emph{Pareto frontier} of the multi-criteria optimization problem, instead of considering all possible predictions.
In this section, we show that the set of oracle $\alpha$-RI $\{f^*_{\alpha}\}_{\alpha \in [0, 1]}$ is the Pareto frontier of the multi-criteria minimization problem with target functions $f \mapsto \risk(f)$ and $f \mapsto \class{U}(f)$.

Let us first introduce the terminology of the Pareto analysis specified for our setup.
 We say that a prediction $f$ \emph{Pareto dominates} a prediction $f'$ if one of the following holds
    \begin{itemize}
        \item $\risk(f) \leq \risk(f')$ and $\class{U}(f) < \class{U}(f')$;
        \item $\risk(f) < \risk(f')$ and $\class{U}(f) \leq \class{U}(f')$.
    \end{itemize}
To denote the fact that $f'$ is dominated by $f$ we write $f' \prec f$. Moreover, we say that $f'$ and $f$ are \emph{comparable} if either $f' \prec f$ or $f \prec f'$.
Intuitively, whenever $f' \prec f$, the prediction $f$ is strictly preferable, since it is at least as good as $f'$ for both criteria and it is strictly better for at least one of them.

Note that not every two predictions are actually comparable, that is, the relation $\prec$ only defines a partial-order. It is a known fact that partially ordered sets can be partitioned into well-ordered chains, that is, every pair within the chain is comparable and the restriction of $\prec$ on this chain defines an order relation.
In this set-theoretic terminology, a prediction $f$ is \emph{Pareto efficient} if it is \emph{maximal} within some chain in the sense of the partial order $\prec$.
In other words, a prediction $f$ is \emph{Pareto efficient} if it is not dominated by any other prediction. The set of all Pareto efficient predictions is called the \emph{Pareto frontier} and is denoted by $\PF$.

Note that it would be more accurate to say that $f'$ $\Prob$-Pareto dominates $f$ and $f'$ is $\Prob$-Pareto efficient, since the above definitions are acting on the level of population and they do depend on the underlying distribution. We omit this notation for simplicity.
In general, an analytic description of the Pareto frontier $\PF$ is not necessarily feasible. However, in our case, thanks to the analysis of the previous section, we can precisely describe the Pareto frontier of this problem.

\begin{proposition}
    \label{prop:Pareto}
    Let Assumption~\ref{as:atomless} be satisfied. Then, the Pareto frontier for the multi-criteria minimization problem with objective functions $\risk(f)$ and $\class{U}(f)$ is given by $\{f^*_{\alpha}\}_{\alpha \in [0, 1]}$.
\end{proposition}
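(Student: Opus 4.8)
The plan is to establish the two inclusions $\{f^*_{\alpha}\}_{\alpha\in[0,1]}\subseteq\PF$ and $\PF\subseteq\{f^*_{\alpha}\}_{\alpha\in[0,1]}$ separately, using throughout the single fact that makes the argument tick: by the very definition of the oracle $\beta$-RI together with Lemma~\ref{lem:distance_fair_and_almost}, for every $\beta\in[0,1]$ and every prediction $g$ with $\class{U}(g)\le\beta\,\class{U}(f^*)$ one has $\risk(g)\ge\risk(f^*_{\beta})=(1-\sqrt{\beta})^2\,\class{U}(f^*)$, and moreover, reading off the explicit formula in Proposition~\ref{prop:optimal_alpha}, any $g$ attaining this value must agree with $f^*_{\beta}$ outside a null set. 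A preliminary reduction handles the degenerate case: if $\class{U}(f^*)=0$ then $\risk(f^*)=0$ and, by Theorem~\ref{thm:basic}, $f^*_0=f^*$ a.e., so $\{f^*_{\alpha}\}_{\alpha\in[0,1]}=\{f^*\}$ (mod null sets); in that case any $g$ not equal to $f^*$ a.e. has $\risk(g)>0=\risk(f^*)$ while $\class{U}(g)\ge0=\class{U}(f^*)$, hence is dominated by $f^*$, and $f^*$ itself is undominated since $\risk,\class{U}\ge0$. So from now on I would assume $\class{U}(f^*)>0$ and write $\beta(g)\eqdef\class{U}(g)/\class{U}(f^*)$.

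For the inclusion $\{f^*_{\alpha}\}_{\alpha\in[0,1]}\subseteq\PF$, fix $\alpha$ and suppose toward a contradiction that some $f$ Pareto dominates $f^*_{\alpha}$. By definition of domination, $\risk(f)\le\risk(f^*_{\alpha})$ and $\class{U}(f)\le\class{U}(f^*_{\alpha})=\alpha\,\class{U}(f^*)$, with at least one inequality strict. Set $\beta\eqdef\beta(f)$; then $\beta\le\alpha$, and $f$ is feasible for the problem defining $f^*_{\beta}$, so $\risk(f)\ge(1-\sqrt{\beta})^2\,\class{U}(f^*)$. Since $t\mapsto(1-\sqrt{t})^2$ is strictly decreasing on $[0,1]$ and $\beta\le\alpha$, this is $\ge(1-\sqrt{\alpha})^2\,\class{U}(f^*)=\risk(f^*_{\alpha})$. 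Combined with $\risk(f)\le\risk(f^*_{\alpha})$ I obtain $\risk(f)=\risk(f^*_{\alpha})$, hence $(1-\sqrt{\beta})^2=(1-\sqrt{\alpha})^2$, hence $\beta=\alpha$ by injectivity on $[0,1]$, hence $\class{U}(f)=\alpha\,\class{U}(f^*)=\class{U}(f^*_{\alpha})$. Thus both inequalities are equalities, contradicting that one is strict; therefore $f^*_{\alpha}$ is undominated, i.e. $f^*_{\alpha}\in\PF$.

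For the reverse inclusion, let $f\in\PF$ and set $\beta\eqdef\beta(f)$. First $\beta\le1$: otherwise $\class{U}(f^*)<\class{U}(f)$ while $\risk(f^*)=0\le\risk(f)$, so $f^*$ would dominate $f$. Now $f$ is feasible for the $\beta$-RI problem, so $\risk(f)\ge\risk(f^*_{\beta})$; if this inequality were strict then, since $\class{U}(f^*_{\beta})=\beta\,\class{U}(f^*)=\class{U}(f)$, the prediction $f^*_{\beta}$ would dominate $f$, contradicting $f\in\PF$. Hence $\risk(f)=\risk(f^*_{\beta})$, i.e. $f$ attains the minimum of $\risk$ over $\{\,g:\class{U}(g)\le\beta\,\class{U}(f^*)\,\}$; by the essential uniqueness of this constrained minimizer (Proposition~\ref{prop:optimal_alpha}) we conclude $f=f^*_{\beta}$ a.e., so $f\in\{f^*_{\alpha}\}_{\alpha\in[0,1]}$.

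I expect the main obstacle to be justifying that essential uniqueness used at the last step: Lemma~\ref{lem:geometric_general} only asserts that the geodesic point $\bsb$ is \emph{a} solution, so one must check that the argument behind Proposition~\ref{prop:optimal_alpha} in fact pins the minimizer down up to a null set — equivalently, that under Assumption~\ref{as:atomless} the relevant Wasserstein barycenter and the intermediate constant-speed geodesic points are unique, which then transfers to uniqueness of $f^*_{\beta}$ via its explicit expression. The only other care needed is the bookkeeping around null sets and the degenerate case $\class{U}(f^*)=0$, both dealt with above; everything else reduces to the strict monotonicity (hence injectivity) of $t\mapsto(1-\sqrt{t})^2$ on $[0,1]$.
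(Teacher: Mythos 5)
Your proof is correct and follows essentially the same route as the paper's: both arguments show $\{f^*_{\alpha}\}_{\alpha\in[0,1]}\subseteq\PF$ first, and then for $f\in\PF$ set $\alpha_f=\class{U}(f)/\class{U}(f^*)$ and use the definition of $f^*_{\alpha_f}$ together with Pareto efficiency of $f$ to conclude $\risk(f)=\risk(f^*_{\alpha_f})$ and $\class{U}(f)=\class{U}(f^*_{\alpha_f})$. Two remarks are worth making.

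First, your treatment of the forward inclusion is more explicit than the paper's one-liner (``by definition of $f^*_{\alpha}$''). The paper's shortcut silently handles only the case $\class{U}(f)\le\class{U}(f^*_\alpha)$ with $\risk(f)<\risk(f^*_\alpha)$, where feasibility immediately contradicts optimality; the other branch of Pareto dominance ($\risk(f)\le\risk(f^*_\alpha)$ with $\class{U}(f)<\class{U}(f^*_\alpha)$) genuinely requires comparing $f^*_\alpha$ with $f^*_\beta$ for $\beta<\alpha$ via Lemma~\ref{lem:distance_fair_and_almost} and the strict monotonicity of $t\mapsto(1-\sqrt t)^2$, which is exactly what you do. That is a small but real improvement in rigor.

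Second, the ``main obstacle'' you flag — essential uniqueness of the constrained minimizer, so that $\risk(f)=\risk(f^*_\beta)$ and $\class{U}(f)=\class{U}(f^*_\beta)$ force $f=f^*_\beta$ a.e. — is indeed not established by Proposition~\ref{prop:optimal_alpha}, which only asserts $f^*_\alpha\in\argmin$, nor by Lemma~\ref{lem:geometric_general}, which only exhibits \emph{a} solution. But this extra step is unnecessary: the paper's proof carries a footnote explaining that the statement is to be read on the quotient space under the equivalence $f\sim f'\iff(\risk(f),\class{U}(f))=(\risk(f'),\class{U}(f'))$. On that quotient your penultimate conclusion, $(\risk(f),\class{U}(f))=(\risk(f^*_\beta),\class{U}(f^*_\beta))$, already gives $[f]=[f^*_\beta]\in\{[f^*_\alpha]\}_{\alpha\in[0,1]}$, and the appeal to uniqueness can simply be dropped. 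If you insist on a pointwise (a.e.) conclusion you would indeed need to prove uniqueness — plausibly true under Assumption~\ref{as:atomless} via uniqueness of one-dimensional $\sW_2$ barycenters and geodesics, plus the equality case of Lemma~\ref{lem:squared_loss_wasserstein} — but that is more than the proposition asks for as the authors interpret it.
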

\begin{proof}
    On the one hand, by definition of $f^*_{\alpha}$ it holds that $\{f^*_{\alpha}\}_{\alpha \in [0, 1]} \subset \PF$.
    On the other hand, let $f \in \PF$ with $\class{U}(f) \neq 0$ and let $\alpha_f \eqdef \class{U}(f) / \class{U}(f^*)$. Then by the definition of $\alpha_f$ it holds that $\class{U}(f) = \alpha_f\, \class{U}(f^*)$.
    Furthermore, by definition of $f^*_{\alpha_f}$ it holds that $\risk(f^*_{\alpha_f}) \leq \risk(f)$ and by Lemma~\ref{lem:distance_fair_and_almost} it holds that $\class{U}(f^*_{\alpha_f}) = \alpha_{f}\,\class{U}(f^*)$.
    Finally, since $f$ is Pareto efficient it holds that $\risk(f) \leq \risk(f^*_{\alpha_f})$. If $f \in \PF$ is such that $\class{U}(f)=0$, then it is as good as $f^*_0$ in the sense of Pareto. The proof is concluded\footnote{To be more precise, one needs to introduce the equivalence relation $\sim$ defined as $f \sim f'$ iff $\risk(f) = \risk(f')$ and $\class{U}(f) = \class{U}(f')$ and to perform the exact same proof on the quotient space. For the sake of presentation we omit this benign technicality.}.
\end{proof}
Note that any predictor $f$ defines a point $(\class{U}(f), \risk(f))$ in the coordinate system $(\class{U}, \risk)$. The left plot of Figure~\ref{fig:pareto} illustrates the Pareto frontier and those values of $(\class{U}, \risk)$ that are attainable by some prediction $f$. We remark that the convexity of the Pareto frontiers curve is due to the specific trade-off provided by the parameter $\alpha$. For a general multi-criteria optimization problem this convexity is not ensured. The right plot of Figure~\ref{fig:pareto} demonstrates the evolution of the Pareto frontier when $\class{U}(f^*)$ decreases.

Finally, Proposition~\ref{prop:Pareto} provides simple practical guidelines for the study of the trade-off given by $\alpha$.
Note that since thanks to Lemma~\ref{lem:distance_fair_and_almost} it holds that $\class{R}(f^*_0) = \class{U}(f^*)$ and $\class{U}(f^*_1) = \class{U}(f^*)$, then the practitioner needs to estimate only one quantity $\class{U}(f^*)$ and trace the curve of Pareto frontier in order to establish the desired trade-off for the problem at hand.

{
\color{red}
\subsection{Relation with fairness regularized problem}
In this section we present one of the possible applications of the Pareto interpretation of the $\alpha$-relative improvements.
For each $\lambda \geq 0$, define (un)fairness regularized minimizer as
\begin{align*}
    f^{*, \lambda} \in \argmin_{f : \bbR^p \times [K] \to \bbR}\ens{\risk(f) + \lambda \cdot \class{U}(f)}\enspace.
\end{align*}
Note that the level sets of $f \mapsto \risk(f) + \lambda \cdot \class{U}(f)$ induce an affine function in the coordinates $(\risk, \class{U})$. This simple observation allows to establish the connection between minimizers of the penalized objective and $\alpha$-relative improvements---minimizers of the constrained problem.
\begin{proposition}
\label{prop:regular_pareto}
Let Assumption~\ref{as:atomless} be satisfied. Then, for any $\lambda \geq 0$ it holds that
\begin{align*}
    f^{*, \lambda} \equiv f^*_{\alpha(\lambda)}\qquad\text{with}\qquad \alpha(\lambda) = {(1 + \lambda)^{-2}}\enspace.
\end{align*}
\end{proposition}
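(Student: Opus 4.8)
The plan is to exploit the geometry uncovered by the Pareto analysis of Section~\ref{sec:pareto}. In the coordinate system $(\class{U},\risk)$ the penalized objective $f\mapsto \risk(f)+\lambda\,\class{U}(f)$ has affine level sets, so its minimizer must sit on the lower-left boundary of the attainable region, which Proposition~\ref{prop:Pareto} identifies with $\{f^*_{\alpha}\}_{\alpha\in[0,1]}$. Concretely, I would first dispose of $\lambda=0$: there $f^{*,0}$ minimizes $\risk$ alone, hence equals $f^*\equiv f^*_1$ (a.e.), which is $f^*_{\alpha(0)}$ since $\alpha(0)=1$. For $\lambda>0$, I would argue that any minimizer $f^{*,\lambda}$ is Pareto efficient: if some $f'$ satisfied $f'\prec f^{*,\lambda}$, then by the definition of Pareto dominance $\risk(f')+\lambda\,\class{U}(f')<\risk(f^{*,\lambda})+\lambda\,\class{U}(f^{*,\lambda})$, contradicting minimality. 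Hence, by Proposition~\ref{prop:Pareto}, $f^{*,\lambda}$ coincides, in risk and unfairness, with $f^*_{\alpha}$ for some $\alpha\in[0,1]$.

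The next step is a one-dimensional optimization. By Lemma~\ref{lem:distance_fair_and_almost}, along the frontier one has $\risk(f^*_{\alpha})+\lambda\,\class{U}(f^*_{\alpha})=\big((1-\sqrt{\alpha})^2+\lambda\alpha\big)\class{U}(f^*)$. Substituting $u=\sqrt{\alpha}\in[0,1]$, the map $u\mapsto(1-u)^2+\lambda u^2$ is strictly convex with unique minimizer $u^\star=(1+\lambda)^{-1}\in(0,1]$ and minimal value $\lambda/(1+\lambda)$; this yields $\alpha(\lambda)=(u^\star)^2=(1+\lambda)^{-2}$. Since $\alpha(\lambda)\in[0,1]$, the predictor $f^*_{\alpha(\lambda)}$ is well defined and attains the value $\tfrac{\lambda}{1+\lambda}\class{U}(f^*)$, so it is itself a minimizer of the penalized problem; conversely, by the previous paragraph, any minimizer realizes the same $(\class{U},\risk)$ pair as $f^*_{\alpha(\lambda)}$. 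If one prefers to avoid invoking Proposition~\ref{prop:Pareto}, the same conclusion follows from a direct lower bound: for a predictor $f$ with $\class{U}(f)=\beta\,\class{U}(f^*)$ and $\beta\le1$, feasibility of $f$ in the problem defining $f^*_{\beta}$ together with Lemma~\ref{lem:distance_fair_and_almost} gives $\risk(f)\ge(1-\sqrt{\beta})^2\class{U}(f^*)$, hence $\risk(f)+\lambda\,\class{U}(f)\ge\big((1-\sqrt{\beta})^2+\lambda\beta\big)\class{U}(f^*)\ge\tfrac{\lambda}{1+\lambda}\class{U}(f^*)$; and if $\class{U}(f)>\class{U}(f^*)$ then trivially $\risk(f)+\lambda\,\class{U}(f)\ge\lambda\,\class{U}(f^*)\ge\tfrac{\lambda}{1+\lambda}\class{U}(f^*)$.

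The main obstacle is upgrading ``same risk and unfairness'' to the stated identity $f^{*,\lambda}\equiv f^*_{\alpha(\lambda)}$: a priori two predictors differing on a positive-measure set could realize the same point of the Pareto frontier. This is handled exactly as in the footnote to Proposition~\ref{prop:Pareto}, by reading $\equiv$ in the quotient by the equivalence relation $f\sim f'\iff\risk(f)=\risk(f')\text{ and }\class{U}(f)=\class{U}(f')$; alternatively, genuine almost-everywhere uniqueness of the minimizer follows from uniqueness of the optimal $\alpha$-RI predictor with active constraint, itself a consequence of uniqueness of optimal transport maps and $\sW_2$-geodesics guaranteed by the non-atomicity in Assumption~\ref{as:atomless}, which is also what underlies the explicit formula of Proposition~\ref{prop:optimal_alpha}. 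The only other points needing a line of care are the degenerate case $\class{U}(f^*)=0$ (where $f^*$ is already fair, every $f^*_{\alpha}$ coincides with $f^*$, and $f^{*,\lambda}=f^*$) and the non-strictness of the Pareto-domination argument at $\lambda=0$, both of which are immediate.
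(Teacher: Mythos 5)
Your proposal is correct and follows the same underlying strategy as the paper's proof: restrict attention to the Pareto frontier $\{f^*_\alpha\}_{\alpha\in[0,1]}$, substitute Lemma~\ref{lem:distance_fair_and_almost} to reduce the penalized problem to a one-dimensional optimization in $\alpha$, and solve it. The paper phrases this reduction geometrically (the affine level set of $\risk+\lambda\,\class{U}$ must be tangent to the frontier, giving the first-order condition $-\lambda=1-\sqrt{\class{U}(f^*)/\class{U}(f^{*,\lambda})}$), while you perform the equivalent calculation explicitly via the substitution $u=\sqrt{\alpha}$ and minimize $(1-u)^2+\lambda u^2$; these are the same computation in different clothing. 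What you add over the paper's terse proof: an explicit argument that penalized minimizers are Pareto efficient (the paper takes this for granted), explicit handling of $\lambda=0$ and of the degenerate case $\class{U}(f^*)=0$, a clear discussion of the $\equiv$ identification in the quotient by $(\risk,\class{U})$, and — most usefully — a self-contained direct lower bound $\risk(f)+\lambda\,\class{U}(f)\ge \tfrac{\lambda}{1+\lambda}\class{U}(f^*)$ that bypasses Proposition~\ref{prop:Pareto} entirely and so gives a cleaner chain of dependencies. All steps check out; the proposal is, if anything, more rigorous than the original.
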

As a sanity check, we observe that for $\lambda = 0$ we recover the Bayes optimal prediction $f^* \equiv f^*_1 \equiv f^{*, 0}$ and with $\lambda$ approaching $+\infty$, we obtain the expression for the fair optimal prediction $f^{*, +\infty} \equiv f^*_0$.

}





\section{Minimax setup}
\label{sec:minimax_setup}

While the previous section was dealing with the general framework on the population level, the goal of this section is to put forward a minimax setup for the statistical problem of regression with the introduced fairness constraints.

Let $(\bsX_1, S_1, Y_1), \ldots, (\bsX_n, S_n, Y_n)$ be \iid sample with joint distribution $\Probf_{(f^*, \bstheta)}$, where the pair $(f^*, \bstheta) \in \class{F} \times \Theta$ for some class $\class{F}$ and $\Theta$.
In this notation $f^*$ is the regression function and $\bstheta$ is a nuisance parameter.
For example $\class{F}$ can be the set of all affine or Lipschitz continuous functions and $\Theta$ defines additional assumptions on the model in Eq.~\eqref{eq:model_general}
(see Section~\ref{SEC:LINEAR} for a concrete example). For a given fairness parameter $\alpha \in [0, 1]$ and a given confidence parameter $t > 0$, the goal of the statistician is to construct an estimator\footnote{As usual, an estimator $\hat f$ is a measurable mapping of data to the space of predictions.} $\hat f$, which simultaneously satisfies the following two properties
\begin{enumerate}
    \item Uniform fairness guarantee:
    \begin{align}
        \label{eq:uniform-fairness}
        \forall(f^*, \bstheta) \in \class{F} \times \Theta\quad \Probf_{(f^*, \bstheta)}\parent{\class{U}(\hat f) \leq \alpha\,\class{U}(f^*)}  \geq 1 - t\enspace,
    \end{align}
    \item Uniform risk guarantee:
    \begin{align}
        \label{eq:uniform-risk}
        \forall(f^*, \bstheta) \in \class{F} \times \Theta \quad \Probf_{(f^*, \bstheta)}\parent{\risk(\hat f) \leq r_{n, \alpha, f^*}(\class{F}, \Theta, t)} \geq 1 - t\enspace.
    \end{align}
\end{enumerate}
Eq.~\eqref{eq:uniform-fairness} states that the constructed estimator satisfies the fairness requirement with high probability uniformly over the class $\class{F} \times \Theta$.
Meanwhile, in Eq.~\eqref{eq:uniform-risk} we seek for the smallest rate $r_{n, \alpha, f^*}(\class{F}, \Theta, t)$ to quantify the statistical price of being $\alpha$-relatively fair.
Note that $r_{n, \alpha, f^*}(\class{F}, \Theta, t)$ depends explicitly on $f^*$. This is explained by the fact that the fairness of $\hat f$ is measured relatively to $f^*$, hence the price of this constraint also depends on the initial unfairness level of the regression function $f^*$.

The actual construction of the estimator $\hat f$ is problem dependent and the proving that it satisfies Eqs.~\eqref{eq:uniform-fairness}--\eqref{eq:uniform-risk} requires a careful case-by-case study.
In Section~\ref{SEC:LINEAR} we provide an example of such analysis for a simple statistical model of linear regression with systematic group-dependent bias.

\subsection{Generic lower bound}
\label{sec:general_lower}
While the upper bounds of Eqs.~\eqref{eq:uniform-fairness}--\eqref{eq:uniform-risk} require a problem dependent analysis, a general problem dependent lower bound can be derived.
In this section we develop such lower bound.
Let us first introduce some useful definitions.
{
\begin{assumption}[Unconstrained rate]
    \label{ass:lower_no_fair}
    For a fixed confidence level $t \in (0, 1)$ and a class $(\class{F}, \Theta)$, there exists a positive sequence $\delta_n(\class{F}, \Theta, t)$ such that
    \begin{align*}
       &\inf_{\hat f}\sup_{(f^*, \bstheta) \in \class{F} \times \Theta} \Probf_{(f^*, \bstheta)}\parent{\risk(\hat f) \geq \delta_n(\class{F}, \Theta, t)} \geq t\enspace,
    \end{align*}
    where the infimum is taken over all estimators.
\end{assumption}
}

Assumption~\ref{ass:lower_no_fair} can be used with any sequence $\delta_{n}(\class{F}, \Theta, t)$, however, we implicitly assume that $\delta_{n}(\class{F}, \Theta, t)$ corresponds to the minimax optimal rate of estimation of $f^*$ by any estimator (without constraints) in expected squared loss.

\begin{definition}[Valid estimators]\label{def:valid_estimators}
    For some $\alpha \in [0, 1]$ and confidence level $t' \in (0, 1)$ we say that an estimator $\hat f$ is $(\alpha, t')$-valid \wrt the class $(\class{F}, \Theta)$ if
    \begin{align*}
        \inf_{{(f^*, \bstheta)} \in \class{F} \times \Theta} \Probf_{(f^*, \bstheta)}\parent{\class{U}(\hat f) \leq \alpha\,\class{U}({f^*})} \geq 1 - t'\enspace.
    \end{align*}
    The set of all $(\alpha, t')$-valid estimators \wrt the class $(\class{F}, \Theta)$ is denoted by $\widehat{\class{F}}_{(\alpha, t')}$.
\end{definition}
Definition~\ref{def:valid_estimators} characterizes estimators which satisfy the $\alpha$-RI constraint at least with constant probability uniformly over the class $(\class{F}, \Theta)$.

Equipped with Assumption~\ref{ass:lower_no_fair} and Definition~\ref{def:valid_estimators} we are in position to state the main result of this section, which establishes the statistical risk-fairness trade-off.
As we will see in Section~\ref{SEC:LINEAR}, supported by appropriate upper bounds, Theorem~\ref{THM:GENERAL_LOWER} yields optimal rates of convergence up to a multiplicative factor.
\begin{theorem}
    \label{THM:GENERAL_LOWER}
     Let Assumption~\ref{as:atomless} be satisfied. Let $\delta_{n}(\class{F}, \Theta, t)$ be a sequence that satisfies Assumption~\ref{ass:lower_no_fair}. Then 
     \begin{align*}
         \inf_{\hat f \in \widehat{\class{F}}_{(\alpha, t')}}\sup_{{(f^*, \bstheta)} \in \class{F} \times \Theta} \Probf_{{(f^*, \bstheta)}}\parent{\risk^{\sfrac{1}{2}}(\hat f) \geq \delta_n^{\sfrac{1}{2}}(\class{F}, \Theta, t) \vee (1 {-} \sqrt{\alpha})\class{U}^{\sfrac{1}{2}}({f^*})} \geq t \wedge (1 - t')\enspace.
     \end{align*}
\end{theorem}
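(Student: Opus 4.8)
The plan is to decouple the two terms inside the maximum: the unconstrained rate $\delta_n^{1/2}(\class{F},\Theta,t)$ will be supplied verbatim by Assumption~\ref{ass:lower_no_fair}, whereas the bias-type term $(1-\sqrt\alpha)\class{U}^{1/2}(f^*)$ will come, \emph{deterministically on the validity event}, from the fact that any $\alpha$-RI predictor is feasible for the population problem~\eqref{eq:alpha_improvement}, together with the exact identity $\risk(f^*_\alpha)=(1-\sqrt\alpha)^2\class{U}(f^*)$ of Lemma~\ref{lem:distance_fair_and_almost}. The single structural point to keep in mind is that the event $\{\risk^{1/2}(\hat f)\ge \delta_n^{1/2}(\class{F},\Theta,t)\vee(1-\sqrt\alpha)\class{U}^{1/2}(f^*)\}$ is an \emph{intersection} of two events, not a union; but on one fixed instance $(f^*,\bstheta)$ only the larger of the two thresholds is active, so the target probability at that instance reduces to exactly one of the two, and I will exploit this through a dichotomy on the class $\class{F}\times\Theta$.

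First I would establish the fairness half. Fix any $\hat f\in\widehat{\class{F}}_{(\alpha,t')}$ and any $(f^*,\bstheta)\in\class{F}\times\Theta$. On the data-dependent event $\{\class{U}(\hat f)\le\alpha\,\class{U}(f^*)\}$ the realized function $\hat f$ is feasible for Problem~\eqref{eq:alpha_improvement}, hence $\risk(\hat f)\ge\risk(f^*_\alpha)$, and Lemma~\ref{lem:distance_fair_and_almost} (which holds under Assumption~\ref{as:atomless}) gives $\risk(f^*_\alpha)=(1-\sqrt\alpha)^2\class{U}(f^*)$. Therefore $\{\class{U}(\hat f)\le\alpha\,\class{U}(f^*)\}\subseteq\{\risk^{1/2}(\hat f)\ge(1-\sqrt\alpha)\class{U}^{1/2}(f^*)\}$ pointwise, so Definition~\ref{def:valid_estimators} yields $\Probf_{(f^*,\bstheta)}\big(\risk^{1/2}(\hat f)\ge(1-\sqrt\alpha)\class{U}^{1/2}(f^*)\big)\ge 1-t'$, and this holds for \emph{every} instance, with no hard-instance construction.

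Then I would conclude by a case split. \textbf{Case 1:} there exists $(\bar f,\bar{\bstheta})\in\class{F}\times\Theta$ with $(1-\sqrt\alpha)\class{U}^{1/2}(\bar f)\ge\delta_n^{1/2}(\class{F},\Theta,t)$. At this instance the active threshold is $(1-\sqrt\alpha)\class{U}^{1/2}(\bar f)$, so by the fairness half, $\sup_{(f^*,\bstheta)}\Probf_{(f^*,\bstheta)}\big(\risk^{1/2}(\hat f)\ge\delta_n^{1/2}(\class{F},\Theta,t)\vee(1-\sqrt\alpha)\class{U}^{1/2}(f^*)\big)\ge 1-t'$ for every valid $\hat f$, hence the infimum over $\widehat{\class{F}}_{(\alpha,t')}$ is $\ge 1-t'$. \textbf{Case 2} (the negation): every instance satisfies $(1-\sqrt\alpha)\class{U}^{1/2}(f^*)<\delta_n^{1/2}(\class{F},\Theta,t)$, so the active threshold is $\delta_n^{1/2}(\class{F},\Theta,t)$ everywhere, and the target probability equals $\Probf_{(f^*,\bstheta)}\big(\risk(\hat f)\ge\delta_n(\class{F},\Theta,t)\big)$; since $\widehat{\class{F}}_{(\alpha,t')}$ is a subset of all estimators, restricting the infimum to it only increases the value, and Assumption~\ref{ass:lower_no_fair} lower bounds it by $t$. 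Since $\delta_n(\class{F},\Theta,t)$ does not depend on the instance, the two cases are exhaustive and mutually exclusive, and in both the infimum is $\ge t\wedge(1-t')$, which is the assertion.

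The proof is thus short, and essentially the only place one can go wrong is the opening move: treating the $\vee$ as a union would force a union bound and produce the suboptimal $t-t'$ rather than the sharp $t\wedge(1-t')$, so the point is to notice that on each instance the $\vee$ collapses to a single threshold and that the fairness lower bound is an almost-sure consequence of feasibility together with Lemma~\ref{lem:distance_fair_and_almost}. The genuinely substantive ingredient — an actual minimax (two-point or Fano-type) argument — is already encapsulated in Assumption~\ref{ass:lower_no_fair}, so no new such argument is needed here.
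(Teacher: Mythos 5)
Your proof is correct and follows essentially the same route as the paper's: the same observation that on the validity event $\{\class{U}(\hat f)\le\alpha\,\class{U}(f^*)\}$ feasibility forces $\risk(\hat f)\ge\risk(f^*_\alpha)=(1-\sqrt\alpha)^2\class{U}(f^*)$ via Lemma~\ref{lem:distance_fair_and_almost}, combined with the same exhaustive dichotomy on whether some instance in $\class{F}\times\Theta$ makes the fairness threshold dominate $\delta_n$, with Assumption~\ref{ass:lower_no_fair} handling the complementary case after dropping the $(\alpha,t')$-validity restriction. The only cosmetic difference is that you isolate the uniform fairness lower bound before the case split, whereas the paper derives it inside Case 1.
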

Drawing an analogy with Lemma~\ref{lem:distance_fair_and_almost}, the two terms of the derived bound have natural interpretations: the first term $\delta_n(\class{F}, \Theta, t)$ is the price of statistical estimation; the second term $(1 {-} \sqrt{\alpha})^2\class{U}({f^*})$ is the price of fairness.
Consequently, the rate $r_{n, \alpha, f^*}(\class{F}, \Theta, t)$ in Eq.~\eqref{eq:uniform-risk} is lower bounded (up to a multiplicative constant factor) by $\delta_n^{\sfrac{1}{2}}(\class{F}, \Theta, t) \vee (1 {-} \sqrt{\alpha})\class{U}^{\sfrac{1}{2}}({f^*})$.
The confidence parameter on the \rhs of the bound is $t \wedge (1 - t')$. The reasonable choice of $t'$ is in the vicinity of zero, which corresponds to estimators satisfying the fairness constraint with high probability.
Finally, observe that this bound is not conventional in the sense of classical statistics, where the bound would converge to zero with the growth of sample size.
This behavior is not surprising, since the infimum is taken \wrt to $(\alpha, t')$-valid estimators and not \wrt all possible estimators.
One can draw an analogy of the obtained bound with recent results in robust statistics~\citep{chen2016general,chen2018robust}, where the minimax rate converges to a function of the proportion of outliers, which might be different from zero.

\section{Application to linear model with systematic bias}
\label{SEC:LINEAR}
\myparagraph{Additional notation}
 We denote by $\|\cdot\|_2$ and by $\|\cdot\|_n = (\sfrac{1}{\sqrt n})\|\cdot\|_2$ the Euclidean and the normalized Euclidean norm. The standard scalar product is denoted by $\scalar{\cdot}{\cdot}$. We denote by $\1_{p}$ the vector of all ones of size $p$. 
 For square matrix $\mathbf{A} \in \mathbb{R}^{n \times n}, n\geq1$, we write $\mathbf{A} \succ 0$ if $\mathbf{A}$ is symmetric positive-definite.

The goal of this part is to provide an example of a complete statistical analysis for a regression problem under the $\alpha$-RI constraint. In particular, we show how to apply the plug-and-play results of Section~\ref{sec:general_lower} in order to derive minimax rate optimal bounds under the $\alpha$-RI constraint.
To this end we apply the developed theory to the following model of linear regression with systematic group-dependent bias
\begin{align}
    \label{eq:model_linear}
    Y = \scalar{\bsX}{\bbeta^*} + b_S^* + \xi\enspace,
\end{align}
where $\bsX \sim \class{N}(\boldsymbol{0}, \bfSigma)$ is a feature vector independent from the sensitive attribute $S$ with $\bfSigma \succ 0$; $\xi \sim \class{N}({0}, \sigma^2)$ is an additive independent noise; and the vector $\bsb^* = (b_1^*, \ldots, b_K^*)$ is the vector of systematic bias.
{We assume that the noise level $\sigma$ is known to the statistician.}
Note that in this case the regression function $f^*$ is given by the expression $f^*(\bsx, s) = \scalar{\bsx}{\bbeta^*} + b_s^*$ and Assumption~\ref{as:atomless} is satisfied.
We assume that the observations are
\begin{align}
    \label{eq:model_linear_vector}
   \bsY_s = \bfX_s \bbeta^* + b_s^*\1_{n_s} + \bsxi_s,\quad s = 1, \ldots, K\enspace,
\end{align}
with $\bsY_s, \bsxi_s \in \bbR^{n_s}$, $\bfX_s \in \bbR^{n_s \times p}$, and $\1_{n_s}$ is the vector of all ones of size $n_s$.
The rows of $\bfX_s$ are \iid realization of $\bsX$, the components of $\bsxi_s$ are \iid from $\mathcal{N}(0, \sigma^2)$.
Additionally, we set $n = n_1 + \ldots + n_K$ and $w_s = \sfrac{n_s}{n}$.
The risk of $f: \bbR^p \times [K] \to \bbR$ is then defined as
\begin{align*}
    \risk(f) = \sum_{s = 1}^K w_s\Exp\parent{\scalar{\bsX}{\bbeta^*} + b_s^* - f(\bsX, s)}^2\enspace.
\end{align*}

\begin{remark}
    We set $w_s = \sfrac{n_s}{n}$ instead of $w_s = \Prob(S = s)$ to simplify the presentation and proofs of the main results.
    Note that if $S_1, \ldots, S_n$ is an \iid sample, then $n_s = \sum_{i = 1}^n\ind{S_i = s}$ and $\Expf[\sfrac{n_s}{n}] = \Prob(S=s)$, that is our choice of weights essentially corresponds to the scenario of \iid sampling of sensitive attribute.
\end{remark}
Using the terminology of Section~\ref{sec:general_lower} the joint distribution of data sample $\Probf_{(f^*, \bstheta)}$ is uniquely defined by $(\bbeta^*, \bsb^*)$ and $(\bfSigma, \sigma)$. That is, $(\bbeta^*, \bsb^*)$ defines the regression function $f^*$ and $(\bfSigma, \sigma)$ is the nuisance parameter $\bstheta$.
To simplify the notation we write $\Probf_{(\bbeta^*, \bsb^*)}$ instead of $\Probf_{(\bbeta^*, \bsb^*, \bfSigma, \sigma)}$.

The following result is the application of Proposition~\ref{prop:optimal_alpha} to the model in Eq.~\eqref{eq:model_linear}.
\begin{proposition}
    For all $\alpha \in [0, 1]$, the $\alpha$-relative improvement of $f^*$ is given for all $(\bsx, s) \in \bbR^p \times [K]$ by
    \begin{align*}
        f^*_{\alpha}(\bsx, s) = \scalar{\bsx}{\bbeta^*} + \sqrt{\alpha}b_s^* + (1 {-} \sqrt{\alpha})\sum_{s = 1}^Kw_sb_s^*\enspace.
    \end{align*}
\end{proposition}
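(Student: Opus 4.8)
The plan is to specialize Proposition~\ref{prop:optimal_alpha} to the linear-with-systematic-bias model of Eq.~\eqref{eq:model_linear}. By that proposition we know that for all $(\bsx, s)$ up to a null set
\begin{align*}
    f^*_{\alpha}(\bsx, s) = \sqrt{\alpha}\, f^*(\bsx, s) + \big(1{-}\sqrt{\alpha}\big) f^*_0(\bsx, s)\enspace,
\end{align*}
so the whole task reduces to identifying $f^*(\bsx,s)$ and $f^*_0(\bsx,s)$ explicitly in this model. The regression function is immediate: $f^*(\bsx,s) = \scalar{\bsx}{\bbeta^*} + b_s^*$. The work is in computing the optimal fair predictor $f^*_0$, for which I would use the closed form from Proposition~\ref{prop:optimal_alpha} at $\alpha = 0$, namely $f^*_0(\bsx, s) = \sum_{s'=1}^K w_{s'} F_{\nu^*_{s'}}^{-1} \circ F_{\nu^*_s} \circ f^*(\bsx, s)$.

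First I would describe the group-wise distributions $\nu^*_s = \Law(f^*(\bsX,S)\mid S = s)$. Since $\bsX \sim \class{N}(\boldsymbol 0, \bfSigma)$ is independent of $S$, inside group $s$ the random variable $f^*(\bsX, s) = \scalar{\bsX}{\bbeta^*} + b_s^*$ is Gaussian with mean $b_s^*$ and variance $\sigma_\beta^2 \eqdef (\bbeta^*)^\top \bfSigma \bbeta^*$, the same variance for every group. Hence all the $\nu^*_s$ are translates of one another: $\nu^*_s = \class{N}(b_s^*, \sigma_\beta^2)$. Consequently the CDF/quantile composition is a pure shift,
\begin{align*}
    F_{\nu^*_{s'}}^{-1} \circ F_{\nu^*_s}(y) = y - b_s^* + b_{s'}^*\enspace,
\end{align*}
which one checks directly from $F_{\nu^*_s}(y) = \Phi\big((y - b_s^*)/\sigma_\beta\big)$. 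Plugging $y = f^*(\bsx,s) = \scalar{\bsx}{\bbeta^*} + b_s^*$ gives $F_{\nu^*_{s'}}^{-1} \circ F_{\nu^*_s} \circ f^*(\bsx,s) = \scalar{\bsx}{\bbeta^*} + b_{s'}^*$, and therefore
\begin{align*}
    f^*_0(\bsx,s) = \sum_{s'=1}^K w_{s'}\big(\scalar{\bsx}{\bbeta^*} + b_{s'}^*\big) = \scalar{\bsx}{\bbeta^*} + \sum_{s'=1}^K w_{s'} b_{s'}^*\enspace,
\end{align*}
using $\sum_{s'} w_{s'} = 1$. (As a sanity check, $f^*_0$ has the same value across all groups for a fixed $\bsx$, consistent with the DP constraint, and its barycenter is $\class{N}(\bar b, \sigma_\beta^2)$ with $\bar b = \sum_s w_s b_s^*$, which also recovers $\class{U}(f^*) = \sum_s w_s (b_s^* - \bar b)^2$ via Theorem~\ref{thm:basic}.)

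Finally I would combine the two: substituting $f^*(\bsx,s) = \scalar{\bsx}{\bbeta^*} + b_s^*$ and the expression for $f^*_0$ into the convex-combination formula gives
\begin{align*}
    f^*_\alpha(\bsx,s) = \sqrt{\alpha}\big(\scalar{\bsx}{\bbeta^*} + b_s^*\big) + (1{-}\sqrt{\alpha})\Big(\scalar{\bsx}{\bbeta^*} + \sum_{s'=1}^K w_{s'} b_{s'}^*\Big) = \scalar{\bsx}{\bbeta^*} + \sqrt{\alpha}\, b_s^* + (1{-}\sqrt{\alpha})\sum_{s'=1}^K w_{s'} b_{s'}^*\enspace,
\end{align*}
which is the claimed identity. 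There is no real obstacle here — the only point needing a little care is the observation that the common variance $\sigma_\beta^2$ is identical across groups, which is precisely what makes the optimal-transport maps between the $\nu^*_s$ degenerate into translations; everything else is bookkeeping. One should also note that Assumption~\ref{as:atomless} holds here (the $\nu^*_s$ are Gaussian, hence non-atomic with finite second moment) so Proposition~\ref{prop:optimal_alpha} applies, and the identity holds for all $(\bsx,s)$ outside a null set, as in the general statement.
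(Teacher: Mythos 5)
Your proof is correct and takes the approach the paper intends: the text explicitly states this proposition is ``the application of Proposition~\ref{prop:optimal_alpha} to the model in Eq.~\eqref{eq:model_linear},'' and you carry out exactly that application, identifying $\nu^*_s = \class{N}(b_s^*, (\bbeta^*)^\top \bfSigma \bbeta^*)$ and observing the common variance makes every transport map a pure translation. (The only point left implicit, by both you and the paper, is that Assumption~\ref{as:atomless} requires $\bbeta^* \neq \mathbf 0$ so the Gaussians are non-degenerate.)
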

In order to build an estimator $\hat f$, which improves the fairness of $f^*$, while providing minimal risk among such predictions, we first estimate parameters of model in Eq.~\eqref{eq:model_linear} using least-squares estimators
\begin{align}
    \label{eq:LS}
    (\hbeta, \hbsb) \in \argmin_{(\bbeta, \bsb) \in \bbR^p \times \bbR^K} \sum_{s = 1}^K w_s \norm{\bsY_s - \bfX_s \bbeta - b_s\1_{n_s}}_{n_s}^2\enspace.
\end{align}
Based on the above quantities we then define a family of linear estimators $\hat f_{\tau}$ parametrized by $\tau \in [0, 1]$ as
\begin{align}
    \label{eq:estimator_general}
    \hat f_{\tau}(\bsx, s) = \scalarin{\bsx}{\hbeta} + \sqrt{\tau}\hb_s + (1 {-} \sqrt{\tau})\sum_{s = 1}^Kw_s\hb_s\,\,, \qquad (\bsx, s) \in \bbR^p \times [K]\enspace.
\end{align}
We would like to find a value of $\tau = \tau_n(\alpha)$ such that Eqs.~\eqref{eq:uniform-fairness}--\eqref{eq:uniform-risk} are satisfied.
Note that the choice of $\tau = \alpha$ would not yield the desired fairness guarantee stated in Eq.~\eqref{eq:uniform-fairness}.
As it will be shown later, $\tau$ should be smaller than $\alpha$, in order to account for finite sample effects and derive high confidence fairness guarantee.
The next result shows that under the model in Eq.~\eqref{eq:model_linear}, the unfairness of $\hat f_{\tau}$ can be computed in a data-driven manner, which is crucial for the consequent choice of $\tau$.

\begin{lemma}
    \label{lem:unfairness_estimator1}
    For any $\tau \in [0, 1]$, the unfairness of $\hat f_{\tau}$ is given by
    \begin{align*}
        \class{U}(\hat f_{\tau}) = \tau\sum_{s = 1}^Kw_s\parent{\hb_s - \sum_{s' = 1}^Kw_{s'}\hb_{s'}}^2\qquad\text{almost surely}\enspace.
    \end{align*}
\end{lemma}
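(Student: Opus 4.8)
The plan is to work conditionally on the training sample $\{(\bfX_s,\bsY_s)\}_{s\in[K]}$, so that the least-squares estimates $\hbeta,\hbsb$ from Eq.~\eqref{eq:LS} become deterministic vectors; one should keep in mind that the feature vector $\bsX$ appearing in the unfairness functional is a \emph{fresh}, independent copy, unrelated to the training data. The first step is then to identify the group-conditional laws of the prediction $\hat f_{\tau}(\bsX, S)$. In the definition of the unfairness measure the expectation $\Exp[\cdot \mid S = s]$ runs over $\bsX \sim \class{N}(\boldsymbol{0}, \bfSigma)$, which by the model in Eq.~\eqref{eq:model_linear} is independent of $S$; hence, writing $v \eqdef \hbeta^\top \bfSigma \hbeta$ and $c_s \eqdef \sqrt{\tau}\,\hb_s + (1 {-} \sqrt{\tau})\sum_{s' = 1}^K w_{s'}\hb_{s'}$, one gets
\begin{align*}
    \Law\big(\hat f_{\tau}(\bsX, S) \mid S = s\big) = \class{N}(c_s, v) \in \class{P}_2(\bbR)\enspace,\qquad s \in [K]\enspace.
\end{align*}
The crucial point is that $v$ is the same for all $s$: the $K$ group-conditional predictive laws are translates of one common measure $\class{N}(0, v)$ (a Dirac mass at the origin when $\hbeta = 0$).

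The second step is an elementary Wasserstein computation for translates of a fixed measure. Since the mean map $\mu \mapsto \int x\,\d\mu$ is $1$-Lipschitz from $(\class{P}_2(\bbR), \sW_2)$ to $\bbR$, for any $\nu \in \class{P}_2(\bbR)$ with mean $m$ we have $\sW_2^2(\class{N}(c_s, v), \nu) \geq (c_s - m)^2$, so that $\sum_{s = 1}^K w_s \sW_2^2(\class{N}(c_s, v), \nu) \geq \sum_{s = 1}^K w_s (c_s - m)^2 \geq \sum_{s = 1}^K w_s (c_s - \bar c)^2$, where $\bar c \eqdef \sum_{s = 1}^K w_s c_s$ is the minimizer of $m \mapsto \sum_{s = 1}^K w_s (c_s - m)^2$. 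This lower bound is attained by $\nu = \class{N}(\bar c, v)$, because $\sW_2^2(\class{N}(a, v), \class{N}(b, v)) = (a - b)^2$ for all $a, b \in \bbR$. Hence $\class{U}(\hat f_{\tau}) = \sum_{s = 1}^K w_s (c_s - \bar c)^2$, and note that this argument needs neither uniqueness nor Gaussianity of the barycenter, only a lower bound valid for all $\nu$ together with one explicit minimizer.

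The last step is bookkeeping. Writing $\bar b \eqdef \sum_{s = 1}^K w_s \hb_s$, one has $\bar c = \sqrt{\tau}\,\bar b + (1 {-} \sqrt{\tau})\,\bar b = \bar b$, whence $c_s - \bar c = \sqrt{\tau}\,(\hb_s - \bar b)$ and
\begin{align*}
    \class{U}(\hat f_{\tau}) = \sum_{s = 1}^K w_s\,\tau\,(\hb_s - \bar b)^2 = \tau \sum_{s = 1}^K w_s \parent{\hb_s - \sum_{s' = 1}^K w_{s'}\hb_{s'}}^2\enspace,
\end{align*}
which is the claimed identity. The almost-sure qualifier merely records that, $\bsX$ being Gaussian with $\bfSigma \succ 0$, the pooled design has full column rank almost surely, so that $(\hbeta, \hbsb)$ are well defined. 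I do not expect a real obstacle: the only two points needing care are (i) recognising that the unfairness is evaluated at a fresh $\bsX$ independent of $S$ --- precisely what makes the group-conditional predictive laws common translates and collapses the barycenter problem to the scalar minimisation of $m \mapsto \sum_{s = 1}^K w_s (c_s - m)^2$ --- and (ii) the single quantitative ingredient, the bound $\sW_2^2 \geq (\text{difference of means})^2$ and its attainment for equal-variance Gaussians.
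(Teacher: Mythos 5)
Your proof is correct and follows essentially the same route as the paper: condition on the training data so that $\hbeta,\hbsb$ are fixed, identify $\Law(\hat f_{\tau}(\bsX,S)\mid S=s)$ as a Gaussian $\class{N}(c_s, \hbeta^\top\bfSigma\hbeta)$ with common variance, locate the Wasserstein-2 barycenter, and apply the closed form for $\sW_2$ between equal-variance Gaussians. The only (minor) difference is that you rederive the barycenter via the $1$-Lipschitz mean map plus explicit attainment instead of citing the Gaussian barycenter formula (Lemma~\ref{lem:baryc_gaus}), which is a clean, self-contained variant of the same step.
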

Apart from being computable in practice, Lemma~\ref{lem:unfairness_estimator1} provides an intuitive result that $\class{U}(\hat f_{\tau})$ is the variance of the bias term $\hbsb$.

\subsection{Upper bound}
Linear regression is one of the most well-studied problems of statistics~\citep{nemirovski2000topics,tsybakov2003optimal,gyorfi2006distribution,mourtada2019exact,catoni:hal-00104952,hsu2012random,audibert2011robust}.
In the context of fairness, linear regression is considered in~\citep{calders2013controlling,berk2017convex,Donini_Oneto_Ben-David_Taylor_Pontil18}, where the fairness constraint is formulated via the approximate equality of group-wise means. In this section we establish a statistical guarantee on the risk and fairness of $\hat f_{\tau}$ for an appropriate data-driven choice of $\tau$.
Our theoretical analysis in this part is inspired by that of~\cite{hsu2012random}, who derived high probability bounds on least squares estimator for linear regression with random design.

The following rate plays a crucial rule in the analysis of this section
\begin{align*}
    \delta_n(p, K, t) = 8\parent{\frac{p}{n} + \frac{K}{n}} + 16\parent{\sqrt{\frac{p}{n}} + \sqrt{\frac{K}{n}}}\sqrt{\frac{t}{n}}
        +
        \frac{32t}{n}\enspace.
\end{align*}
Not taking into account the confidence parameter $t > 0$, $\delta_n(p, K, t) \asymp  {(p + K)}/{n}$ up to a constant multiplicative factor, which as it is shown in Theorem~\ref{thm:lower_linear} is the minimax optimal rate for the model in Eq.~\eqref{eq:model_linear} without the fairness constraint.
    \begin{theorem}[Fairness and risk upper bound]
        \label{thm:real_final}
        Define
        \begin{align*}
            &\hat\tau =
            \begin{cases}
              \alpha\parent{1 + \frac{\sigma{\delta^{\sfrac{1}{2}}_n(p, K, t)}}{{\class{U}^{\sfrac{1}{2}}(\hat f_1)} - \sigma{\delta^{\sfrac{1}{2}}_n(p, K, t)}}}^{-2} &\text{ if }\quad {\class{U}^{\sfrac{1}{2}}(\hat f_1)} > \sigma{\delta^{\sfrac{1}{2}}_n(p, K, t)}\\
              0, &\text{ otherwise}
            \end{cases}\enspace.
        \end{align*}
        Consider $p, K \in \bbN, t \geq 0$ and define $\theta(p, K, t) = \sfrac{(4\sqrt{K} + 5\sqrt{t} + 6\sqrt{p})}{(\sqrt{p}+\sqrt{t})}$.
        Assume that $\sqrt{n} \geq 2\sfrac{(\sqrt{p}+\sqrt{t})}{(\theta(p, K, t) - \sqrt{\theta^2(p, K, t) - 3})}$. Then, for any $\alpha \in [0, 1]$, with probability at least $1 - 4\exp(-t/2)$ it holds that
    \begin{align*}
        \class{U}(\hat f_{\hat\tau}) \leq \alpha\,\class{U}(f^*)
         \quad\text{and}\quad
        \risk^{\sfrac{1}{2}}(\hat f_{\hat\tau})
        \leq
        2\sigma(1 {+} \sqrt{\alpha}){\delta^{\sfrac{1}{2}}_n(p, K, t)}
        +
        (1 {-} \sqrt{\alpha})\,\class{U}^{\sfrac{1}{2}}(f^*)
        \enspace.
     \end{align*}
\end{theorem}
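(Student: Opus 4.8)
The plan is to decouple the analysis into two pieces: (i) a high-probability bound on the estimation error of the least-squares parameters $(\hbeta, \hbsb)$, and (ii) an algebraic analysis of how the data-driven choice $\hat\tau$ propagates this error into the fairness and risk guarantees via the explicit formulas from Lemma~\ref{lem:unfairness_estimator1} and Proposition~\ref{prop:optimal_alpha}. The main work is step (i); step (ii) is careful bookkeeping once the right deterministic event is isolated.

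\medskip
\noindent\textbf{Step 1: Control the least-squares error.} Following \cite{hsu2012random}, I would first write the least-squares problem in Eq.~\eqref{eq:LS} in the augmented design $\tilde{\bfX}$ that stacks $\bfX_s$ with the group-indicator columns, so that the unknown is $(\bbeta^*, \bsb^*) \in \bbR^{p + K}$. The key quantity is $\risk(\hat f_1) = \norm{\hat f_1 - f^*}_2^2$, which (because $\hat f_1$ uses the raw estimates $(\hbeta,\hbsb)$) equals a quadratic form in the parameter error that can be written as a population-norm of the prediction error. The goal is the bound $\risk(\hat f_1) \leq \sigma^2 \delta_n(p,K,t)$ with probability $\geq 1 - c\exp(-t/2)$. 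This requires: (a) a lower bound on the smallest eigenvalue of the empirical Gram matrix relative to its population counterpart (a Gaussian-design concentration argument, where the condition $\sqrt{n} \geq 2(\sqrt p + \sqrt t)/(\theta - \sqrt{\theta^2 - 3})$ enters to make the empirical Gram matrix invertible and well-conditioned on the relevant event); (b) a bound on the noise projected onto the $(p+K)$-dimensional column space, which is a $\chi^2_{p+K}$-type tail giving the $\sqrt{(p+K)/n}\cdot\sqrt{t/n}$ and $t/n$ cross-terms; combining (a) and (b) produces exactly the three-term structure of $\delta_n(p, K, t)$.

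\medskip
\noindent\textbf{Step 2: Fairness guarantee.} By Lemma~\ref{lem:unfairness_estimator1}, $\class{U}(\hat f_\tau) = \tau\, V(\hbsb)$ where $V(\hbsb) = \sum_s w_s(\hb_s - \sum_{s'} w_{s'}\hb_{s'})^2 = \class{U}(\hat f_1)$, and similarly $\class{U}(f^*) = V(\bsb^*)$. On the event of Step 1, the map $\bsb \mapsto V^{1/2}(\bsb)$ is $1$-Lipschitz in the relevant seminorm, so $|\,\class{U}^{1/2}(\hat f_1) - \class{U}^{1/2}(f^*)\,| \leq \sigma\,\delta_n^{1/2}(p,K,t)$, hence $\class{U}^{1/2}(f^*) \geq \class{U}^{1/2}(\hat f_1) - \sigma\delta_n^{1/2}$. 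In the nontrivial case $\class{U}^{1/2}(\hat f_1) > \sigma\delta_n^{1/2}$, plugging the definition of $\hat\tau$ gives
\begin{align*}
\class{U}(\hat f_{\hat\tau}) = \hat\tau\,\class{U}(\hat f_1) = \alpha\Big(\tfrac{\class{U}^{1/2}(\hat f_1) - \sigma\delta_n^{1/2}}{\class{U}^{1/2}(\hat f_1)}\Big)^2 \class{U}(\hat f_1) = \alpha\big(\class{U}^{1/2}(\hat f_1) - \sigma\delta_n^{1/2}\big)^2 \leq \alpha\,\class{U}(f^*)\enspace,
\end{align*}
and in the trivial case $\hat\tau = 0$ so $\class{U}(\hat f_0) = 0 \leq \alpha\class{U}(f^*)$.

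\medskip
\noindent\textbf{Step 3: Risk guarantee.} Decompose via the triangle inequality in $\norm{\cdot}_2$: writing $\hat f_{\hat\tau} = \sqrt{\hat\tau}\hat f_1 + (1-\sqrt{\hat\tau})\hat f_0$ and $f^*_\alpha = \sqrt{\alpha}f^* + (1-\sqrt{\alpha})f^*_0$, I would bound $\risk^{1/2}(\hat f_{\hat\tau}) = \norm{\hat f_{\hat\tau} - f^*}_2 \leq \norm{\hat f_{\hat\tau} - f^*_\alpha}_2 + \norm{f^*_\alpha - f^*}_2$. The second term is $(1-\sqrt{\alpha})\class{U}^{1/2}(f^*)$ by Lemma~\ref{lem:distance_fair_and_almost}. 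For the first term, further split by adding and subtracting $\sqrt{\hat\tau}f^* + (1-\sqrt{\hat\tau})f^*_0$: one piece is $|\sqrt{\hat\tau} - \sqrt{\alpha}|\cdot\norm{f^* - f^*_0}_2 = |\sqrt{\hat\tau}-\sqrt\alpha|\,\class{U}^{1/2}(f^*)$, controlled because $\hat\tau \leq \alpha$ and $\sqrt\alpha - \sqrt{\hat\tau} \leq \sigma\delta_n^{1/2}/\class{U}^{1/2}(f^*)$ times a bounded factor (from the definition of $\hat\tau$ and Step 1), making this piece $O(\sigma\delta_n^{1/2})$; the other piece is $\norm{\sqrt{\hat\tau}(\hat f_1 - f^*) + (1-\sqrt{\hat\tau})(\hat f_0 - f^*_0)}_2 \leq \sqrt{\hat\tau}\risk^{1/2}(\hat f_1) + (1-\sqrt{\hat\tau})\norm{\hat f_0 - f^*_0}_2$, and both $\risk^{1/2}(\hat f_1)$ and $\norm{\hat f_0 - f^*_0}_2$ are $\leq \sigma\delta_n^{1/2}$ on the Step-1 event (the latter because $\hat f_0$ and $f^*_0$ are the group-averaged versions of $\hat f_1$ and $f^*$, and averaging is a contraction). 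Collecting the constants yields $\risk^{1/2}(\hat f_{\hat\tau}) \leq 2\sigma(1+\sqrt\alpha)\delta_n^{1/2}(p,K,t) + (1-\sqrt\alpha)\class{U}^{1/2}(f^*)$.

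\medskip
\noindent\textbf{Main obstacle.} The delicate part is Step 1 — obtaining the precise constants ($8$, $16$, $32$) in $\delta_n(p,K,t)$ from the Gaussian-design least-squares analysis, and in particular controlling the restricted eigenvalue of the augmented Gram matrix uniformly enough that the condition on $\sqrt n$ suffices; the quadratic $\theta^2 - 3$ under the square root strongly suggests the bound is extracted from a quadratic inequality in $\sqrt{p/n}$-type quantities, so reverse-engineering that inequality to match the stated threshold is where the real care is required. A secondary subtlety is handling the boundary/degenerate case $\class{U}^{1/2}(\hat f_1) \leq \sigma\delta_n^{1/2}$ cleanly so that the risk bound still holds with $\hat\tau = 0$, which needs $\class{U}^{1/2}(f^*) \leq 2\sigma\delta_n^{1/2}$ on the good event there, again from the Lipschitz argument of Step 2.
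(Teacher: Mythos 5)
Your proposal is correct and uses the same underlying concentration event (the paper's event $\class{A}$ from Theorem~\ref{thm:random_design}, which is exactly your $\risk(\hat f_1)\leq\sigma^2\delta_n$), the same Lipschitz estimate on $\class{U}^{\sfrac12}$ (this is Eq.~\eqref{eq:unfairness_bound2} of Lemma~\ref{lem:unfairness_estimator2}, which gives $|\class{U}^{\sfrac12}(\hat f_1)-\class{U}^{\sfrac12}(f^*)|\le\{\sum_s w_s(\hb_s-b^*_s)^2\}^{\sfrac12}\le\sigma\delta_n^{\sfrac12}$ on $\class{A}$), and the same case split on whether $\class{U}^{\sfrac12}(\hat f_1)>\sigma\delta_n^{\sfrac12}$. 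Your Step~2 (fairness) is a cleaner version of the paper's: you plug the exact identity $\class{U}(\hat f_\tau)=\tau\,\class{U}(\hat f_1)$ from Lemma~\ref{lem:unfairness_estimator1} directly into the definition of $\hat\tau$, getting $\class{U}(\hat f_{\hat\tau})=\alpha(\class{U}^{\sfrac12}(\hat f_1)-\sigma\delta_n^{\sfrac12})^2\le\alpha\,\class{U}(f^*)$, whereas the paper routes this through the looser bound of Theorem~\ref{thm:final}. These deliver the same conclusion.

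Where you genuinely diverge from the paper is the risk bound. The paper's strategy is ``bound $\risk(\hat f_\tau)$ in terms of $\hat f_1$'': Lemma~\ref{lem:general_risk_control} and Theorem~\ref{thm:final} expand the quadratic and apply Cauchy--Schwarz to get $\risk^{\sfrac12}(\hat f_\tau)\le\sigma\delta_n^{\sfrac12}+(1-\sqrt{\tau})\class{U}^{\sfrac12}(\hat f_1)$, and then the choice of $\hat\tau$ and the Lipschitz bound convert this into the final expression. You instead triangulate through the population oracle: $\risk^{\sfrac12}(\hat f_{\hat\tau})\le\|\hat f_{\hat\tau}-f^*_\alpha\|_2+\|f^*_\alpha-f^*\|_2$, using the shared convex-combination structure $\hat f_\tau=\sqrt\tau\hat f_1+(1-\sqrt\tau)\hat f_0$ and $f^*_\alpha=\sqrt\alpha f^*+(1-\sqrt\alpha)f^*_0$ (valid in the linear model, by the proposition after Lemma~\ref{lem:unfairness_estimator1}). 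The second term is exactly $(1-\sqrt\alpha)\class{U}^{\sfrac12}(f^*)$ by Lemma~\ref{lem:distance_fair_and_almost}; the first term splits into $\sqrt{\hat\tau}\|\hat f_1-f^*\|_2+(1-\sqrt{\hat\tau})\|\hat f_0-f^*_0\|_2+|\sqrt{\hat\tau}-\sqrt\alpha|\,\class{U}^{\sfrac12}(f^*)$, and on $\class{A}$ the first two summands are each $\le\sigma\delta_n^{\sfrac12}$ (for $\|\hat f_0-f^*_0\|_2$, note that replacing $\hb_s\to\sum_{s'}w_{s'}\hb_{s'}$ is indeed a contraction by Jensen). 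The remaining term $|\sqrt{\hat\tau}-\sqrt\alpha|\class{U}^{\sfrac12}(f^*)=\sqrt\alpha\,\sigma\delta_n^{\sfrac12}\cdot\class{U}^{\sfrac12}(f^*)/\class{U}^{\sfrac12}(\hat f_1)$ is $<2\sqrt\alpha\,\sigma\delta_n^{\sfrac12}$ in Case~1 because $\class{U}^{\sfrac12}(f^*)/\class{U}^{\sfrac12}(\hat f_1)<2$ there, and a direct check handles Case~2 using $\class{U}^{\sfrac12}(f^*)\le2\sigma\delta_n^{\sfrac12}$. Collecting the pieces gives $(1+2\sqrt\alpha)\sigma\delta_n^{\sfrac12}+(1-\sqrt\alpha)\class{U}^{\sfrac12}(f^*)$, which is in fact \emph{slightly sharper} than the paper's $2(1+\sqrt\alpha)\sigma\delta_n^{\sfrac12}+(1-\sqrt\alpha)\class{U}^{\sfrac12}(f^*)$ and hence implies it. Your decomposition is conceptually transparent: it isolates the ``statistical estimation'' error $\|\hat f_{\hat\tau}-f^*_\alpha\|_2$ from the ``fairness price'' $\|f^*_\alpha-f^*\|_2$, mirroring the form of the minimax rate. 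The trade-off is that the paper's route does not need the explicit convex-combination representation of $\hat f_\tau$ and $f^*_\alpha$ (it works through the estimated unfairness $\class{U}(\hat f_1)$ only), while yours leans on that structure, which happens to be available in this model.

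You correctly flagged that the technical weight lies in establishing the concentration event itself (the paper's Lemma~\ref{lem:fixed_design} and Theorem~\ref{thm:random_design}, controlling the smallest eigenvalue of the normalized augmented Gram matrix $\widehat{\bfPsi}$ relative to $\bfPsi$, together with $\chi^2$ tails for the noise projection); that part is intentionally abstracted away in your Step~1 sketch.
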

Theorem~\ref{thm:real_final} simultaneously provides two results: first, it shows that the estimator $\hat f_{\hat \tau}$ is $(\alpha, 4e^{-t/2})$-valid, that is, it satisfies the fairness constraint with high probability; second it provides the rate of convergence which consists of two parts.
The first part of the rate, $\sigma\delta_n^{\sfrac{1}{2}}(p, K, t)$, is the price of statistical estimation of $(\bbeta^*, \bsb^*)$, while the second part, $(1 {-} \sqrt{\alpha})\class{U}^{\sfrac{1}{2}}(f^*)$, {is the price one has to pay when introducing the $\alpha$-RI fairness constraint.}
In order to achieve the fairness validity, we need to loosen the value of $\alpha$ to reflect the base level of unfairness, that is, $\hat\tau$ is adjusted by $\class{U}(\hat f_1)$.
Let us point out that the bound of Theorem~\ref{thm:real_final} slightly differs from the conditions required by Eqs.~\eqref{eq:uniform-fairness}--\eqref{eq:uniform-risk}. In particular, it provides a joint guarantee on risk and fairness.

Let us remark that the previous result requires $n$ to be sufficiently large, similarly to the conditions in~\citep{hsu2012random, audibert2011robust}.
One can obtain a more explicit, but more restrictive bound on $n$ by finding sufficient conditions under which the assumption on $n$ is satisfied. For instance, rough computations show that it is sufficient to assume that $\sqrt{n} \geq 16\sqrt{K}$ and $\sqrt{n}\geq 12.5(\sqrt{p} + \sqrt{t})$.

At last, we emphasize that the choice of $\hat\tau$ requires the knowledge of the noise level $\sigma$, that is, this choice is not adaptive.
However, our proof can effortlessly be extended to the case when only an upper bound $\bar{\sigma}$ on the noise level $\sigma$ is known.
In this case $\sigma$ should be replaced by $\bar\sigma$ in the definition of $\hat\tau$ and in the resulting rate.
The question of adaptation to $\sigma$ without any prior knowledge should be treated separately and is out of the scope of this work.

\begin{figure}[!t]
\centering
\includegraphics[width=0.49\textwidth]{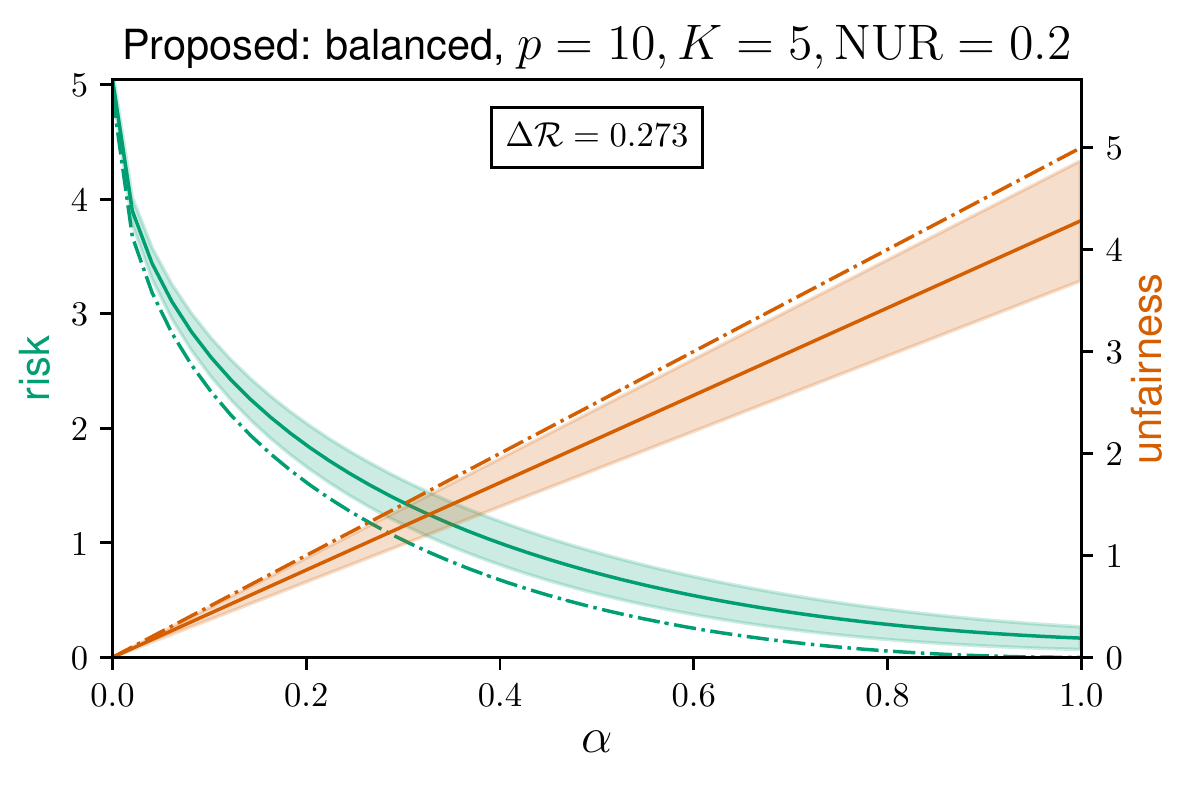}
\includegraphics[width=0.49\textwidth]{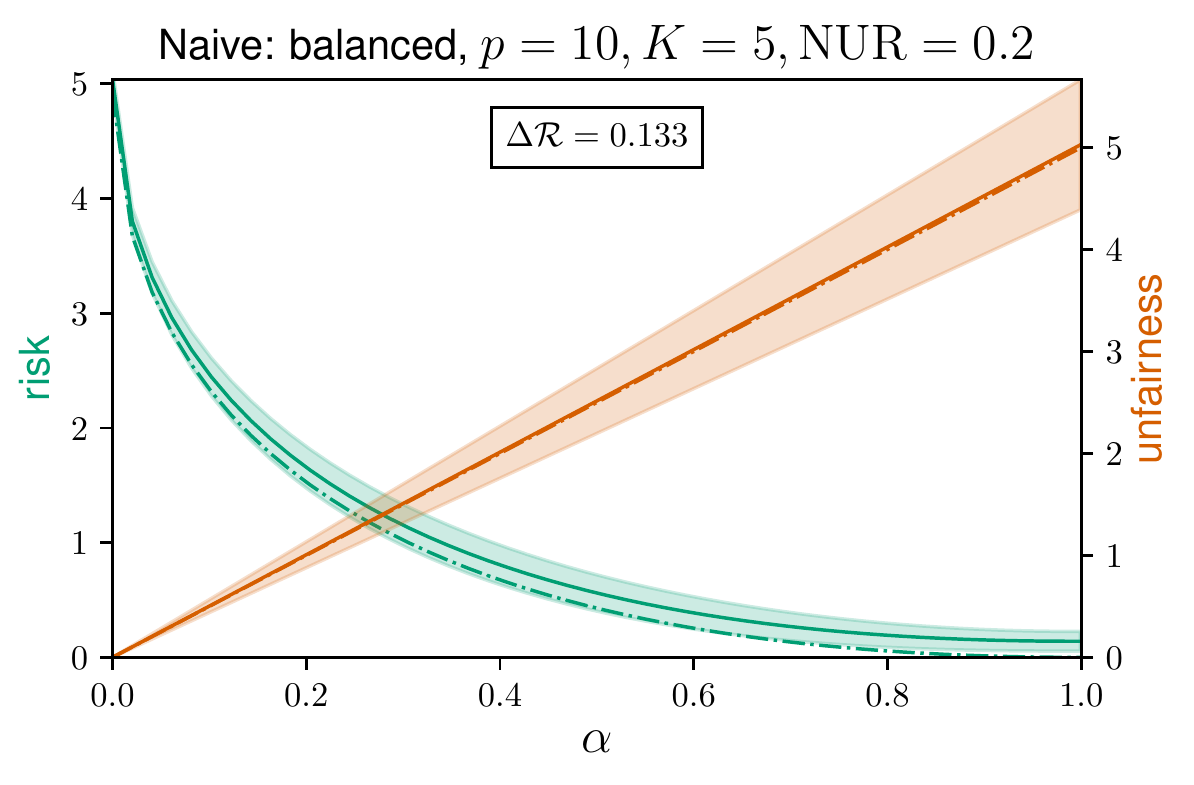}
\includegraphics[width=0.49\textwidth]{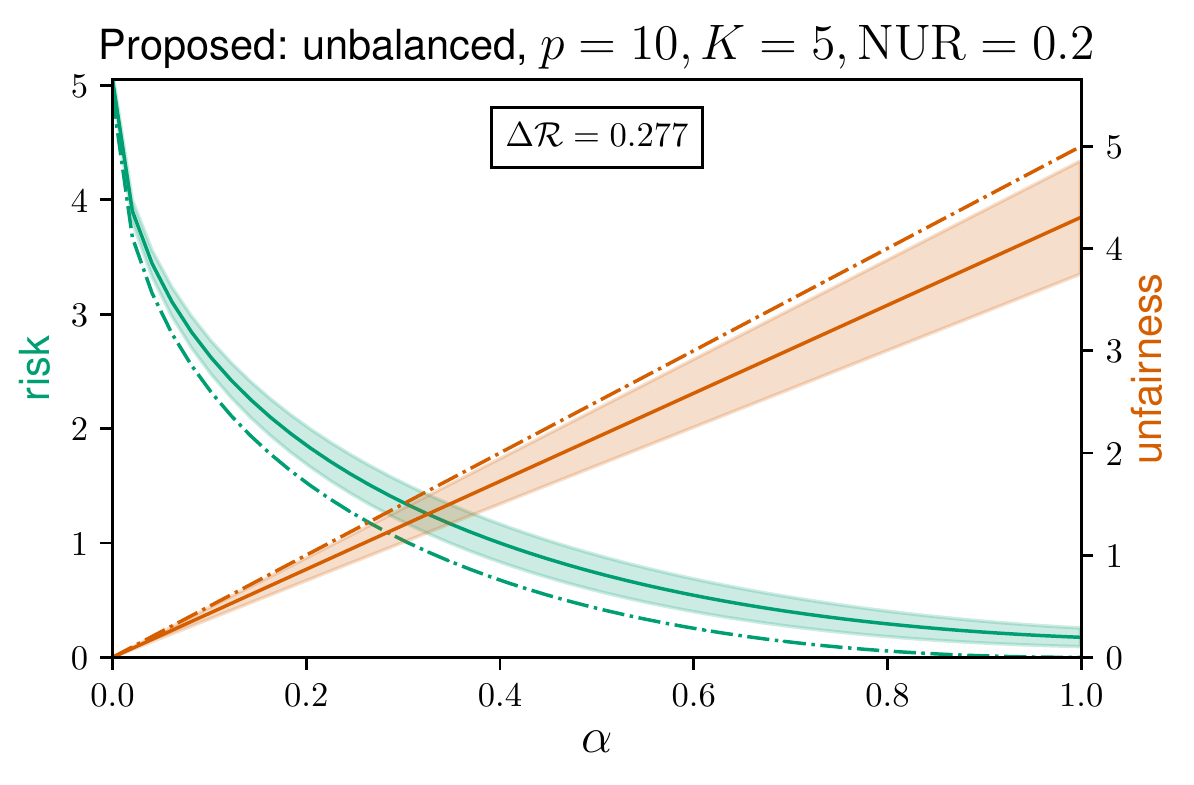}
\includegraphics[width=0.49\textwidth]{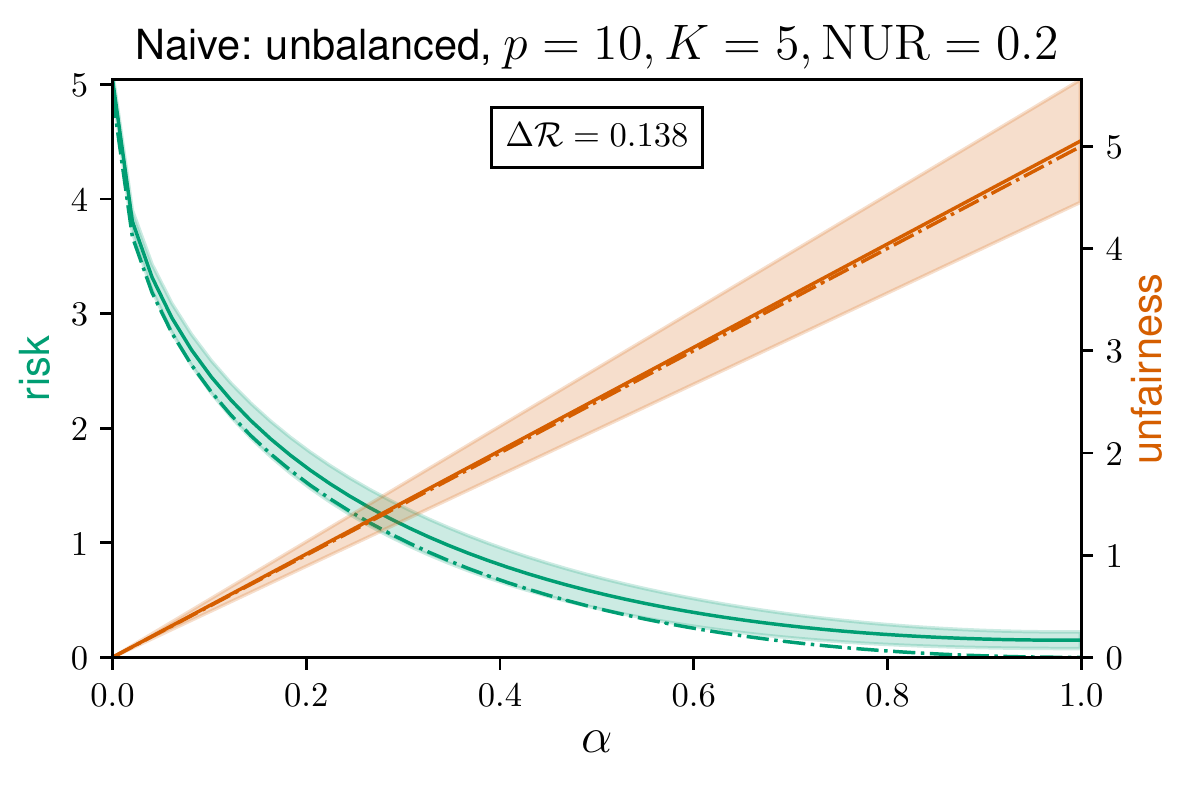}
\caption{Dashed green and brown lines correspond to the risk and unfairness of $f^*_{\alpha}$ respectively. Solid green and brown lines correspond to the average risk and unfairness of $\hat f_{\tau(\alpha)}$ and the shaded region shows three standard deviations over $50$ repetitions. On the left $\tau(\alpha) = \hat\tau$ and on the risk $\tau(\alpha) = \alpha$.}
\label{fig:exp1}
\end{figure}

\subsection{Lower bound}
The goal of this section is to provide a lower bound, demonstrating that the result of Theorem~\ref{thm:real_final} is minimax optimal up to a multiplicative constant factor.
Recall that thanks to the general lower bound derived in Theorem~\ref{THM:GENERAL_LOWER} it is sufficient to prove a lower bound on the risk without constraining the set of possible estimators.
Even though the problem of linear regression is well studied, to the best of our knowledge
there is no known lower bound for the model in Eq.~\eqref{eq:model_linear} which \emph{i)} holds for the random design \emph{ii)} is stated in probability \emph{iii)} considers explicitly the confidence parameter $t$.
Next theorem establishes such lower bound.
\begin{theorem}
    \label{thm:lower_linear}
    For all $n, p, K \in \bbN$, $t \geq 0$, $\sigma > 0$ it holds that
    \begin{align*}
       \inf_{\hat f}\sup_{(\bbeta^*, \bsb^*) \in \bbR^p \times \bbR^K, \bfSigma \succ 0} \Probf_{(\bbeta^*, \bsb^*)}\parent{\risk(\hat f) \geq \frac{\sigma^2}{3 \cdot 2^9 n} (\sqrt{p+K} + \sqrt{32t})^2} \geq \frac{1}{12}e^{-t}\enspace,
    \end{align*}
    where the infimum is taken \wrt all estimators.
\end{theorem}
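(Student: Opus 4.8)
The plan is to strip the problem down to a Gaussian-location-type lower bound and then apply the two standard information-theoretic devices: Le Cam's two-point method for the confidence term and Fano's inequality for the dimension term. First I would restrict the supremum to $\bfSigma = \Id_p$, which only decreases it and is therefore harmless. Next, observe that it suffices to consider \emph{linear} estimators: for an arbitrary estimator $\hat f$ (with $\risk(\hat f) = \infty$ the claim is trivial, so assume $\hat f \in L^2$), let $\hat\gamma = (\hbeta, \hbsb) \in \bbR^{p+K}$ collect the coordinates of the $L^2$-orthogonal projection of $\hat f$ onto the subspace of linear predictors $(\bsx, s) \mapsto \scalar{\bsx}{\bbeta} + b_s$; since the cross terms vanish under a centered Gaussian design, Pythagoras gives $\risk(\hat f) \ge \|\hbeta - \bbeta^*\|_2^2 + \sum_s w_s(\hb_s - b_s^*)^2 =: \|\hat\gamma - \gamma^*\|_W^2$, where $\gamma^* = (\bbeta^*, \bsb^*)$, $W \succ 0$ is the block-diagonal matrix with blocks $\Id_p$ and $\diag(w_1, \dots, w_K)$, and $\hat\gamma$ is itself a bona fide estimator. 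A direct computation — the law of the design and of $S$ do not depend on the parameter, so only the Gaussian noise contributes — yields the exact identity $\KL(\Probf_{(\bbeta, \bsb)} \| \Probf_{(\bbeta', \bsb')}) = \tfrac{n}{2\sigma^2}\|\gamma - \gamma'\|_W^2$. After the whitening $\theta = W^{1/2}\gamma$ the task reduces to: for a family with $\KL(\Probf_\theta \| \Probf_{\theta'}) = \tfrac{n}{2\sigma^2}\|\theta - \theta'\|_2^2$ on $\bbR^d$, $d := p+K$, lower bound $\inf_{\hat\theta}\sup_\theta \Probf_\theta\big(\|\hat\theta - \theta\|_2^2 \ge \tfrac{\sigma^2}{3\cdot 2^9 n}(\sqrt d + \sqrt{32t})^2\big)$ by $\tfrac1{12}e^{-t}$.

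I would then split on whether $32t \ge d$. In the confidence-dominated regime $32t \ge d$, take $\theta^{(0)} = 0$ and $\theta^{(1)}$ with $\|\theta^{(1)}\|_2^2 = 2\sigma^2 t/n$, so $\KL = t$. Using the test $\psi = \ind{\|\hat\theta\|_2 \ge \|\theta^{(1)}\|_2/2}$ together with the Bretagnolle--Huber inequality $1 - \TV(P,Q) \ge \tfrac12 e^{-\KL(P\|Q)}$, one gets $\max_{j\in\{0,1\}}\Probf_{\theta^{(j)}}\big(\|\hat\theta - \theta^{(j)}\|_2^2 \ge \tfrac{\sigma^2 t}{2n}\big) \ge \tfrac14 e^{-t}$; since here $(\sqrt d + \sqrt{32t})^2 \le 128t$, the claimed threshold is $\le \tfrac{\sigma^2 t}{12 n} \le \tfrac{\sigma^2 t}{2n}$ and the bound follows with probability $\ge \tfrac14 e^{-t} \ge \tfrac1{12} e^{-t}$. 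In the dimension-dominated regime $32t < d$ (where $t < 1$, so $e^{-t}$ is within a constant of $1$ and only a constant probability is needed), I use a Varshamov--Gilbert packing $\{\theta^{(\omega)} = r\omega : \omega \in \Omega\} \subset \{0,r\}^d$ with $|\Omega| \ge 2^{d/8}$ and pairwise Hamming distances in $[d/8, d]$, taking $r^2 \asymp \sigma^2/n$ so that the largest pairwise $\KL$ is $\lesssim d$; coordinatewise rounding of $\hat\theta$ to $\{0, r\}$ turns a low-risk estimator into a test, and Fano's inequality gives $\|\hat\theta - \theta^{(\omega)}\|_2^2 \gtrsim \sigma^2 d/n$ with probability at least an absolute constant $\ge \tfrac1{12}$ once $d$ exceeds a small absolute threshold; the finitely many remaining small-$d$ cases, where everything is of order $\sigma^2/n$, are covered by one more two-point argument. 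Since $(\sqrt d + \sqrt{32t})^2 \le 4d$ in this regime, the claimed threshold is again dominated.

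The conceptual content lies entirely in the first paragraph; the rest is bookkeeping, which is where the main obstacle is. Concretely, one must (i) tune the two-point and packing radii so that the numerical constants land exactly on $3 \cdot 2^9$, $\sqrt{32}$, and $\tfrac1{12}$; (ii) fix the crossover between the Fano argument and the small-dimension two-point patch so that the error probability is genuinely at least $\tfrac1{12}$ for \emph{all} $n, p, K \in \bbN$ (including $p = 0$ and $n = 1$); and (iii) handle the measurability and, if needed, the randomization in passing from $\hat f$ to $\hat\gamma$ and in the reductions to testing. Finally, it is worth noting that Theorem~\ref{thm:lower_linear} is exactly the verification that Assumption~\ref{ass:lower_no_fair} holds for the model of Eq.~\eqref{eq:model_linear} with $\delta_n(\class F, \Theta, t) \asymp \tfrac{\sigma^2}{n}(\sqrt{p+K} + \sqrt t)^2$, which matches the upper-bound rate of Theorem~\ref{thm:real_final} and, via Theorem~\ref{THM:GENERAL_LOWER}, yields minimax optimality of $\hat f_{\hat\tau}$ under the $\alpha$-RI constraint.
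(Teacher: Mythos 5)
Your plan is conceptually sound and would produce a bound of the right form, but it is a genuinely different route from the paper's. The paper runs a \emph{single} unified argument: it takes a Varshamov--Gilbert packing of $\{0,1\}^{p+K}$ (Lemma~\ref{lem:varshamov}), whitens by $\bfPsi^{-1/2}$ exactly as you do, but then scales the radius by the factor $1 + \sqrt{t/(p+K)}$ so that the confidence parameter is \emph{built into the packing separation and the KL budget simultaneously}. The punchline is Lemma~\ref{lem:expLB}, which is a restatement of Bellec's sharp many-hypotheses bound (Lemma~\ref{lem:Bellec}): $\max_j \Qprobf_j(A_j^c) \geq \tfrac{1}{12}\min\bigl(1, M e^{-3\kappa}\bigr)$. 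Unlike classical Fano, this bound already decays \emph{exponentially} in the KL budget $\kappa$, so taking $\kappa \asymp \log M + t$ and tuning $\varphi$ so that $M^{1-96\varphi^2} = 1$ (i.e., $\varphi = 1/\sqrt{96}$) makes the $\log M$ part cancel and leaves exactly $e^{-t/32}$, which after the reparametrization $t \mapsto 32t$ gives the stated $\tfrac{1}{12}e^{-t}$ at threshold $\tfrac{\sigma^2}{1536 n}(\sqrt{p+K}+\sqrt{32t})^2$. There is no case split and no small-$d$ patch: the same packing and the same lemma cover all $n, p, K, t$ at once. By contrast, your scheme replaces this one lemma by two classical devices --- Bretagnolle--Huber / Le Cam for the confidence-dominated regime and Fano for the dimension-dominated regime --- and you correctly anticipate that the resulting bookkeeping (the small-$d$ patch, and forcing the constants to land on $3\cdot 2^9$, $\sqrt{32}$, $\tfrac{1}{12}$) is where the real effort lies; the paper's approach is precisely designed to avoid that. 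One more simplification you are entitled to: the reduction to linear estimators by $L^2$-projection is harmless but unnecessary. Lemma~\ref{lem:expLB} works directly with $\risk(\hat f)$ for an arbitrary estimator, because the events $\{\risk_j(\hat f) < s\}$, $j = 0, \dots, M$, are pairwise disjoint whenever the hypothesis separation exceeds $4s$ (a two-line triangle-inequality argument in its proof), so no passage to $\hat\gamma$ or any rounding-to-a-test step is needed.
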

The proof of Theorem~\ref{thm:lower_linear} relies on standard information theoretic results.
In particular, in order to prove optimal exponential concentration we follow similar strategy as that of~\cite{bellec2017optimal,kerkyacharian2014optimal} who derived optimal exponential concentrations in the context of density aggregation and binary classification.
Theorem~\ref{thm:lower_linear} combined with generic lower bound derived in Theorem~\ref{THM:GENERAL_LOWER} yields  the following corollary.
\begin{corollary}
    \label{cor:lower_bound_final}
    Let $\bar{\delta}_n(p, K, t) = (\sqrt{\sfrac{(p + K)}{n}} + \sqrt{\sfrac{32 t}{n}})^2 / (3 \cdot 2^9)$.
    For all $n, p, K \in \bbN$, $t \geq 0$, $\sigma > 0$, $\alpha \in [0, 1]$ it holds for all $t \geq 0$ and all $t' \leq 1 - \sfrac{e^{-t}}{12}$ that
    \begin{align*}
         \inf_{\hat f \in \widehat{\class{F}}_{\alpha, t'}}\sup_{(\bbeta^*, \bsb^*) \in \bbR^p \times \bbR^K, \bfSigma \succ 0} \Probf_{(\bbeta^*, \bsb^*)}\parent{\risk^{\sfrac{1}{2}}(\hat f) \geq {\sigma}\bar{\delta}^{\sfrac{1}{2}}_n(p, K, t) \vee (1 {-} \sqrt{\alpha})\,\class{U}^{\sfrac{1}{2}}(f^*)} \geq \frac{1}{12}e^{-t}\enspace.
    \end{align*}
\end{corollary}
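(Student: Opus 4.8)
The plan is to obtain Corollary~\ref{cor:lower_bound_final} as a direct instantiation of the generic reduction in Theorem~\ref{THM:GENERAL_LOWER}, fed with the unconstrained lower bound of Theorem~\ref{thm:lower_linear}. The whole argument reduces to two things: (i) checking that the hypotheses of Theorem~\ref{THM:GENERAL_LOWER} hold for the linear model of Eq.~\eqref{eq:model_linear}, and (ii) bookkeeping the confidence parameters so that the final probability reads $\tfrac{1}{12}e^{-t}$.

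First I would pin down the statistical model in the notation of Section~\ref{sec:minimax_setup}: the class $\class{F}$ is $\{(\bsx, s) \mapsto \scalar{\bsx}{\bbeta} + b_s : (\bbeta, \bsb) \in \bbR^p \times \bbR^K\}$ and the nuisance set is $\Theta = \{(\bfSigma, \sigma) : \bfSigma \succ 0\}$ with $\sigma$ the fixed, known noise level. As observed right after Eq.~\eqref{eq:model_linear}, Assumption~\ref{as:atomless} holds here, since each $\nu^*_s = \Law(\scalar{\bsX}{\bbeta^*} + b^*_s)$ is a non-degenerate Gaussian (because $\bfSigma \succ 0$), hence non-atomic with finite second moment; thus Theorem~\ref{THM:GENERAL_LOWER} applies to this model.

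Second I would recast Theorem~\ref{thm:lower_linear} as an instance of Assumption~\ref{ass:lower_no_fair}. Fixing $t \geq 0$, I set the confidence level appearing in Assumption~\ref{ass:lower_no_fair} to $\tilde t \eqdef \tfrac{1}{12}e^{-t} \in (0, 1)$ and take the sequence
\begin{align*}
    \delta_n(\class{F}, \Theta, \tilde t) \eqdef \sigma^2\bar{\delta}_n(p, K, t) = \frac{\sigma^2}{3 \cdot 2^9 n}\big(\sqrt{p + K} + \sqrt{32 t}\big)^2\enspace.
\end{align*}
With these choices, Theorem~\ref{thm:lower_linear} is verbatim the statement of Assumption~\ref{ass:lower_no_fair} for $(\class{F}, \Theta)$ with rate $\delta_n$ and confidence level $\tilde t$.

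Third I would invoke Theorem~\ref{THM:GENERAL_LOWER} with this $\delta_n$, with its internal confidence parameter equal to $\tilde t$, and with the fairness validity level $t'$; it yields, for every $\alpha \in [0, 1]$, that uniformly over $\widehat{\class{F}}_{(\alpha, t')}$ one has $\risk^{\sfrac{1}{2}}(\hat f) \geq \sigma\bar{\delta}_n^{\sfrac{1}{2}}(p, K, t) \vee (1 {-} \sqrt{\alpha})\class{U}^{\sfrac{1}{2}}(f^*)$ with probability at least $\tilde t \wedge (1 - t')$. The hypothesis $t' \leq 1 - \tfrac{e^{-t}}{12}$ gives $1 - t' \geq \tilde t$, so the minimum collapses to $\tfrac{1}{12}e^{-t}$, which is exactly the claim. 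The only point that requires care --- and the ``main obstacle'', to the extent there is one --- is this juggling of the three confidence levels ($\tfrac{1}{12}e^{-t}$ in Theorem~\ref{thm:lower_linear}, the internal level of Assumption~\ref{ass:lower_no_fair}, and the validity level $t'$); everything else is a plain substitution, since $\delta_n(\class{F}, \Theta, t)$ in the corollary is, by the very definition of $\bar{\delta}_n$, literally $\sigma^2\bar{\delta}_n(p, K, t)$.
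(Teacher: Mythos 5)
Your proposal is correct and is exactly the argument the paper intends (the paper offers no written proof beyond the sentence ``Theorem~\ref{thm:lower_linear} combined with generic lower bound derived in Theorem~\ref{THM:GENERAL_LOWER} yields the following corollary''). You correctly identify the model and nuisance classes, verify Assumption~\ref{as:atomless} via non-degenerate Gaussianity, match $\sigma^2\bar{\delta}_n(p,K,t)$ to the rate of Theorem~\ref{thm:lower_linear}, set the confidence in Assumption~\ref{ass:lower_no_fair} to $\tfrac{1}{12}e^{-t}$, and observe that the hypothesis $t' \leq 1 - \tfrac{e^{-t}}{12}$ collapses the minimum $\tilde t \wedge (1-t')$ to $\tfrac{1}{12}e^{-t}$; this is precisely the bookkeeping the paper leaves to the reader.
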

Comparing the upper bound of Theorem~\ref{thm:real_final} and the lower bound of Corollary~\ref{cor:lower_bound_final} we conclude that the two obtained rates are the same up to a multiplicative constant factor.
Hence confirming the tightness of the results derived in Section~\ref{sec:general_lower}.

\subsection{Simulation study}
\label{sec:simulation}

\begin{figure}[!t]
\centering
\includegraphics[width=0.49\textwidth]{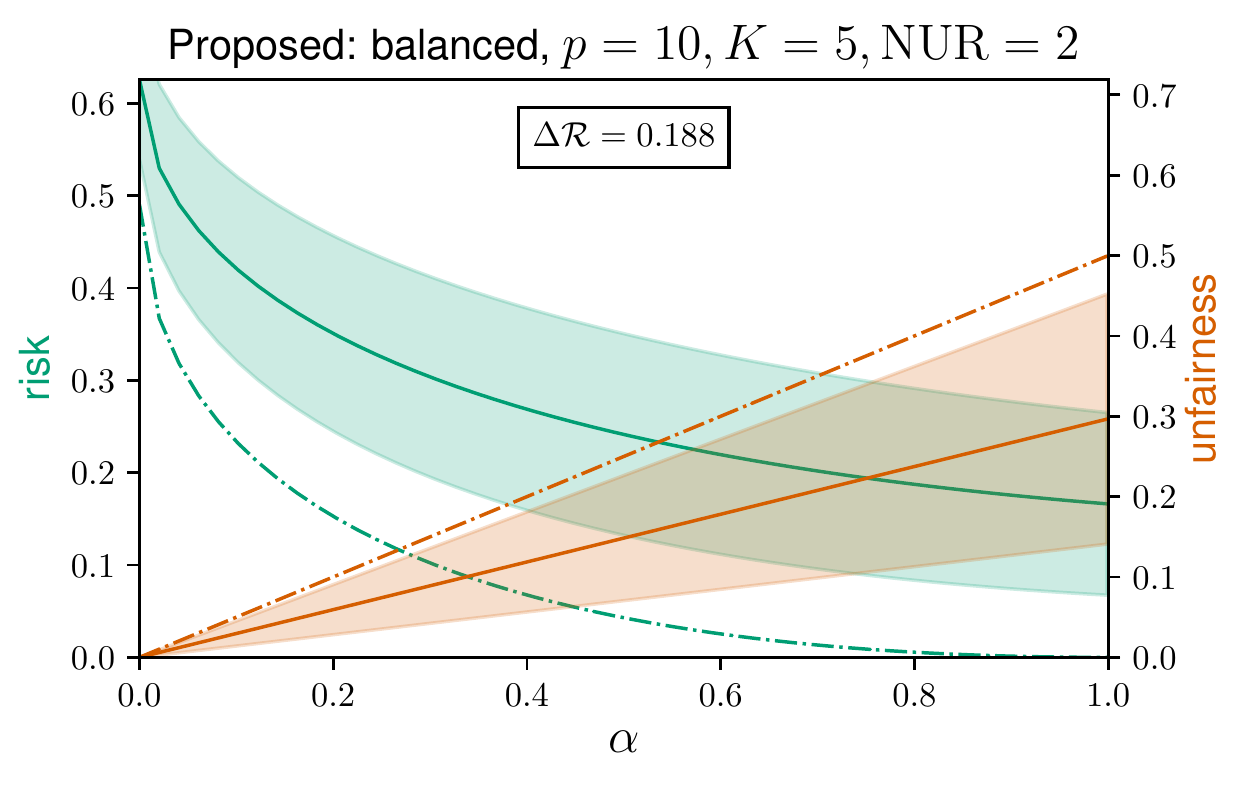}
\includegraphics[width=0.49\textwidth]{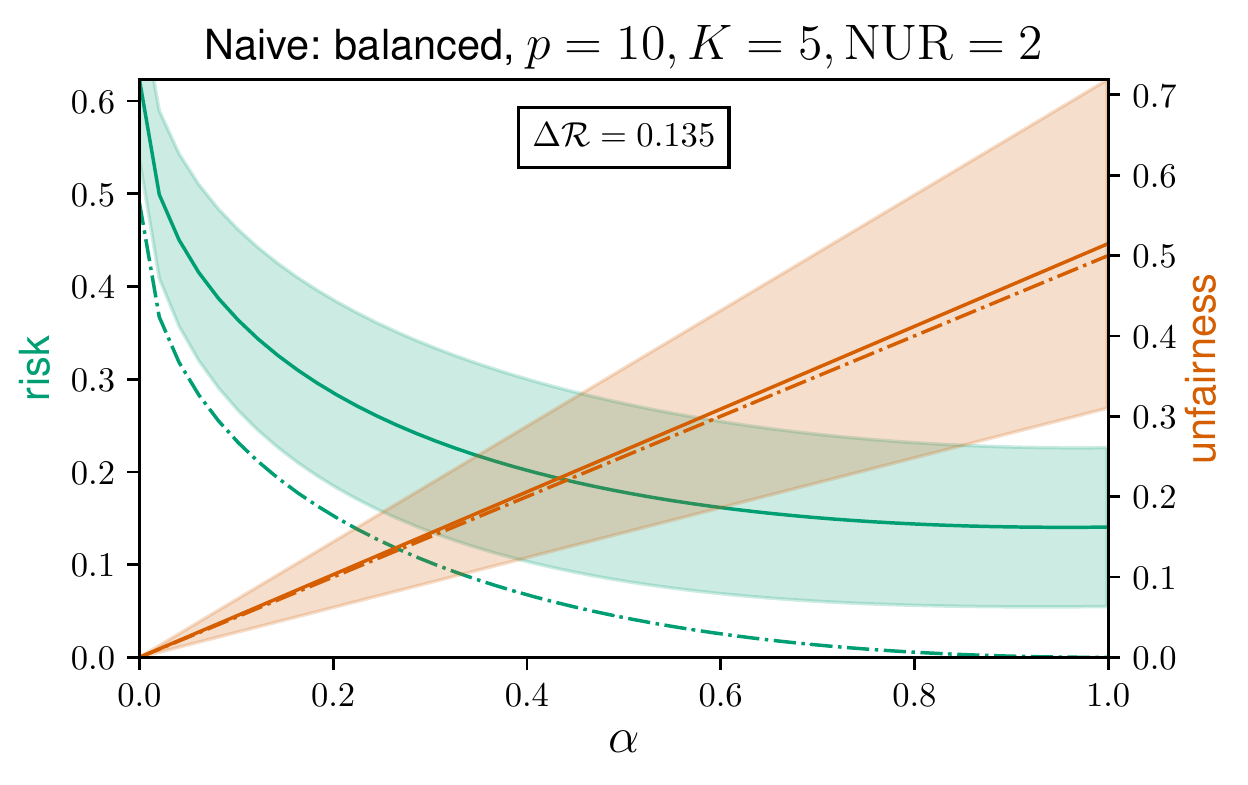}
\includegraphics[width=0.49\textwidth]{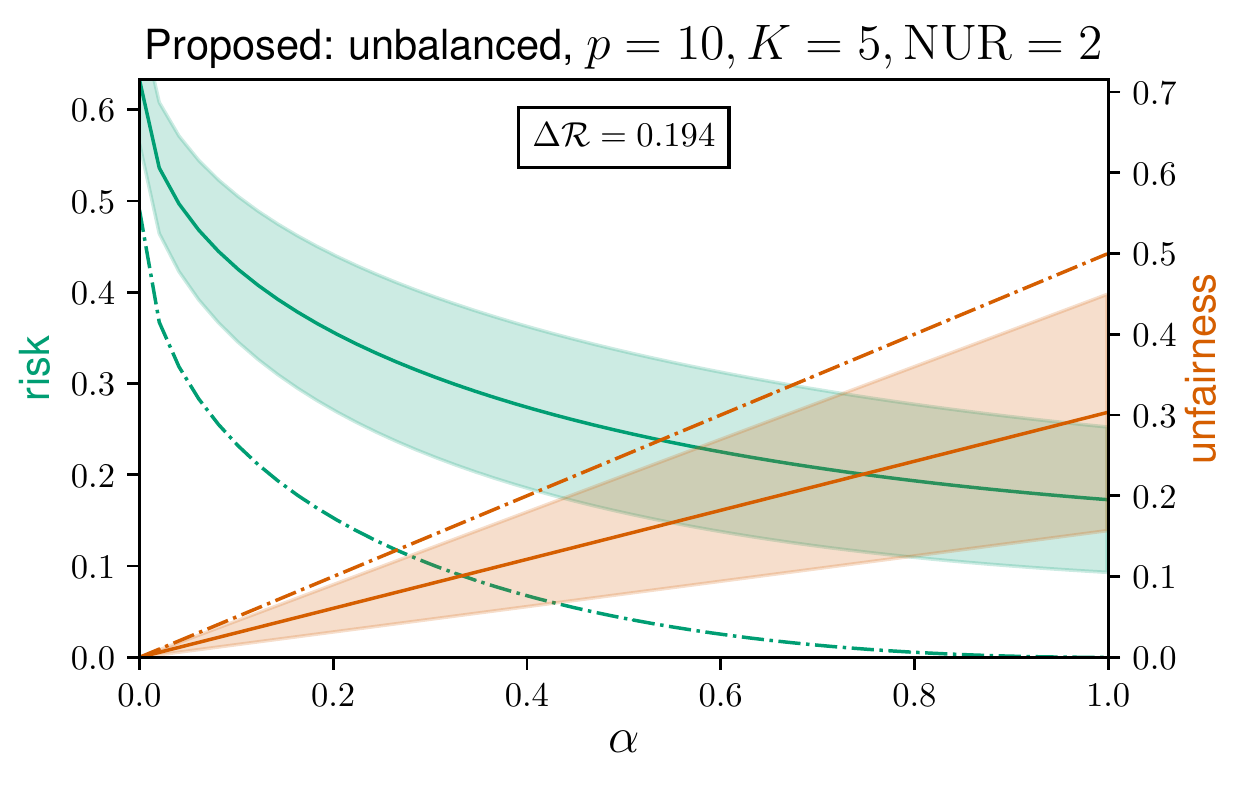}
\includegraphics[width=0.49\textwidth]{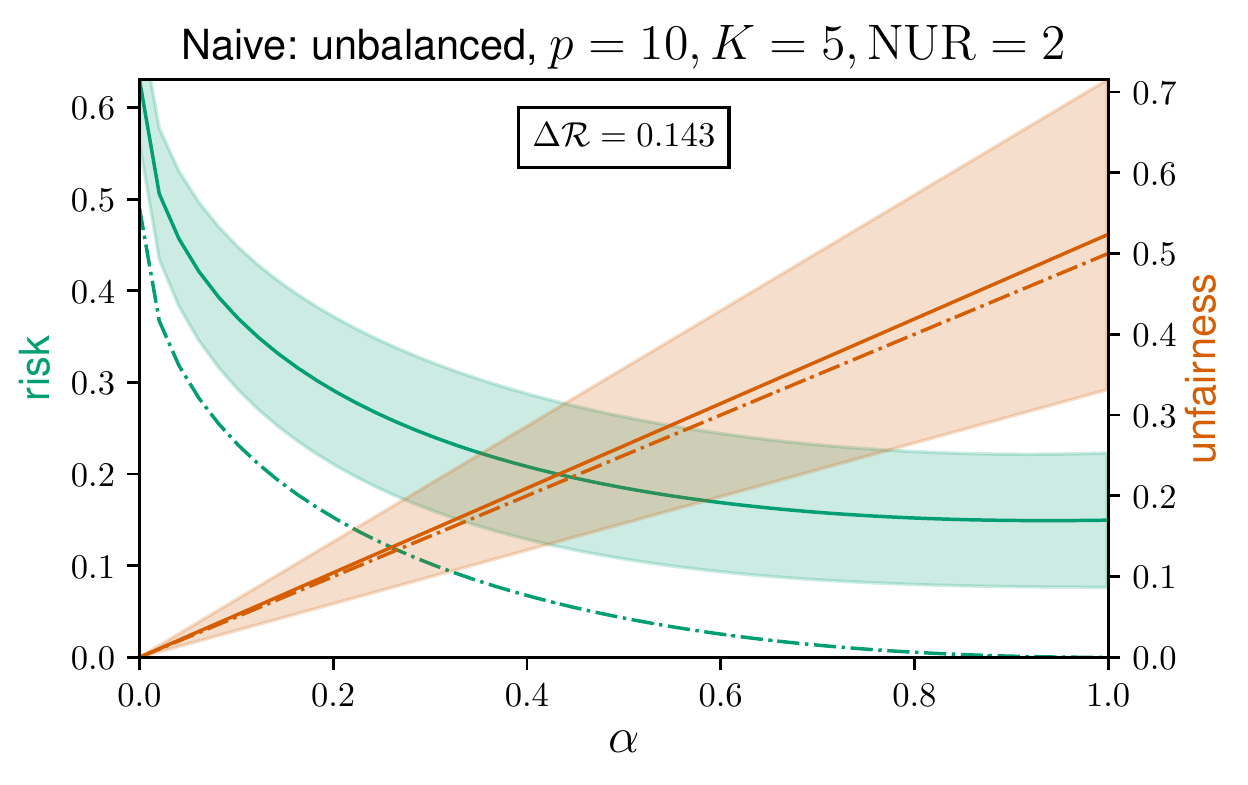}
\caption{Dashed green and brown lines correspond to the risk and unfairness of $f^*_{\alpha}$ respectively. Solid green and brown lines correspond to the average risk and unfairness of $\hat f_{\tau(\alpha)}$ and the shaded region shows three standard deviations over $50$ repetitions. On the left $\tau(\alpha) = \hat\tau$ while on the right $\tau(\alpha) = \alpha$.}
\label{fig:exp2}
\end{figure}

In this section we perform simulation study to empirically validate our theoretical analysis\footnote{For our empirical validation and illustrations we have relied on the following \texttt{python} packages: \texttt{scikit-learn}~\citep{scikit-learn}, \texttt{numpy}~\citep{numpy}, \texttt{matplotlib}~\citep{matplotlib}, \texttt{seaborn}.}.
Before continuing let us discuss the notion of signal-to-unfairness ratio.
Setting $\bbeta^* = 0$ in the model~\eqref{eq:model_linear}, if the amplitudes of $b^*_s$ is much smaller than the noise level $\sigma^2$, then the observations $\bsY_s$ are mainly composed of noise.
While for the prediction problem it is not a problem, since our rates will scale with the noise level, it becomes important for the estimation of unfairness $\class{U}(f^*)$.
Motivated by this discussion, we define the noise-to-unfairness ratio as
     \begin{align*}
        \nur^2 \eqdef {\sigma^2}\big/{\class{U}(f^*)}\enspace.
    \end{align*}
The signal-to-unfairness ratio tells as how the level of unfairness compares to the noise level.
The regime $\nur \gg 1$ means that the unfairness of the distributions is below the noise level, and it is statistically difficult to estimate it. In contrast, $\nur \ll 1$ implies that the unfairness dominates the noise.
Instead of varying $\class{U}(f^*)$ and $\sigma$ we fix $\sigma$ and perform our study for different values of $\nur$.

We follow the following protocol.
For some fixed $K, n_1, \ldots, n_K, p, \sigma, \nur$ we simulate the model in Eq.~\eqref{eq:model_linear_vector} with $\bfSigma = \mathbf{I}_p$.
In all the experiments we set $\bbeta^* = (1, \ldots, 1)^\top \in \bbR^p$.
For $\bsb^*$ we first define $\bsv = (1, -1, 1, -1, \ldots)^\top \in \bbR^K$ and set $\bsb^* = \bsv \sqrt{\sfrac{\sigma^2}{{\nur \cdot \Var_S(\bsv)}}}$, where $\Var_S(\bsv)$ is the variance of $\bsv$ with weights $w_1, \ldots, w_K$.
So that the unfairness of this model is exactly equal to $\sigma^2 / \nur^2$.
On each simulation round of the model, we compute the estimator in Eq.~\eqref{eq:estimator_general} with two choices of parameter $\tau$:
\begin{enumerate}[noitemsep,topsep=0.1ex]
    \item \textbf{Proposed}:  $\tau(\alpha) = \hat\tau$ from Theorem~\ref{thm:real_final};
    \item \textbf{Naive}:  $\tau(\alpha) = \alpha$.
\end{enumerate}
\begin{remark}
    While performing experiments we have noticed that setting $\hat\tau$ with $\delta_n(p, K, t)$ defined in Theorem~\ref{thm:real_final} results in too pessimistic estimates in terms of unfairness, for this reason in all of our experiments we set $\delta_n(p, K, t) = (\sfrac{p}{n}) + (\sfrac{K}{n})$, which is of the same order as that of Theorem~\ref{thm:real_final}.
\end{remark}
Then, for each $\hat f_{\tau(\alpha)}$ we evaluate $\risk(\hat f_{\tau(\alpha)})$ and $\class{U}(\hat f_{\tau(\alpha)})$.
This procedure is repeated $50$ times, which results in $50$ values of $\risk(\hat f_{\tau(\alpha)})$ and $\class{U}(\hat f_{\tau(\alpha)})$ for each $\alpha \in (0, 1)$.
For these $50$ values we compute mean and standard deviation.
We considered $p = 10$, $K = 5$, $\sigma = 1$, and $\nur \in \{0.2, 0.5, 2\}$.
Furthermore, for the choice of $n_1, \ldots, n_K$ we study the following two regimes
\begin{enumerate}[noitemsep,topsep=0.5ex]
    \item \textbf{Balanced}: $n_1 = \ldots = n_5 = 100$.
    \item \textbf{Unbalanced}: $n_1 = 5, n_2 = 45, n_3=100, n_4=100, n_5=250$.
\end{enumerate}
The reason we consider two regimes is to confirm the theoretical findings of Theorem~\ref{thm:real_final}, which indicate that the rate is governed by $n_1 + \ldots + n_K$ instead of the their individual values.
Finally, for a given fairness parameter function $\alpha \mapsto \tau(\alpha)$ we report cumulative risk increase over all $\alpha \in [0, 1]$ defined as
\begin{align*}
    \Delta\risk(\tau) \eqdef \int_{0}^1 \parent{\risk(\hat f_{\tau(\alpha)}) - \risk(f^*_{\alpha})}\, \d\alpha\enspace.
\end{align*}
This quantity describes the cumulative risk loss of the rule $\tau(\alpha)$ across all the levels of fairness $\alpha$ compared to the best $\alpha$-relative improvement $f^*_{\alpha}$.

On Figures~\ref{fig:exp1}--\ref{fig:exp2} we draw the evolution of the risk and of the unfairness when $\alpha$ traverses the interval $[0, 1]$. We also report $\Delta\risk(\tau)$ defined above.
Inspecting the plots we can see that that the main disadvantage of the naive choice of $\tau = \alpha$ is its poor fairness guarantee, that is, in almost half of the outcomes, the unfairness of $\hat f_{\alpha}$ exceeded the prescribed value. In contrast, the proposed choice of $\tau(\alpha) = \hat\tau$ consistently improves the unfairness of the regression function $f^*$, empirically validating our findings in Theorem~\ref{thm:real_final}. However, good fairness results come at the cost of consistently higher risk.
One can also see that the effect of unbalanced distributions is negligible for the considered model (it only affects the variance of the result). This is explained by the definition of the risk, which weights the groups proportionally to their frequencies.
Finally, observing the behavior of naive approach for $\nur = 0.2$ and $\nur = 2$ we note that in the latter case the unfairness of $\hat f_{\alpha}$ starts to deviate from the true value (with consistently positive bias). Meanwhile, since the proposed choice $\tau(\alpha) = \hat\tau$ is more conservative, the bias remains negative, that is, the unfairness of $f^*$ is still improved.


{
\section{Post-processing method without model}
\label{sec:estimation}

{The previous section was concerned with illustrating the minimax setup, introduced in Section~\ref{sec:minimax_setup}, with a concrete choice of parametric model. In practice, however, and especially for benchmark problems, parametric assumptions can hardly be verified; therefore, a more generic estimation algorithm, which does not rely on the modeling assumptions, is desirable.}

In what follows, we propose a generic post-processing algorithm which can be applied on top of \emph{any} black-box estimation procedure.
Note that, due to the appealing structure of $\alpha$-RI, we only need to estimate the two prediction functions: the Bayes rule $f^*$ and the fair optimal $f^*_0$. Then, an estimator of $f^*_{\alpha}$ can be built as a convex combination of the estimators of $f^*$ and $f^*$. The literature on the estimation of the Bayes rule $f^*$ is rather rich, thus, we only detail the estimation of $f^*_0$. Before proceeding, we introduce an additional bit of notation.




\myparagraph{Additional notation}
For any prediction function $f : \bbR^p \times [K] \to \bbR$, any $q \in [1, \infty)$, define 
\begin{align*}
  \norm{f}_q^q \eqdef \sum_{s = 1}^K w_s \Exp[|f(\bsX, S)|^q \mid S = s]
\end{align*}
and $\norm{f}_{\infty} \eqdef \sum_{s = 1}^Kw_s \inf\enscond{b \in \bbR}{\Prob(f(\bsX, S) < b \mid S = s) = 0}$.


\subsection{The algorithm}
\label{sec:adhoc}

{For each sub-population $s \in [K]$, let $(\bsX_1^s, \ldots, \bsX_{2N_s}^s)$ be $2N_s$ \iid feature vectors\footnote{For simplicity and without loss of generality, we assume that an even number of observations is available for every sensitive attribute.} sampled from the distribution $(\bsX \mid S = s)$, independently from $(\bsX, S, Y)$ and from all other observations.}
Note that we explicitly allow ourselves to sample from each sensitive group separately. This is not restrictive, since an \iid sample from $\Prob$ can be converted into the considered sampling scheme by conditioning on the number of available observations from each group.

Let $f : \bbR^p \times [K] \to \bbR$ be a fixed prediction function.
Our goal is to build a post-processing operator $f \mapsto \hat{\Pi}(f) : \bbR^p \times [K] \to \bbR$ such that the post-processing estimator $\hat{\Pi}(f)$ satisfies the Demographic Parity constraint and its closeness to $f^*_0$ is controlled by that of $f$ to $f^*$.
The rationale behind this goal is the representation of $\alpha$
-RI as a point-wise combination of $f^*$ and $f^*_0$: if the considered function $f$ is a good estimator of $f^*$ and $\hat{\Pi}(f)$ of $f^*_0$, then, as a trivial consequence of the triangle inequality, we can effectively estimate all of the $\{f^*_{\alpha}\}_{\alpha \in [0,1]}$.

More formally, we want to build a (possibly randomized) operator $\hat{\Pi}$ which satisfies, for every $f: \bbR^p \times [K] \to \bbR$, $q \in [1, +\infty)$,
\begin{align*}
    &\Law\left( \hat{\Pi}(f) (\bsX, S) \mid S = s \right) = \Law\left( \hat{\Pi}(f) (\bsX, S) \mid S = s' \right) \qquad \forall s, s' \in [K]\enspace,\\
    &\Expf\norm{\hat{\Pi}(f) - f^*_0}_q \leq \text{Error}_q(f, f^*) + \text{Remainder}(N_1, \ldots, N_K)\enspace,
\end{align*}
where $\text{Error}_q(f, f^*)$ represents the quality of the base estimator, which is supposed to be a good approximation of the Bayes rule $f^*$. Ideally, we want to have $\text{Error}_q(f, f^*) = C \cdot \norm{f - f^*}_q$ for some $C \geq 1$. We will achieve such a goal for $q = 1$, while for $q > 1$, the error term will be slightly different.

\begin{remark}
Note that the first condition is stated with respect to the joint distribution of $\hat{\Pi}(f) (\bsX, S)$, \ie it involves all the randomness present in $\hat{\Pi}(f) (\bsX, S)$.
\end{remark}

\myparagraph{Estimator construction}
Let $\zeta, (\zeta_i^{s})_{i = 1, \ldots, 2N_s; s = 1, \ldots, K}$ be \iid real valued random variables distributed uniformly on $[-\sigma, \sigma]$ for some $\sigma > 0$ to be specified.
For each $f : \bbR^p \times [K] \to \bbR$, each $s \in [K]$ and $i \in [2N_s]$, define the following random variables
\begin{align*}
    &\tilde{f}_i^s \eqdef f(\bsX_i^s, s) + \zeta_i^s\quad\text{and}\quad
    \tilde{f}(\bsx, s) := f(\bsx, s) + \zeta \qquad \forall\,\, (\bsx, s) \in \bbR^p \times [K]\enspace.
\end{align*}
Using the above quantities, we build the following estimators: for all $t \in \bbR$
\begin{align*}
    &\hat{F}_{1, \nu^{f}_s}(t) \eqdef \frac{1}{N_s {+} 1}\parent{\sum_{i = 1}^{N_s}\ind{\tilde{f}_i^s < t} + U^{s}\left(1 {+} \sum_{i = 1}^{N_s}\ind{\tilde{f}_i^s = t}\right)}\enspace,\\
    &\hat{F}_{2, \nu^{f}_s}(t) \eqdef \frac{1}{N_s}\sum_{i = N_s + 1}^{2N_s}\ind{\tilde{f}_i^s \leq t}\enspace,
\end{align*}
where $(U^{s})_{s \in [K]}$ are \iid random variables, distributed uniformly on $[0, 1]$ and independent from all the previously introduced random variables.
Consequently, for each $f: \bbR^p \times [K] \to \bbR$ we define for all $(\bsx, s) \in \bbR^p \times [K]$
\begin{align}
    \label{eq:ad_hoc_def}
    \hat{\Pi}(f)(\bsx, s) = \sum_{s' = 1}^K w_{s'}\hat{F}_{2, \nu^{f}_{s'}}^{-1} \circ \hat{F}_{1, \nu^{f}_s} \circ \tilde{f}(\bsx, s)\enspace.
\end{align}
{The form of our estimator was inspired by the explicit formula obtained for the fair optimal predictor $f_0^*$ (see Proposition~\ref{prop:optimal_alpha}). It can be seen as its empirical counterpart with additional randomization.

From now on our goal is to establish the desired guarantees on the operator $\hat{\Pi}$. A more in-depth study of this procedure is left for future works. We begin by providing an \emph{exact} demographic parity guarantee.}


\begin{theorem}[Demographic parity guarantee]\label{thm:improved_fairness}
    For any $f: \bbR^p \times [K] \to \bbR$, any joint distribution $\Prob$ of $(\bsX, S, Y)$ and any $\sigma > 0$, it holds that
    \begin{align*}
        \Law\left( \hat{\Pi}(f) (\bsX, S) \mid S = s \right) = \Law\left( \hat{\Pi}(f) (\bsX, S) \mid S = s' \right) \qquad \forall s, s' \in [K]\enspace.
    \end{align*}
\end{theorem}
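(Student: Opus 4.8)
The plan is to show that, conditionally on $S = s$, the variable $\hat{\Pi}(f)(\bsX, s)$ is produced by feeding a $\mathrm{Uniform}[0,1]$ random variable --- whose law does not depend on $s$ --- into a random function that also does not depend on $s$; equality of the conditional laws across groups is then immediate.

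First I would decompose the estimator. Set $g(u) \eqdef \sum_{s' = 1}^K w_{s'}\hat{F}_{2, \nu^{f}_{s'}}^{-1}(u)$ and $V_s \eqdef \hat{F}_{1, \nu^{f}_s}\big(\tilde{f}(\bsX, s)\big)$, so that $\hat{\Pi}(f)(\bsX, s) = g(V_s)$. The crucial structural observation is that $g$ is a random function that does not involve the index $s$ at all: it is measurable with respect to the ``second-half'' data $\{(\bsX_i^{s'}, \zeta_i^{s'})\}_{i = N_{s'}+1}^{2N_{s'}}$, $s' \in [K]$, only. By contrast, $V_s$ is a function of the test point $\bsX$, its added noise $\zeta$, the ``first-half'' data of group $s$, namely $\{(\bsX_i^{s}, \zeta_i^{s})\}_{i=1}^{N_s}$, and $U^s$. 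Since $(\bsX, S, Y)$ is independent of the extra sample and of all the $\zeta$'s and $U^s$'s, and since the first-half and second-half portions of the extra sample are independent, we obtain that, conditionally on $S = s$, the pair $(V_s, g)$ is independent.

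Next I would identify the conditional law of $V_s$ by a randomized probability-integral-transform argument. Conditionally on $S = s$, the augmented collection $\tilde{f}(\bsX, s), \tilde{f}_1^s, \ldots, \tilde{f}_{N_s}^s$ consists of $N_s + 1$ i.i.d. random variables, because $\bsX_i^s \stackrel{d}{=} (\bsX \mid S = s)$ and $\zeta_i^s \stackrel{d}{=} \zeta$ with everything independent; hence these variables are exchangeable, and since $\zeta$ is non-atomic they are almost surely pairwise distinct. On this almost sure event one has $\sum_{i=1}^{N_s}\ind{\tilde{f}_i^s = \tilde{f}(\bsX,s)} = 0$, so $V_s = (R_s + U^s)/(N_s + 1)$ with $R_s \eqdef \sum_{i=1}^{N_s}\ind{\tilde{f}_i^s < \tilde{f}(\bsX, s)}$. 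By exchangeability $R_s$ is uniform on $\{0, 1, \ldots, N_s\}$ and independent of $U^s \sim \mathrm{Uniform}[0,1]$, and the elementary identity that $(R+U)/(N+1)$ is $\mathrm{Uniform}[0,1]$ when $R$ is uniform on $\{0,\ldots,N\}$ and $U$ is an independent $\mathrm{Uniform}[0,1]$ then yields $\Law(V_s \mid S = s) = \mathrm{Uniform}[0,1]$, which does not depend on $s$.

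Combining the two steps, conditionally on $S = s$ we have $\hat{\Pi}(f)(\bsX, s) = g(V_s)$ with $g$ a random function free of $s$ and $V_s$ an independent $\mathrm{Uniform}[0,1]$ variable, hence $\Law\big(\hat{\Pi}(f)(\bsX, S) \mid S = s\big) = \Law(g(U))$ for $U \sim \mathrm{Uniform}[0,1]$ independent of $g$ --- an expression in which $s$ no longer appears. This gives the claimed equality for all $s, s' \in [K]$, for any $\Prob$ and any $\sigma > 0$ (the role of $\sigma > 0$ being only to make $\zeta$ non-atomic, which kills ties). The step that I expect to require the most care is the bookkeeping of the independence structure: one must check rigorously that the first-half/second-half split decouples $V_s$ from $g$, that the test point and its noise $\zeta$ enter only through $V_s$, and that conditioning on $S = s$ does not disturb any of these independences; the uniformity of $V_s$ itself is then routine.
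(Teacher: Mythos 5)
Your proof is correct and follows essentially the same route as the paper's: decompose $\hat{\Pi}(f) = g(V_s)$ with $g = \sum_{s'}w_{s'}\hat{F}_{2,\nu^f_{s'}}^{-1}$ and $V_s = \hat{F}_{1,\nu^f_s}\circ\tilde f(\bsX,s)$, use the first-half/second-half data split to establish conditional independence of $g$ and $V_s$, show that $V_s \mid S = s$ is uniform on $[0,1]$, and conclude that the conditional law is $\Law(g(U))$ independently of $s$. The one genuine difference is how you establish the uniformity of $V_s$: the paper invokes its Lemma~\ref{lem:inverse_with_noise} (Vovk et al.'s Lemma~8.7), which treats general exchangeable sequences \emph{including ties} via the randomized rank statistic; you instead observe that the added noise $\zeta$ with $\sigma > 0$ renders ties almost surely impossible, so the tied term in $\hat{F}_{1,\nu^f_s}$ vanishes and you can invoke the elementary fact that $(R+U^s)/(N_s+1)$ is $\mathrm{Uniform}[0,1]$ when $R$ is discrete uniform on $\{0,\ldots,N_s\}$ and independent of $U^s\sim\mathrm{Uniform}[0,1]$. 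This is a valid simplification; it buys a more self-contained argument under the stated hypothesis $\sigma>0$, while the paper's lemma is slightly more general (it would cover $\sigma=0$, where ties can occur, but that case is outside the theorem's scope, so nothing is lost). A second, cosmetic difference: the paper concludes by computing the Kolmogorov--Smirnov distance between the two conditional laws and showing it is zero, whereas you write both conditional laws directly as $\Law(g(U))$, which is a cleaner finish. The independence bookkeeping you flag as delicate does go through: $g$ is measurable with respect to the second halves alone, $V_s$ (conditionally on $S=s$) depends only on the first half of group $s$, $\bsX$, $\zeta$, and $U^s$, and these collections are jointly independent of one another and of $S$.
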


{
The above theorem may appear as ``magical'' at first sight as it holds under no assumption. 
To obtain such a result, we leverage distribution-free properties on rank and order statistics presented in Lemma~\ref{lem:inverse_with_noise} in Appendix. This approach was inspired by the literature on conformal prediction~\citep{vovk2005algorithmic,lei2014distribution,lei2018distribution,foygel2021limits} in which similar tools are used. Theorem~\ref{thm:improved_fairness} and the estimator in Eq.~\eqref{eq:ad_hoc_def} improve upon the estimator of~\cite{chzhen2020fair}, for which only approximate fairness is established.

Since our fairness guarantee on its own is not necessarily informative (constant predictions trivially satisfy it), we complement it with a post-processing estimation bound to assess the predictive performance of our estimator. Unlike the previous result, which was assumption-free, we need an additional assumption stated below.
}



\begin{assumption}
    \label{as:density_bound}
    For all $s \in [K]$, the measures $\nu_s^*$ are supported on an interval in $\bbR$, admit density \wrt Lebesgue measure which is lower and upper bounded by $\underline{\lambda}_s > 0$ and $\overline{\lambda}_s > 0$ respectively.
\end{assumption}

Note that the requirement of the existence of an upper-bounded density is not particularly restrictive. The most restrictive part of this assumption is the lower bound on this density. The reason we introduce it can be intuitively understood from asymptotic normality results on order statistics~\cite[see e.g.,][Section~21.2 and Corollary~21.5]{van2000asymptotic} {which show that the limiting variance depends on the inverse density evaluated at the quantile that one wishes to estimate. In particular, it  explodes when this density approaches zero.} In our case, we actually need to estimate the whole quantile function (since $f^*_0$ is based on it, see Prop.~\ref{prop:optimal_alpha}), hence we require that the density is uniformly lower bounded.
It appears that the most important implication of the lower bounded density that we use is the $\underline{\lambda}_s^{-1}$-Lipschitz continuity of the quantile function of $\nu^*_s$.
Due to this reason, it is certainly possible to relax the lower boundedness assumption by H\"older smoothness condition on the quantile function of $\nu^*_s$. We leave this investigation for future works.

\begin{theorem}[Estimation guarantee]\label{thm:risk_bound_ad_hoc}
   Let Assumptions~\ref{as:atomless} and~\ref{as:density_bound} be satisfied, then for any base prediction rule $f : \bbR^p \times [K] \to \bbR$, any $\sigma \in (0, 1)$ and any $q \in [1, \infty)$, the proposed post-processing procedure satisfies
    \begin{align*}
        \Expf\normin{\hat{\Pi}(f) - f^*_0}_q \leq C_{ \overline{\underline{\boldsymbol\lambda}}}^q\bigg( &\norm{f - f^*}_q {+}  \min\left\{\norm{f - f^*}_{q-1}^{1 - \sfrac{1}{q}} {+} \sigma^{1 - \sfrac 1 q},\, \norm{f - f^*}_{\infty} {+} \sigma \right\}\ind{q {>} 1} \\
        & + \left\{\sum_{s = 1}^Kw_sN_{s}^{-\sfrac{1}{2}}\right\} +  \left\{\sum_{s = 1}^Kw_sN_s^{-\sfrac q 2}\right\}^{\sfrac{1}{q}} + \sigma\bigg)\enspace,
    \end{align*}
    where $C_{\overline{\underline{\boldsymbol\lambda}}}^q$ depends only on $(\underline{\lambda}_s)_s, (\overline{\lambda}_s)_s$ from Assumption~\ref{as:density_bound} and $q \in [1, \infty)$.
\end{theorem}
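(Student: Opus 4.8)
The plan is to compare $\hat{\Pi}(f)$ with the closed form for $f^*_0$ supplied by Proposition~\ref{prop:optimal_alpha} at $\alpha=0$, namely $f^*_0(\bsx,s)=\sum_{s'=1}^K w_{s'}F_{\nu^*_{s'}}^{-1}\circ F_{\nu^*_s}\circ f^*(\bsx,s)$, reading $\hat{\Pi}(f)$ as its noisy, empirical counterpart. By Minkowski's inequality in $\normin{\cdot}_q$ (applied first to the sum over $s'$ and then to the $\normin{\cdot}_q$-aggregation over the outer index $s$), it suffices to bound, for every pair $(s,s')$, the $L^q(\bsX\mid S=s)$-distance between $\hat{F}_{2,\nu^{f}_{s'}}^{-1}\circ\hat{F}_{1,\nu^{f}_s}\circ\tilde f(\cdot,s)$ and $F_{\nu^*_{s'}}^{-1}\circ F_{\nu^*_s}\circ f^*(\cdot,s)$. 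I insert a chain of intermediate transports that passes successively from (a) the empirical, noise-randomised objects, to (b) the population quantile transport of the noise-smoothed base estimator, to (c) the population transport of the noise-smoothed truth $\tilde\nu^*_s:=\nu^*_s*\mathrm{Unif}[-\sigma,\sigma]$, and finally to (d) the exact transport for $f^*$. Throughout, Assumption~\ref{as:density_bound} enters in two ways: $F_{\nu^*_{s'}}^{-1}$ is $\underline{\lambda}_{s'}^{-1}$-Lipschitz and $F_{\nu^*_s}$ is $\overline{\lambda}_s$-Lipschitz.

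\textbf{The statistical step (a)$\to$(b).} Here I rely on the distribution-free rank identity underlying Theorem~\ref{thm:improved_fairness} (Lemma~\ref{lem:inverse_with_noise}): conditionally on $S=s$, the random rank $\hat{F}_{1,\nu^{f}_s}(\tilde f(\bsX,s))$ is \emph{exactly} uniform on $[0,1]$ and independent of the second-half empirical quantile $\hat{F}_{2,\nu^{f}_{s'}}^{-1}$. Consequently, substituting $F_{\nu^*_{s'}}^{-1}$ for the empirical quantile at this genuinely uniform argument produces an $L^q$-error equal to $(\Expf\,\sW_q^q(\hat\nu^{f}_{s',2},\nu^*_{s'}))^{1/q}$, while substituting $F_{\nu^{f}_s}$ for $\hat{F}_{1,\nu^{f}_s}$ costs $\Expf\normin{\hat{F}_{1,\nu^{f}_s}-F_{\nu^{f}_s}}_{\infty}$ plus the $O(1/N_s)$ from the $(1+U^s(\cdots))/(N_s{+}1)$ normalisation, amplified by the Lipschitz factor $\underline{\lambda}_{s'}^{-1}$. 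The one-dimensional DKW inequality gives $\Expf\normin{\hat{F}_N-F}_\infty^q\lesssim N^{-q/2}$, and the one-dimensional empirical-Wasserstein rate gives $\Expf\,\sW_q^q(\hat\nu_N,\nu)\lesssim N^{-q/2}$ once the $\sW_q$-perturbations of size $\normin{f-f^*}_q+\sigma$ separating $\nu^{f}_{s'}$ from the density-bounded $\nu^*_{s'}$ have been peeled off. Summing over $s,s'$ (and using $\sum_s w_s a_s\le(\sum_s w_s a_s^q)^{1/q}$) yields the terms $\{\sum_s w_s N_s^{-1/2}\}$ and $\{\sum_s w_s N_s^{-q/2}\}^{1/q}$.

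\textbf{The deterministic steps (b)$\to$(c)$\to$(d).} In the \emph{outer} slot the argument is exactly uniform, so $\normin{F_{\nu^{f}_{s'}}^{-1}-F_{\nu^*_{s'}}^{-1}}_{L^q([0,1])}=\sW_q(\nu^{f}_{s'},\nu^*_{s'})\le\sW_q(\nu^{f}_{s'},\tilde\nu^*_{s'})+\sW_q(\tilde\nu^*_{s'},\nu^*_{s'})\le\normin{f-f^*}_{q,s'}+\sigma$, the two estimates obtained by coupling through the same covariate and the same auxiliary noise. In the \emph{inner} slot I control $|F_{\nu^{f}_s}(\tilde f(\bsX,s))-F_{\nu^*_s}(f^*(\bsX,s))|$ by $\overline{\lambda}_s(|f(\bsX,s)-f^*(\bsX,s)|+|\zeta|)$ (Lipschitz CDF of $\nu^*_s$, contributing $\normin{f-f^*}_q+\sigma$) plus the Kolmogorov mismatch $\normin{F_{\nu^{f}_s}-F_{\nu^*_s}}_\infty\le\normin{F_{\nu^{f}_s}-F_{\tilde\nu^*_s}}_\infty+\overline{\lambda}_s\sigma/2$; the residual $\normin{F_{\nu^{f}_s}-F_{\tilde\nu^*_s}}_\infty$ — the Kolmogorov distance of two laws obtained by convolving $\mathrm{Law}(f\mid s)$ and $\mathrm{Law}(f^*\mid s)$ with the same uniform kernel — is bounded either via a smoothing estimate in terms of $\sW_1(\cdot)$ raised to the power $1-\sfrac1q$ (the branch $\normin{f-f^*}_{q-1}^{1-1/q}+\sigma^{1-1/q}$) or, more crudely, by $\overline{\lambda}_s\normin{f-f^*}_\infty$ (the branch $\normin{f-f^*}_\infty+\sigma$), whence the $\min$; the correction is only needed for $q>1$, hence the $\ind{q>1}$. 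Pushing the inner mismatch through the $\underline{\lambda}_{s'}^{-1}$-Lipschitz outer quantile and folding all $\underline{\lambda}_s,\overline{\lambda}_s$-dependent prefactors into $C_{\overline{\underline{\boldsymbol\lambda}}}^q$ closes this part.

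\textbf{Main obstacle and assembly.} The delicate point is the inner rank mismatch in step (c): the noise-smoothed base measure $\nu^{f}_s$ does \emph{not} inherit the lower density bound of Assumption~\ref{as:density_bound}, so its quantile function need not be Lipschitz and the comparison cannot be run symmetrically; the remedy is to keep the true smoothed measures $\tilde\nu^*_s$ in every Lipschitz slot and to trade the injected noise level $\sigma$ against the base-estimator error through the Kolmogorov-to-Wasserstein estimates, which is exactly what produces the additive $\sigma$ term and the $\ind{q>1}$ correction in the statement. Reassembling the per-$(s,s')$ bounds via Minkowski over $s'$ and the $\normin{\cdot}_q$-aggregation over $s$, and observing that the chained Lipschitz estimates accumulate powers of $\max_s(\underline{\lambda}_s^{-1},\overline{\lambda}_s,1)$ with exponent depending only on $q$, gives the asserted inequality.
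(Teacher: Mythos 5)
Your decomposition follows the paper's architecture at a high level: you pass through intermediate objects, use the exactly-uniform random rank from Lemma~\ref{lem:inverse_with_noise}, invoke one-dimensional empirical Wasserstein rates for the density-bounded true law, and propagate errors through the $\underline{\lambda}_{s'}^{-1}$-Lipschitz oracle quantile. You also correctly identify the main obstruction (that $\nu^f_s$ does not inherit the lower density bound). However, your treatment of the inner CDF mismatch, $\Expf\lvert F_{\nu^f_s}(\tilde f(\bsX,s))-F_{\nu^*_s}(f^*(\bsX,s))\rvert^q$, contains a genuine gap. You bound this pointwise via the \emph{sup-norm} Kolmogorov distance $\normin{F_{\nu^f_s}-F_{\nu^*_s}}_\infty$ plus a Lipschitz term, and then convert the Kolmogorov mismatch to a Wasserstein quantity via a smoothing estimate. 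For $q>1$ this is salvageable: optimizing $\Probf(\lvert X-t\rvert\le\delta)+\Exp\lvert X-Y\rvert^{q-1}/\delta^{q-1}$ over $\delta$ yields $\KS\lesssim C^{1-\sfrac1q}\sW_{q-1}^{1-\sfrac1q}$, which matches the middle term after raising to the $q$-th power. But for $q=1$ the only available smoothing estimate is $\KS\lesssim\sqrt{C\sW_1}$ (cf.\ Lemma~\ref{lem:ks_to_earth_mover}), and the sup-norm route then produces an extraneous $\sqrt{\normin{f-f^*}_1}$ term that is strictly larger than $\normin{f-f^*}_1$ in the regime of interest and does not appear in the theorem. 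You claim ``the correction is only needed for $q>1$, hence the $\ind{q>1}$,'' but your argument does not show why the Kolmogorov mismatch can be dropped when $q=1$: the sup-norm is generically of order $\sqrt{\normin{f-f^*}_1}$ (take $f=f^*$ except on a set of measure $\varepsilon$ where $f-f^*=\varepsilon$: then $\normin{f-f^*}_1=\varepsilon^2$ while the KS distance is $\asymp\varepsilon$).

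The missing ingredient is the probabilistic decoupling argument of Lemma~\ref{lem:for_p_moments} in the appendix. That lemma bounds $\Expf\lvert F_A(A)-F_B(B)\rvert^q$ (the \emph{average} over the joint law of $(A,B)$, not the sup) by introducing an independent copy $(\tilde A,\tilde B)$ and writing $F_A(A)-F_B(B)=\Probf(\tilde A\le A\mid A)-\Probf(\tilde B\le B\mid B)$, which is then controlled through $\Probf(\lvert B-\tilde B\rvert\le\lvert A-B\rvert+\lvert\tilde A-\tilde B\rvert)$ and the upper density bound of $B$. This yields $\Expf\lvert F_A(A)-F_B(B)\rvert\lesssim\Exp\lvert A-B\rvert$ for $q=1$ and $\Expf\lvert F_A(A)-F_B(B)\rvert^q\lesssim\Exp\lvert A-B\rvert^{q-1}$ for $q>1$, directly and without any sup-norm loss. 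Applied with $A=\tilde f(\bsX,s)$, $B=f^*(\bsX,s)$, this is exactly what produces the claimed exponents — including the $\ind{q>1}$ structure — and is the step your proposal would need to replace the Kolmogorov bound. In short: the sup-norm/Kolmogorov route is not equivalent to the averaged quantity, and for $q=1$ the gap is not closable by a smoothing estimate alone.
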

Note that for $q = 1$ we get a ``truly'' post-processing guarantee---the quality of the post-processed prediction function in $\ell_1$-norm is controlled by the quality of the initial function in $\ell_1$-norm. For the case $q > 1$, the bound slightly deteriorates and comprises two parts. One that involves a $(q{-}1)$-norm to the power $1 - \tfrac{1}{q}$ and the other one involves a $\infty$-norm. Note that, following~\cite{Stone77}, it is always possible to build an estimator $f$ of $f^*$ using an independent set of labeled data which is consistent in $(q{-}1)$-norm. However, due to the presence of $1 - \tfrac{1}{q}$ in the exponent, the potential rate of convergence might be far from optimal. To this end, we included the term with $\infty$-norm. Indeed, under smoothness assumptions on $f^*$ and extra conditions on $\Prob_{\bsX \mid S}$ it is possible to build estimators $f$ of $f^*$ in $\infty$-norm only loosing an extra logarithmic factor~\citep[see e.g.,][]{Tsybakov09} compared to the estimation in $q < \infty$ norm.
Furthermore we note that the constants in the above bound do not depend on the number of groups $K$. The dependency on $K$ is implicit in the quality of estimation of $f^*$ by $f$ and is explicit in the parametric part of the rate. As an example consider $w_s = \Prob(S = s)$, $N_s = N \cdot\Prob(S=s)$, and $q=1$. In that case the remainder term can be bounded by $\sqrt{\sfrac{K}{N}}$---the standard parametric rate.

\section{Empirical study on real data}
\label{sec:real_exps}
In this section we perform empirical study on the \texttt{Communities and Crime} dataset from UCI Machine Learning Repository\footnote{Data source: \href{https://archive.ics.uci.edu/ml/datasets/communities+and+crime}{https://archive.ics.uci.edu/ml/datasets/communities+and+crime}}.
We study the post-processing procedure described in Eq.~\eqref{eq:ad_hoc_def} as well as the estimator from Eq.~\eqref{eq:estimator_general} that we designed in the context of linear regression with systematic bias in Section~\ref{SEC:LINEAR} (we refer to the latter estimator as \texttt{BLS}).
The post-processing algorithm in Eq.~\eqref{eq:ad_hoc_def} relies on some base estimator $f$ and we set $\hat{f}_{\alpha} \eqdef \sqrt{\alpha} f + (1 - \sqrt{\alpha}) \hat{\Pi}(f)$. That is, for $\alpha=1$ we recover $\hat{f}_{\alpha}\equiv f$---the base estimator itself. {Thus, the $\texttt{BLS}$ and each base estimator + post-processing induce a family of estimators parametrized by $\alpha \in [0, 1]$. In what follows we study these families of estimators.}


\subsection{Statistics about predictions}
We split the dataset $\data$ into three disjoint parts: $\data_{\train}$, $\data_{\unlab}$, and $\data_{\test}$ (of sizes $50\%, 30\%, 20\%$ respectively). The first, $\data_{\train}$, is used to fit an initial estimator $f$ of $f^*$; the second, $\data_{\unlab}$, is used to perform the post-processing $\hat{\Pi}(f)$ described in Eq.~\eqref{eq:ad_hoc_def} (we set $\sigma = 10^{-6}$); the last, $\data_{\test}$, is used to compute various statistics related to the performance and fairness of estimators.
For the estimator from Section~\ref{SEC:LINEAR} we use both $\data_{\train}$ for the train, since it is a one-shot estimator, which does not require data splitting.

Let $\bsw \in \Delta^{K-1}$ be a weight vector. For any predictor $f:\mathbb{R}^p \times [K]  \rightarrow \mathbb{R}$, we measure its performance with weighted mean-squared error on the test data
\begin{align*}
    \widehat{\MSE}(f) = \sum_{s = 1}^Kw_s\hat{\Exp}\left[(Y - f(\bsX, S))^2 \mid S = s\right]\enspace,
\end{align*}
where $\hat{\Exp}$ is evaluated on $\data_{\test}$.
The unfairness estimator (see Appendix~\ref{sec:q_losses} for details on this estimator) of $f:\mathbb{R}^p \times [K]  \rightarrow \mathbb{R}$ is defined as
\begin{align*}
    \widehat{\class{U}}(f) =  \int_0^1 \min_{y \in \bbR}\sum_{s=1}^K w_s\lvert \hat{F}_{\nu_s^f}^{-1}(t) - y \rvert \d t\enspace,
\end{align*}
where $\hat{F}_{\nu_s^f}^{-1}(\cdot)$ is the generalized inverse of $\hat{F}_{\nu_s^f}(t) \eqdef \tfrac{1}{|\data_{\test}^s|}\sum_{(\bsX, S, Y) \in \data_{\test}^s}\ind{f(\bsX, S) \leq t}$ with $\data_{\test}^s = \{(\bsX, S, Y) \in \data_{\test} \,:\, S = s\}$.
Note that as long as the image measure of each $\{\Prob_{\bsX \mid S = s}\}_{s \in [K]}$ under $f(\cdot, s)$ is supported on an interval and its density is positive, $\widehat{\class{U}}(f)$ is a consistent estimator (conditionally on $\data_{\train}, \data_{\unlab}, f$) of $\class{U}(f)$ as a consequence of~\cite[Theorem 5.2]{bobkov2019one} and Lemma~\ref{lem:barycenter_p} (in Appendix~\ref{sec:q_losses}).
Unless stated otherwise, we fix $\bsw = (\sfrac{1}{K}, \ldots, \sfrac{1}{K})^\top$.

\begin{wrapfigure}{r}{0.4\textwidth}
    \centering
    \includegraphics[width=0.37\textwidth]{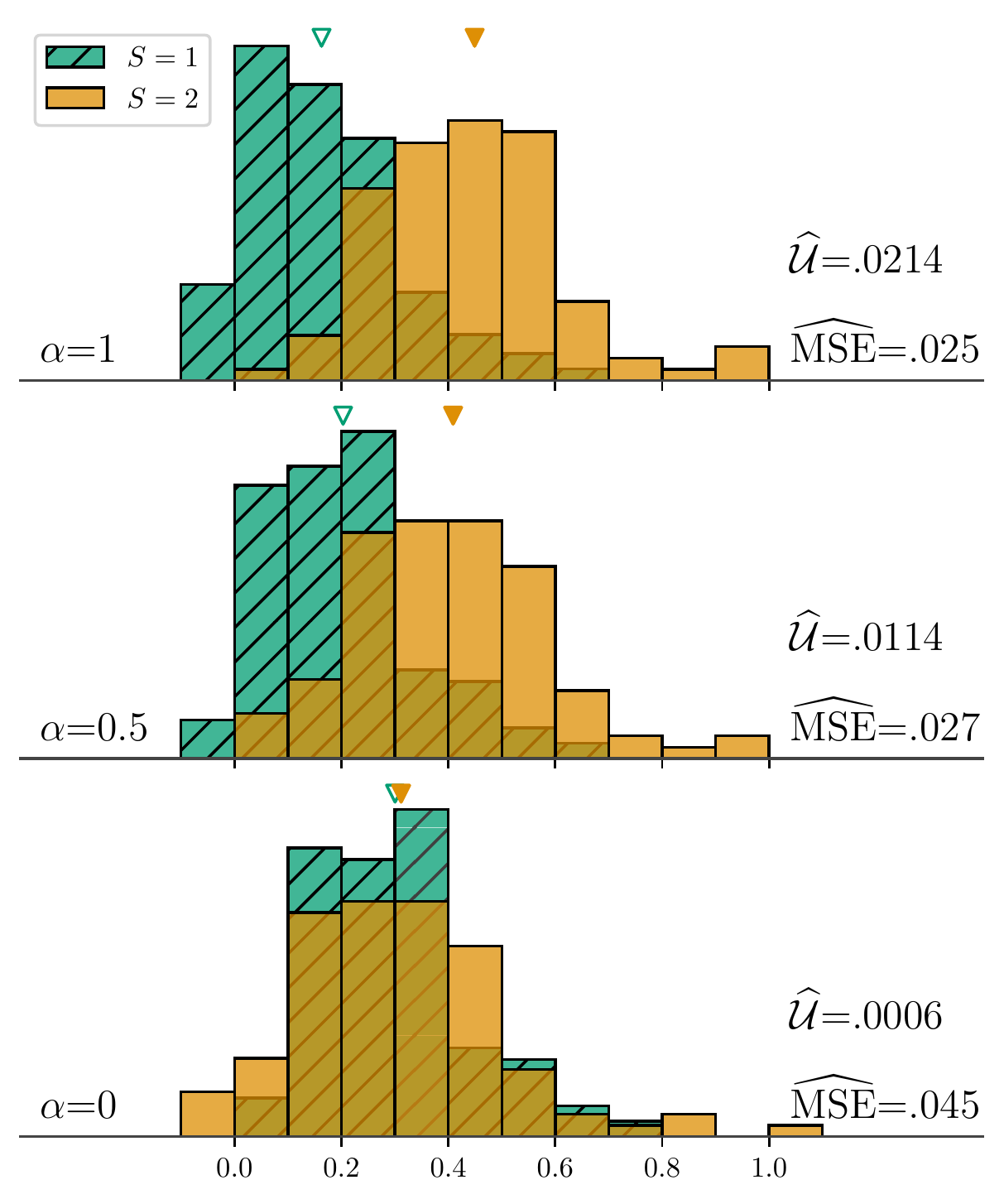}%
    \caption{Group-wise distribution of \texttt{BLS} from~Eq.~\eqref{eq:estimator_general} for three values of $\alpha$.
    }
    \label{fig:LRWSB}
\end{wrapfigure}
\subsection{\texttt{BLS} estimator from Section~\ref{SEC:LINEAR}} The \texttt{BLS} estimator from Section~\ref{SEC:LINEAR} strongly relies on the linear model with systematic bias.
Nevertheless, it appears that after properly pre-processing the data, this estimator can actually yield reasonable performance both in terms of risk and unfairness.
Given the linear model in Eq.~\eqref{eq:model_linear}, the data pre-processing is rather straightforward: we need to (at least) make sure that the group-wise means of the feature vectors are (approximately) zero. This is achieved by estimating these means on $\data_{\train}$ and subtracting these estimated means from all the features on $\data_{\test}$.
After fitting the \texttt{BLS} estimator from Eq.~\eqref{eq:estimator_general}, we display the histograms of group-wise distributions of the predicted values on Figure~\ref{fig:LRWSB}. As expected, the \texttt{BLS} estimator does not modify the shape of the group-wise distributions---it only changes their means, which are displayed by the triangles pointing downwards. It is interesting to note that such a simple method, with a proper data pre-processing step, can deliver a reasonable performance both in terms of risk and fairness. 
Nevertheless, if the equalization of the distributions (and not only means) is mandatory for the given application, one should consider instead the ad-hoc procedure that we study in the next section.

\begin{figure}[t!]
\begin{subfigure}{.475\textwidth}
  \centering
  \includegraphics[width=\textwidth]{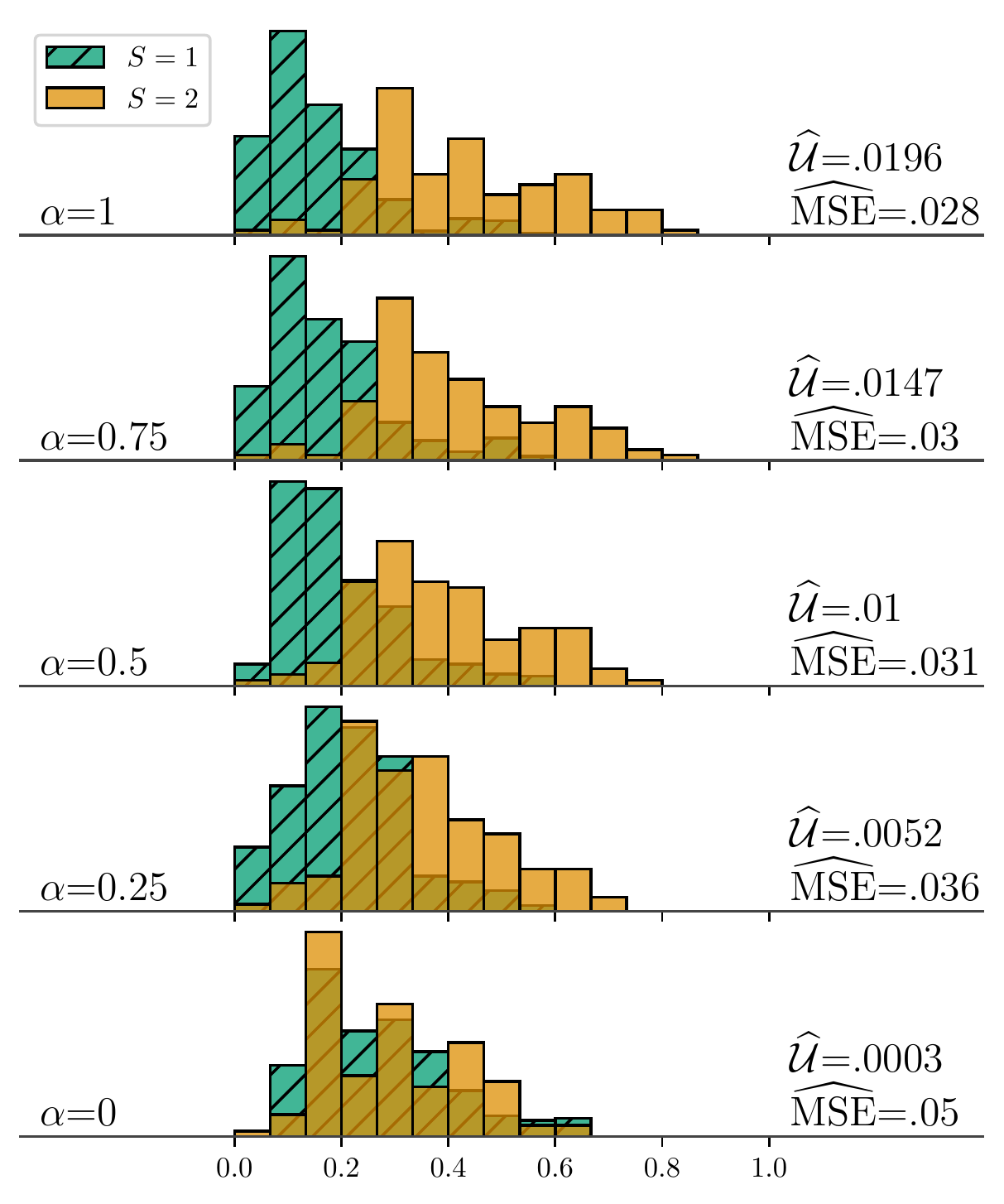}
  \caption{Evaluated on $\data_{\test}$.}
  \label{fig:sfig1}
\end{subfigure}%
\begin{subfigure}{.475\textwidth}
  \centering
  \includegraphics[width=\textwidth]{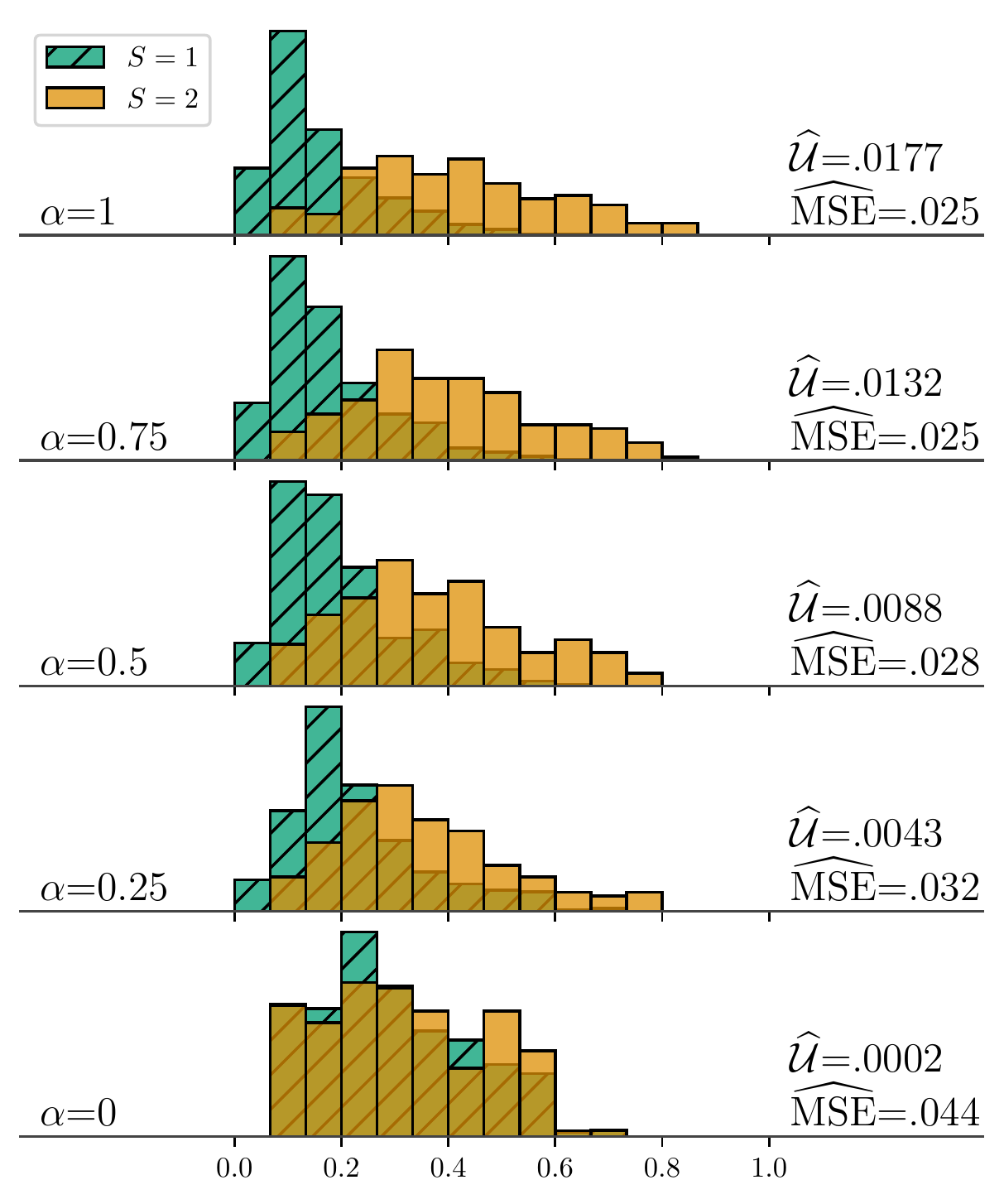}
  \caption{Evaluated on $\data_{\unlab}$.}
  \label{fig:sfig2}
\end{subfigure}
\caption{Evolution of the empirical distributions of the post-processing method for different choices of $\alpha \in [0, 1]$ coupled with the \texttt{RF} as the base estimator.}
\label{fig:dist_evolution}
\end{figure}

\subsection{Ad-hoc estimator from Section~\ref{sec:adhoc}: distribution equalization}

We tested our post-processing procedure with two different base methods: random forest (\texttt{RF}) and $k$-Nearest Neighbours (\texttt{kNN})\footnote{As before, we rely on \texttt{scikit-learn} package \citep{scikit-learn} to implement those base methods.}. We fitted each method on the training set $\data_{\train}$ using 3-fold CV procedure to optimize hyper-parameters. 

First, on Figure~\ref{fig:dist_evolution}, we display the evolution of the group-wise distributions computed on $\data_{\test}$ (Fig.~\ref{fig:sfig1}) and on $\data_{\unlab}$ (Fig.~\ref{fig:sfig2}) when $\alpha$ varies from $1$ (no fairness adjustment) to $0$ (estimation of fair optimal). Due to the form of the post-processing procedure, it is expected that the post-processing estimator achieves near perfect distribution matching when evaluated on the unlabeled data. This phenomenon is displayed on Fig.~\ref{fig:sfig2}. Furthermore, on $\data_{\test}$, which was never used during the training stage, the post-processing algorithm also displays a visually (and quantitatively, as indicated by risk and unfairness measures) superior performance.

\subsection{The performance frontier}

We repeat the splitting of the whole dataset $\data$ into the three sets for $100$ times and, for each $\alpha \in [0, 1]$, average the resulting unfairness and MSE. Then, for each method and each $\alpha \in [0, 1]$ we obtain a point in the coordinates $(\widehat{\MSE},\, \widehat{\class{U}})$ and the resulting curves (parametrized by $\alpha$) are displayed in Figure~\ref{fig:Pareto_empirical}.
We note that the estimator \texttt{BLS} from Section~\ref{SEC:LINEAR} performs reasonably well and even dominates the post-processing based on the \texttt{kNN} in the regime of moderately low values $\alpha$. At the same time, the post-processing coupled with the \texttt{RF} as the base method uniformly dominates both \texttt{kNN}+post-processing and \texttt{BLS}---model-based method of Section~\ref{SEC:LINEAR}.
Furthermore, on Figure~\ref{fig:Pareto_empirical2} one can clearly see the main drawback of the \texttt{BLS} estimator---it fails to achieve a very low level of unfairness {(as a point of reference compare the largest unfairness of $\texttt{RF}$ and the smallest of $\texttt{BLS}$)}.
Such a behaviour is not surprising since the Gaussian features assumption is probably violated and thus, the best we can hope for in general situation is the group-wise mean equalization.

\begin{figure}[t!]
\begin{subfigure}{.45\textwidth}
  \centering
  \includegraphics[width=\textwidth]{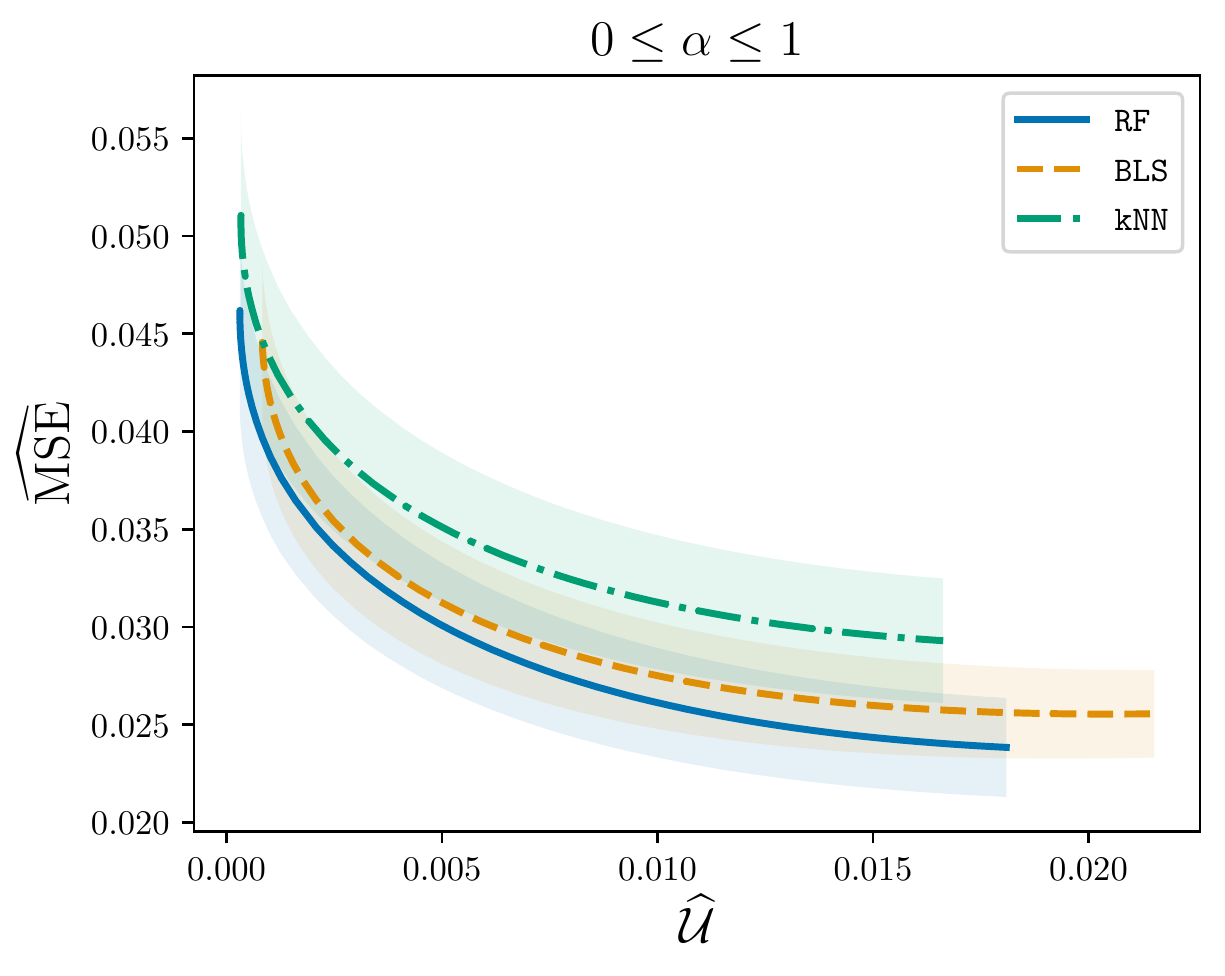}
  \caption{Performance frontier of three methods: \texttt{RF}+post-process; \texttt{kNN}+post-process; \texttt{BLS} from~Eq.~\eqref{eq:estimator_general} for $\alpha \in [0, 1]$.}
  \label{fig:Pareto_empirical1}
\end{subfigure}%
\hfill
\begin{subfigure}{.45\textwidth}
  \centering
  \includegraphics[width=\textwidth]{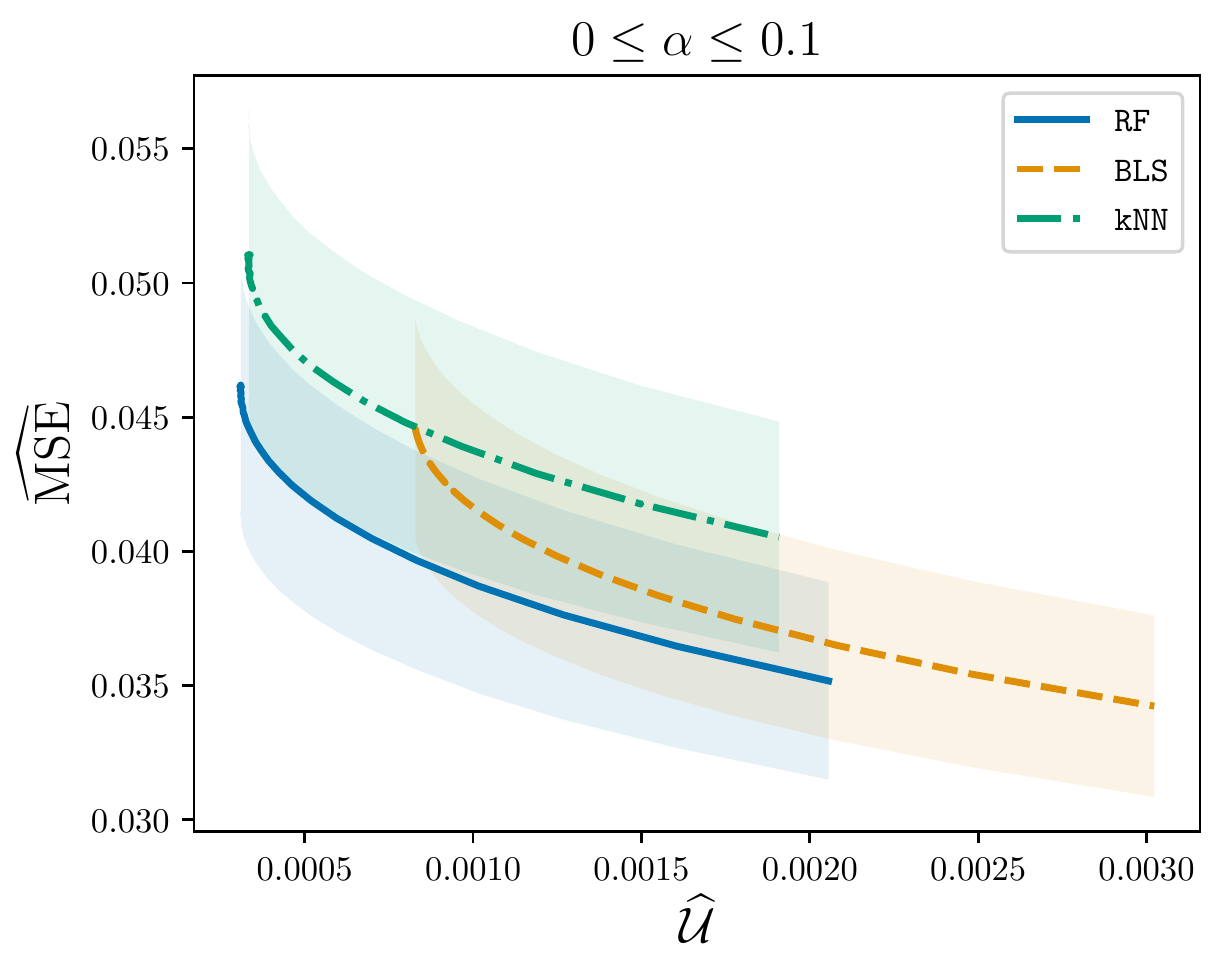}
  \caption{Performance frontier of three methods: \texttt{RF}+post-process; \texttt{kNN}+post-process; \texttt{BLS} from~Eq.~\eqref{eq:estimator_general} for $\alpha \in [0, 0.1]$.}
  \label{fig:Pareto_empirical2}
\end{subfigure}
\caption{Performance frontiers of different families of estimators}
\label{fig:Pareto_empirical}
\end{figure}

\subsection{Ad-hoc estimator from Section~\ref{sec:adhoc}: verifying properties}

Even though the ad-hoc procedure in Section~\ref{sec:adhoc} directly mimics the expression for the $\alpha$-RI, it is actually a randomized prediction. Hence, as suggested by one of the reviewers, it is interesting to empirically verify the order-preservation and the mean stability properties announced in Section~\ref{SUBSEC:GENERAL}.

\myparagraph{Order preservation} Note that, while the order preservation property was stated for the $\alpha$-RI relevant to the Bayes optimal $f^*$, it should be understood relative to the base estimator for the post-processing procedure .
To this end, we fit the post-processing operator $\hat{\Pi}$ on $\data_{\unlab}$ and, for three values of $\alpha$, display
\begin{align*}
    f(\bsx, s) \qquad\text{vs}\qquad \hat{f}_{\alpha}(\bsx, s) = \sqrt{\alpha}f(\bsx, s) + \hat{\Pi}(f)(\bsx, s)\qquad\qquad (\bsx, s) \in \bbR^p \times [K]\enspace,
\end{align*}
evaluated on $(\bsx, s) \in \data_{\test}$, where $f$ is a base-estimator trained on $\data_{\train}$. If the order preservation property holds, we expect to see a monotone curve for all $\alpha \in [0, 1)$. {Note that the curve for the reference value $\alpha = 1$ corresponds to that of the identity mapping.}
\begin{figure}[t!]
  \centering
  \includegraphics[width=\textwidth]{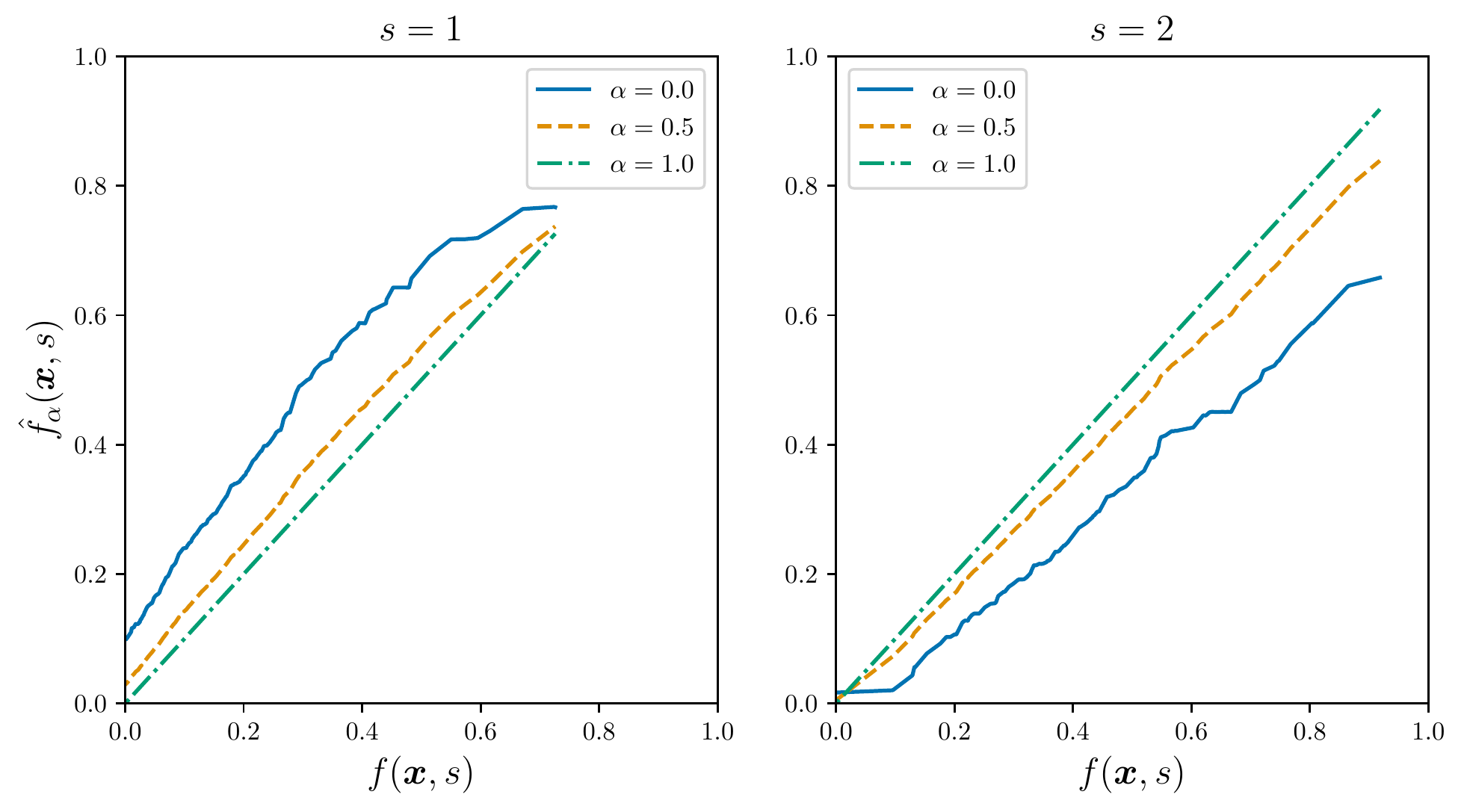}
\caption{Order preservation for the ad-hoc method}
\label{fig:order}
\end{figure}
\begin{figure}[t!]
  \centering
  \includegraphics[width=\textwidth]{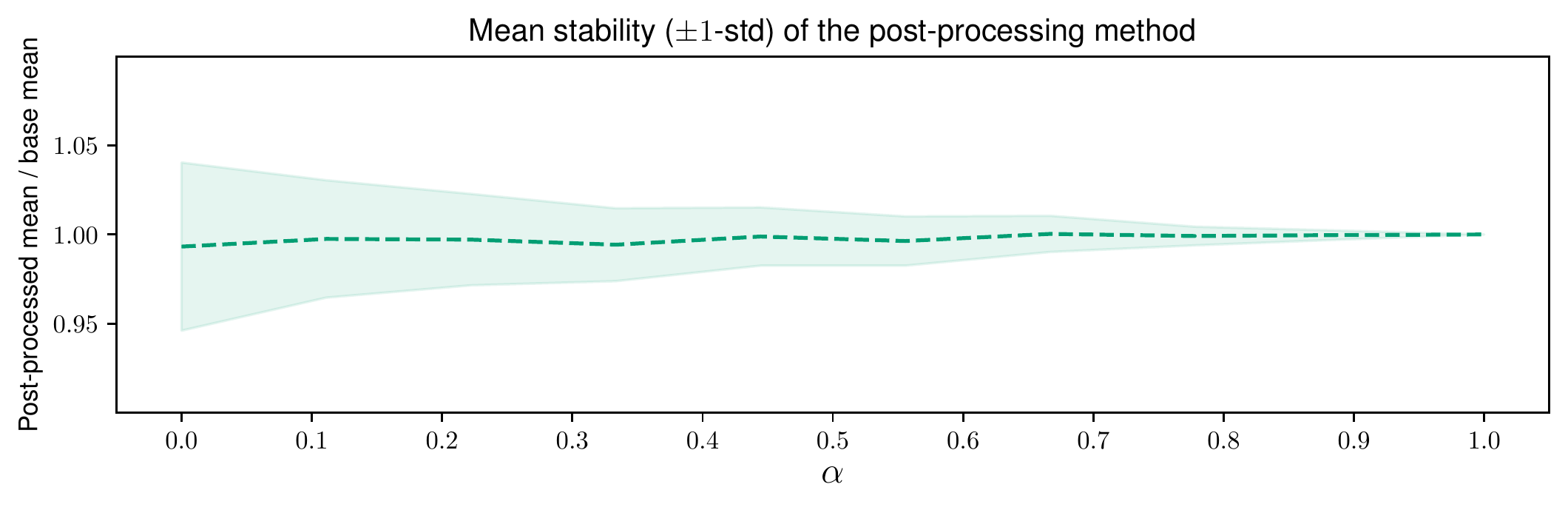}
\caption{Mean stability of the ad-hoc method}
\label{fig:mean}
\end{figure}
Figure~\ref{fig:order} displays the obtained results. We observe, as expected, that the order-preservation property is approximately satisfied.

\myparagraph{Mean stability} Concerning the mean stability, we display the following quantity
\begin{align*}
    \parent{\frac{1}{K}\sum_{s = 1}^K \hat\Exp[\hat{f}_{\alpha}(\bsX, S) \mid S = s]} \bigg / \parent{\frac{1}{K}\sum_{s = 1}^K \hat\Exp[f(\bsX, S) \mid S = s]}\qquad\qquad \alpha \in [0, 1]\enspace,
\end{align*}
where $\hat{\Exp}$ is evaluated on $\data_{\test}$. We evaluate this quantity $100$ times to account for randomness that is present within $\hat{f}_{\alpha}$. Ideally, the above quantity should be equal to one for all $\alpha \in [0, 1]$, which signifies that the mean stays stable along all the values of $\alpha \in [0, 1]$. We display these results on Figure~\ref{fig:mean} and observe that the mean stability property is indeed present with the standard deviation being at most $5\%$.


\section{Conclusion}
\label{sec:concl}
In this work, we proposed a theoretical framework for rigorous analysis of regression problems under fairness requirements. Our framework allows to interpolate between regression under the Demographic Parity constraint and unconstrained regression, using a univariate parameter between zero and one. Within this framework we precisely quantified the risk-fairness trade-off and derived general plug-n-play lower bound. To demonstrate the generality of our results we provided minimax analysis of the linear model with systematic group-dependent bias. 
{\color{red}Finally, we have proposed a post-processing algorithm which enjoys strong theoretical guarantees and performed empirical validations both on simulated and benchmark data.}
For future work, it would be interesting to extend our analysis to other statistical models, providing estimators with high confidence fairness improvement.

\bibliography{biblio}
\bibliographystyle{imsart-nameyear}

}





\newpage
\clearpage
\appendix
\setcounter{page}{1}
\begin{center}
    {\Large\bf Supplementary material for ``A minimax framework for quantifying risk-fairness trade-off in regression''}\\
    \vspace{1cm}
    {\large CONTENTS}
\end{center}

\startcontents[appendices]
\printcontents[appendices]{l}{1}{\setcounter{tocdepth}{2}}

\section{Reminder}
\label{sec:reminder}

\subsection{The Wasserstein-2 distance}\label{sec:Wassersteinreminder} 
\myparagraph{Additional notation} For any $s \in [K]$, we denote by $\mu_{\bsX | s}$ the conditional distribution of the feature vector $\bsX$ knowing the attribute $s$.
For a probability measure $\mu$ on $\bbR^p$ and a measurable function $g : \bbR^p \to \bbR$, we denote by $g \# \mu$ the push-forward (image) measure.
That is for all measurable set $\class{C} \subset \bbR$ it holds that $(g \# \mu) (\class{C}) \eqdef \mu\{\bsx \in \bbR^p\,:\, g(\bsx) \in \class{C}\}$.

We recall basic results on the Wasserstein-2 distance on the real line.
We recall that the \emph{Wasserstein-2} distance between probability distributions $\mu$ and $\nu$ in $\mathcal{P}_2(\mathbb{R}^d)$, the space of measures on $\mathbb{R}^d$ with finite second moment, is defined as
\begin{align}\label{eq:Wasserstein2}
    \sW_2^2(\mu, \nu) \coloneqq \inf_{\pi \in \Gamma(\mu, \nu)} \left\{ \int_{\mathbb{R}^d \times \mathbb{R}^d} \lVert \bsx - \bsy \rVert_2^2 \d\pi(\bsx, \bsy)\right\}\enspace,
\end{align}
where $\Gamma(\mu, \nu)$ denotes the collection of measures on $\mathbb{R}^d \times \mathbb{R}^d$ with marginals $\mu$ and $\nu$.
See \citep{santambrogio2015optimal,villani2003topics} for more details about Wasserstein distances and optimal transport.

The following lemma gives a closed form expression for the Wasserstein-2 distance between two univariate Gaussian distributions.
\begin{lemma}[\cite{frechet1957distance}]
    \label{lem:wass_between_gaus}
    For any $m_0, m_1 \in \bbR, \sigma_0, \sigma_1 \geq 0$ it holds that
    \begin{align*}
        \sW_2^2\Big({\class{N}(m_0, \sigma_0^2),\, \class{N}(m_1, \sigma_1^2)\Big)} = (m_0 - m_1)^2 + (\sigma_0 - \sigma_1)^2\enspace.
    \end{align*}
\end{lemma}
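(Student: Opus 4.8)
The plan is to bound the optimal transport cost below by a covariance (Cauchy--Schwarz) inequality and then exhibit an explicit coupling that attains this bound. Write $\mu = \class{N}(m_0, \sigma_0^2)$ and $\nu = \class{N}(m_1, \sigma_1^2)$. First I would expand the cost of an arbitrary coupling: for any $\pi \in \Gamma(\mu, \nu)$ with $(X, Y) \sim \pi$ the marginals are fixed, so
\[
  \int_{\bbR \times \bbR} (x - y)^2 \d\pi(x, y) = \Exp[X^2] - 2\Exp[XY] + \Exp[Y^2] = (m_0^2 {+} \sigma_0^2) + (m_1^2 {+} \sigma_1^2) - 2\Exp[XY].
\]
Hence minimizing the cost amounts to maximizing $\Exp[XY] = \mathrm{Cov}(X, Y) + m_0 m_1$, and $\mathrm{Cov}(X, Y) \leq \sqrt{\Var(X)\Var(Y)} = \sigma_0 \sigma_1$. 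Plugging this in gives, for every coupling,
\[
  \int_{\bbR \times \bbR} (x - y)^2 \d\pi(x,y) \geq (m_0^2 {+} \sigma_0^2) + (m_1^2 {+} \sigma_1^2) - 2(m_0 m_1 + \sigma_0 \sigma_1) = (m_0 - m_1)^2 + (\sigma_0 - \sigma_1)^2,
\]
which is the claimed lower bound on $\sW_2^2(\mu, \nu)$.

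Second I would check that this value is attained, so that the infimum in the definition of $\sW_2^2$ equals it. Let $Z \sim \class{N}(0, 1)$ and take the comonotone coupling $X = m_0 + \sigma_0 Z$, $Y = m_1 + \sigma_1 Z$; then $X \sim \mu$, $Y \sim \nu$, and $X - Y = (m_0 - m_1) + (\sigma_0 - \sigma_1) Z$, so $\Exp[(X - Y)^2] = (m_0 - m_1)^2 + (\sigma_0 - \sigma_1)^2$ using $\Exp[Z] = 0$ and $\Exp[Z^2] = 1$. This matches the lower bound, proving the identity. (Alternatively, one can invoke the one-dimensional formula $\sW_2^2(\mu, \nu) = \int_0^1 (F_\mu^{-1}(t) - F_\nu^{-1}(t))^2 \d t$ together with $F_{\class{N}(m, \sigma^2)}^{-1}(t) = m + \sigma\,\Phi^{-1}(t)$ and the identities $\int_0^1 \Phi^{-1}(t)\d t = 0$, $\int_0^1 (\Phi^{-1}(t))^2\d t = 1$.)

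There is no serious obstacle here --- the whole argument is a short computation --- and the only point requiring a moment of care is the degenerate case where $\sigma_0 = 0$ and/or $\sigma_1 = 0$, so that one or both of $\mu, \nu$ is a Dirac mass: the expansion above still holds verbatim, a Dirac marginal forces the product coupling when both are degenerate, and otherwise the comonotone construction still produces a valid coupling attaining the same value, so the formula persists. The tightness of the covariance inequality is precisely what makes the comonotone coupling optimal, since it turns $X$ and $Y$ into affine functions of the same standard Gaussian.
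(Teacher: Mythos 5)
The paper states this lemma purely as a citation to Fr\'echet (1957) and gives no internal proof, so there is no argument in the paper to compare against; what can be checked is whether your proof is correct, and it is. The expansion $\Exp[(X-Y)^2] = (m_0^2 + \sigma_0^2) + (m_1^2 + \sigma_1^2) - 2\Exp[XY]$ under a coupling with fixed Gaussian marginals is right, the Cauchy--Schwarz bound $\mathrm{Cov}(X,Y) \leq \sigma_0 \sigma_1$ turns minimizing the cost into the clean inequality $(m_0 - m_1)^2 + (\sigma_0 - \sigma_1)^2$, and the comonotone coupling $X = m_0 + \sigma_0 Z$, $Y = m_1 + \sigma_1 Z$ with $Z \sim \class{N}(0,1)$ shows the bound is attained. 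The degenerate cases $\sigma_0 = 0$ and/or $\sigma_1 = 0$ do not disturb any of these steps, as you note. This is the standard argument --- essentially Fr\'echet's --- and your parenthetical alternative via $\sW_2^2(\mu,\nu) = \int_0^1 (F_\mu^{-1}(t) - F_\nu^{-1}(t))^2 \d t$ with $F_{\class{N}(m,\sigma^2)}^{-1}(t) = m + \sigma \Phi^{-1}(t)$ is equally valid in one dimension and is the route most consistent with the optimal-transport machinery (cumulative distribution and quantile functions) that the paper actually uses elsewhere, e.g.\ in Lemma~\ref{lem:A3}. Either derivation is a fine replacement for the bare citation.
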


The next lemma gives a closed form expression for the barycenter of $K$ univariate Gaussian distributions. It shows in particular that such barycenter is also a univariate Gaussian distribution.
\begin{lemma}[\cite{agueh2011barycenters}]
    \label{lem:baryc_gaus}
    Let $\bsw \in \bbR^K$ be a probability vector, then the solution of
    \begin{align*}
        \min_{\nu \in \class{P}_2(\bbR)} \sum_{s = 1}^K w_s\sW_2^2\parent{\class{N}(m_s, \sigma_s^2), \nu}\enspace,
    \end{align*}
    is given by $\class{N}(\bar{m}, \bar{\sigma}^2)$ with
    \begin{align*}
        \bar{m} = \sum_{s = 1}^K w_sm_s\quad\text{and}\quad\bar{\sigma} = \sum_{s = 1}^Kw_s\sigma_s\enspace.
    \end{align*}
\end{lemma}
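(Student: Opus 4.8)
The plan is to reduce the infinite-dimensional minimization over $\class{P}_2(\bbR)$ to a one-dimensional problem on quantile functions and then to solve that problem by a pointwise argument in the quantile level.

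First, I would recall the standard representation of the Wasserstein-$2$ distance on the real line: for $\mu, \nu \in \class{P}_2(\bbR)$ one has $\sW_2^2(\mu,\nu) = \int_0^1 (F_\mu^{-1}(t) - F_\nu^{-1}(t))^2 \d t$, and the map $\nu \mapsto F_\nu^{-1}$ is a bijection between $\class{P}_2(\bbR)$ and the set $\class{Q}$ of left-continuous, non-decreasing functions on $(0,1)$ that are square-integrable. Since $F_{\class{N}(m_s, \sigma_s^2)}^{-1}(t) = m_s + \sigma_s\,\Phi^{-1}(t)$, where $\Phi$ is the standard Gaussian c.d.f., writing $q = F_\nu^{-1}$ turns the objective into
\begin{align*}
    \sum_{s = 1}^K w_s \sW_2^2\big(\class{N}(m_s, \sigma_s^2),\, \nu\big) = \int_0^1 \sum_{s = 1}^K w_s\big(m_s + \sigma_s\,\Phi^{-1}(t) - q(t)\big)^2 \d t\enspace,
\end{align*}
and minimizing over $\nu \in \class{P}_2(\bbR)$ amounts to minimizing this integral over $q \in \class{Q}$.

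Next, I would lower bound the integrand pointwise. For each fixed $t$ the scalar least-squares problem $\min_{y \in \bbR}\sum_s w_s(m_s + \sigma_s\Phi^{-1}(t) - y)^2$ is solved by $y^\star(t) = \bar{m} + \bar{\sigma}\,\Phi^{-1}(t)$ with $\bar m = \sum_s w_s m_s$ and $\bar\sigma = \sum_s w_s \sigma_s$, so the integral is at least $\int_0^1 \sum_s w_s(m_s + \sigma_s\Phi^{-1}(t) - y^\star(t))^2\d t$. To conclude it suffices that this lower bound is attained, i.e. that $q^\star := y^\star$ belongs to $\class{Q}$: indeed $q^\star$ is left-continuous, square-integrable, and non-decreasing because $\bar\sigma \geq 0$ and $\Phi^{-1}$ is increasing, and it is precisely the quantile function of $\class{N}(\bar m, \bar\sigma^2) \in \class{P}_2(\bbR)$. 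Hence the barycenter is $\class{N}(\bar m, \bar\sigma^2)$, consistently with what one obtains by restricting to Gaussian $\nu$ and applying Lemma~\ref{lem:wass_between_gaus} directly, which decouples into the two scalar minimizations $\min_m \sum_s w_s(m_s - m)^2$ and $\min_{\sigma \geq 0}\sum_s w_s(\sigma_s - \sigma)^2$.

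The one point requiring care is precisely the admissibility of the pointwise minimizer: for general tuples $\{\nu_s\}$ the map $t \mapsto \sum_s w_s F_{\nu_s}^{-1}(t)$ need not be a quantile function, so the pointwise lower bound need not be tight. Here the Gaussian structure saves us, since $\sum_s w_s F_{\nu_s}^{-1}(t) = \bar m + \bar\sigma\,\Phi^{-1}(t)$ is automatically monotone; this is essentially why the class of Gaussians is closed under taking Wasserstein barycenters. Degenerate components ($\sigma_s = 0$, i.e. Dirac masses) need no special treatment, as both the quantile representation and Lemma~\ref{lem:wass_between_gaus} remain valid and $\bar\sigma$ may simply equal $0$.
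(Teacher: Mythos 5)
The paper does not prove this lemma; it cites it directly from Agueh and Carlier (2011), so there is no in-paper proof to compare against. Your argument is correct and self-contained: the quantile representation of $\sW_2$ on $\bbR$, the linear form of Gaussian quantile functions, the pointwise least-squares minimization, and the check that the resulting pointwise minimizer is itself a quantile function (hence the lower bound is attained) together give a complete proof. This is also consistent with the restricted-to-Gaussians calculation via Lemma~\ref{lem:wass_between_gaus}.

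One side remark is slightly off, though it does not affect the proof. You claim that for a general tuple $\{\nu_s\}$ the pointwise minimizer $t \mapsto \sum_s w_s F_{\nu_s}^{-1}(t)$ ``need not be a quantile function, so the pointwise lower bound need not be tight.'' In fact, for the squared ($W_2$) cost the pointwise minimizer is always the $w$-weighted convex combination $\sum_s w_s F_{\nu_s}^{-1}$, which is automatically non-decreasing, left-continuous, and square-integrable whenever each $F_{\nu_s}^{-1}$ is; so it is always an admissible quantile function, and the one-dimensional $W_2$ barycenter always has quantile function $\sum_s w_s F_{\nu_s}^{-1}$ (this is exactly what Lemma~\ref{lem:A3} encodes through the transport maps $T_{a_s \to \bar a}$). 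The Gaussian structure is what makes the \emph{closed form} $\class{N}(\bar m, \bar\sigma^2)$ come out, not what makes the pointwise minimizer admissible. The caveat you had in mind is the right one for the $\ell_q$ extension with $q \neq 2$, where the pointwise scalar minimizer is no longer a convex combination of the $F_{\nu_s}^{-1}(t)$ and monotonicity genuinely needs an argument.
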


Finally we state a lemma giving an explicit form for the transport map to the barycenter of probability distributions supported on the real line and the corresponding constant speed geodesics. See \cite[Section 6.1]{agueh2011barycenters}.

\begin{lemma}
    \label{lem:A3}
    Let $a_1, \dots, a_K$ be non-atomic probability measures on the real line that have finite second moments, and let $w_1, \dots, w_K$ be positive reals that sum to 1. Denote by $\bar{a}$ a barycenter of those measures (w.r.t. to the Wasserstein-2 distance). For any $s \in [K]$, the transport map from $a_s$ to the barycenter $\bar{a}$ is given by
\begin{align*}
    T_{a_s \rightarrow \bar{a}} = \parent{\sum_{s'=1}^K w_{s'} F_{s'}^{-1} \circ F_{s}} \enspace,
\end{align*}
where $F_s$ is the cumulative distribution function of $a_s$ and $F_s^{-1}$ denotes the generalized inverse of $F_s$ defined as
\begin{align*}
    F_s^{-1}(t) = \inf\{x : F_s(x) \geq t\}\enspace.
\end{align*}
In particular, the constant speed geodesic $\gamma_s(\cdot)$ from $a_s$ to $\bar{a}$ is given by
\begin{align*}
    \gamma_s(t) = ((1-t)\Id + t T_{a_s \rightarrow \bar{a}})\# a_s , \quad t \in [0, 1]\enspace.
\end{align*}
\end{lemma}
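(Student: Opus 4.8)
Although this statement is essentially contained in \citep[Section~6.1]{agueh2011barycenters}, here is a self-contained route. The plan is to reduce everything to the one-dimensional quantile representation of the Wasserstein-2 distance, $\sW_2^2(\mu,\nu) = \int_0^1 \parent{F_\mu^{-1}(t) - F_\nu^{-1}(t)}^2\,\d t$ for $\mu,\nu \in \class{P}_2(\bbR)$, which I would recall from the standard references \citep{santambrogio2015optimal,villani2003topics} (it follows from the fact that on the real line the monotone coupling $t\mapsto\parent{F_\mu^{-1}(t),F_\nu^{-1}(t)}$ is optimal for the quadratic cost). Granting this, the barycenter functional reads $\nu \mapsto \sum_{s=1}^K w_s \int_0^1 \parent{F_{a_s}^{-1}(t) - F_\nu^{-1}(t)}^2\,\d t$, and by Tonelli's theorem it can be minimized pointwise in $t$: for each fixed $t$ the map $x \mapsto \sum_{s} w_s \parent{F_{a_s}^{-1}(t) - x}^2$ is a strictly convex quadratic, minimized at $x^*(t) = \sum_{s} w_s F_{a_s}^{-1}(t)$. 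Since $t\mapsto x^*(t)$ is a finite convex combination of non-decreasing, square-integrable functions, it is itself non-decreasing and lies in $L^2([0,1])$, hence it is the quantile function of a probability measure $\bar a \in \class{P}_2(\bbR)$, which is therefore a barycenter of $a_1,\dots,a_K$, with $F_{\bar a}^{-1} = \sum_{s'} w_{s'} F_{a_{s'}}^{-1}$.

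Next I would identify the transport map. Since each $a_s$ is non-atomic, $F_{a_s}$ is continuous and $F_{a_s}\#a_s$ is the uniform law on $[0,1]$; composing, $\parent{F_{\bar a}^{-1}\circ F_{a_s}}\#a_s = F_{\bar a}^{-1}\#\parent{F_{a_s}\#a_s} = \bar a$, so $T_{a_s\to\bar a} \eqdef F_{\bar a}^{-1}\circ F_{a_s}$ is a transport map from $a_s$ to $\bar a$. It is non-decreasing, hence by one-dimensional optimal transport theory (for a strictly convex cost the monotone rearrangement is \emph{the} optimal map whenever the source measure is non-atomic) it is the optimal transport map. Substituting the formula for $F_{\bar a}^{-1}$ yields $T_{a_s\to\bar a} = \parent{\sum_{s'} w_{s'} F_{a_{s'}}^{-1}}\circ F_{a_s} = \sum_{s'} w_{s'}F_{a_{s'}}^{-1}\circ F_{a_s}$, the claimed expression.

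For the geodesic I would set $\gamma_s(t) \eqdef \parent{(1-t)\Id + t\,T_{a_s\to\bar a}}\#a_s$ and check it is the constant-speed geodesic from $a_s$ to $\bar a$. Since $(1-t)\Id + t\,T_{a_s\to\bar a}$ is non-decreasing (a convex combination of the identity and a non-decreasing map) and $F_{a_s}\#a_s$ is uniform, the quantile function of $\gamma_s(t)$ equals $(1-t)F_{a_s}^{-1} + t\,F_{\bar a}^{-1}$; in particular $\gamma_s(0)=a_s$ and $\gamma_s(1)=\bar a$. Applying the quantile representation once more, for any $u,v\in[0,1]$,
\[
\sW_2^2\parent{\gamma_s(u),\gamma_s(v)} = \int_0^1 (u-v)^2\parent{F_{\bar a}^{-1}(t) - F_{a_s}^{-1}(t)}^2\,\d t = (u-v)^2\,\sW_2^2(a_s,\bar a)\enspace,
\]
so $t\mapsto\gamma_s(t)$ is a constant-speed geodesic, and it is the unique one by uniqueness of the optimal map. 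The main obstacle is not a single deep step but the bookkeeping around generalized inverses and non-atomicity: one must justify that $F_{a_s}\#a_s$ is exactly the uniform law, that quantile functions of push-forwards by non-decreasing maps behave as stated up to null sets, that the pointwise minimization can be exchanged with the integral, and that the monotone rearrangement is genuinely \emph{the} optimal map and not merely \emph{an} optimal one — all classical, but each deserving a line. For $\ell_q$ losses one replaces the quadratic by $|\cdot|^q$, retaining strict convexity for $q>1$; this variant is handled in the appendix.
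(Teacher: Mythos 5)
Your proof is correct, and in fact the paper does not supply its own proof of this lemma---it simply defers to \cite[Section~6.1]{agueh2011barycenters}. Your self-contained route (quantile representation of $\sW_2$ on the line, Tonelli plus pointwise minimization to get $F_{\bar a}^{-1}=\sum_{s'} w_{s'}F_{a_{s'}}^{-1}$, probability integral transform to exhibit $F_{\bar a}^{-1}\circ F_{a_s}$ as a transport map, monotonicity plus non-atomicity to conclude optimality, and direct computation of $\sW_2(\gamma_s(u),\gamma_s(v))$ to verify constant speed) is a faithful reconstruction of the standard argument the paper is implicitly relying on. One small point you could make explicit: because the pointwise quadratic in quantile coordinates is strictly convex, the barycenter is unique, so the transport-map formula applies to \emph{the} barycenter $\bar a$ referenced in the statement rather than merely to a conveniently constructed one; this is the bookkeeping needed to match the lemma's phrasing, but it does not affect correctness.
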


\subsection{Tail inequalities}
The next result can be found in~\cite[Lemma 1]{laurent2000adaptive}.
\begin{lemma}
    \label{lem:conc_chi_squared}
    Let $\zeta_1, \ldots, \zeta_p$ be \iid standard Gaussian random variables. Let $\bsa = (a_1, \ldots, a_p)^\top$ be component-wise non-negative, then
    \begin{align*}
        \Probf\parent{\sum_{j = 1}^pa_j(\zeta_j^2 - 1) \geq 2\norm{\bsa}_2\sqrt{t} + 2\norm{\bsa}_{\infty}t} \leq \exp(-t),\quad \forall t \geq 0\enspace.
    \end{align*}
\end{lemma}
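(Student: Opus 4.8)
The plan is to run the classical Chernoff--Cram\'er argument specialized to the random variable $X \eqdef \sum_{j = 1}^p a_j(\zeta_j^2 - 1)$. First I would reduce to the case $t > 0$ and $\bsa \neq \boldsymbol{0}$, the remaining cases being trivial. For $\lambda > 0$, Markov's inequality gives $\Probf(X \geq x) \leq e^{-\lambda x}\,\Expf[e^{\lambda X}]$, and by independence of the $\zeta_j$ together with the chi-square moment generating function $\Expf[e^{\lambda a_j(\zeta_j^2 - 1)}] = e^{-\lambda a_j}(1 - 2\lambda a_j)^{-\sfrac12}$ (valid whenever $2\lambda a_j < 1$), one obtains, for every $\lambda \in \big(0,\, 1/(2\norm{\bsa}_{\infty})\big)$,
\begin{align*}
    \log \Expf\big[e^{\lambda X}\big] = \sum_{j = 1}^p \Big( -\lambda a_j - \tfrac12 \log(1 - 2\lambda a_j) \Big)\enspace.
\end{align*}

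The second step is to bound each summand using the elementary inequality $-u - \tfrac12\log(1 - 2u) \leq u^2/(1 - 2u)$, which holds for all $u \in [0, 1/2)$ and is verified by noting that both sides vanish at $u = 0$ and comparing derivatives (the derivative inequality reduces to $1 - 2u \leq 1 - u$). Substituting $u = \lambda a_j$, bounding $a_j \leq \norm{\bsa}_{\infty}$ in each denominator, and using $\sum_{j} a_j^2 = \norm{\bsa}_2^2$ yields the sub-gamma estimate
\begin{align*}
    \log\Expf\big[e^{\lambda X}\big] \leq \sum_{j = 1}^p \frac{\lambda^2 a_j^2}{1 - 2\lambda a_j} \leq \frac{\lambda^2 \norm{\bsa}_2^2}{1 - 2\lambda\norm{\bsa}_{\infty}}\enspace,\qquad 0 < \lambda < \frac{1}{2\norm{\bsa}_{\infty}}\enspace.
\end{align*}

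Finally, I would take $x = 2\norm{\bsa}_2\sqrt{t} + 2\norm{\bsa}_{\infty}t$ and the explicit value $\lambda = \sqrt{t}\,\big/\big(\norm{\bsa}_2 + 2\norm{\bsa}_{\infty}\sqrt{t}\big)$, which lies in $\big(0,\, 1/(2\norm{\bsa}_{\infty})\big)$ and satisfies $1 - 2\lambda\norm{\bsa}_{\infty} = \norm{\bsa}_2\big/\big(\norm{\bsa}_2 + 2\norm{\bsa}_{\infty}\sqrt{t}\big)$. A direct substitution then shows that the exponent $-\lambda x + \lambda^2\norm{\bsa}_2^2/(1 - 2\lambda\norm{\bsa}_{\infty})$ collapses to exactly $-t$, so the Chernoff bound gives $\Probf(X \geq x) \leq e^{-t}$, which is the claim. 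I do not expect any genuine obstacle: the only mildly delicate points are the elementary logarithmic inequality of the second step and the verification that the chosen $\lambda$ makes the exponent equal $-t$, both of which are one-line computations; since the statement is Lemma~1 of~\cite{laurent2000adaptive}, one could also simply cite it.
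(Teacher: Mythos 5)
Your proof is correct; every step checks out — the Chernoff--Cram\'er bound, the chi-square moment generating function, the elementary inequality $-u - \tfrac12\log(1-2u) \le u^2/(1-2u)$ on $[0,\tfrac12)$ (whose derivative comparison does indeed reduce to $1-2u \le 1-u$), and the choice $\lambda = \sqrt{t}/(\normin{\bsa}_2 + 2\normin{\bsa}_{\infty}\sqrt{t})$, which lies in the admissible range and makes the exponent collapse exactly to $-t$. The paper supplies no proof of its own but simply cites \cite[Lemma 1]{laurent2000adaptive}, and your argument is precisely the Chernoff-bound proof given in that reference, so the two coincide.
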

In particular, setting $\bszeta = (\zeta_1, \ldots, \zeta_p)^\top$ and applying the previous result with $a_1 = \ldots = a_p = 1$ we get
\begin{align*}
    \Probf\parent{\|\bszeta\|_2^2 \geq p + 2\sqrt{pt} + 2t} \leq \exp(-t),\quad \forall t \geq 0
\end{align*}

We need one result from random matrix theory to control the smallest and largest singular values of a Gaussian matrix, see \cite[Corollary 5.35]{vershynin2010introduction}.
\begin{lemma}
    \label{lem:gauss_matrix}
    Let $\bfA$ be an $N \times m$ matrix whose entries are independent standard normal random variables. Then,
    \begin{align*}
        \Probf\parent{\sigma_{\min}(\bfA) \leq \sqrt{N}
        - \sqrt{m} - t} \vee \Probf\parent{\sigma_{\max}(\bfA) \geq \sqrt{N}
        + \sqrt{m} + t} \leq \exp(-t^2/2), \quad \forall t \geq 0\enspace.\\
    \end{align*}
\end{lemma}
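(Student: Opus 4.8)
This is the classical two-sided deviation bound for the extreme singular values of a Gaussian matrix, and the plan is to obtain it by combining concentration of measure for Lipschitz functions with Gaussian comparison inequalities for the means; a bare $\varepsilon$-net argument would also work but would not recover the sharp coefficient $1$ in front of $\sqrt m$. Throughout, I would identify $\bfA\in\bbR^{N\times m}$ with its list of entries in $\bbR^{Nm}$, so that $\bfA$ is a standard Gaussian vector and its Euclidean norm equals the Frobenius norm $\norm{\bfA}_F$; replacing $\bfA$ by $\bfA^\top$ if necessary, one may assume $N\geq m$, in which case $\sigma_{\min}(\bfA)=\inf_{\norm{u}_2=1}\norm{\bfA u}_2$ and $\sigma_{\max}(\bfA)=\sup_{\norm{u}_2=1}\norm{\bfA u}_2$ (when $N<m$ the bound on $\sigma_{\min}$ is vacuous).

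The first step is to note that $\bfA\mapsto\sigma_{\max}(\bfA)$ and $\bfA\mapsto\sigma_{\min}(\bfA)$ are $1$-Lipschitz for $\norm{\cdot}_F$: for any unit vector $u$ one has $\bigl|\norm{\bfA u}_2-\norm{\bfB u}_2\bigr|\leq\norm{(\bfA-\bfB)u}_2\leq\norm{\bfA-\bfB}_{\op}\leq\norm{\bfA-\bfB}_F$, and passing to the supremum, respectively the infimum, over $u$ together with $|\sup f-\sup g|\leq\sup|f-g|$ transfers the estimate to the two singular values (this is Weyl's perturbation inequality for singular values). The second step is to apply the Gaussian concentration inequality for Lipschitz functions (Borell--Tsirelson--Ibragimov--Sudakov): for a $1$-Lipschitz $F$ and a standard Gaussian vector, $\Probf(F\geq\Exp F+t)\leq e^{-t^2/2}$ and $\Probf(F\leq\Exp F-t)\leq e^{-t^2/2}$. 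Taking $F=\sigma_{\max}$ (upper tail) and $F=\sigma_{\min}$ (lower tail) reduces the whole statement to the two mean bounds $\Exp\sigma_{\max}(\bfA)\leq\sqrt N+\sqrt m$ and $\Exp\sigma_{\min}(\bfA)\geq\sqrt N-\sqrt m$; the $\vee$ in the claim then costs nothing, since each of the two tail probabilities is separately at most $e^{-t^2/2}$.

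For the mean bounds I would use Gaussian comparison. Write $\sigma_{\max}(\bfA)=\sup_{u,v}\scalar{\bfA u}{v}$ and $\sigma_{\min}(\bfA)=\inf_u\sup_v\scalar{\bfA u}{v}$, the ranges being the unit spheres of $\bbR^m$ and $\bbR^N$, and set $X_{u,v}=\scalar{\bfA u}{v}$, $Y_{u,v}=\scalar{g}{u}+\scalar{h}{v}$ for independent standard Gaussians $g\in\bbR^m$, $h\in\bbR^N$. A one-line computation gives $\Exp(Y_{u,v}-Y_{u',v'})^2-\Exp(X_{u,v}-X_{u',v'})^2=2(1-\scalar{u}{u'})(1-\scalar{v}{v'})\geq0$, with equality exactly when $v=v'$. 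The Sudakov--Fernique inequality then yields $\Exp\sigma_{\max}(\bfA)\leq\Exp\sup_{u,v}Y_{u,v}=\Exp\norm{g}_2+\Exp\norm{h}_2\leq\sqrt m+\sqrt N$ (Jensen), while Gordon's min--max comparison inequality, which is applicable precisely because equality holds for $v=v'$, gives $\Exp\sigma_{\min}(\bfA)\geq\Exp\inf_u\sup_v Y_{u,v}=\Exp\norm{h}_2-\Exp\norm{g}_2\geq\sqrt N-\sqrt m$, the last inequality being a standard estimate on expected norms of Gaussian vectors. Combining with the concentration step gives, for all $t\geq0$,
\begin{align*}
\Probf\bigl(\sigma_{\max}(\bfA)\geq\sqrt N+\sqrt m+t\bigr)\leq e^{-t^2/2}\qquad\text{and}\qquad\Probf\bigl(\sigma_{\min}(\bfA)\leq\sqrt N-\sqrt m-t\bigr)\leq e^{-t^2/2}\enspace,
\end{align*}
which is the assertion.

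The step I expect to be the real obstacle is the lower bound on $\Exp\sigma_{\min}(\bfA)$: the supremum estimate needs only the one-sided Sudakov--Fernique inequality, but the $\inf$-$\sup$ estimate requires Gordon's two-sided comparison theorem together with a careful verification that its covariance hypotheses hold for the chosen process $Y$ (the equality case $v=v'$ is exactly what makes it go through), plus the elementary fact that $\Exp\norm{h}_2-\Exp\norm{g}_2\geq\sqrt N-\sqrt m$ for $N\geq m$. If one is willing to give up the sharp constant, Step 3 can be replaced entirely by a covering of the sphere, the $\chi^2$ tail bound of Lemma~\ref{lem:conc_chi_squared}, and a union bound — technically elementary, but it inflates the coefficient of $\sqrt m$, so I would keep the comparison-based argument in order to match the statement exactly.
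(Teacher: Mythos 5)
The paper does not prove this lemma itself: it imports it verbatim from \cite{vershynin2010introduction}, Corollary~5.35. Your proof is, up to presentation, the argument found at that reference — reduce to the means via the $1$-Lipschitz property and Gaussian concentration, then bound $\Exp\sigma_{\max}$ and $\Exp\sigma_{\min}$ by Sudakov--Fernique and Gordon comparison against the decoupled process $Y_{u,v}=\scalar{g}{u}+\scalar{h}{v}$. So the approaches coincide and the structure is sound.

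Two small points worth tightening. First, the applicability condition for Gordon's $\inf$--$\sup$ comparison concerns increments at a \emph{fixed outer index}, i.e.\ $u=u'$: the hypothesis is that $\Exp(X_{u,v}-X_{u,v'})^2 \geq \Exp(Y_{u,v}-Y_{u,v'})^2$ for all $v,v'$, together with the reversed inequality whenever $u\neq u'$. You wrote that the equality case $v=v'$ is what makes the theorem go through; in fact it is the case $u=u'$ that is invoked, and the factorization $\Exp(Y-Y')^2-\Exp(X-X')^2 = 2(1-\scalar{u}{u'})(1-\scalar{v}{v'})$ just happens to vanish in both cases. This is an imprecision of bookkeeping, not a gap. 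Second, the inequality $\Exp\norm{h}_2-\Exp\norm{g}_2\geq\sqrt N-\sqrt m$ is not a consequence of Jensen alone: Jensen gives $\Exp\norm{g}_2\leq\sqrt m$, which pulls the wrong way, and you also need the monotonicity of $k\mapsto\sqrt k-\Exp\norm{g_k}_2$ (equivalently, a short computation with the identity $\Exp\norm{g_{k+1}}_2\cdot\Exp\norm{g_k}_2=k$). You flag it as ``a standard estimate,'' which is fair, but it is the one genuinely nontrivial scalar fact in Step~3 and is worth stating explicitly when writing this out.
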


\section{Proofs for Section~\ref{SUBSEC:GENERAL}}\label{sec:general_proof}

\subsection{Auxiliary results}
The next result is taken from~\citep[Theorem 3]{gouic2020price}.
\begin{lemma}\label{lem:squared_loss_wasserstein}
    Let $f: \bbR^p \times [K] \to \bbR$ be any measurable function.  Let Assumption~\ref{as:atomless} be satisfied, then
    \begin{align*}
        \risk(f) \geq \sum_{s = 1}^Kw_s \sW_2^2\bigg(f(\cdot, s)\# \mu_{\bsX|s}, f^*(\cdot, s)\# \mu_{\bsX|s}\bigg)\enspace.
    \end{align*}
\end{lemma}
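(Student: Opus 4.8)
The plan is to realize the group-wise squared error as the transport cost of one particular coupling and then invoke the definition of $\sW_2$ as an infimum over couplings.

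First I would unfold the definition of the risk: by definition of the risk measure,
\begin{align*}
    \risk(f) = \sum_{s = 1}^K w_s\,\Exp\big[(f(\bsX, S) - f^*(\bsX, S))^2 \mid S = s\big] = \sum_{s = 1}^K w_s \int_{\bbR^p}\big(f(\bsx, s) - f^*(\bsx, s)\big)^2\,\d\mu_{\bsX|s}(\bsx)\enspace,
\end{align*}
where $\mu_{\bsX|s}$ denotes the conditional law of $\bsX$ given $S = s$.

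Next, I would fix $s \in [K]$ and introduce the measurable map $\Phi_s : \bbR^p \to \bbR\times\bbR$ defined by $\Phi_s(\bsx) \eqdef \big(f(\bsx, s),\, f^*(\bsx, s)\big)$, together with the pushforward $\pi_s \eqdef \Phi_s \# \mu_{\bsX|s}$. By construction, the first marginal of $\pi_s$ is $f(\cdot, s)\#\mu_{\bsX|s}$ and the second is $f^*(\cdot, s)\#\mu_{\bsX|s} = \nu^*_s$, so $\pi_s \in \Gamma\big(f(\cdot, s)\#\mu_{\bsX|s},\, f^*(\cdot, s)\#\mu_{\bsX|s}\big)$. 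The change-of-variables formula for pushforwards then gives
\begin{align*}
    \int_{\bbR^p}\big(f(\bsx, s) - f^*(\bsx, s)\big)^2\,\d\mu_{\bsX|s}(\bsx) = \int_{\bbR\times\bbR}(u - v)^2\,\d\pi_s(u, v) \geq \sW_2^2\big(f(\cdot, s)\#\mu_{\bsX|s},\, f^*(\cdot, s)\#\mu_{\bsX|s}\big)\enspace,
\end{align*}
since $\sW_2^2$ is the infimum of $\pi \mapsto \int (u-v)^2\,\d\pi$ over $\Gamma(\cdot,\cdot)$ and $\pi_s$ is an admissible coupling. Multiplying by $w_s \geq 0$ and summing over $s \in [K]$ then yields the claim. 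This chain is valid in $[0,\infty]$, so there is no need to treat $\risk(f) = \infty$ separately; when $\risk(f) < \infty$, Assumption~\ref{as:atomless} (giving $\nu^*_s \in \mathcal{P}_2(\bbR)$) together with the triangle inequality in $L^2$ ensures $f(\cdot, s)\#\mu_{\bsX|s} \in \mathcal{P}_2(\bbR)$ as well, so all the Wasserstein terms are finite and the bound is non-vacuous.

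There is no genuine obstacle here: the entire argument rests on the single observation that the features, through the pair $(f(\cdot,s), f^*(\cdot,s))$, induce a legitimate coupling of the two image measures whose transport cost is exactly the group-$s$ contribution to $\risk(f)$. The only points requiring (routine) care are the measurability of $\Phi_s$ and the integrability bookkeeping, both immediate from the standing assumptions.
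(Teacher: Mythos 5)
Your proof is correct, and it is the canonical argument: the map $\bsx \mapsto (f(\bsx,s),\, f^*(\bsx,s))$ pushes $\mu_{\bsX|s}$ forward to a coupling of the two image measures whose quadratic transport cost equals the group-$s$ contribution to $\risk(f)$, so the infimum defining $\sW_2^2$ can only be smaller. The paper does not reproduce a proof of this lemma — it simply cites \citep[Theorem~3]{gouic2020price} — so there is no alternative argument to compare against; your coupling construction is exactly what one expects to find in that reference. Your remarks on integrability (working in $[0,\infty]$, and using Assumption~\ref{as:atomless} together with the $L^2$ triangle inequality to ensure $f(\cdot,s)\#\mu_{\bsX|s}\in\class{P}_2(\bbR)$ when $\risk(f)<\infty$) are the right bookkeeping; note also that the inequality itself does not actually require Assumption~\ref{as:atomless} — the hypothesis only serves to keep the right-hand side finite and meaningful.
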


\begin{lemma}[Minkowski's inequality]
    \label{lem:fancy_triangle}
    Let $(\class{X}, d)$ be a metric space. Fix integers $K \geq 2$, $q \in [1, +\infty)$, a weight vector $\bsw \in \Delta^{K - 1}$ and  define the mapping $d_{\bsw, q} : \class{X}^K \times \class{X}^K \to \bbR$ as
    \begin{align*}
        d_{\bsw, q}(\bsa, \bsb) = \left\{\sum_{s = 1}^K w_s d^q(a_s, b_s)\right\}^{\sfrac{1}{q}},\qquad \text{for any  $\bsa, \bsb \in \class{X}^K$} \enspace.
    \end{align*}
    Then, $d_{\bsw, q}$ is a pseudo-metric on the product space $\class{X}^K$.
\end{lemma}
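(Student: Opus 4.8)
The plan is to check the pseudo-metric axioms for $d_{\bsw, q}$ one by one. Non-negativity is immediate since $d \geq 0$ and $w_s \geq 0$ for all $s$; symmetry follows from $d(a_s, b_s) = d(b_s, a_s)$; and $d_{\bsw, q}(\bsa, \bsa) = 0$ is clear. The only substantive point is the triangle inequality, which is precisely Minkowski's inequality in a weighted form.

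First I would fix $\bsa, \bsb, \bsc \in \class{X}^K$ and apply the triangle inequality in $(\class{X}, d)$ coordinatewise: $d(a_s, c_s) \leq d(a_s, b_s) + d(b_s, c_s)$ for every $s \in [K]$. Since $t \mapsto t^q$ is non-decreasing on $[0, +\infty)$, raising to the power $q$, multiplying by $w_s \geq 0$, and summing over $s$ yields $\sum_{s = 1}^K w_s d^q(a_s, c_s) \leq \sum_{s = 1}^K w_s \big(d(a_s, b_s) + d(b_s, c_s)\big)^q$. It then remains to bound the right-hand side by $\big(d_{\bsw, q}(\bsa, \bsb) + d_{\bsw, q}(\bsb, \bsc)\big)^q$.

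This last step is exactly Minkowski's inequality for the weighted $\ell_q$-seminorm $\norm{x}_{\bsw, q} \eqdef \big(\sum_{s = 1}^K w_s |x_s|^q\big)^{\sfrac1q}$ on $\bbR^K$, applied to the non-negative vectors $u = (d(a_s, b_s))_{s \in [K]}$ and $v = (d(b_s, c_s))_{s \in [K]}$. The cleanest way I would establish it is by the substitution $\tilde{u}_s = w_s^{\sfrac1q} u_s$ and $\tilde{v}_s = w_s^{\sfrac1q} v_s$: then $\norm{u}_{\bsw, q} = \norm{\tilde u}_q$, $\norm{v}_{\bsw, q} = \norm{\tilde v}_q$, and $\norm{u + v}_{\bsw, q} = \norm{\tilde u + \tilde v}_q$ because $w_s^{\sfrac1q}(u_s + v_s) = \tilde u_s + \tilde v_s$; now the classical Minkowski inequality $\norm{\tilde u + \tilde v}_q \leq \norm{\tilde u}_q + \norm{\tilde v}_q$ on $\bbR^K$ finishes the argument. (Alternatively one derives the weighted inequality directly from H\"older's inequality, but the substitution avoids repeating that computation.)

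Finally I would remark that $d_{\bsw, q}$ is in general only a pseudo-metric and not a metric: if $w_s = 0$ for some $s$, then two tuples differing only in the $s$-th coordinate have $d_{\bsw, q}$-distance zero, so separation may fail; every other axiom holds verbatim, which explains the wording of the statement. I do not anticipate any real obstacle here --- the argument is a textbook application of Minkowski's inequality --- so the only care needed is to state the weighted version precisely and to record the reduction to the classical one.
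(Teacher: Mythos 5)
Your proof is correct, and the routine axioms (non-negativity, symmetry, $d_{\bsw,q}(\bsa,\bsa)=0$) are dispatched the same way as in the paper; the remark about why this is only a pseudo-metric (failure of separation when some $w_s = 0$) is a nice addition not present in the paper. For the triangle inequality, however, your route differs from the paper's. The paper gives the standard direct derivation: it bounds $d^q(a_s,b_s) \leq \big(d(a_s,c_s)+d(c_s,b_s)\big)\,d^{q-1}(a_s,b_s)$, sums against $w_s$, applies H\"older's inequality with conjugate exponents $q$ and $q/(q-1)$ to each of the two resulting sums, and cancels the common factor $\big(\sum_s w_s d^q(a_s,b_s)\big)^{1-\sfrac1q}$. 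You instead first apply the coordinatewise triangle inequality and reduce the resulting weighted $\ell_q$ bound to the classical (unweighted) Minkowski inequality on $\bbR^K$ via the rescaling $\tilde u_s = w_s^{\sfrac1q}u_s$, $\tilde v_s = w_s^{\sfrac1q}v_s$, which preserves both the weighted norms and (crucially) the sum since $w_s^{\sfrac1q}(u_s+v_s)=\tilde u_s+\tilde v_s$. Both arguments are valid; yours is shorter if one is willing to cite classical Minkowski as known, while the paper's version is self-contained and repeats the H\"older computation. You also explicitly flag the paper's alternative, so the two approaches are reconciled.
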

\begin{proof}
    The mapping $d_{\bsw, q}$ is clearly symmetric and non-negative.
    We only have to check the triangle inequality. Fix arbitrary $\bsa, \bsb, \bsc \in \class{X}^K$. Then, by triangular inequalities on the distance $d$ and H\"older's inequality,
    \begin{align*}
        \sum_{s = 1}^K w_s d^q(a_s, b_s)
        &\leq
        \sum_{s = 1}^K w_s d(a_s, c_s)d^{q-1}(a_s, b_s) + \sum_{s = 1}^K w_s d(c_s, b_s)d^{q-1}(a_s, b_s)\\
        &\leq
        \left(\left\{\sum_{s = 1}^K w_s d^q(a_s, c_s)\right\}^{\sfrac 1 q} + \left\{\sum_{s = 1}^K w_s d^q(c_s, b_s)\right\}^{\sfrac{1}{q}}\right) \left\{\sum_{s = 1}^K w_sd^q(a_s, b_s)\right\}^{1 - \frac{1}{q}}\enspace.
    \end{align*}
    That is, after rearranging we obtain
    \begin{align*}
        d_{\bsw, q}(\bsa, \bsb) = \left\{\sum_{s = 1}^K w_s d^q(a_s, b_s)\right\}^{\sfrac 1 q}
        &\leq \left\{\sum_{s = 1}^K w_sd^q(a_s, c_s)\right\}^{\sfrac{1}{q}} + \left\{\sum_{s = 1}^K w_sd^q(c_s, b_s)\right\}^{\sfrac{1}{q}}\\
        &= d_{\bsw, q}(\bsa, \bsc) + d_{\bsw, q}(\bsc, \bsb)\enspace.
    \end{align*}
\end{proof}

\begin{lemma}
    \label{lem:barycenters_coincide}
    Fix some $q \in [1, +\infty)$.
    Let $\bsa = (a_1, \ldots, a_K) \in \class{X}^K$, $\bsw = (w_1, \ldots, w_K)^\top \in \Delta^{K - 1}$.
    Assume that $\bsb = (b_1, \cdots, b_K) \in \mathcal{X}^K$ satisfies~\eqref{eq:prop_1}--\eqref{eq:prop_2}, then
    \begin{align*}
        \left\{\sum_{s = 1}^K w_s d^q(b_s, C_{\bsb})\right\}^{\sfrac 1 q} =  \left\{\sum_{s = 1}^K w_s d^q(b_s, C_{\bsa})\right\}^{\sfrac 1 q}\enspace.
    \end{align*}
\end{lemma}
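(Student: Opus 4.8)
The plan is to reduce the statement to the single assertion that \emph{$C_{\bsa}$ is itself a barycenter of the tuple $\bsb$ with respect to the weights $\bsw$}. Indeed, by Definition~\ref{def:barycenter_space} the point $C_{\bsb}$ minimizes $C \mapsto \sum_{s} w_s d^q(b_s, C)$ over $\class{X}$; if $C_{\bsa}$ minimizes the same functional, then $\sum_s w_s d^q(b_s, C_{\bsb}) = \sum_s w_s d^q(b_s, C_{\bsa})$, and taking $q$-th roots yields exactly the claimed equality. So the whole proof is: show $C_{\bsa}$ is a barycenter of $\bsb$.

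To do this I would pass to the product space $\class{X}^K$ equipped with the pseudo-metric $d_{\bsw, q}$ of Lemma~\ref{lem:fancy_triangle}. Set $r_s \eqdef d(a_s, C_{\bsa})$ and $V \eqdef \sum_{s} w_s r_s^q$, and note that $V = \min_{C \in \class{X}} \sum_{s} w_s d^q(a_s, C)$ by the definition of $C_{\bsa}$. For $C \in \class{X}$ write $\bsC \eqdef (C, \dots, C)$ for the constant tuple, so that $d_{\bsw, q}(\bsa, \bsC)^q = \sum_s w_s d^q(a_s, C)$ and $d_{\bsw, q}(\bsb, \bsC)^q = \sum_s w_s d^q(b_s, C)$. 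From~\eqref{eq:prop_2} we have $d(a_s, b_s) = (1 - \alpha^{\sfrac{1}{q}}) r_s$ for every $s$, hence $d_{\bsw, q}(\bsa, \bsb) = (1 - \alpha^{\sfrac{1}{q}}) V^{\sfrac{1}{q}}$; combining this with the betweenness relation~\eqref{eq:prop_1} gives $d(b_s, C_{\bsa}) = r_s - d(a_s, b_s) = \alpha^{\sfrac{1}{q}} r_s$, so that $\sum_s w_s d^q(b_s, C_{\bsa}) = \alpha V$.

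Now the triangle inequality for $d_{\bsw, q}$ (Lemma~\ref{lem:fancy_triangle}) gives, for every $C \in \class{X}$,
\begin{align*}
  \Big(\sum_{s} w_s d^q(b_s, C)\Big)^{\sfrac{1}{q}} = d_{\bsw, q}(\bsb, \bsC)
  \;\geq\; d_{\bsw, q}(\bsa, \bsC) - d_{\bsw, q}(\bsa, \bsb)
  \;\geq\; V^{\sfrac{1}{q}} - (1 - \alpha^{\sfrac{1}{q}}) V^{\sfrac{1}{q}}
  = \alpha^{\sfrac{1}{q}} V^{\sfrac{1}{q}}\enspace,
\end{align*}
where the second inequality uses $d_{\bsw, q}(\bsa, \bsC) \geq V^{\sfrac{1}{q}}$, valid because $C_{\bsa}$ minimizes $C \mapsto \sum_s w_s d^q(a_s, C)$. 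Raising to the power $q$ yields $\sum_s w_s d^q(b_s, C) \geq \alpha V = \sum_s w_s d^q(b_s, C_{\bsa})$ for all $C \in \class{X}$, i.e. $C_{\bsa}$ is a barycenter of $\bsb$, which is what was needed.

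There is no genuine obstacle here; the only points requiring care are (i) to express the $K$ pointwise triangle inequalities collectively as the single triangle inequality for $d_{\bsw, q}$ on $\class{X}^K$ rather than manipulating them by hand, which is precisely what Lemma~\ref{lem:fancy_triangle} provides, and (ii) to use that~\eqref{eq:prop_1} upgrades the trivial bound $d(b_s, C_{\bsa}) \le r_s - d(a_s, b_s)$ into an equality, which is what pins down $\sum_s w_s d^q(b_s, C_{\bsa}) = \alpha V$ and matches the lower bound $\alpha^{\sfrac{1}{q}} V^{\sfrac{1}{q}}$ exactly.
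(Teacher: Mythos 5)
Your proof is correct and follows essentially the same route as the paper's: both rely on the pseudo-metric triangle inequality from Lemma~\ref{lem:fancy_triangle}, the minimality of $C_{\bsa}$ for $\bsa$, and the computation from~\eqref{eq:prop_1}--\eqref{eq:prop_2} that $\sum_s w_s d^q(b_s, C_{\bsa}) = \alpha \sum_s w_s d^q(a_s, C_{\bsa})$. The only cosmetic difference is that you phrase the conclusion as ``$C_{\bsa}$ is itself a barycenter of $\bsb$'' by running the triangle inequality against an arbitrary $C$, whereas the paper runs it directly against $C_{\bsb}$ and then invokes minimality of $C_{\bsb}$ for the converse inequality.
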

\begin{proof}
    Let $C_{\bsb}$ be a barycenter of $(b_s)_{s\in[K]}$ with weights $(w_s)_{s \in [K]}$, then by Lemma~\ref{lem:fancy_triangle} it holds that
    \begin{align}
        \left\{\sum_{s = 1}^K w_s d^q(a_s, C_{\bsb})\right\}^{\sfrac{1}{q}}
        &\leq\left\{\sum_{s = 1}^K w_s d^q(a_s, b_s)\right\}^{\sfrac{1}{q}}
        +
        \left\{\sum_{s = 1}^K w_s d^q(b_s, C_{\bsb})\right\}^{\sfrac{1}{q}}\enspace.\label{eq:fancy_triangle}
    \end{align}
    The following chain of inequalities holds thanks to Eq.~\eqref{eq:fancy_triangle} and properties~\eqref{eq:prop_1}--\eqref{eq:prop_2}
    \begin{align*}
        \left\{\sum_{s = 1}^K w_s d^q(b_s, C_{\bsb})\right\}^{\sfrac{1}{q}}
        &\geq
        \left\{\sum_{s = 1}^K w_s d^q(a_s, C_{\bsb})\right\}^{\sfrac{1}{q}} - \left\{\sum_{s = 1}^K w_s d^q(a_s, b_s)\right\}^{\sfrac{1}{q}}\\
        &\geq
        \left\{\sum_{s = 1}^K w_s d^q(a_s, C_{\bsa})\right\}^{\sfrac{1}{q}} - \left\{\sum_{s = 1}^K w_s d^q(a_s, b_s)\right\}^{\sfrac{1}{q}}\\
        &=
        \frac{1}{{\alpha}^{\sfrac 1 q}}\left\{\sum_{s = 1}^K w_s d^q(b_s, C_{\bsa})\right\}^{\sfrac{1}{q}} - \frac{1 {-} {\alpha}^{\sfrac 1 q}}{{\alpha}^{\sfrac{1}{q}}}\left\{\sum_{s = 1}^K w_s d^q(b_s, C_{\bsa})\right\}^{\sfrac{1}{q}}\\
        &=
        \left\{\sum_{s = 1}^K w_s d^q(b_s, C_{\bsa})\right\}^{\sfrac{1}{q}}\enspace.
    \end{align*}
    The converse inequality follows from the definition of $C_{\bsb}$, which concludes the proof.
\end{proof}

\subsection{Proof of Proposition~\ref{prop:optimal_alpha}}

    Let $\alpha \in [0, 1]$. For any $s \in [K]$, define
    \begin{align}
    a_s  &= f^*(\cdot, s)\# \mu_{\bsX|s} = \nu_s^*\enspace,
    \end{align}
    Let $\gamma_s$ be the (constant-speed) geodesic between $a_s$ and $C_{\bsa}$ \ie $\gamma_s(0) = a_s$, $\gamma_s(1)=C_{\bsa}$ and $\sW_2(\gamma_s(t_1), \gamma_s(t_2)) = |t_2 - t_1| \sW_2(a_s, C_{\bsa})$ for any $t_1, t_2 \in [0, 1]$. Note that the uniqueness of the geodesic come from the particular structure of the Wasserstein-2 space on the real line, see \eg \cite[Section 2.2]{kloeckner2010geometric}.
     We define $b_s \eqdef \gamma_s(1 {-} \sqrt{\alpha})$ for $s \in[K]$.
    Let us show that $\bsb= (b_s)_{s \in [K]}$ satisfies the properties~\eqref{eq:prop_1}--\eqref{eq:prop_2} of the Geometric Lemma~\ref{lem:geometric_general} when considering $\bsa = (a_s)_{s \in [K]}$ with the weights $(w_s)_{s \in [K]}$ and $d \equiv \sW_2$.
    By construction of $b_s = \gamma_s(1 {-} \sqrt{\alpha})$, we have
    \begin{align}
        &\sW_2(b_s, C_{\bsa}) = \sqrt{\alpha}\sW_2(a_s, C_{\bsa})\label{eq:optimal_form_geodesic_properties}\enspace,\\
        &\sW_2(b_s, a_s) = (1 {-} \sqrt{\alpha}) \sW_2(a_s, C_{\bsa})\label{eq:optimal_form_geodesic_properties1}\enspace.
    \end{align}
    This shows that $\bsb = (b_s)_{s \in [K]}$ satisfies \eqref{eq:prop_1} and \eqref{eq:prop_2}.
    Therefore, using Lemma~\ref{lem:geometric_general} we get
    \begin{align}
        \label{eq:using_lemma_geom}
         \sum_{s = 1}^Kw_s\sW_2^2(b_s, a_s)
         =
         \inf_{\bsb \in \mathcal{P}^K_2(\bbR)}\enscond{\sum_{s = 1}^K w_s \sW_2^2(b_s, a_s)}{\sum_{s = 1}^K w_s W^2_2(b_s, C_{\bsb}) {\leq} {\alpha} \sum_{s = 1}^K w_s d^2(a_s, C_{\bsa})}\enspace.
    \end{align}


    Finally, thanks to the Assumption~\ref{as:atomless} which says that that $a_s = \nu_s^*$ is atomless the constant speed geodesic $\gamma_s$ between $a_s$ and $C_{\bsa}$ can be written as
    \begin{align*}
        \gamma_s(t)
        &=
        \left((1-t) \Id + t\left(\sum_{s' = 1}^K w_{s'} F_{a_{s'}}^{-1}\circ F_{a_s}\right) \right)\# a_s\\
        &= \left\{\left((1-t) \Id + t\left(\sum_{s' = 1}^K w_{s'} F_{a_{s'}}^{-1}\circ F_{a_s}\right)\right) \circ f^*(\cdot, s)\right\} \# \mu_{\bsX|s}
        , \quad t \in [0, 1]\enspace.
    \end{align*}
    See Appendix~\ref{sec:Wassersteinreminder} for details about the first equality.
    Substituting $t = 1 {-} \sqrt{\alpha}$ to $\gamma_s$, the expression for $b_s$ is
    \begin{align}
        \label{eq:bs_as_push_forward}
        b_s
        =
        \left\{\left(\sqrt{\alpha} \Id + \big(1 {-} \sqrt{\alpha}\big)\left(\sum_{s' = 1}^K w_{s'} F_{a_{s'}}^{-1}\circ F_{a_s}\right)\right) \circ f^*(\cdot, s)\right\} \# \mu_{\bsX|s}\enspace.
    \end{align}
    We define $f_{\alpha}^*$ for all $(\bsx, s) \in \bbR^p \times [K]$ as
    \begin{align}
        f_{\alpha}^*(\bsx, s) = \sqrt{\alpha} f^*(\bsx, s) + (1 {-} \sqrt{\alpha}) \sum_{s' = 1}^K w_{s'} F_{a_{s'}}^{-1} \left( F_{a_s}(f^*(\bsx, s)\right))\enspace,
    \end{align}
    then after Eq.~\eqref{eq:bs_as_push_forward} it holds that $b_s = f^*_{\alpha}(\cdot, s) \# \mu_{\bsX|s}$ and
    \begin{align}
        \sW_2^2(b_s, a_s) = \Exp\left[(f^*(\bsX, S) -  f^*_{\alpha}(\bsX, S))^2 \mid  S = s\right]\enspace.
    \end{align}
    with $\class{U}(f^*_{\alpha}) = \alpha\,\class{U}(f^*)$.
    Moreover, Lemma~\ref{lem:squared_loss_wasserstein} implies that for any $f$ such that $\class{U}(f) \leq \alpha\,\class{U}(f^*)$ we have
    \begin{align*}
        \Exp(f^*(X, S) - f(X, S))^2
        \geq
        \sum_{s = 1}^K w_s\sW_2^2(b_s, a_s)
        &=
        \sum_{s = 1}^K w_s\Exp\left[(f^*(\bsX, S) -  f^*_{\alpha}(\bsX, S))^2 \mid  S = s\right]\\
        &=
        \risk(f^*_{\alpha})\enspace.
    \end{align*}
    Thus, $f^*_{\alpha}$ is the optimal fair prediction with $\alpha$ relative improvement. The proof is concluded.

\subsection{Proof of Proposition~\ref{prop:regular_pareto}}
\begin{proof}
For $\lambda = 0$, the statement trivially holds. Fix some $\lambda \in (0, +\infty)$. 
Following the Pareto frontier interpretation, the minimum of the functional $\risk(f) + \lambda \cdot \class{U}(f)$ equals to some $F^* \geq 0$ if and only if, in the coordinates $(\risk, \class{U})$, the line $\enscond{(\risk, \class{U}) \in \bbR^2}{\risk + \lambda \cdot \class{U} = F^*}$
is tangent to the Pareto frontier curve $\big\{((1 {-} \sqrt{\alpha})^2,\, \alpha ) \cdot \class{U}(f^*)\big\}_{\alpha \in [0, 1]}$.
Thus, writing this condition explicitly, it should hold that
\begin{align*}
    -\lambda = 1 - \sqrt{\frac{\class{U}(f^*)}{\class{U}(f^{*, \lambda})}}\enspace.
\end{align*}
Hence, at the minimum $f^{*, \lambda}$ of $\risk(f) + \lambda \cdot \class{U}(f)$ it holds that $\class{U}(f^{*, \lambda}) = \tfrac{1}{(1+\lambda)^2}\cdot\class{U}(f^*)$.
\end{proof}

\section{Proof of Theorem~\ref{THM:GENERAL_LOWER}}
\label{sec:general_lower_proof}
    To ease the notation we write $\delta_n$ instead of $\delta_n(\class{F}, \Theta, t)$.
    We also define
    \begin{align*}
    \Psi(\hat f, (f^*, \bstheta) ) \eqdef \Probf_{(f^*, \bstheta)}\parent{\risk^{\sfrac{1}{2}}(\hat f) \geq \delta_n^{\sfrac{1}{2}} \vee (1 {-} \sqrt{\alpha}){\class{U}^{\sfrac{1}{2}}({f^*})}}\enspace.
    \end{align*}
    We split the proof according to two complementary cases.

    {\bf Case 1:} there exists $(f^*, \bstheta) \in {\class{F}} \times \Theta$ such that $\delta_n \leq (1 {-} \sqrt{\alpha})^2\class{U}({f^*})$.
    In this case, for such couple $(f^*, \bstheta) \in {\class{F}} \times \Theta$ and for any estimator $\hat{f} \in \widehat{\class{F}}_{(\alpha, t')}$  we have
    \begin{align*}
         \Psi(\hat f, (f^*, \bstheta) )
         &\geq
         \Probf_{(f^*, \bstheta)}\parent{\risk^{\sfrac{1}{2}}(\hat f) \geq \delta_n^{\sfrac{1}{2}} \vee (1 {-} \sqrt{\alpha}){\class{U}^{\sfrac{1}{2}}({f^*})},\, \class{U}(\hat f) \leq \alpha\,\class{U}(f^*)}\\
         &\leftstackrel{\text{def. of $f^*_{\alpha}$}}{\geq}
          \Probf_{(f^*, \bstheta)}\parent{\risk^{\sfrac{1}{2}}(f^*_{\alpha}) \geq \delta_n^{\sfrac{1}{2}} \vee (1 {-} \sqrt{\alpha}){\class{U}^{\sfrac{1}{2}}({f^*})},\, \class{U}(\hat f) \leq \alpha\,\class{U}(f^*)}\\
         &\leftstackrel{\text{Lemma~\ref{lem:distance_fair_and_almost}}}{=}
         \Probf_{(f^*, \bstheta)}\parent{\class{U}(\hat f) \leq \alpha\,\class{U}({f^*})}\ind{\delta_n \leq (1 {-} \sqrt{\alpha})^2\class{U}({f^*})} \enspace.
    \end{align*}
    Note that by definition of $\widehat{\class{F}}_{(\alpha, t')}$ it holds that
    \begin{align*}
        \forall \hat{f} \in \widehat{\class{F}}_{(\alpha, t')},\forall (f^*, \bstheta) \in \class{F} \times \Theta,\quad \Probf_{(f^*, \bstheta)}\parent{\class{U}(\hat f) \leq \alpha\,\class{U}({f^*})} \geq 1 - t'\enspace.
    \end{align*}
    Since in the considered case there exists a couple $(f^*, \bstheta) \in \widehat{\class{F}}_{(\alpha, t')} \times \Theta$ such that $\delta_n \leq (1 {-} \sqrt{\alpha})^2\class{U}({f^*})$, by definition of $\widehat{\class{F}}_{(\alpha, t')}$ we have
    \begin{align}
    \label{eq:lower1}
        \inf_{\hat f \in \widehat{\class{F}}_{(\alpha, t')}}\sup_{(f^*, \bstheta) \in \class{F}\times \Theta}\Psi(\hat f, (f^*, \bstheta) ) \geq 1 - t'\enspace.
    \end{align}

    {\bf Case 2:} for any couple $(f^*, \bstheta) \in \class{F}\times \Theta$ it holds that $\delta_n > (1 {-} \sqrt{\alpha})^2\class{U}({f^*})$.
    In this case, for any couple $(f^*, \bstheta) \in {\class{F}} \times \Theta$ and for any estimator $\hat{f} \in \widehat{\class{F}}_{(\alpha, t')}$,
    \begin{align*}
        \Psi(\hat f, (f^*, \bstheta) ) = \Probf_{(f^*, \bstheta)}\parent{\risk(\hat f) \geq \delta_n}\enspace.
    \end{align*}
    By definition of $\delta_n$ it holds in this case that
    \begin{align}
        \label{eq:lower2}
        \inf_{\hat f \in \widehat{\class{F}}_{(\alpha, t')}}\sup_{(f^*, \bstheta) \in \class{F}\times \Theta}\Psi(\hat f, (f^*, \bstheta) )
        &\geq
        \inf_{\hat f}\sup_{(f^*, \bstheta) \in \class{F}\times \Theta}\Psi(\hat f, (f^*, \bstheta) ) \nonumber\\
        &=
        \inf_{\hat f}\sup_{(f^*, \bstheta) \in \class{F}\times \Theta}\Probf_{(f^*, \bstheta)}\parent{\risk(\hat f)
         \geq \delta_n}
        \geq t\enspace.
    \end{align}

    Putting two cases together, and in particular using Eqs.~\eqref{eq:lower1} and~\eqref{eq:lower2} we obtain
    \begin{align*}
        \inf_{\hat f \in \widehat{\class{F}}_{(\alpha, t')}}\sup_{(f^*, \bstheta) \in \class{F}\times \Theta}\Psi(\hat f, (f^*, \bstheta) )
        \geq
        \begin{cases}
            1- t' &\text{if}\quad \exists(f^*, \bstheta) \in \class{F}\times \Theta \text{ s.t. } \delta_n \leq (1 {-} \sqrt{\alpha})^2\class{U}({f^*})\\
            t &\text{otherwise}
        \end{cases}\enspace.
    \end{align*}
    We conclude the proof observing that the \rhs of the last inequality is lower bounded by $t \wedge (1 - t')$.


\section{Proofs for Section~\ref{SEC:LINEAR}}
\label{sec:proofs_linear}
\myparagraph{Additional notation}
We denote by $\mathbb{S}^{p - 1}$ the unit sphere in $\bbR^p$.
For any matrix $\mathbf{A}$ we denote by $\|\mathbf{A}\|_{\op}$, the operator norm of $\mathbf{A}$.
We denote by $\chi^2(p)$ the standard chi-square distribution with $p$ degrees of freedom and by $\mathcal{N}(\boldsymbol{\mu}, \bfSigma)$ the multivariate Gaussian with mean $\boldsymbol{\mu}$ and covariance $\bfSigma$.
 We denote by $\mathbf{I}_p$  the identity matrix of size $p \times p$.

\subsection{Proof of Lemma~\ref{lem:unfairness_estimator1}}
Throughout the proof we implicitly condition on the observations.
Let $\tau \in [0, 1]$. For each $s \in [K]$ we set $\hat m_s = \sqrt{\tau}\hb_s + (1 - \sqrt{\tau})\sum_{s = 1}^Kw_s\hb_s$.
Note that for all $s \in [K]$, $(\hat f_{\tau}(\bsX, S) | S = s) \sim \mathcal{N}(\hat{m}_s, \scalarin{\hbeta}{\bfSigma \hbeta})$.
Therefore, by the definition of the unfairness and Lemma~\ref{lem:baryc_gaus}
\begin{align*}
    \class{U}(\hat f_{\tau})
    &= \min_{\nu} \sum_{s = 1}^Kw_s\sW_2^2\parent{\mathcal{N}(\hat m_s, \scalarin{\hbeta}{\bfSigma \hbeta}),\, \nu}\\
    &= \sum_{s = 1}^Kw_s\sW_2^2\parent{\mathcal{N}(\hat m_s, \scalarin{\hbeta}{\bfSigma \hbeta}),\, \mathcal{N}(\bar{m}, \scalarin{\hbeta}{\bfSigma \hbeta})}\enspace,
\end{align*}
where $\bar{m} = \sum_{s = 1}^Kw_s \hat m_s$.
We conclude the proof by noticing that thanks to Lemma~\ref{lem:wass_between_gaus} it holds that
\begin{align*}
    \sW_2^2\parent{\mathcal{N}(\hat m_s, \scalarin{\hbeta}{\bfSigma \hbeta}), \mathcal{N}(\bar{m}, \scalarin{\hbeta}{\bfSigma \hbeta})}
    &= (\hat m_s - \bar{m})^2\\
    &= \left\{\sqrt{\tau}\hb_s + (1 - \sqrt{\tau})\sum_{s = 1}^Kw_s\hb_s - \sum_{s = 1}^Kw_s\hb_s\right\}^2\enspace.
\end{align*}
The proof is concluded.

\subsection{Auxiliary results for Theorem~\ref{thm:real_final}}

\begin{lemma}[Fixed design analysis]
    \label{lem:fixed_design}
    Define the following matrix of size $(p + K) \times (p + K)$
   \begin{align*}
   \widehat\bfPsi =
       \left[
        \begin{array}{c|c}
          \frac{1}{2}\sum_{s = 1}^K w_s\bfX_s^\top\bfX_s / n_s & \bfO \\
          \hline
          \bfO^\top & \frac{1}{2}\bfW
        \end{array}
     \right]\enspace,
   \end{align*}
   where $\bfO = [w_1\bar{\bsX}_1, \ldots, w_K\bar{\bsX}_K] \in \bbR^{p \times K}$ and $\bfW = \diag(w_1, \ldots, w_K)$.
    For all $t\geq 0$ it holds that
    \begin{align*}
        \Probf\parent{\|\widehat{\bfPsi}^{1/2}\hbsdelta\|_2^2 \geq \sigma^2\left\{\parent{\frac{p}{n} + \frac{K}{n}} + 2\parent{\sqrt{\frac{p}{n}} + \sqrt{\frac{K}{n}}}\sqrt{\frac{t}{n}}
        +
        4\frac{t}{n} \right\} \,\bigg|\, \bfX_{1 : K}} \leq 2\exp(-t)\enspace,
    \end{align*}
    where $\hbsdelta = (\hbeta - \bbeta^*, \hbsb - \bsb^*) \in \mathbb{R}^{p} \times \bbR^K$ and $\bfX_{1 : K} = (\bfX_1, \ldots, \bfX_K)$.
\end{lemma}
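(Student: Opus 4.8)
The plan is to recognize the least-squares problem~\eqref{eq:LS} as an ordinary least-squares fit with a single stacked design matrix, and then to reduce $\normin{\widehat{\bfPsi}^{1/2}\hbsdelta}_2^2$ to the squared norm of a Gaussian vector projected onto a (conditionally) fixed subspace, after which a standard $\chi^2$ tail bound finishes the job.

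First I would stack the observations. Let $\mathbf{Z} \in \bbR^{n \times (p + K)}$ be the matrix whose row attached to an observation in group $s$ equals $(\bsX_i^\top, \mathbf{e}_s^\top)$, with $\mathbf{e}_s$ the $s$-th canonical basis vector of $\bbR^K$, and let $\bsY, \bsxi \in \bbR^n$ be the corresponding stackings of the $\bsY_s, \bsxi_s$. Since $w_s = \sfrac{n_s}{n}$, the objective in~\eqref{eq:LS} equals $\tfrac1n\normin{\bsY - \mathbf{Z}\boldsymbol\gamma}_2^2$ with $\boldsymbol\gamma = (\bbeta^\top, \bsb^\top)^\top$; hence $(\hbeta, \hbsb)$ is an ordinary least-squares estimator and $\hbsdelta = \hat{\boldsymbol\gamma} - \boldsymbol\gamma^*$ where $\boldsymbol\gamma^* = ((\bbeta^*)^\top, (\bsb^*)^\top)^\top$. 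A block computation of $\tfrac1n\mathbf{Z}^\top\mathbf{Z}$ shows that $\widehat{\bfPsi} = \tfrac{1}{2n}\mathbf{Z}^\top\mathbf{Z}$: its $p\times p$ block is $\tfrac1n\sum_s\bfX_s^\top\bfX_s$, its $p\times K$ block has $s$-th column $\tfrac1n\sum_{i\in s}\bsX_i = w_s\bar{\bsX}_s$, and its $K\times K$ block is $\diag(w_1,\dots,w_K)$. Consequently
\begin{align*}
    \norm{\widehat{\bfPsi}^{1/2}\hbsdelta}_2^2 = \tfrac{1}{2n}\hbsdelta^\top\mathbf{Z}^\top\mathbf{Z}\hbsdelta = \tfrac{1}{2n}\norm{\mathbf{Z}\hbsdelta}_2^2\enspace.
\end{align*}

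Next I would use the normal equations. Let $\boldsymbol\Pi$ be the orthogonal projector onto the column space of $\mathbf{Z}$, which is well-defined even when $\mathbf{Z}$ is rank-deficient (no lower bound on $n$ is assumed). Any minimizer satisfies $\mathbf{Z}\hat{\boldsymbol\gamma} = \boldsymbol\Pi\bsY$, and since $\bsY = \mathbf{Z}\boldsymbol\gamma^* + \bsxi$ and $\boldsymbol\Pi\mathbf{Z} = \mathbf{Z}$, we get $\mathbf{Z}\hbsdelta = \boldsymbol\Pi\bsxi$, so $\normin{\widehat{\bfPsi}^{1/2}\hbsdelta}_2^2 = \tfrac{1}{2n}\normin{\boldsymbol\Pi\bsxi}_2^2$. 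Conditionally on $\bfX_{1:K}$, the projector $\boldsymbol\Pi$ is a fixed matrix and $\bsxi \sim \mathcal{N}(\boldsymbol 0, \sigma^2\mathbf{I}_n)$; writing $\boldsymbol\Pi = \mathbf{U}\mathbf{U}^\top$ with $\mathbf{U}$ having $r \eqdef \operatorname{rank}(\mathbf{Z}) \le p+K$ orthonormal columns, we have $\normin{\boldsymbol\Pi\bsxi}_2^2 = \normin{\mathbf{U}^\top\bsxi}_2^2 \overset{d}{=} \sigma^2\sum_{j=1}^r\zeta_j^2$ with $\zeta_j$ i.i.d.\ standard Gaussian, so $\normin{\boldsymbol\Pi\bsxi}_2^2$ is stochastically dominated by $\sigma^2\chi^2(p+K)$.

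Finally I would invoke Lemma~\ref{lem:conc_chi_squared} with $a_1 = \dots = a_{p+K} = 1$, giving for all $t\ge0$
\begin{align*}
    \Probf\!\left(\norm{\boldsymbol\Pi\bsxi}_2^2 \ge \sigma^2\big((p+K) + 2\sqrt{(p+K)t} + 2t\big)\,\Big|\,\bfX_{1:K}\right) \le e^{-t}\enspace,
\end{align*}
and, recalling $\normin{\widehat{\bfPsi}^{1/2}\hbsdelta}_2^2 = \tfrac{1}{2n}\normin{\boldsymbol\Pi\bsxi}_2^2$, $\sqrt{p+K}\le\sqrt p+\sqrt K$, $\tfrac12\le1$ and $1\le4$, this yields the claimed estimate with room to spare (in particular with probability $\ge 1 - e^{-t}$, so the factor $2e^{-t}$ in the statement is more than enough). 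I do not expect a real obstacle here: the only points demanding care are (i) identifying $\widehat{\bfPsi}$ with $\tfrac{1}{2n}\mathbf{Z}^\top\mathbf{Z}$, including the cross block $\bfO$, and (ii) keeping the argument valid for a possibly rank-deficient $\mathbf{Z}$, which is why I route everything through the projector $\boldsymbol\Pi$ rather than through $(\mathbf{Z}^\top\mathbf{Z})^{-1}$. If one wishes to reproduce the stated constants and the $2e^{-t}$ verbatim, one can instead split $\hbsdelta$ into its $\bbeta$- and $\bsb$-blocks, control each quadratic form by a separate $\chi^2$ concentration, and union-bound; this is slightly lossier but matches the statement literally.
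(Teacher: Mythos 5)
Your strategy of stacking the design into a single matrix $\mathbf{Z}$, passing through the projector $\boldsymbol\Pi$ onto the column space, and invoking one $\chi^2$ concentration is cleaner (and in fact slightly sharper) than the paper's argument. However, the identity you rest on, $\widehat{\bfPsi} = \tfrac{1}{2n}\mathbf{Z}^\top\mathbf{Z}$, is false, and this is a real gap: your own computation gives the off-diagonal $p\times K$ block of $\tfrac1n\mathbf{Z}^\top\mathbf{Z}$ as having $s$-th column $w_s\bar{\bsX}_s$, so halving yields $\tfrac{w_s}{2}\bar{\bsX}_s$; but the off-diagonal block of $\widehat{\bfPsi}$ is $\bfO$, whose $s$-th column is $w_s\bar{\bsX}_s$. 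In other words, only the two diagonal blocks of $\widehat{\bfPsi}$ carry the factor $\tfrac12$, not the cross block. Consequently $\hbsdelta^\top\widehat{\bfPsi}\hbsdelta$ and $\tfrac{1}{2n}\normin{\mathbf{Z}\hbsdelta}_2^2$ differ by the extra cross term $\Delta_{\bbeta}^\top\bfO\Delta_{\bsb}$, which has no sign.

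The fix is a one-sided matrix inequality rather than an equality. Observe that
\begin{align*}
\tfrac1n\mathbf{Z}^\top\mathbf{Z} - \widehat{\bfPsi}
=
\left[\begin{array}{c|c}
\tfrac{1}{2n}\sum_{s}\bfX_s^\top\bfX_s & 0 \\ \hline 0 & \tfrac12\bfW
\end{array}\right]
\succeq 0\enspace,
\end{align*}
since the off-diagonal blocks cancel exactly. Hence $\hbsdelta^\top\widehat{\bfPsi}\hbsdelta \le \tfrac1n\hbsdelta^\top\mathbf{Z}^\top\mathbf{Z}\hbsdelta = \tfrac1n\normin{\mathbf{Z}\hbsdelta}_2^2 = \tfrac1n\normin{\boldsymbol\Pi\bsxi}_2^2$, which conditionally is stochastically dominated by $\tfrac{\sigma^2}{n}\chi^2(p+K)$; Lemma~\ref{lem:conc_chi_squared} then gives, with probability at least $1-e^{-t}$, the bound $\tfrac{\sigma^2}{n}\big((p+K)+2\sqrt{(p+K)t}+2t\big)$, which is dominated by the stated right-hand side using $\sqrt{p+K}\le\sqrt p+\sqrt K$ and $2\le4$. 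Once corrected in this way, your argument proves the lemma with a single $\chi^2$ bound and the improved constant $e^{-t}$ in place of $2e^{-t}$. This is genuinely different from the paper's proof, which never stacks the design: instead it starts from the basic inequality from optimality of $(\hbeta,\hbsb)$, applies Young's inequality to move half of each diagonal quadratic form to the left, and bounds separately a $\chi^2(p)$ term (coming from a supremum over directions $\bsdelta\in\bbR^p$) and a $\chi^2(K)$ term (coming from $\sum_s w_s\bar{\bsxi}_s^2$), union-bounding at the end; that is precisely why $\widehat{\bfPsi}$ has halved diagonals but an un-halved cross block, and why the statement carries the factor $2e^{-t}$.
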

\begin{proof}
   By optimality of $(\hbeta, \hbsb)$ and the linear model assumption in Eq.~\eqref{eq:model_linear_vector} it holds that
   \begin{align*}
       \sum_{s = 1}^K w_s \norm{\bsY_s - \bfX_s \hbeta - \hb_s\1_{n_s}}_{n_s}^2 \leq \sum_{s = 1}^K w_s \norm{\bsxi_s}_{n_s}^2\enspace.
   \end{align*}
   After simplification, the above yields
   \begin{align*}
    \sum_{s = 1}^K w_s \norm{\bfX_s(\bbeta^* - \hbeta) + (b^*_s - \hb_s)\1_{n_s}}_{n_s}^2
       &\leq
       2\sum_{s = 1}^K w_s \scalar{\bfX_s(\hbeta - \bbeta^*) + (\hb_s - b^*_s)\1_{n_s}}{\bsxi_s / n_s}\\
       &=
        2\scalar{\hbeta - \bbeta^*}{\sum_{s = 1}^K\bfX_s^\top\bsxi_s / n}
        +
        2\sum_{s = 1}^K w_s(\hb_s - b^*_s)\bar{\bsxi}_s\enspace,
   \end{align*}
   where $\bar{\bsxi}_s = (\sfrac{1}{n_s})\sum_{i = 1}^{n_s} (\bsxi_s)_i$.
  Using Young's inequality, we can write
   \begin{align*}
       2\scalar{\hbeta - \bbeta^*}{\sum_{s = 1}^K\bfX_s^\top\bsxi_s / n}
       &\leq
       \frac{1}{2}\sum_{s = 1}^Kw_s\|\bfX_s(\bbeta^* - \hbeta)\|_{n_s}^2 +
       2\parent{\frac{\scalar{\hbeta - \bbeta^*}{\sum_{s = 1}^K\bfX_s^\top\bsxi_s / n}}{\sqrt{\sum_{s = 1}^Kw_s\|\bfX_s(\bbeta^* - \hbeta)\|_{n_s}^2}}}^2\\
       &\leq
        \frac{1}{2}\sum_{s = 1}^Kw_s\|\bfX_s(\bbeta^* - \hbeta)\|_{n_s}^2 +
       2\sup_{\bsdelta \in \bbR^p}\parent{\frac{\scalar{\bsdelta}{\sum_{s = 1}^K\bfX_s^\top\bsxi_s / n}}{\sqrt{\sum_{s = 1}^Kw_s\|\bfX_s\bsdelta\|_{n_s}^2}}}^2\enspace.
   \end{align*}
   We also observe that again thanks to Young's inequality
   \begin{align*}
       2\sum_{s = 1}^K w_s(\hb_s - b^*_s)\bar{\bsxi}_s \leq \frac{1}{2}\sum_{s = 1}^Kw_s(\hb_s - b^*_s)^2 + 2\sum_{s = 1}^Kw_s\bar{\bsxi}_s^2\enspace.
   \end{align*}
   Putting everything together, we have shown that
   \begin{align}
        \label{eq:fixed_1}
       \|\widehat{\bfPsi}^{1/2}\hbsdelta\|_2^2 \leq 2 \sup_{\bsdelta \in \bbR^p}\parent{\frac{\scalar{\bsdelta}{\sum_{s = 1}^K\bfX_s^\top\bsxi_s / n}}{\sqrt{\sum_{s = 1}^Kw_s\|\bfX_s\bsdelta\|_{n_s}^2}}}^2 + 2 \sum_{s = 1}^Kw_s\bar{\bsxi}_s^2\enspace.
   \end{align}
    Notice that since $\bsxi_s \sim \mathcal{N}(\boldsymbol{0}, \sigma^2 \mathbf{I}_{n_s})$, then conditionally on $\bfX_1, \ldots, \bfX_K$,
   \begin{align*}
       \sum_{s = 1}^K\bfX_s^\top\bsxi_s / n \leftstackrel{d}{=} \frac{\sigma}{n}\parent{\sum_{s = 1}^K\bfX_s^\top\bfX_s }^{1/2}\bszeta\enspace,
   \end{align*}
   where $\bszeta \sim \mathcal{N}(\boldsymbol{0}, \mathbf{I}_p)$.
   Besides, since $w_s = \sfrac{n_s}{n}$, it holds for all $\bsdelta \in \bbR^p$ that
   \begin{align*}
       \sum_{s = 1}^Kw_s\|\bfX_s\bsdelta\|_{n_s}^2 = \bsdelta^\top \parent{\frac{1}{n}\sum_{s = 1}^K\bfX_s^\top \bfX_s}\bsdelta = \norm{\parent{\frac{1}{n}\sum_{s = 1}^K\bfX_s^\top \bfX_s}^{1/2}\bsdelta}_2^2\enspace.
   \end{align*}
   The above implies that conditionally on $\bfX_1, \ldots, \bfX_K$,
   \begin{align}
        \label{eq:fixed_2}
        \sqrt{U} \eqdef \sup_{\bsdelta \in \bbR^p}\frac{\scalar{\bsdelta}{\sum_{s = 1}^K\bfX_s^\top\bsxi_s / n}}{\sqrt{\sum_{s = 1}^Kw_s\|\bfX_s\bsdelta\|_{n_s}^2}} &\leftstackrel{d}{=} \frac{\sigma}{\sqrt{n}} \sup_{\bsdelta \in \bbR^p} \frac{\scalar{\parent{\sum_{s = 1}^K\bfX_s^\top \bfX_s}^{1/2} \bsdelta}{\bszeta}}{\norm{\parent{\sum_{s = 1}^K\bfX_s^\top \bfX_s}^{1/2}\bsdelta}_2}\enspace.
   \end{align}
   Note that for any random variable $\bszeta$ taking values in $\mathbb{R}^p$,
   \begin{align}
    \sup_{\bsdelta \in \bbR^p} \frac{\scalar{\parent{\sum_{s = 1}^K\bfX_s^\top \bfX_s}^{1/2} \bsdelta}{\bszeta}}{\norm{\parent{\sum_{s = 1}^K\bfX_s^\top \bfX_s}^{1/2}\bsdelta}_2} \leq \lVert \bszeta \rVert_2\enspace \text{almost surely}.
   \end{align}
   Furthermore, recalling that $\bar{\bsxi}_s \sim \mathcal{N}(\boldsymbol{0}, \sfrac{1}{n_s})$ we get
   \begin{align}
    \label{eq:fixed_3}
       V \eqdef \sum_{s = 1}^Kw_s\bar{\bsxi}_s^2 \sim \frac{\sigma^2}{n}\chi^2(K)\enspace.
   \end{align}
   For any $u, v \in \bbR$ it holds that
   \begin{align*}
       \Probf\parent{\|\widehat{\bfPsi}^{1/2}\hbsdelta\|_2^2 \geq 2(u + v) \,\big |\, \bfX_{1 : K}} &\leftstackrel{\text{\eqref{eq:fixed_1}}}{\leq} \Probf\parent{ 2(U + V) \geq 2(u + v) \,\big |\, \bfX_{1 : K}} \\
       &\leftstackrel{(a)}{\leq}
       \Probf\parent{\frac{\sigma^2}{n}\chi^2(p) \geq u \,\big |\, \bfX_{1 : K}}
       +
       \Probf\parent{\frac{\sigma^2}{n}\chi^2(K) \geq v \,\big |\, \bfX_{1 : K}}\enspace,
   \end{align*}
   where inequality $(a)$ uses Eqs.~\eqref{eq:fixed_2} and~\eqref{eq:fixed_3} and the fact that $\Probf(U + V \geq u + v) \leq \Probf(U \geq u) + \Probf(V \geq v)$ for all random variables $U, V$ and all $u, v \in \bbR$.
   Finally, setting $u = u_n(\sigma, p, t), v = v_n(\sigma, p, t)$ with
   \begin{align*}
       &u_{n}( \sigma, p, t)
       = \frac{\sigma^2p}{n} +
       2\sigma^2\sqrt{\frac{p}{n}}\sqrt{\frac{t}{n}}
       +
       2\frac{\sigma^2t}{n},\quad
       v_{n} (\sigma, K, t) =  \frac{\sigma^2K}{n} +
2\sigma^2\sqrt{\frac{K}{n}}\sqrt{\frac{t}{n}} + 2\frac{\sigma^2t}{n}\enspace,
   \end{align*}
   we obtain the stated result after application of Lemma~\ref{lem:conc_chi_squared} in appendix
   \begin{align*}
       \Probf\parent{\|\widehat{\bfPsi}^{1/2}\hbsdelta\|_2^2 \geq 2(u_n(\sigma, p, t) + v_n(\sigma, p, t)) \,\big |\, \bfX_{1 : K}} \leq 2\exp(-t)\enspace.
   \end{align*}
\end{proof}

\begin{theorem}[From fixed to random design]
    \label{thm:random_design}
    Define,
    \begin{align*}
        \delta_n(p, K, t) = 8\parent{\frac{p}{n} + \frac{K}{n}} + 16\parent{\sqrt{\frac{p}{n}} + \sqrt{\frac{K}{n}}}\sqrt{\frac{t}{n}}
        +
        \frac{32t}{n}\enspace.
    \end{align*}
    Consider $p, K \in \bbN, t \geq 0$ and define $\theta(p, K, t) = \sfrac{(4\sqrt{K} + 5\sqrt{t} + 6\sqrt{p})}{(\sqrt{p}+\sqrt{t})}$.
    Assume that $\sqrt{n} \geq 2\sfrac{(\sqrt{p}+\sqrt{t})}{(\theta(p, K, t) - \sqrt{\theta^2(p, K, t) - 3})}$, then with probability at least $1 - 4\exp(-t/2)$
    \begin{align*}
        \|\bfSigma^{1/2}(\bbeta^* - \hbeta)\|_2^2 + \sum_{s = 1}^K w_s(b^*_s - \hb_s)^2 \leq \sigma^2\delta_n(p, K, t)\enspace.
    \end{align*}
\end{theorem}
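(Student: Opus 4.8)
The plan is to transfer the fixed-design control of Lemma~\ref{lem:fixed_design} — which bounds $\hbsdelta^\top\widehat{\bfPsi}\,\hbsdelta$ conditionally on the design — to the population quadratic form that we actually want to bound, namely
\[
    \|\bfSigma^{1/2}(\bbeta^*{-}\hbeta)\|_2^2 + \sum_{s = 1}^K w_s(b^*_s {-} \hb_s)^2 = \hbsdelta^\top\widetilde{\bfPsi}\,\hbsdelta,\qquad \widetilde{\bfPsi}\eqdef\diag(\bfSigma,\bfW),
\]
the deterministic block-diagonal analogue of $\widehat{\bfPsi}$ (with $\tfrac12\sum_s w_s\bfX_s^\top\bfX_s/n_s$ replaced by $\bfSigma$, the cross block $\bfO$ replaced by $\mathbf 0$, and $\tfrac12\bfW$ replaced by $\bfW$). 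The heart of the argument will be an operator comparison $\widetilde{\bfPsi}\preceq c\,\widehat{\bfPsi}$ for an absolute constant $c$ (one may take $c=8$), valid on a high-probability event depending only on the design; combining this with Lemma~\ref{lem:fixed_design} applied at confidence level $t/2$ then closes the proof. The halving of the confidence parameter is exactly what produces the stated probability $1-4e^{-t/2}$: two invocations of Lemma~\ref{lem:gauss_matrix} at level $\sqrt t$ (each failing with probability at most $e^{-t/2}$) plus Lemma~\ref{lem:fixed_design} at level $t/2$ (failing, conditionally on any design, with probability at most $2e^{-t/2}$).

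First I would reduce the operator comparison to a scalar inequality via a Schur complement. Let $\mathbf X\in\bbR^{n\times p}$ be the matrix stacking $\bfX_1,\dots,\bfX_K$. Since $w_s/n_s=1/n$ we have $\sum_s w_s\bfX_s^\top\bfX_s/n_s=\tfrac1n\mathbf X^\top\mathbf X$, and using the explicit form $\bfO=[w_1\bar{\bsX}_1,\dots,w_K\bar{\bsX}_K]$ one has $\bfO\bfW^{-1}\bfO^\top=\sum_s w_s\bar{\bsX}_s\bar{\bsX}_s^\top$. For $c>2$ the lower-right block of $c\,\widehat{\bfPsi}-\widetilde{\bfPsi}$ equals $(\tfrac c2-1)\bfW\succ0$, so $\widetilde{\bfPsi}\preceq c\,\widehat{\bfPsi}$ is equivalent to
\[
    \frac{c}{2n}\mathbf X^\top\mathbf X - \frac{c^2}{c/2-1}\sum_{s=1}^K w_s\bar{\bsX}_s\bar{\bsX}_s^\top \;\succeq\; \bfSigma\enspace.
\]

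Next I would bound the two random matrices on the left-hand side with Lemma~\ref{lem:gauss_matrix}. Writing $\mathbf X=\mathbf Z\bfSigma^{1/2}$ with $\mathbf Z$ a standard Gaussian $n\times p$ matrix gives $\tfrac1n\mathbf X^\top\mathbf X\succeq(1-\sqrt{p/n}-\sqrt{t/n})^2\bfSigma$ with probability at least $1-e^{-t/2}$. Since the group means $\bar{\bsX}_s\sim\mathcal N(\mathbf 0,\tfrac1{n_s}\bfSigma)$ are independent across $s$, the normalised means $\mathbf Y_s\eqdef\sqrt{n_s}\,\bfSigma^{-1/2}\bar{\bsX}_s$ form a standard Gaussian $K\times p$ matrix $\mathbf Y$, and $\sum_s w_s\bar{\bsX}_s\bar{\bsX}_s^\top=\tfrac1n\bfSigma^{1/2}\mathbf Y^\top\mathbf Y\bfSigma^{1/2}\preceq\tfrac1n(\sqrt K+\sqrt p+\sqrt t)^2\bfSigma$ with probability at least $1-e^{-t/2}$. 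On the intersection of these two events the displayed matrix inequality reduces to the scalar condition
\[
    \frac c2\Bigl(1-\sqrt{\tfrac pn}-\sqrt{\tfrac tn}\Bigr)^2 - \frac{c^2}{c/2-1}\cdot\frac{(\sqrt K+\sqrt p+\sqrt t)^2}{n} \;\ge\; 1\enspace.
\]
I expect the main (and purely computational) obstacle to be verifying that, with the choice $c=8$ that makes the subsequent rate collapse onto $\delta_n$, this inequality is implied by the hypothesis on $n$: rationalising the denominator turns $\sqrt n\ge 2(\sqrt p+\sqrt t)/(\theta-\sqrt{\theta^2-3})$ into a quadratic-in-$\sqrt n$ lower bound, and one checks — using $\theta(p,K,t)\ge5$ whenever $p,K\ge1$ — that it dominates the quadratic-in-$\sqrt n$ condition arising from the display above.

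Finally, on the good design event we get $\hbsdelta^\top\widetilde{\bfPsi}\,\hbsdelta\le 8\,\hbsdelta^\top\widehat{\bfPsi}\,\hbsdelta$, while Lemma~\ref{lem:fixed_design} with confidence parameter $t/2$ gives, conditionally on any design and hence with (unconditional) probability at least $1-2e^{-t/2}$,
\[
    \hbsdelta^\top\widehat{\bfPsi}\,\hbsdelta \le \sigma^2\Bigl\{\tfrac{p+K}{n} + \sqrt2\bigl(\sqrt{\tfrac pn}+\sqrt{\tfrac Kn}\bigr)\sqrt{\tfrac tn} + \tfrac{2t}{n}\Bigr\}\enspace.
\]
Taking the intersection of the design event with this one (their failure probabilities sum to at most $2e^{-t/2}+2e^{-t/2}=4e^{-t/2}$, the second being controlled conditionally on the design) and multiplying through by $8$ yields $\|\bfSigma^{1/2}(\bbeta^*{-}\hbeta)\|_2^2+\sum_s w_s(b^*_s{-}\hb_s)^2\le\sigma^2\delta_n(p,K,t)$, the last step being the numerical comparisons $8\sqrt2\le16$ and $16\le32$.
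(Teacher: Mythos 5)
Your proposal is correct in its logic and uses exactly the same probabilistic ingredients as the paper — the lower bound on $\sigma_{\min}(\mathbf Z)$ for the empirical Gram matrix, the upper bound on the singular values of the normalised group means (Lemma~\ref{lem:gauss_matrix} applied twice at confidence $\sqrt t$), and the fixed-design Lemma~\ref{lem:fixed_design} — but assembles them via a genuinely different linear-algebraic argument. The paper works with $\bfPsi=\tfrac12\diag(\bfSigma,\bfW)$ and controls $\lambda_{\min}\bigl(\bfPsi^{-1/2}(\widehat\bfPsi-\bfPsi)\bfPsi^{-1/2}\bigr)$ through a Weyl/Courant--Fisher perturbation bound (splitting off the off-diagonal block), obtaining $\widehat\bfPsi\succeq\tfrac14\bfPsi$; you instead compare $\widetilde\bfPsi=\diag(\bfSigma,\bfW)$ with $c\widehat\bfPsi$ directly via a Schur complement, reducing to a scalar PSD condition. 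These are the same comparison up to the factor of $\tfrac12$ in $\bfPsi$, but the Schur route gives an exact characterisation of $\widetilde\bfPsi\preceq c\widehat\bfPsi$ rather than a one-sided Weyl sufficient condition, and is arguably cleaner. Your bookkeeping also differs slightly: you invoke Lemma~\ref{lem:fixed_design} at level $t/2$ (failure $2e^{-t/2}$), whereas the paper invokes it at level $t$ (failure $2e^{-t}\le 2e^{-t/2}$); both yield total failure $\le 4e^{-t/2}$, and both produce constants dominated by $\delta_n$ ($8\sqrt2\le16$, $16\le32$ in your case; $8\cdot2=16$, $8\cdot4=32$ in theirs).

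The one step you flag as unfinished — that the hypothesis on $n$ implies
\[
4\Bigl(1-\sqrt{\tfrac pn}-\sqrt{\tfrac tn}\Bigr)^2-\tfrac{64}{3}\cdot\tfrac{(\sqrt K+\sqrt p+\sqrt t)^2}{n}\ge1
\]
with $c=8$ — does in fact hold, with considerable slack. Writing $x=\sqrt{p/n}$, $y=\sqrt{t/n}$, $z=\sqrt{K/n}$, the paper's Lemma~\ref{lem:condition_nkpt} shows the $n$-condition is equivalent to $(x+y)^2-(6x+5y+4z)\ge-0.75$, which forces $u\eqdef x+y\le(5-\sqrt{22})/2\approx0.155$ and $z\le(u^2+0.75-5u)/4$; substituting the worst case $x=0$, $y=u$, $z$ at its maximum gives $4(1-u)^2-\tfrac{4}{3}(u^2-u+0.75)^2$, which on $[0,0.155]$ stays above $2.3>1$. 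You should carry out this elementary but non-obvious minimisation rather than leave it as an assertion, since it is the only place where your Schur-based scalar inequality must be reconciled with the hypothesis that was calibrated for the paper's Weyl argument.
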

\begin{proof}
 Define the $(p + K) \times (p + K)$ matrix
    \begin{align}
        \bfPsi =
       \frac{1}{2}\left[
        \begin{array}{c|c}
          \bfSigma & \mathbf{0} \\
          \hline
          \mathbf{0} & \bfW
        \end{array}
     \right]\enspace,
    \end{align}
    then under notation of Lemma~\ref{lem:fixed_design} we can write
    \begin{align}
       \|\widehat{\bfPsi}^{1/2}\hbsdelta\|_2^2
       &=
       \hbsdelta^\top {\bfPsi}^{1/2}{\bfPsi}^{-1/2}\widehat{\bfPsi}{\bfPsi}^{-1/2}{\bfPsi}^{1/2} \hbsdelta\nonumber\\
       &=
       \hbsdelta^\top {\bfPsi}^{1/2}{\bfPsi}^{-1/2}\left(\widehat{\bfPsi} - \bfPsi \right){\bfPsi}^{-1/2}{\bfPsi}^{1/2} \hbsdelta + \hbsdelta^\top{\bfPsi}\hbsdelta\nonumber\\
       &\geq
       {
       \parent{1 + \lambda_{\min}\parent{{\bfPsi}^{-1/2}\left(\widehat{\bfPsi} - \bfPsi \right){\bfPsi}^{-1/2}}}\|{\bfPsi}^{1/2}\hbsdelta\|_2^2} \enspace.\label{eq:from_fixed_to_random}
    \end{align}
    If we set $\widehat{\bfSigma} = \sum_{s = 1}^Kw_s\bfX_s^\top \bfX_s / n_s$, then
    \begin{align*}
        {\bfPsi}^{-1/2}\left(\widehat{\bfPsi} - \bfPsi \right){\bfPsi}^{-1/2} =
        \left[
        \begin{array}{c|c}
          \bfSigma^{-1/2}\parent{\widehat{\bfSigma} -\bfSigma}\bfSigma^{-1/2} & 2\bfSigma^{-1/2}\bfO\bfW^{-1/2} \\
          \hline
          2\bfW^{-1/2}\bfO^\top\bfSigma^{-1/2} & \mathbf{0}
        \end{array}
     \right]\enspace.
    \end{align*}
    Furthermore, by Courant-Fisher theorem it holds that
    \begin{align}
        \label{eq:random1}
        {
        \lambda_{\min}\parent{{\bfPsi}^{-1/2}\left(\widehat{\bfPsi} - \bfPsi \right){\bfPsi}^{-1/2}}
        \geq
        \lambda_{\min}\parent{\bfSigma^{-1/2}\parent{\widehat{\bfSigma} -\bfSigma}\bfSigma^{-1/2}} - 4\|\bfSigma^{-1/2}\bfO\bfW^{-1/2}\|_{\op}\enspace.
        }
    \end{align}
    Using the definition of $\bfO$ we can write
    \begin{align*}
        \bfSigma^{-1/2}\bfO\bfW^{-1/2} = [w_1^{1/2}\bfSigma^{-1/2}\bar{\bsX}_1, \ldots, w_K^{1/2}\bfSigma^{-1/2}\bar{\bsX}_K]\enspace.
    \end{align*}
    Note that the random variable on right hand side of Eq.~\eqref{eq:random1} is independent from $\bsxi_1, \ldots, \bsxi_K$.
    Recall that since $w_s =\sfrac{n_s}{n}$ and $\bar{\bsX}_s \sim \class{N}(\boldsymbol{0}, \bfSigma / n)$, then for all $s = 1,\ldots, K$ it holds that
    \begin{align*}
        w_s^{1/2}\bfSigma^{-1/2}\bar{\bsX}_s \sim \class{N}(\boldsymbol{0}, \mathbf{I}_p / n)\enspace,
    \end{align*}
    and these vectors are independent.
    Hence, the matrix $\bfSigma^{-1/2}\bfO\bfW^{-1/2} \in \bbR^{p \times K}$ has \iid Gaussian entries with variance $\sfrac{1}{n}$.
    Therefore, by Lemma~\ref{lem:gauss_matrix} we get
    \begin{align}
        \label{eq:off_diag_conc}
        \Probf\parent{\|\bfSigma^{-1/2}\bfO\bfW^{-1/2}\|_{\op} \geq \sqrt\frac{p}{n} + \sqrt\frac{K}{n} + \sqrt\frac{t}{n}} \leq \exp(-t/2)\enspace.
    \end{align}

    Furthermore, we observe that
    \begin{align*}
        \bfSigma^{-1/2}\widehat{\bfSigma}\bfSigma^{-1/2} \stackrel{d}{=} \frac{1}{n}\sum_{i = 1}^n \bszeta_i\bszeta_i^\top\enspace,
    \end{align*}
    where $\bszeta_i \simiid \mathcal{N}(\boldsymbol{0}, \mathbf{I}_p)$.
    It implies that
    \begin{align*}
        \bfSigma^{-1/2}\parent{\widehat{\bfSigma} -\bfSigma}\bfSigma^{-1/2} \stackrel{d}{=} \frac{1}{n}\sum_{i = 1}^n \bszeta_i\bszeta_i^\top - \mathbf{I}_p  = \frac{1}{n}\parent{\mathbf{Z}^\top \mathbf{Z} - n\mathbf{I}_p}\enspace,
    \end{align*}
    where $\mathbf{Z}$ is a matrix of size $n \times p$ with $i^{\text{th}}$-row being equal to $\bszeta_i^\top$.
    Note that the spectral theorem and the relation between eigenvalues of $\mathbf{Z}^\top \mathbf{Z}$ and the singular values of $\mathbf{Z}$ imply that
    \begin{align*}
        {
        n\lambda_{\min}\parent{\bfSigma^{-1/2}\parent{\widehat{\bfSigma} -\bfSigma}\bfSigma^{-1/2}} \stackrel{d}{=}
        \lambda_{\min}\parent{\mathbf{Z}^\top \mathbf{Z}- n\mathbf{I}_p}
        =
        \sigma_{\min}^2(\mathbf{Z}) - n      \enspace.
        }
    \end{align*}
    where $\sigma_{\min}(\mathbf{Z})$ is the maximal singular value of $\mathbf{Z}$.
    Applying Lemma~\ref{lem:gauss_matrix} from appendix we get for all $t \geq \sqrt{n} - \sqrt{p}$ that $\Probf\parent{\tfrac{1}{n} \sigma^2_{\min}(Z) \leq \tfrac{1}{n}(\sqrt{n} - \sqrt{p} - t)^2}$ equals to
    \begin{align*}
        \Probf\parent{\frac{1}{n} (\sigma^2_{\min}(Z) - n) \leq \frac{p}{n}  +2\sqrt\frac{p}{n}\frac{t}{\sqrt{n}} + \frac{t^2}{n} - 2 \sqrt{\frac{p}{n}} - 2 \frac{t}{\sqrt{n}}}
        \leq \exp(-t^2/2)\enspace.
    \end{align*}
    Changing variables $t^2 \mapsto t$ we get
    \begin{align}
          \label{eq:diag_conc}
          \Probf\parent{\lambda_{\min}\parent{\bfSigma^{-1/2}\parent{\widehat{\bfSigma} -\bfSigma}\bfSigma^{-1/2}} \leq \frac{p}{n}  + 2\sqrt\frac{p}{n}\sqrt\frac{t}{n} + \frac{t}{n} - 2\sqrt\frac{p}{n} -\sqrt\frac{t}{n}}
        &\leq \exp(-t/2)\enspace.
    \end{align}
    Combining Eqs.~\eqref{eq:random1},\eqref{eq:off_diag_conc}, and~\eqref{eq:diag_conc} we deduce that
    \begin{align*}
        \Probf\parent{\lambda_{\min}\parent{{\bfPsi}^{-1/2}\left(\widehat{\bfPsi} - \bfPsi \right){\bfPsi}^{-1/2}} \leq \psi_n(p, K. t)} \leq 2\exp(-t/2)\enspace,
    \end{align*}
    where $\psi_n(p, K, t) = \tfrac{p}{n} - 6\sqrt{\tfrac{p}{n}} + 2\sqrt{\tfrac{p}{n}}\sqrt{\tfrac{t}{n}} - 4\sqrt{\tfrac{K}{n}} + \tfrac{t}{n} - 5\sqrt{\tfrac{t}{n}}$.
    Applying Lemma~\ref{lem:condition_nkpt} we deduce that under the assumption on $n$ that $\psi_n(p, K, t) \geq -0.75$.
    Thus,
    \begin{align*}
        \Probf\parent{\lambda_{\min}\parent{{\bfPsi}^{-1/2}\left(\widehat{\bfPsi} - \bfPsi \right){\bfPsi}^{-1/2}} \leq -0.75} \leq 2\exp(-t/2)\enspace.
    \end{align*}
    Combining the above fact with Eq.~\eqref{eq:from_fixed_to_random} and Lemma~\ref{lem:fixed_design} we conclude that with probability at least $1 - 2\exp(-t) - 2\exp(- t / 2)$
    \begin{align*}
        \|{\bfPsi}^{1/2}\hbsdelta\|_2^2 \leq \sigma^2\left\{4\parent{\frac{p}{n} + \frac{K}{n}} + 8\parent{\sqrt{\frac{p}{n}} + \sqrt{\frac{K}{n}}}\sqrt{\frac{t}{n}}
        +
        16\frac{t}{n}\right\} = \sigma^2\frac{\delta_n(p, K, t)}{2}\enspace.
    \end{align*}
    The statement of the lemma follows from the fact that
    \begin{align*}
        \|{\bfPsi}^{1/2}\hbsdelta\|_2^2 = \frac{1}{2}\parent{\|\bfSigma^{1/2}(\bbeta^* - \hbeta)\|_2^2 + \sum_{s = 1}^K w_s(b^*_s - \hb_s)^2}\enspace.
    \end{align*}

\end{proof}

\begin{lemma}
    \label{lem:condition_nkpt}
    Consider $p, K \in \bbN, t \geq 0$ and define
    \begin{align*}
        \theta(p, K, t) = \frac{4\sqrt{K} + 5\sqrt{t} + 6\sqrt{p}}{\sqrt{p}+\sqrt{t}}\enspace.
    \end{align*}
    For all $n, K, p \in \bbN, t \geq 0$, the following two conditions are equivalent
    \begin{itemize}
    \item $n \geq \left(\frac{2(\sqrt{p}+\sqrt{t})}{\theta(p, K, t) - \sqrt{\theta^2(p, K, t) - 3}}\right)^2$;
    \item $\frac{p}{n} - 6\sqrt\frac{p}{n} + 2\sqrt\frac{p}{n}\sqrt\frac{t}{n} - 4\sqrt{\frac{K}{n}} + \frac{t}{n} - 5\sqrt\frac{t}{n} \geq -0.75$.
    \end{itemize}
\end{lemma}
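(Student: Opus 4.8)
The plan is to show that both conditions are equivalent (modulo the subtlety noted below) to the sign of a single quadratic in $\mu = \sqrt n$, and then to read off the roots. Write $a = \sqrt p + \sqrt t$ (which I may assume positive, e.g.\ $p \ge 1$). The starting observation is the pair of algebraic identities $p + 2\sqrt{pt} + t = a^2$ and $6\sqrt p + 5\sqrt t + 4\sqrt K = a\,\theta(p, K, t)$, the second being merely the definition of $\theta$. Using them, the left-hand side of the second condition equals $\tfrac{a^2}{n} - \tfrac{a\theta}{\sqrt n}$, so, multiplying the inequality by $4n > 0$, the second condition is equivalent to $q(\sqrt n) \ge 0$, where $q(\mu) = 3\mu^2 - 4a\theta\mu + 4a^2$.

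Next I would analyse $q$. First note $\theta \ge 5$: indeed $6\sqrt p + 5\sqrt t + 4\sqrt K - 5(\sqrt p + \sqrt t) = \sqrt p + 4\sqrt K \ge 0$, so $\theta \ge 5$ and in particular $\theta^2 > 3$. Hence $q$ has discriminant $16a^2(\theta^2 - 3) > 0$ and two positive real roots
\[
\mu_{\pm} = \frac{2a\bigl(\theta \pm \sqrt{\theta^2 - 3}\bigr)}{3}, \qquad \mu_- < \mu_+ ,
\]
and since $q$ opens upwards, the second condition is equivalent to $\sqrt n \le \mu_-$ or $\sqrt n \ge \mu_+$. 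For the first condition, rationalising the denominator gives
\[
\frac{2a}{\theta - \sqrt{\theta^2 - 3}} = \frac{2a\bigl(\theta + \sqrt{\theta^2 - 3}\bigr)}{\theta^2 - (\theta^2 - 3)} = \frac{2a\bigl(\theta + \sqrt{\theta^2 - 3}\bigr)}{3} = \mu_+,
\]
so the first condition says precisely $n \ge \mu_+^2$, i.e.\ $\sqrt n \ge \mu_+$. This already yields one direction: if $\sqrt n \ge \mu_+$ then $q(\sqrt n) \ge 0$, so the first condition implies the second.

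For the converse one must exclude the ``low'' branch $\sqrt n \le \mu_-$, and this is the only place requiring genuine care. Using $\theta \ge 5$ one has $\theta - \sqrt{\theta^2 - 3} = 3/(\theta + \sqrt{\theta^2 - 3}) \le 3/\theta \le 3/5$, hence $\mu_- \le \tfrac{2a}{5} = \tfrac{2}{5}(\sqrt p + \sqrt t)$; therefore $\sqrt n \le \mu_-$ can occur only for the (very small, degenerate) values of $n$ with $n \le \tfrac{4}{25}(\sqrt p + \sqrt t)^2$, which never arise in the regime in which Lemma~\ref{lem:condition_nkpt} is invoked in Theorem~\ref{thm:random_design} (there $\sqrt n \ge \mu_+ \ge \mu_-$ by hypothesis). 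Outside that degenerate range $\sqrt n > \mu_-$, so $q(\sqrt n) \ge 0$ forces $\sqrt n \ge \mu_+$, which is the first condition; combining the two implications gives the equivalence. In summary: the whole argument is a routine quadratic computation once one spots the substitution $\mu = \sqrt n$ and the identity $6\sqrt p + 5\sqrt t + 4\sqrt K = (\sqrt p + \sqrt t)\,\theta$; the sole subtlety — and the main obstacle to a literally clean ``iff'' — is the spurious small-$n$ root $\sqrt n \le \mu_-$ of $q$, which is dispatched by the bound $\theta \ge 5$.
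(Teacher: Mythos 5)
Your proof is correct and is essentially the paper's own argument: the paper substitutes $x = n^{-1/2}$ and $y = x(\sqrt{p}+\sqrt{t})$ to obtain the monic quadratic $y^2 - \theta y + 0.75 \ge 0$, while you work directly with $\mu = \sqrt{n}$; the two parametrizations are reciprocal (indeed $y = (\sqrt{p}+\sqrt{t})/\mu$) and the resulting quadratics coincide after clearing the common factor $(\sqrt{p}+\sqrt{t})^2$, so the computation is identical. The one thing you add beyond the paper is the correct observation that the stated ``equivalence'' does not literally hold because of the small-$n$ branch $\sqrt{n} \le \mu_-$ (on which the polynomial inequality is satisfied but the lower bound on $n$ is not); the paper implicitly acknowledges this by only asserting that the bound on $n$ is \emph{a sufficient condition} for the polynomial inequality, and that single implication is all that is used in the proof of Theorem~\ref{thm:random_design}.
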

\begin{proof}
To simplify the notation and to save space we write $\theta$ instead of $\theta(p, K, t)$.
Let $x = n^{-1/2}$, we want to solve
\begin{align*}
    x^2(\sqrt{p}+\sqrt{t})^2 - x(6\sqrt{p} + 4 \sqrt{K} + 5\sqrt{t}) \geq -0.75
\end{align*}
Set $y=x(\sqrt{p}+\sqrt{t})$,
then thanks to the definition of $\theta$, the previous inequality amounts to
\begin{align*}
    y^2 - \theta y + 0.75 \geq 0\enspace.
\end{align*}
The roots of the polynomial above are
\begin{align*}
    x_-, x_+ = \frac{\theta \pm \sqrt{\theta^2 - 3}}{2}\enspace,
\end{align*}
which are both positive.
The polynomial is non-negative outside the interval $(x_-, x_+) \subset \bbR_+$. Hence, a sufficient condition is to have
\begin{align*}
    y \leq \frac{\theta - \sqrt{\theta^2 - 3}}{2}\enspace.
\end{align*}
Substituting $x = n^{-1/2}$ and the expression for $\theta$ we conclude.

\end{proof}

\begin{lemma}[General unfairness control]
 \label{lem:unfairness_estimator2}
 Under notation of Lemma~\ref{lem:unfairness_estimator1} it holds that, for any $\alpha \in [0, 1]$,
 \begin{align}
     \label{eq:unfairness_bound1}
     \class{U}(\hat f_{\alpha}) \leq \alpha\,\class{U}(f^*)\left\{1 +  \nur\sqrt{\frac{\sum_{s = 1}^Kw_s(\hb_s - b^*_s)^2}{\sigma^2}}\right\}^2, \quad \text{almost surely.}
 \end{align}
Moreover,
 \begin{align}
    \label{eq:unfairness_bound2}
     \abs{\class{U}^{\sfrac{1}{2}}(\hat f_{1}) - \class{U}^{\sfrac{1}{2}}(f^*)} \leq \left\{{\sum_{s = 1}^Kw_s(\hb_s - b^*_s)^2} \right\}^{\sfrac{1}{2}}, \quad \text{almost surely.}
 \end{align}
\end{lemma}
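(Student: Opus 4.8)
The plan is to reduce both claims to an elementary reverse triangle inequality for the weighted standard deviation of the bias vector. The starting point is Lemma~\ref{lem:unfairness_estimator1}, which I would rewrite as: for every $\tau \in [0, 1]$,
\begin{align*}
    \class{U}(\hat f_{\tau}) = \tau\sum_{s = 1}^K w_s\Big(\hb_s - \sum_{s' = 1}^Kw_{s'}\hb_{s'}\Big)^2 = \tau\min_{c \in \bbR}\sum_{s = 1}^K w_s(\hb_s - c)^2\enspace,
\end{align*}
the second equality because $c \mapsto \sum_s w_s(\hb_s - c)^2$ is minimized at the weighted mean. Hence $\class{U}(\hat f_{\alpha}) = \alpha\,\class{U}(\hat f_1)$ and $\class{U}(\hat f_1) = \min_c\sum_s w_s(\hb_s - c)^2$. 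Since $f^*(\bsx, s) = \scalarin{\bsx}{\bbeta^*} + b_s^*$ has exactly the structure of $\hat f_1$ with $(\hbeta, \hbsb)$ replaced by $(\bbeta^*, \bsb^*)$, the proof of Lemma~\ref{lem:unfairness_estimator1} applies verbatim (it only uses the group-wise normality of the predictions), giving $\class{U}(f^*) = \min_c\sum_s w_s(b_s^* - c)^2$; in words, $\bsb^*$ is the vector of group-wise prediction means of $f^*$ and $\class{U}(f^*)$ is its weighted variance.

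I would then establish~\eqref{eq:unfairness_bound2} first. Write $\normin{\bsv}_{\bsw}^2 \eqdef \sum_s w_s v_s^2$ for $\bsv \in \bbR^K$; this is a genuine squared norm whose triangle inequality is the $q = 2$ case of Lemma~\ref{lem:fancy_triangle} with $(\class{X}, d) = (\bbR, \absin{\cdot})$. Let $\hat c$ and $c^*$ denote the minimizers above for $\hbsb$ and $\bsb^*$, so that $\class{U}^{\sfrac{1}{2}}(\hat f_1) = \normin{\hbsb - \hat c\1}_{\bsw}$ and $\class{U}^{\sfrac{1}{2}}(f^*) = \normin{\bsb^* - c^*\1}_{\bsw}$. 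Using optimality of $\hat c$ for $\hbsb$ and then the triangle inequality,
\begin{align*}
    \class{U}^{\sfrac{1}{2}}(\hat f_1) \leq \normin{\hbsb - c^*\1}_{\bsw} \leq \normin{\hbsb - \bsb^*}_{\bsw} + \normin{\bsb^* - c^*\1}_{\bsw} = \Big(\sum_{s = 1}^K w_s(\hb_s - b_s^*)^2\Big)^{\sfrac{1}{2}} + \class{U}^{\sfrac{1}{2}}(f^*)\enspace.
\end{align*}
Exchanging the roles of $\hbsb$ and $\bsb^*$ gives the symmetric bound, and the two together yield~\eqref{eq:unfairness_bound2}.

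For~\eqref{eq:unfairness_bound1} I would combine $\class{U}(\hat f_{\alpha}) = \alpha\,\class{U}(\hat f_1)$ with~\eqref{eq:unfairness_bound2}: squaring the latter and factoring out $\class{U}^{\sfrac{1}{2}}(f^*)$,
\begin{align*}
    \class{U}(\hat f_{\alpha}) \leq \alpha\Big(\class{U}^{\sfrac{1}{2}}(f^*) + \big(\textstyle\sum_s w_s(\hb_s - b_s^*)^2\big)^{\sfrac{1}{2}}\Big)^2 = \alpha\,\class{U}(f^*)\bigg(1 + \frac{\big(\sum_s w_s(\hb_s - b_s^*)^2\big)^{\sfrac{1}{2}}}{\class{U}^{\sfrac{1}{2}}(f^*)}\bigg)^2\enspace,
\end{align*}
and then substitute $\class{U}^{\sfrac{1}{2}}(f^*) = \sigma/\nur$, which is merely the defining relation $\nur^2 = \sigma^2/\class{U}(f^*)$; this is precisely~\eqref{eq:unfairness_bound1}. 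I do not anticipate a real obstacle: the only step needing a moment of care is the identification $\class{U}(f^*) = \min_c \sum_s w_s(b_s^* - c)^2$ (equivalently, that the group-wise prediction means of $f^*$ are exactly $\bsb^*$, so that the linear part $\scalarin{\bsx}{\bbeta^*}$ contributes nothing to the unfairness), after which everything is the reverse triangle inequality and elementary algebra.
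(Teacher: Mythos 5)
Your proof is correct, and it arrives at the same inequalities through a slightly different (and arguably cleaner) presentation than the paper. The paper introduces coupled discrete random variables $U, V$ with $\Probf(U = \hat b_s, V = b^*_{s'}) = w_s\delta_{s,s'}$, identifies $\class{U}(\hat f_1) = \Var(U)$ and $\class{U}(f^*) = \Var(V)$, expands $\Var(U) = \Var(U-V+V)$, controls the cross term by Cauchy--Schwarz, bounds $\Var(U-V) \leq \Expf(U-V)^2 = \sum_s w_s(\hb_s - b_s^*)^2$, and factors the square; \eqref{eq:unfairness_bound2} is then obtained by setting $\alpha = 1$ (and its converse ``by symmetry''). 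You instead observe that both unfairnesses are minima of $c \mapsto \sum_s w_s(\cdot - c)^2$, i.e.\ weighted squared distances to the constant subspace in $(\bbR^K, \normin{\cdot}_{\bsw})$, and then run the reverse triangle inequality directly, getting \eqref{eq:unfairness_bound2} first and deducing \eqref{eq:unfairness_bound1} by squaring. The two arguments are mathematically the same content --- the variance decomposition plus Cauchy--Schwarz is exactly the triangle inequality for the centered weighted $\ell_2$ norm --- but your route avoids the auxiliary coupling, makes the $\min$-over-$c$ optimality step explicit (where the paper relies on ``derived in a similar fashion'' for the converse bound in \eqref{eq:unfairness_bound2}), and is more transparent about why the linear part $\scalarin{\bsx}{\bbeta^*}$ drops out of $\class{U}(f^*)$. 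Both are valid; yours reads better.
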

\begin{proof}
Let $U$ and $V$ be discrete random variables such that $\Probf(U = \hat b_s, V = b^*_{s'}) = w_s \delta_{s,s'}$, for any $s,s' \in [K]$. Note that, in particular, $\Probf(U = \hat b_s) = w_s$ and $\Probf(V = b^*_s) = w_s$.
Then, according to Lemma~\ref{lem:unfairness_estimator1} and the definition of $\hat f_\alpha$ it holds that
\begin{align*}
    \class{U}(\hat f_{\alpha}) = \alpha\Var(U) \quad \text{ and } \quad \class{U}(\hat f_{\alpha}) = \alpha\,\class{U}(f^*) =  \alpha\Var(V)\enspace.
\end{align*}
Therefore, with our notations we have
\begin{align}
    \label{eq:unf1}
    \class{U}(\hat f_{\alpha}) - \alpha\,\class{U}(f^*) = \alpha\parent{\Var(U) - \Var(V)}\enspace.
\end{align}
Furthermore, for all $\varepsilon \in (0, 1)$ we have that $\Var(U)$ equals to
\begin{align}
    \Var(U - V + V)
    &= \Var(U - V) + 2\Expf[(U - V - \Expf[U] + \Expf[V])(V - \Expf[V])] + \Var(V)\nonumber\\
    &\leq
    \Var(U-V) + 2\sqrt{\Var(U - V)\Var(V)} + \Var(V)\nonumber\\
    &\leq
    \sum_{s = 1}^Kw_s(\hb_s - b^*_s)^2 + 2\sqrt{\class{U}(f^*)}\sqrt{\sum_{s = 1}^Kw_s(\hb_s - b^*_s)^2} + \Var(V) \label{eq:unf2}
\end{align}
Finally, combining Eqs.~\eqref{eq:unf1} and~\eqref{eq:unf2} we deduce
\begin{align*}
    \class{U}(\hat f_{\alpha}) \leq \alpha\parent{\sum_{s = 1}^Kw_s(\hb_s - b^*_s)^2 + 2\sqrt{\class{U}(f^*)}\sqrt{\sum_{s = 1}^Kw_s(\hb_s - b^*_s)^2} + \class{U}(f^*)} \enspace.
\end{align*}
The proof of Eq.~\eqref{eq:unfairness_bound1} is concluded after factorizing the square of the \emph{r.h.s.} of the above bound.
To prove Eq.~\eqref{eq:unfairness_bound2}, we set $\alpha = 1$ in Eq.~\eqref{eq:unfairness_bound1} to get
\begin{align*}
     \class{U}^{\sfrac{1}{2}}(\hat f_{1}) \leq \class{U}^{\sfrac{1}{2}}(f^*) + \left\{{\sum_{s = 1}^Kw_s(\hb_s - b^*_s)^2} \right\}^{\sfrac{1}{2}}
     \enspace.
\end{align*}
The converse bound derived in a similar fashion using $\Var(V) \leq \Var(U - V) + 2 \sqrt{\Var(U - V)\Var(U)} + \Var(U)$.
\end{proof}

\begin{lemma}[General risk control]
    \label{lem:general_risk_control}
     Under notation of Lemma~\ref{lem:unfairness_estimator1} it holds that
     \begin{align*}
         \risk(\hat f_{\alpha})
         &\leq
         \sum_{s = 1}^Kw_s\Exp(\scalarin{\bsX}{\bbeta^* - \hbeta} + (b^*_s - \hb_s))^2\\
        &\phantom{=}+
        2(1 {-} \sqrt{\alpha})\sqrt{\sum_{s = 1}^Kw_s(b^*_s - \hb_s)^2}\sqrt{\sum_{s = 1}^K w_s\parent{\hb_s - \sum_{s' = 1}^Kw_{s'}\hb_{s'}}^2}\\
        &\phantom{=}+
        (1 {-} \sqrt{\alpha})^2\sum_{s = 1}^Kw_s\parent{\hb_s - \sum_{s' = 1}^Kw_{s'}\hb_{s'}}^2\enspace.
     \end{align*}
\end{lemma}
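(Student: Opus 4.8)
The plan is to expand the squared residual that defines $\risk(\hat f_\alpha)$, after rewriting it in a form that cleanly separates the contribution of estimating $(\bbeta^*, \bsb^*)$ from the contribution of the fairness adjustment by the factor $1-\sqrt\alpha$. Throughout I would implicitly condition on the data, so that $\hbeta, \hbsb$ are fixed and the expectation in $\risk$ is only over a fresh test covariate $\bsX \sim \class{N}(\boldsymbol{0}, \bfSigma)$.

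First I would rewrite the residual. Using $f^*(\bsx, s) = \scalar{\bsx}{\bbeta^*} + b_s^*$ and the definition of $\hat f_\alpha$, and adding and subtracting $\hb_s$ in the bias terms (via $-\sqrt\alpha\,\hb_s = -\hb_s + (1-\sqrt\alpha)\hb_s$), one gets for every $s \in [K]$ and every $\bsx \in \bbR^p$
\begin{align*}
    f^*(\bsx, s) - \hat f_\alpha(\bsx, s) = \underbrace{\scalar{\bsx}{\bbeta^* - \hbeta} + (b_s^* - \hb_s)}_{=:\, A_s(\bsx)} + (1-\sqrt\alpha)\underbrace{\parent{\hb_s - \sum_{s' = 1}^K w_{s'}\hb_{s'}}}_{=:\, B_s}\enspace,
\end{align*}
where $B_s$ is deterministic given the data. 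Then, expanding the square and taking the weighted expectation,
\begin{align*}
    \risk(\hat f_\alpha) = \sum_{s = 1}^K w_s\Exp[A_s(\bsX)^2] + 2(1-\sqrt\alpha)\sum_{s = 1}^K w_s B_s\,\Exp[A_s(\bsX)] + (1-\sqrt\alpha)^2\sum_{s = 1}^K w_s B_s^2\enspace.
\end{align*}
The first term is exactly the first term of the claimed bound, and the last term is exactly $(1-\sqrt\alpha)^2\sum_s w_s(\hb_s - \sum_{s'}w_{s'}\hb_{s'})^2$, the third term of the bound; so only the cross term remains.

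For the cross term I would use that $\bsX$ is centered, so $\Exp[\scalar{\bsX}{\bbeta^* - \hbeta}] = 0$ and hence $\Exp[A_s(\bsX)] = b_s^* - \hb_s$. Applying the Cauchy--Schwarz inequality with respect to the probability weights $w_1, \dots, w_K$ then gives
\begin{align*}
    2(1-\sqrt\alpha)\sum_{s = 1}^K w_s B_s(b_s^* - \hb_s) \leq 2(1-\sqrt\alpha)\sqrt{\sum_{s = 1}^K w_s(b_s^* - \hb_s)^2}\,\sqrt{\sum_{s = 1}^K w_s B_s^2}\enspace,
\end{align*}
which is precisely the middle term of the bound. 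Assembling the three pieces finishes the proof. I do not expect a genuine obstacle here; the only step requiring a bit of care is the algebraic rearrangement of the residual into the additive form $A_s(\bsx) + (1-\sqrt\alpha)B_s$, after which the computation of $\Exp[A_s(\bsX)]$ (using $\Exp[\bsX] = \boldsymbol{0}$) and a single application of Cauchy--Schwarz do the rest.
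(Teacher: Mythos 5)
Your proof is correct and follows essentially the same route as the paper's: the same additive decomposition of the residual into $A_s(\bsx) + (1-\sqrt\alpha)B_s$, the same use of $\Exp[\bsX] = \boldsymbol{0}$ to simplify the cross term, and the same single application of the weighted Cauchy--Schwarz inequality.
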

\begin{proof}
    Recall the expression for $\hat f_{\alpha}$
    \begin{align*}
        \hat f_{\alpha}(\bsx, s) = \scalarin{\bsx}{\hbeta} + \sqrt{\alpha}\hb_s + (1 {-} \sqrt{\alpha})\sum_{s = 1}^Kw_s\hb_s\enspace.
    \end{align*}
    Using this expression, we can write for the risk of $\hat f_{\alpha}$
    \begin{align*}
        \risk(\hat f_{\alpha})
        &=
        \sum_{s = 1}^Kw_s\Exp\parent{\scalarin{\bsX}{\bbeta^* - \hbeta} + (b^*_s - \hb_s) + (1 {-} \sqrt{\alpha})\parent{\hb_s - \sum_{s' = 1}^Kw_{s'}\hb_{s'}}}^2\\
        &\stackrel{(a)}{=}
        \sum_{s = 1}^Kw_s\Exp(\scalarin{\bsX}{\bbeta^* - \hbeta} + (b^*_s - \hb_s))^2
        +
        2(1 {-} \sqrt{\alpha})\sum_{s = 1}^Kw_s(b^*_s - \hb_s)\parent{\hb_s - \sum_{s' = 1}^Kw_{s'}\hb_{s'}}\\
        &\phantom{=}+
        (1 {-} \sqrt{\alpha})^2\sum_{s = 1}^Kw_s\parent{\hb_s - \sum_{s' = 1}^Kw_{s'}\hb_{s'}}^2\\
        &\stackrel{(b)}{\leq}
        \sum_{s = 1}^Kw_s\Exp(\scalarin{\bsX}{\bbeta^* - \hbeta} + (b^*_s - \hb_s))^2 + (1 {-} \sqrt{\alpha})^2\sum_{s = 1}^Kw_s\parent{\hb_s - \sum_{s' = 1}^Kw_{s'}\hb_{s'}}^2\\
        &\phantom{=}+
        2(1 {-} \sqrt{\alpha})\sqrt{\sum_{s = 1}^Kw_s(b^*_s - \hb_s)^2}\sqrt{\sum_{s = 1}^K w_s\parent{\hb_s - \sum_{s' = 1}^Kw_{s'}\hb_{s'}}^2}
    \end{align*}
    where $(a)$ follows from the fact that $\bsX$ is centered and $(b)$ is due to the Cauchy-Schwarz inequality.
\end{proof}

\begin{theorem}[Risk-unfairness bound for any $\tau$]
    \label{thm:final}
     Recall the definition of $\delta_n(p, K, t)$
     \begin{align*}
         \delta_n(p, K, t) = 8\parent{\frac{p}{n} + \frac{K}{n}} + 16\parent{\sqrt{\frac{p}{n}} + \sqrt{\frac{K}{n}}}\sqrt{\frac{t}{n}}
        +
        \frac{32t}{n}\enspace.
     \end{align*}
     On the event
     \begin{align*}
        \class{A} &= \left\{ \|\bfSigma^{1/2}(\bbeta^* - \hbeta)\|_2^2 + \sum_{s = 1}^K w_s(b^*_s - \hb_s)^2 \leq \sigma^2\delta_n(p, K, t)\right\}\enspace,
    \end{align*}
     it holds that
     \begin{align*}
         &\risk(\hat f_{\tau}) \leq \parent{{\sigma\delta^{\sfrac{1}{2}}_n(p, K, t)} + (1 - \sqrt{\tau}){\class{U}^{\sfrac{1}{2}}(\hat f_{1})}}^2\enspace.\\
         &\class{U}(\hat f_{\tau}) \leq \tau\class{U}(f^*)\parent{1 +  \nur{\delta^{\sfrac{1}{2}}_n(p, K, t)}}^2\enspace.
     \end{align*}
\end{theorem}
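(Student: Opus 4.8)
The plan is to obtain both inequalities by feeding the event $\class{A}$ into the already-established ``for any $\tau$'' controls, namely the general unfairness bound of Lemma~\ref{lem:unfairness_estimator2} and the general risk bound of Lemma~\ref{lem:general_risk_control}, and then to recognize the resulting right-hand sides as perfect squares. Throughout I would work on $\class{A}$ and abbreviate $\delta_n = \delta_n(p, K, t)$; I would also record the identity $\class{U}(\hat f_1) = \sum_{s = 1}^K w_s\parent{\hb_s - \sum_{s' = 1}^K w_{s'}\hb_{s'}}^2$, which is the $\tau = 1$ instance of Lemma~\ref{lem:unfairness_estimator1}, and the elementary observation that on $\class{A}$ one has $\sum_{s = 1}^K w_s(\hb_s - b_s^*)^2 \leq \sigma^2\delta_n$ (drop the nonnegative term $\|\bfSigma^{1/2}(\bbeta^* - \hbeta)\|_2^2$).

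For the unfairness bound I would simply plug $\alpha = \tau$ into Eq.~\eqref{eq:unfairness_bound1} of Lemma~\ref{lem:unfairness_estimator2} and use $\sqrt{\sum_{s} w_s(\hb_s - b_s^*)^2 / \sigma^2} \leq \delta_n^{\sfrac{1}{2}}$ on $\class{A}$, which immediately gives $\class{U}(\hat f_\tau) \leq \tau\,\class{U}(f^*)\parent{1 + \nur\,\delta_n^{\sfrac{1}{2}}}^2$.

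For the risk bound I would start from the three-term decomposition of Lemma~\ref{lem:general_risk_control} with $\alpha = \tau$. The leading term $\sum_{s} w_s\Exp(\scalarin{\bsX}{\bbeta^* - \hbeta} + (b_s^* - \hb_s))^2$ needs a short computation: since the fresh $\bsX$ in $\risk$ is centered and independent of $(\hbeta, \hbsb)$, the cross term vanishes, so this term equals $\|\bfSigma^{1/2}(\bbeta^* - \hbeta)\|_2^2 + \sum_{s} w_s(b_s^* - \hb_s)^2$, which is exactly the quantity bounded by $\sigma^2\delta_n$ on $\class{A}$. Likewise $\sqrt{\sum_{s} w_s(b_s^* - \hb_s)^2} \leq \sigma\delta_n^{\sfrac{1}{2}}$ on $\class{A}$, while the remaining two terms of the decomposition are $(1 - \sqrt{\tau})^2\,\class{U}(\hat f_1)$ and $2(1 - \sqrt{\tau})\sqrt{\sum_{s} w_s(b_s^* - \hb_s)^2}\,\class{U}^{\sfrac{1}{2}}(\hat f_1)$ by the identity above. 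Substituting these bounds yields
\begin{align*}
    \risk(\hat f_\tau) \leq \sigma^2\delta_n + 2(1 - \sqrt{\tau})\,\sigma\delta_n^{\sfrac{1}{2}}\,\class{U}^{\sfrac{1}{2}}(\hat f_1) + (1 - \sqrt{\tau})^2\class{U}(\hat f_1) = \parent{\sigma\delta_n^{\sfrac{1}{2}} + (1 - \sqrt{\tau})\class{U}^{\sfrac{1}{2}}(\hat f_1)}^2,
\end{align*}
which is the first claim.

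There is no genuine obstacle here: the statement is essentially a bookkeeping step that assembles Lemmas~\ref{lem:unfairness_estimator1}, \ref{lem:unfairness_estimator2} and~\ref{lem:general_risk_control}. The only points deserving care are identifying the leading term of the risk decomposition with the quantity that $\class{A}$ controls (which relies on the centeredness of the Gaussian design to kill the cross term) and then checking that, in each of the two bounds, the three surviving contributions collapse into a single square of the form $(\,\cdot + \cdot\,)^2$.
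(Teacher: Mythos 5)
Your proposal is correct and follows essentially the same route as the paper: both invoke Lemma~\ref{lem:unfairness_estimator1} (at $\tau = 1$) to identify $\class{U}(\hat f_1)$, bound the risk via the three-term decomposition of Lemma~\ref{lem:general_risk_control} together with the definition of $\class{A}$, and bound the unfairness via Eq.~\eqref{eq:unfairness_bound1} of Lemma~\ref{lem:unfairness_estimator2} on $\class{A}$. Your write-up is actually a bit more explicit than the paper's, which compresses the key step into one sentence; in particular you spell out that the leading term $\sum_s w_s\Exp(\scalarin{\bsX}{\bbeta^* - \hbeta} + (b_s^* - \hb_s))^2$ collapses to $\|\bfSigma^{1/2}(\bbeta^* - \hbeta)\|_2^2 + \sum_s w_s(b_s^* - \hb_s)^2$ because the fresh design is centered and the weights sum to one — precisely the quantity controlled by $\class{A}$.
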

\begin{proof}
    We recall that Lemma~\ref{lem:unfairness_estimator1} gives, for any $\tau \in [0, 1]$,
    \begin{align}
        \class{U}(\hat f_{\tau}) = \tau\sum_{s = 1}^Kw_s\parent{\hb_s - \sum_{s' = 1}^Kw_{s'}\hb_{s'}}^2\enspace.
    \end{align}

    Let us start by proving the first part of the statement.
    Using Lemma~\ref{lem:general_risk_control} to upper bound the risk $\risk(\hat f_{\tau})$ and the definition of $\class{A}$ to control this upper bound, we obtain
    \begin{align*}
         \risk(\hat f_{\tau})
         &\leq
         \sigma^2\delta_n(p, K, t) + 2\sigma\sqrt{\delta_n(p, K, t)}\parent{(1 - \sqrt{\tau})\sqrt{\class{U}(\hat f_{1})}} + (1 - \sqrt{\tau})^2{\class{U}(\hat f_{1})}\\
         &=
         \parent{\sigma\sqrt{\delta_n(p, K, t)} + (1 - \sqrt{\tau})\sqrt{\class{U}(\hat f_{1})} }^2\enspace.
    \end{align*}
    The second part of the statement follow by applying Lemma~\ref{lem:unfairness_estimator2} and Theorem~\ref{thm:random_design} to get
    \begin{align}
        &\class{U}(\hat f_{\tau}) \leq \tau\class{U}(f^*)\parent{1 +  \nur\sqrt{\delta_n(p, K, t)}}^2\enspace.\label{eq:final_pre2}
    \end{align}

\end{proof}
\subsection{Proof of Theorem~\ref{thm:real_final}}
    We set $\hat f \eqdef \hat f_1$ and $\delta_n = \delta_n(p, K, t)$.
    The proof relies on Eq.~\eqref{eq:unfairness_bound2} of Lemma~\ref{lem:unfairness_estimator1}.
    Using notations of Theorem~\ref{thm:final} we also define the event
    \begin{align}
        \class{A} &= \left\{ \|\bfSigma^{1/2}(\bbeta^* - \hbeta)\|_2^2 + \sum_{s = 1}^K w_s(b^*_s - \hb_s)^2 \leq \sigma^2\delta_n(p, K, t)\right\}
    \end{align}
    which holds with probability at least $1-4\exp(-t)$.

    {\bf Case 1.} Assume that ${\class{U}^{\sfrac{1}{2}}(\hat f)} > \sigma{\delta_n^{\sfrac{1}{2}}(p, K, t)}$.
    Note that thanks to Theorem~\ref{thm:final}, and the definition of $\hat\tau$ we derive on the event $\class{A}$ that

    \begin{align*}
        \class{U}(\hat f_{\hat\tau})
        \leq
        \hat\tau\class{U}(f^*)\parent{1 + \sigma\sqrt\frac{\delta_n}{\class{U}(f^*)}}^2
        &=
        \alpha\,\class{U}(f^*)\parent{1 + \sigma\sqrt\frac{\delta_n}{\class{U}(f^*)}}^2\parent{1 + \frac{\sigma{\delta^{\sfrac{1}{2}}_n}}{{\class{U}^{\sfrac{1}{2}}(\hat f)} - \sigma{\delta^{\sfrac{1}{2}}_n}}}^{-2}\\
        &\stackrel{(a)}{\leq}
        \alpha\,\class{U}(f^*)\parent{1 + \sigma\sqrt\frac{\delta_n}{\class{U}(f^*)}}^2\parent{1 + \frac{\sigma{\delta^{\sfrac{1}{2}}_n}}{{\class{U}^{\sfrac{1}{2}}(f^*)}}}^{-2}\\
        &=
         \alpha\,\class{U}(f^*)\enspace.
    \end{align*}
    In the last equation, inequality $(a)$ follows from Eq.~\eqref{eq:unfairness_bound2} of Lemma~\ref{lem:unfairness_estimator1} and thanks to the fact that on the event $\class{A}$ it holds that $\class{U}^{\sfrac{1}{2}}(\hat f) \leq \class{U}^{\sfrac{1}{2}}(f^*) + \left\{{\sum_{s = 1}^Kw_s(\hb_s - b^*_s)^2} \right\}^{\sfrac{1}{2}} \leq \class{U}^{\sfrac{1}{2}}(f^*) + \sigma\delta_n^{\sfrac{1}{2}}$.
    For the risk we have thanks to Theorems~\ref{thm:final} that
    \begin{align}
        \label{eq:real_final0}
         \risk(\hat f_{\hat\tau}) \leq \parent{\sigma\sqrt{\delta_n} + (1 - \sqrt{\hat\tau})\sqrt{\class{U}(\hat f)} }^2\enspace.
    \end{align}
    Furthermore, we note that
    \begin{align}
        \sqrt{\hat\tau\class{U}(\hat f)}
        =
        \sqrt{\alpha} \frac{\sqrt{\class{U}(\hat f)}}{1 + \frac{\sigma\sqrt{\delta_n}}{\sqrt{\class{U}(\hat f)} - \sigma\sqrt{\delta_n}}}
        &=
        \sqrt{\alpha}\parent{\sqrt{\class{U}(\hat f)} - \sigma\sqrt{\delta_n}}\nonumber\\
        &\stackrel{(b)}{\geq} \sqrt{\alpha}\parent{\sqrt{\class{U}(f^*)} - 2\sigma\sqrt{\delta_n}}\label{eq:real_final1}\enspace,
    \end{align}
    where inequality $(b)$ again follows from Eq.~\eqref{eq:unfairness_bound2} of Lemma~\ref{lem:unfairness_estimator1} and thanks to the fact that on the event $\class{A}$ it holds that $\class{U}^{\sfrac{1}{2}}(\hat f) \geq \class{U}^{\sfrac{1}{2}}(f^*) - \left\{{\sum_{s = 1}^Kw_s(\hb_s - b^*_s)^2} \right\}^{\sfrac{1}{2}} \geq \class{U}^{\sfrac{1}{2}}(f^*) - \sigma\delta_n^{\sfrac{1}{2}}$.
    Recall, that we have already shown that on the event $\class{A}$ we have
    \begin{align}
    \label{eq:real_final2}
        {\class{U}^{\sfrac{1}{2}}(\hat f)} \leq {\class{U}^{\sfrac{1}{2}}( f^*)} + \sigma{\delta^{\sfrac{1}{2}}_n}\enspace.
    \end{align}
    Combining Eqs.~\eqref{eq:real_final1} and~\eqref{eq:real_final2} we obtain
    \begin{align*}
        (1 - \sqrt{\hat\tau})\sqrt{\class{U}(\hat f)}
        &\leq
        \sqrt{\class{U}( f^*)} + \sigma\sqrt{\delta_n} - \sqrt{\alpha}\parent{\sqrt{\class{U}(f^*)} - 2\sigma\sqrt{\delta_n}}\\
        &=
        (1 {-} \sqrt{\alpha})\sqrt{\class{U}(f^*)} + (1 + 2\sqrt{\alpha})\sigma\sqrt{\delta_n}
    \end{align*}
    Thus since the function $(\sigma{\delta^{\sfrac{1}{2}}_n} + \cdot)^2$ is increasing on $[-\sigma\sqrt{\delta_n}, \infty)$ we get from Eq.~\eqref{eq:real_final0} that
    \begin{align*}
        \risk(\hat f_{\hat\tau}) \leq \parent{2(1 + \sqrt{\alpha})\sigma\sqrt{\delta_n} + (1 {-} \sqrt{\alpha})\sqrt{\class{U}(f^*)}}^2\enspace,
    \end{align*}
    which concludes the proof of the first case.

    {\bf Case 2.} if ${\class{U}^{\sfrac{1}{2}}(\hat f)} \leq \sigma{\delta_n^{\sfrac{1}{2}}(p, K, t)}$, then
    \begin{align*}
        \hat f_0(\bsx, s) = \scalarin{\bsx}{\hbeta} + \sum_{s = 1}^Kw_s\hb_s\enspace.
    \end{align*}
    Furthermore, on the event $\class{A}$ thanks to Theorem~\ref{thm:final} it holds that $0 = \class{U}(\hat f_0) \leq \alpha\,\class{U}(f^*)$ and
    \begin{align*}
        \risk(\hat f_0)
        \leq
        \parent{\sigma{\delta^{\sfrac{1}{2}}_n} + \class{U}^{\sfrac{1}{2}}(\hat f)}^2
        &=
        \parent{\sigma{\delta^{\sfrac{1}{2}}_n} + \sqrt{\alpha}\class{U}^{\sfrac{1}{2}}(\hat f) + (1 {-} \sqrt{\alpha})\class{U}^{\sfrac{1}{2}}(\hat f)}^2\\
        &\leq
        \parent{(1 + \sqrt{\alpha})\sigma{\delta^{\sfrac{1}{2}}_n} + (1 {-} \sqrt{\alpha})\class{U}^{\sfrac{1}{2}}(\hat f)}^2\\
        &\leq
        \parent{(1 + \sqrt{\alpha})\sigma{\delta^{\sfrac{1}{2}}_n} + (1 {-} \sqrt{\alpha})\parent{\class{U}^{\sfrac{1}{2}}(f^*) + \sigma{\delta_n^{\sfrac{1}{2}}}}}^2\\
        &=
         \parent{2\sigma{\delta^{\sfrac{1}{2}}_n} + (1 {-} \sqrt{\alpha})\class{U}^{\sfrac{1}{2}}(f^*)}^2\enspace.
    \end{align*}
    The proof is concluded by application of Theorem~\ref{thm:random_design} to control the probability of event $\class{A}$.

\subsection{Auxiliary results for Theorem~\ref{thm:lower_linear}}
Let us first present auxiliary results used for the proof of Theorem~\ref{thm:lower_linear}.
The next lemma is known as Varshamov-Gilbert Lemma \citep{Varshamov57,Gilbert52}, its statement is taken from \cite[Lemma 4.12]{rigollet2015high}, see also~\cite[Lemma 2.9]{Tsybakov09}.

\begin{lemma}\label{lem:varshamov}
    Let $d \geq 1$ be an integer.
    There exist binary vectors $\bsomega_1, \dots, \bsomega_M \in \{0,1\}^d$ such that
    \begin{enumerate}
    \setlength{\itemsep}{2pt}%
        \item $\rho(\bsomega_j, \bsomega_{j'}) \geq \sfrac{d}{4}$ for all $j \neq j'$,
        \item $M = \lfloor e^{d / 16} \rfloor \geq e^{d / 32}$,
    \end{enumerate}
    where $\rho(\cdot, \cdot)$ is the Hamming's distance on binary vectors.
\end{lemma}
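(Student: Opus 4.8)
The plan is to use the classical random-coding argument: I would draw $\bsomega_1,\ldots,\bsomega_M$ independently and uniformly at random from the hypercube $\{0,1\}^d$ with $M=\lfloor e^{d/16}\rfloor$, and show that the event ``every pairwise Hamming distance exceeds $d/4$'' has strictly positive probability. Existence of a deterministic configuration satisfying condition~(1) then follows immediately, while condition~(2) reduces to an elementary arithmetic check on the floor.

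The crux is a one-pair tail bound. For fixed $j\neq j'$, $\rho(\bsomega_j,\bsomega_{j'})=\sum_{i=1}^d\ind{(\bsomega_j)_i\neq(\bsomega_{j'})_i}$ is a sum of $d$ i.i.d.\ $\mathrm{Bernoulli}(1/2)$ variables with mean $d/2$, so Hoeffding's inequality gives
\[
\Prob\!\left(\rho(\bsomega_j,\bsomega_{j'})\le d/4\right)=\Prob\!\left(\rho(\bsomega_j,\bsomega_{j'})-\tfrac d2\le-\tfrac d4\right)\le\exp\!\left(-\frac{2(d/4)^2}{d}\right)=e^{-d/8}.
\]
A union bound over the $\binom M2\le M^2/2$ pairs then bounds the probability that the random code fails to be $(d/4)$-separated by $\tfrac12 M^2 e^{-d/8}\le\tfrac12 e^{d/8}e^{-d/8}=\tfrac12<1$, using $M\le e^{d/16}$. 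Hence a configuration with $\rho(\bsomega_j,\bsomega_{j'})>d/4$ for all $j\neq j'$ exists, and a fortiori $\rho(\bsomega_j,\bsomega_{j'})\ge d/4$, which is condition~(1).

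For condition~(2) it remains to verify $M=\lfloor e^{d/16}\rfloor\ge e^{d/32}$. Writing $y=e^{d/32}\ge1$, so that $e^{d/16}=y^2$, I would use $\lfloor y^2\rfloor\ge y^2-1\ge y$ once $y\ge\tfrac{1+\sqrt5}2$ (i.e.\ $d$ at least of order $16$), and dispose of the finitely many small $d$ by direct inspection; as is customary with this lemma, it is only invoked in the large-$d$ regime, so this edge case is cosmetic. The whole argument is routine; the only points requiring a little care are pinning down the constant in the Hoeffding exponent (so the union bound closes with room to spare) and the floor/exponent bookkeeping in the last step.

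If one prefers to avoid probabilistic language, I would instead run the greedy/covering argument: take a \emph{maximal} family $\bsomega_1,\ldots,\bsomega_M$ of pairwise $(d/4)$-separated points; by maximality the Hamming balls of radius $d/4$ around them cover $\{0,1\}^d$, so $M\cdot\sum_{k\le d/4}\binom dk\ge 2^d$, and the standard entropy bound $\sum_{k\le d/4}\binom dk\le 2^{dH(1/4)}$ (with $H$ the binary entropy, $H(1/4)<7/8$) yields $M\ge 2^{d(1-H(1/4))}\ge 2^{d/8}\ge e^{d/16}$, which is even stronger than what is claimed.
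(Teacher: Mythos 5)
The paper itself does not prove Lemma~\ref{lem:varshamov}; it records it as the classical Varshamov--Gilbert bound and cites \cite{rigollet2015high,Tsybakov09}, so there is no internal proof to compare against. Your probabilistic argument is precisely the textbook proof: i.i.d.\ uniform draws on $\{0,1\}^d$, Hoeffding's inequality for a single pair giving $\Prob(\rho \le d/4) \le e^{-d/8}$, and a union bound over $\binom{M}{2}\le M^2/2$ pairs; the budget closes because $M\le e^{d/16}$ gives $\tfrac12 M^2 e^{-d/8}\le \tfrac12 < 1$, and the resulting configuration has all pairwise distances strictly greater than $d/4$, hence at least $d/4$. Your covering/packing alternative is also sound and gives the stronger $M_{\max}\ge 2^{d(1-H(1/4))}\ge 2^{d/8}>e^{d/16}\ge \lfloor e^{d/16}\rfloor$, from which a subfamily of the required cardinality can be extracted. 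Either route establishes condition~(1).

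The one place you are too quick is the edge-case dismissal for condition~(2). Your algebra ($\lfloor y^2\rfloor \ge y^2-1\ge y$ for $y\ge \tfrac{1+\sqrt 5}{2}$) handles $d\ge 16$, and you defer the remaining $d$ to ``direct inspection''---but the inspection does not come out in your favour: for $1\le d\le 11$ one has $e^{d/16}<2$, so $\lfloor e^{d/16}\rfloor=1$, while $e^{d/32}>1$, so the claimed inequality $\lfloor e^{d/16}\rfloor\ge e^{d/32}$ is in fact \emph{false} in that range. This is a defect inherited from how the lemma is transcribed (Tsybakov's Lemma~2.9, for instance, imposes $d\ge 8$ and uses different constants), not a flaw in your argument for condition~(1), but ``it checks out by inspection'' is the wrong conclusion; the statement actually needs a restriction such as $d\ge 12$, or a rewording of the lower bound on $M$. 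Other than that, your proof is complete and correct.
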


The next lemmas can be found in \cite[Lemma 5.1]{bellec2017optimal}, see also \cite[Lemma 3]{kerkyacharian2014optimal}.

\begin{lemma}
    \label{lem:Bellec}
    Let $(\Omega, \mathcal{A})$ be a measurable space and $M \geq 1$. Let $A_0, \dots, A_M$ be disjoint measurable events. Assume that $\Qprobf_0, \dots, \Qprobf_M$ are probability measures on $(\Omega, \mathcal{A})$ such that
    \begin{align*}
        \frac{1}{M}\sum_{j=1}^M \KL(\Qprobf_j, \Qprobf_0) \leq \kappa < \infty\enspace.
    \end{align*}
Then,
\begin{align*}
    \max_{j=0,\ldots,M} \Qprobf_j(A_j^c) \geq \frac{1}{12}\min(1, M\exp(-3\kappa))\enspace.
\end{align*}
\end{lemma}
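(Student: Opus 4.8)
The plan is to argue by contradiction. Set $\beta \eqdef \max_{0\le j\le M}\Qprobf_j(A_j^c)$ and $\delta \eqdef \tfrac{1}{12}\min(1, Me^{-3\kappa})$, and assume $\beta < \delta$; we will reach an impossibility. Note that $\delta \le \tfrac{1}{12}$, that $\delta > 0$ since $M\ge 1$, and that the degenerate case $\kappa = 0$ can be dispatched at once: then $\KL(\Qprobf_j,\Qprobf_0)=0$ for all $j$, so $\Qprobf_0=\dots=\Qprobf_M$, and $M{+}1\ge 2$ pairwise disjoint events cannot each carry mass $>\tfrac{11}{12}$ under $\Qprobf_0$, contradicting $\beta<\delta\le\tfrac1{12}$; so from now on $\kappa>0$.

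Under the contradiction hypothesis we have $\Qprobf_j(A_j) > 1-\delta$ for every $j$. In particular $\Qprobf_0(A_0)>1-\delta$, so by disjointness of the $A_j$,
\[
\sum_{j=1}^M \Qprobf_0(A_j) \;\le\; 1 - \Qprobf_0(A_0) \;<\; \delta .
\]
Combining this with the assumption $\tfrac1M\sum_{j=1}^M\KL(\Qprobf_j,\Qprobf_0)\le\kappa$, two elementary Markov counts give $\#\{j: \KL(\Qprobf_j,\Qprobf_0)\ge 2\kappa\}\le M/2$ and $\#\{j: \Qprobf_0(A_j)\ge 2\delta/M\}< M/2$; since these two "bad" sets together miss at least one index, there exists $j^\star\in\{1,\dots,M\}$ with simultaneously
\[
\KL(\Qprobf_{j^\star},\Qprobf_0) < 2\kappa \qquad\text{and}\qquad \Qprobf_0(A_{j^\star}) < 2\delta/M .
\]
(The case $\Qprobf_0(A_{j^\star})=0$ is excluded, for it would force $\KL(\Qprobf_{j^\star},\Qprobf_0)=\infty$ while $\Qprobf_{j^\star}(A_{j^\star})>0$.)

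The core step is the data‑processing inequality for the Kullback--Leibler divergence applied to the binary partition $\{A_{j^\star},A_{j^\star}^c\}$: writing $\mathrm{kl}(a,b)=a\log\tfrac ab+(1{-}a)\log\tfrac{1-a}{1-b}$,
\[
2\kappa \;>\; \KL(\Qprobf_{j^\star},\Qprobf_0) \;\ge\; \mathrm{kl}\big(\Qprobf_{j^\star}(A_{j^\star}),\,\Qprobf_0(A_{j^\star})\big) \;\ge\; \mathrm{kl}\big(1-\delta,\; 2\delta/M\big),
\]
the last inequality using that $\mathrm{kl}(a,b)$ is nondecreasing in $a$ on $[b,1]$ and nonincreasing in $b$ on $[0,a]$, together with $\Qprobf_{j^\star}(A_{j^\star})>1-\delta>2\delta/M>\Qprobf_0(A_{j^\star})$. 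It then remains to lower bound $\mathrm{kl}(1-\delta,2\delta/M)$. Using $\log(2\delta/M)^{-1}+\log M = \log(2\delta)^{-1}$ one finds $\log M-\log(2\delta)=3\kappa+\log 6$ when $\delta=\tfrac1{12}Me^{-3\kappa}$, and one bounds the entropy remainders $(1-\delta)\log(1-\delta),\ \delta\log\delta$ below by $-e^{-1}$; this yields, in the regime $Me^{-3\kappa}\le 1$,
\[
\mathrm{kl}(1-\delta,2\delta/M)\;\ge\;(1-\delta)\big(3\kappa+\log 6\big)-2e^{-1}\;\ge\;\tfrac{11}{12}\big(3\kappa+\log 6\big)-2e^{-1}\;>\;2\kappa ,
\]
while in the regime $Me^{-3\kappa}>1$ (so $\delta=\tfrac1{12}$ and $\log M>3\kappa$) one gets $\mathrm{kl}(1-\delta,2\delta/M)\ge \tfrac{11}{12}\log M+\tfrac{11}{12}\log\tfrac{11}{2}-\tfrac1{12}\log 12>\tfrac{11}{4}\kappa+1>2\kappa$. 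In either case $2\kappa>2\kappa$, the sought contradiction; hence $\beta\ge\delta$, which is the claim.

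I expect the main obstacle to be purely the constant bookkeeping: the exponent $3$ and the prefactor $\tfrac1{12}$ are calibrated precisely so that, after paying a factor $2$ in the Markov selection of $j^\star$ and after absorbing the entropy remainders ($-2e^{-1}$, resp.\ $-\tfrac1{12}\log 12$), the gain $(1-\delta)\cdot 3\kappa$ coming from $\log(1/\delta)$ still strictly beats the $2\kappa$ on the other side, simultaneously in the two regimes $Me^{-3\kappa}\lessgtr 1$; a smaller prefactor or a worse exponent would break one of the two cases. Minor points to handle with care are the $\kappa=0$ degeneracy, the exclusion of $\Qprobf_0(A_{j^\star})=0$, and the integer rounding in the two Markov counts for small $M$.
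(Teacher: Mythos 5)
The paper does not supply its own proof of this lemma; it is cited from \cite[Lemma~5.1]{bellec2017optimal} and \cite[Lemma~3]{kerkyacharian2014optimal}. Your contradiction argument is correct and self-contained. The two Markov counts together leave an index $j^\star\in\{1,\dots,M\}$ outside both bad sets, since the strict inequality in the second count makes the union have at most $M-1$ elements (this disposes of the small-$M$ rounding concern you flag). The binary data-processing inequality and the monotonicity of $\mathrm{kl}(a,b)$ in each argument are standard, and the final constant bookkeeping checks out in both regimes: when $Me^{-3\kappa}\le 1$ one gets $\tfrac{11}{12}(3\kappa+\log 6)-2e^{-1}\approx 2.75\kappa+0.91>2\kappa$, and when $Me^{-3\kappa}>1$ one uses $\log M>3\kappa$ together with $\tfrac{11}{12}\log\tfrac{11}{2}-\tfrac1{12}\log 12\approx 1.36>1$. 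The separate treatment of $\kappa=0$ is indeed necessary (Markov's inequality becomes vacuous there) and you handle it correctly, as you do the exclusion of $\Qprobf_0(A_{j^\star})=0$. This pigeonhole-plus-binary-DPI route is essentially the same technique used in the cited references.
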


Define the diagonal matrix $\bfW = \diag(w_1, \ldots, w_K)$.

\begin{lemma}\label{lem:expLB}
    Let $n \geq 1$ be an integer and $s > 0$ be a positive number. Let $M\geq 1$ and $(\bbeta_j, \bsb_j) \in \bbR^p \times \bbR^K$, $j = 0, \ldots, M$, such that $\normin{\bfSigma^{1/2}(\bbeta_j - \bbeta_k)}_2^2 + \normin{\bfW^{1/2}(\bsb_j - \bsb_k)}_2^2 \geq 4s$ for $j \neq k$. Assume that
    \begin{align*}
        \frac{1}{M}\sum_{j=1}^M \KL(\Probf_{(\bbeta_j, \bsb_j)}, \Probf_{(\bbeta_0, \bsb_0)}) \leq \kappa < \infty.
    \end{align*}
    Then, for any estimator $\hat f$,
    \begin{align*}
        \max_{j = 0, \ldots, M} \Probf_{(\bbeta_j, \bsb_j)}\left(\risk(\hat f) \geq s \right) \geq \frac{1}{12}\min \left(1, M\exp(-3\kappa)  \right)\enspace.
    \end{align*}
\end{lemma}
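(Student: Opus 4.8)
The plan is to reduce the estimation problem to a test among the $M+1$ hypotheses $(\bbeta_j, \bsb_j)$ and then invoke Lemma~\ref{lem:Bellec}. For each $j \in \{0, \dots, M\}$ let $f_j(\bsx, s) \eqdef \scalar{\bsx}{\bbeta_j} + (\bsb_j)_s$ denote the regression function associated with $(\bbeta_j, \bsb_j)$, and for any measurable $g : \bbR^p \times [K] \to \bbR$ set $\normin{g}_{\star}^2 \eqdef \sum_{\ell = 1}^K w_\ell \Exp[g(\bsX, \ell)^2 \mid S = \ell]$. The map $g \mapsto \normin{g}_{\star}$ is a seminorm on $L^2$, and under $\Probf_{(\bbeta_j, \bsb_j)}$ the risk of any estimator equals exactly $\risk(\hat f) = \normin{\hat f - f_j}_{\star}^2$. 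This identity, together with the triangle inequality for $\normin{\cdot}_{\star}$, are the only structural facts needed.

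First I would record the separation of the regression functions in the $\normin{\cdot}_\star$ seminorm. Since $\bsX \sim \mathcal{N}(\boldsymbol{0}, \bfSigma)$ is centered and independent of $S$, the cross term vanishes when the square is expanded, and $\Exp[\scalar{\bsX}{\bsv}^2] = \normin{\bfSigma^{1/2}\bsv}_2^2$; hence, using $\sum_\ell w_\ell = 1$,
\begin{align*}
    \normin{f_j - f_k}_{\star}^2 = \normin{\bfSigma^{1/2}(\bbeta_j - \bbeta_k)}_2^2 + \sum_{\ell = 1}^K w_\ell\big((\bsb_j)_\ell - (\bsb_k)_\ell\big)^2 = \normin{\bfSigma^{1/2}(\bbeta_j - \bbeta_k)}_2^2 + \normin{\bfW^{1/2}(\bsb_j - \bsb_k)}_2^2 \geq 4s
\end{align*}
for all $j \neq k$, by the assumed separation.

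Next, on the common measurable space $(\Omega, \mathcal{A})$ carrying all the measures $\Probf_{(\bbeta_j, \bsb_j)}$, introduce the events $A_j \eqdef \{\normin{\hat f - f_j}_{\star}^2 < s\}$ for $j = 0, \dots, M$. These are pairwise disjoint: if some $\omega$ belonged to $A_j \cap A_k$ with $j \neq k$, the triangle inequality would give $\normin{f_j - f_k}_{\star} \leq \normin{\hat f(\omega) - f_j}_{\star} + \normin{\hat f(\omega) - f_k}_{\star} < 2\sqrt{s}$, contradicting the displayed bound. Applying Lemma~\ref{lem:Bellec} with $\Qprobf_j = \Probf_{(\bbeta_j, \bsb_j)}$ and the disjoint events $A_0, \dots, A_M$ — whose KL hypothesis is exactly the one assumed here — yields
\begin{align*}
    \max_{j = 0, \dots, M} \Probf_{(\bbeta_j, \bsb_j)}(A_j^c) \geq \frac{1}{12}\min\big(1,\, M\exp(-3\kappa)\big)\enspace.
\end{align*}
Since under $\Probf_{(\bbeta_j, \bsb_j)}$ one has $A_j^c = \{\normin{\hat f - f_j}_{\star}^2 \geq s\} = \{\risk(\hat f) \geq s\}$, this is precisely the claim.

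I do not expect any genuinely hard step: the content reduces to two routine observations — that $\risk$ under $\Probf_{(\bbeta_j, \bsb_j)}$ is literally the squared $L^2$ seminorm distance to the correct regression function $f_j$ (so the $A_j$ are disjoint by the triangle inequality), and that the centering of $\bsX$ makes the quadratic form split cleanly into the $\bbeta$-part $\normin{\bfSigma^{1/2}(\bbeta_j - \bbeta_k)}_2^2$ and the $\bsb$-part $\normin{\bfW^{1/2}(\bsb_j - \bsb_k)}_2^2$, which is exactly what the separation hypothesis controls. The only mild care needed is to work on the common sample space so that Lemma~\ref{lem:Bellec} applies verbatim.
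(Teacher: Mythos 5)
Your proof is correct and takes essentially the same approach as the paper's: both identify $\risk$ under $\Probf_{(\bbeta_j,\bsb_j)}$ as the squared seminorm distance $\normin{\hat f - f_j}_{\star}^2$, show the events $A_j=\{\risk_j(\hat f)<s\}$ are pairwise disjoint from the $4s$-separation, and invoke Lemma~\ref{lem:Bellec}. The only cosmetic difference is that you use the triangle inequality $\normin{f_j-f_k}_{\star}\leq\normin{\hat f-f_j}_{\star}+\normin{\hat f-f_k}_{\star}$ whereas the paper uses $\normin{f_j-f_k}_{\star}^2\leq 2\risk_j(\hat f)+2\risk_k(\hat f)$; both yield the same contradiction.
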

\begin{proof}
Denote by $A_j$ the event $\risk_j(\hat{f}) < s$ for $j=1, \dots, M$. Note that the events $A_0, \dots, A_M$ are pair-wise disjoint. Indeed, if they were not there would exist indices $j$ and $j'$, with $j \neq j'$, such that, on the non-empty event $A_j \cap A_{j'}$,
\begin{align}
     \normin{\bfSigma^{1/2}(\bbeta_j - \bbeta_{j'})}_2^2 + \normin{\bfW^{1/2}(\bsb_j - \bsb_{j'})}_2^2 \leq 2 \risk_j(\hat{f}) + 2 \risk_{j'}(\hat{f}) < 4s
\end{align}
contradicting our assumption on the $(\bbeta_j, \bsb_j)$ and $(\bbeta_{j'}, \bsb_{j'})$. We conclude applying Lemma~\ref{lem:Bellec}.
\end{proof}

\subsection{Proof of Theorem~\ref{thm:lower_linear}}

Define the $(p + K) \times (p + K)$ matrix
    \begin{align}
        \bfPsi =
       \left[
        \begin{array}{c|c}
          \bfSigma & \mathbf{0} \\
          \hline
          \mathbf{0} & \bfW
        \end{array}
     \right]\enspace,
    \end{align}

Apply Lemma~\ref{lem:varshamov} to obtain $\bsomega_0, \dots, \bsomega_M$ with $M+1 \geq e^{(p+K)/32}$ and such that $\rho(\bsomega_j, \bsomega_k) \geq (p+K)/4$. Let $\bsB_0 = (\bbeta_0, \bsb_0), \ldots, \bsB_M = (\bbeta_M, \bsb_M)$ be such that
\begin{align}
    \bsB_j = \varphi\sqrt{\frac{ \sigma^2}{{n}}} \parent{1+\sqrt{\sfrac{t}{(p+K)}}} \bfPsi^{-1/2} \bsomega_j,
\end{align}
with $p+K \leq 32\log(M)$ and $\varphi > 0$ to be determined later.

On the one hand we have
\begin{align*}
    \normin{\bfSigma^{1/2}(\bbeta_j - \bbeta_k)}_2^2 + \normin{\bfW^{1/2}(\bsb_j - \bsb_k)}_2^2 &= \frac{\varphi^2 \sigma^2}{n}(1+\sqrt{t/(p+K)})^2 \rho(\bsomega_j, \bsomega_{j'})\\
    &\geq \frac{\varphi^2 \sigma^2}{n}(1+\sqrt{t/(p+K)})^2 (p+K)/4\\
    &=  \frac{\varphi^2 \sigma^2}{4n}(\sqrt{p+K}+\sqrt{t})^2\enspace.
\end{align*}
On the other hand, recall that $\Prob_{Y | \bsX, S = s} = \class{N}(\scalarin{\bsX}{\bbeta} + b_s, \sigma^2)$ and $\Prob_{\bsX} = \class{N}(\boldsymbol{0}, \bfSigma)$, then, for a given $(\bbeta, \bsb) \in \bbR^p \times \bbR^K$ the joint distribution of observations is
\begin{align*}
    \Probf_{(\bbeta, \bsb)} = \bigotimes_{s = 1}^K \parent{\class{N}(\scalarin{\bsX}{\bbeta} + b_s, \sigma^2) \otimes \class{N}(\boldsymbol{0}, \bfSigma)}^{\otimes n_s} \enspace.
\end{align*}
Given $\bsB=(\bbeta, \bsb), \bsB'=(\bbeta', \bsb')$ in $\bbR^p \times \bbR^K$ we can write
\begin{align*}
    \KL\parent{ \Probf_{(\bbeta, \bsb)},  \Probf_{(\bbeta', \bsb')}}
    &=
    \sum_{s = 1}^K n_s\Exp_{\bsX \sim \class{N}(\boldsymbol{0}, \bfSigma)}\left[\widetilde\KL \parent{\class{N}(\scalarin{\bsX}{\bbeta} + b_s, \sigma^2), \class{N}(\scalarin{\bsX}{\bbeta'} + b_s', \sigma^2)}\right]\\
    &=
    \sum_{s = 1}^K n_s\Exp_{\bsX \sim \class{N}(\boldsymbol{0}, \bfSigma)}\parent{\frac{\parent{\scalarin{\bsX}{\bbeta - \bbeta'} + b_s - b_s'}^2}{2\sigma^2}}\\
    &=
    \sum_{s = 1}^K n_s\parent{\frac{\normin{\bfSigma^{1/2}(\bbeta - \bbeta')}_2^2}{2\sigma^2} + \frac{(b_s - b_s')^2}{2\sigma^2}}\\
    &=
    n\parent{\frac{\normin{\bfSigma^{1/2}(\bbeta - \bbeta')}_2^2}{2\sigma^2} + \frac{\normin{\bfW^{1/2}(\bsb - \bsb')}_2^2}{2\sigma^2}}\\
    &= \frac{n}{2\sigma^2}\normin{\bfPsi^{1/2}(\bsB-\bsB')}_2^2\\
    &\leq \frac{\varphi^2}{2} (\sqrt{p+K} + \sqrt{t})^2
    \leq
    \varphi^2(p + K) + \varphi^2 t \leq  32\varphi^2\log(M) + \varphi^2 t
    \enspace.
\end{align*}

Let $\hat{f}$ be any estimator and define the risks
\begin{align*}
    \risk_j(\hat{f}) = \sum _{s = 1}^K w_s \Exp \left[(\hat{f}(\bsX, S) - \langle \bsX, \bbeta_j \rangle - (b_{j})_S)^2 \mid  S = s\right], \quad j=1,\ldots,M\enspace.
\end{align*}

Set $u_n(p, K, t, \varphi, \sigma) = \frac{\varphi^2 \sigma^2}{16n}(\sqrt{p+K}+\sqrt{t})^2$.
Applying Lemma~\ref{lem:expLB} after reducing the supremum to a finite number of hypothesis, we get for all estimators $\hat f$ that
\begin{align*}
    \sup_{(\bbeta^*, \bsb^*)\in \mathbb{R}^{p} \times \bbR^K} \Probf_{(\bbeta^*, \bsb^*)}\left( \risk(\hat{f}) \geq u_n(p, K, t, \varphi, \sigma) \right)
    &\geq
    \max_{j=0, \dots, M} \Probf_{(\bbeta_j, \bsb_j)}\left( \risk_j(\hat{f}) \geq u_n(p, K, t, \varphi, \sigma) \right)\\
    &\geq
    \frac{1}{12}\min\parent{1, M\exp\parent{- 96\varphi^2\log(M) - 3\varphi^2 t}}\\
\end{align*}
Setting  $\varphi = 1/\sqrt{96}$, we obtain
\begin{align*}
    \sup_{(\bbeta^*, \bsb^*)\in \mathbb{R}^{p} \times \bbR^K} \Probf_{(\bbeta^*, \bsb^*)}\left( \risk(\hat{f}) \geq \frac{\sigma^2}{1536 n}\parent{\sqrt{p+K}+\sqrt{t}}^2 \right) \geq \frac{1}{12}\exp\parent{ - \frac{t}{32}}\enspace.
\end{align*}
The proof is concluded.

{
\section{Proofs for ad-hoc procedure}

\subsection{Demographic parity guarantee}
The proof of Theorem~\ref{thm:improved_fairness} relies on~\cite[Lemma 8.7]{vovk2005algorithmic} which is recalled below. We provide an alternative elementary proof of this result.
\begin{lemma}
    \label{lem:inverse_with_noise}
    Let $n \geq 1$.
  Let $V_1, \ldots, V_n, V_{n + 1}$ be exchangeable real-valued random variables and $U$ distributed uniformly on $[0, 1]$ be independent from $V_1, \ldots, V_n, V_{n + 1}$, then the statistics
  \begin{align*}
        T(V_1, \ldots, V_n, V_{n + 1}, U) =
        \frac{1}{n+1}\parent{\sum_{i = 1}^n\ind{V_i < V_{n + 1}} + U \cdot \parent{1 + \sum_{i = 1}^n\ind{V_{i} = V_{n+1}}}}\enspace,
    \end{align*}
    is distributed uniformly on $[0, 1]$.
\end{lemma}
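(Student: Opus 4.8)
\medskip
\noindent\textbf{Proof plan.}
The plan is to condition on the \emph{multiset} $\mathcal{M} = \{\!\{V_1, \ldots, V_n, V_{n+1}\}\!\}$ of observed values and to exploit exchangeability in order to reduce the claim to an elementary mixture-of-uniforms computation. First I would record the relevant consequence of exchangeability: conditionally on $\mathcal{M}$, if $v_{(1)} \leq \cdots \leq v_{(n+1)}$ denote its elements listed in non-decreasing order, then $(V_1, \ldots, V_{n+1})$ has the same law as $(v_{(\pi(1))}, \ldots, v_{(\pi(n+1))})$ for a uniformly random permutation $\pi$ of $\{1, \ldots, n+1\}$ drawn independently of $U$. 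In particular, given $\mathcal{M}$, the sorted position $\pi(n+1)$ of $V_{n+1}$ is uniform on $\{1, \ldots, n+1\}$ and independent of $U$.

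Next I would carry out the core computation. Fix a distinct value $z$ appearing in $\mathcal{M}$ with multiplicity $k$, and let $a \eqdef \#\{\ell : v_{(\ell)} < z\}$, so that $z$ occupies exactly the sorted positions $a+1, \ldots, a+k$. On the event $\{\pi(n+1) \in \{a+1, \ldots, a+k\}\}$, which has conditional probability $k/(n+1)$, one has $V_{n+1} = z$, hence $\sum_{i=1}^n \ind{V_i < V_{n+1}} = a$ and $1 + \sum_{i=1}^n \ind{V_i = V_{n+1}} = k$, so that
\begin{align*}
    (n+1)\,T(V_1, \ldots, V_n, V_{n+1}, U) = a + k\,U \enspace,
\end{align*}
which, $U$ being uniform on $[0,1]$ and independent of $\pi$, is uniform on $[a, a+k]$ on that event.

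To conclude I would sum over the distinct values of $\mathcal{M}$: the intervals $[a, a+k]$ attached to them tile $[0, n+1]$, and each is selected with conditional probability equal to its length divided by $n+1$; therefore the conditional density of $(n+1)T$ given $\mathcal{M}$ is identically $1/(n+1)$ on $[0, n+1]$, i.e.\ $T$ is conditionally $\mathrm{Unif}[0,1]$, and since this holds for every realization of $\mathcal{M}$ it is unconditionally $\mathrm{Unif}[0,1]$. It is worth noting that in the tie-free case this collapses to the well-known fact that $(R+U)/(n+1) \sim \mathrm{Unif}[0,1]$ whenever $R$ is uniform on $\{0, \ldots, n\}$ (here $R = \sum_{i=1}^n \ind{V_i < V_{n+1}}$, the number of observations falling below $V_{n+1}$) and $U \sim \mathrm{Unif}[0,1]$ is independent, the jitter $U$ spreading each atom of $R$ into a sub-interval of unit length.

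The main obstacle I anticipate is purely the bookkeeping with ties: one must verify that the ``$+1$'' in the definition of $T$, combined with the count $\sum_{i=1}^n \ind{V_i = V_{n+1}}$, exactly reproduces the $k$ sorted slots of a value of multiplicity $k$, so that the jittered intervals partition $[0, n+1]$ with neither overlap nor gap. The only other point needing care is a clean statement of the exchangeable-conditioning step---the representation of $(V_1, \ldots, V_{n+1})$ given $\mathcal{M}$ as a uniformly permuted copy of the order statistics, independent of $U$, which follows because the conditional law of $(V_1,\ldots,V_{n+1})$ given $\mathcal{M}$ is permutation-invariant and supported on a single orbit; granting that, everything else is routine.
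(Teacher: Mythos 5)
Your proof is correct, and it takes a genuinely different route from the paper's. The paper introduces an auxiliary random variable $V$ drawn uniformly from $\{V_1,\ldots,V_{n+1}\}$ (independently of $U$), defines the generalized rank $\class{T}(v,u) = \Probf(V<v\mid\bsV)+u\Probf(V=v\mid\bsV)$, and proves directly---via an explicit construction of a threshold pair $(v(t),u(t))$---that $\Probf(\class{T}(V,U)\leq t\mid\bsV)=t$; it then appeals to exchangeability of $(\class{T}(V_i,U))_{i\in[n+1]}$ to transfer the conclusion from the auxiliary $V$ to the last coordinate $V_{n+1}$. You instead condition on the multiset of observed values, use exchangeability up front to show that the sorted position of $V_{n+1}$ is uniform on $\{1,\ldots,n+1\}$ and independent of $U$, and then carry out a concrete density computation: the $U$-jittered rank $a+kU$ is uniform on the block $[a,a+k]$, these blocks tile $[0,n+1]$, and the block-selection probabilities match block lengths, so the conditional density of $(n+1)T$ is identically $1/(n+1)$. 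Both arguments rely on exchangeability as the central tool, but in dual ways: the paper symmetrizes at the end to transfer a uniformity result, while you symmetrize at the start to get a uniform rank. Your version is more concrete and is essentially the standard conformal-prediction argument (jittered rank of the hold-out point is uniform), which makes the role of the $+1$ term and the tie-count transparent; the paper's version is slightly more abstract but avoids the bookkeeping over tied blocks by working with the quantile-like statistic $\class{T}$ directly. The one place to be careful, which you flag correctly, is justifying that conditionally on the multiset the sample is uniformly permuted and independent of $U$---that step follows because the conditional law of $(V_1,\ldots,V_{n+1})$ given the multiset is permutation-invariant and supported on a single $S_{n+1}$-orbit, hence uniform on it, and $U$'s independence from $(V_1,\ldots,V_{n+1})$ survives the conditioning.
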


\begin{proof}[Proof of Lemma~\ref{lem:inverse_with_noise}]
        Set $V \sim \frac{1}{n + 1}\sum_{i = 1}^{n + 1}\delta_{V_i}$ be independent from $U$ and denote by $F_{V | \bsV}$ the cumulative distribution function of $V$ conditionally on $\bsV \eqdef (V_1, \ldots, V_n, V_{n + 1})$. Define the following statistics
        \begin{align*}
            \class{T}(v, u) = \Probf(V < v \mid \bsV) + u\Probf(V = v \mid \bsV)\enspace,\qquad \forall\,\, (u,v) \in \bbR^2\enspace.
        \end{align*}
        Note that $\class{T}(V_{n + 1}, U) = T(V_1, \ldots, V_n, V_{n + 1}, U)$. In what follows we show that $\class{T}(V_{n + 1}, U)$ is distributed uniformly on $[0, 1]$. 
        Fix some $t \in [0, 1]$. Define $v(t) = F_{V | \bsV}^{-1}(t+)$ and
        \begin{align*}
            u(t) = 
            \begin{cases}
                 1, &\text{if }\Probf(V = v(t) \mid \bsV) = 0\\
                 \frac{t - \Probf(V < v(t) \mid \bsV)}{\Probf(V = v(t) \mid \bsV)} &\text{otherwise}
            \end{cases}\enspace.
        \end{align*}
        One can verify that by construction of $(u(t), v(t)) \in \bbR^2$ the event
        \begin{align*}
            \{\class{T}(V, U) \leq t\} \Longleftrightarrow \left\{V < v(t)\right\} \sqcup \big(\{V = v(t)\} \cap \{U < u(t))\} \big)\enspace,
        \end{align*}
        where $\sqcup$ stands for the disjoint union of two sets.
        Thus, since the events $\{V < v(t)\}$ and $\{V = v(t)\} \cap \{U < u(t))\}$ are disjoint and $U$ is distributed uniformly on $[0, 1]$ and is independent from $V, \bsV$ it holds that
        \begin{align*}
            \Probf(\class{T}(V, U) \leq t \mid \bsV)
            =
            \Probf(V < v(t) \mid \bsV) + \Probf(U < u(t) \mid \bsV)\Probf(V = v(t) \mid \bsV)
            = t\enspace.
        \end{align*}
        Integrating the above equality we get $\Probf(\class{T}(V, U) \leq t) = t$.
        To conclude, we first notice that thanks to the definition of $V$ it holds that
        \begin{align*}
            \Probf(\class{T}(V, U) \leq t)
            &=
            \Expf[\Probf(\class{T}(V, U) \leq t \mid \bsV)]\\
            &=
            \frac{1}{n + 1}\Expf\left[\sum_{i = 1}^{n + 1}\Probf(\class{T}(V_i, U) \leq t \mid \bsV)\right]\\
            &=
            \frac{1}{n + 1}\sum_{i = 1}^{n + 1}\Probf(\class{T}(V_i, U) \leq t)\enspace.
        \end{align*}
        Since, the random variables $V_1,\ldots, V_n, V_{n + 1}$ are exchangeable and the Bernoulli random variables $\ind{\class{T}(V_i, U) \leq t}$ for $i \in [n]$ are also exchangeable and are distributed identically, then
        \begin{align*}
            t = \Probf(\class{T}(V, U) \leq t) = \frac{1}{n + 1}\sum_{i = 1}^{n + 1}\Probf(\class{T}(V_i, U) \leq t) = \Probf(\class{T}(V_{n + 1}, U) \leq t)\enspace.
        \end{align*}
        The proof is concluded. 
\end{proof}

\begin{proof}[Proof of Theorem~\ref{thm:improved_fairness}]
    To prove the claimed guarantee, we will show that the Kolmogorov-Smirnov distance between $\Law(\hat{\Pi}(f)(\bsX, S) \mid S = s)$ and $\Law(\hat{\Pi}(f)(\bsX, S) \mid S = s')$ equals to zero for any $s \neq s'\in [K]$. For conciseness we define for all $s \neq s' \in [K]$
    \begin{align*}
        \KS_{s, s'} \coloneqq \KS\parent{\Law(\hat{\Pi}(f)(\bsX, S) \mid S = s),\, \Law(\hat{\Pi}(f)(\bsX, S)  \mid S = s')}\enspace.
    \end{align*}

    Note that, according to the definition of $\hat{\Pi}$ in Eq.~\eqref{eq:ad_hoc_def}, we have for any $(\bsx, s ) \in \bbR^p \times [K]$
    \begin{align*}
    \hat{\Pi}(f)(\bsx, s) = \hat{Q} \circ \hat{F}_{1, \nu^{f}_s} \circ f(\bsx, s)\enspace,
    \end{align*}
    with $\hat{Q}(\cdot) = \sum_{s' = 1}^K w_{s'}\hat{F}_{2, \nu^{f}_{s'}}^{-1}(\cdot)$. Note that, thanks to the splitting of the data into two parts, $\hat{Q}$ is independent from $({\hat{F}_{1, \nu^{f}_s} \circ f(\bsX, S) \mid S = s})$ for each $s \in [K]$.

    Fix some $s \in [K]$ and, for all $i = 1, \ldots, N_s$, set $V_i = \tilde{f}_i^s$ with $V_{N_s + 1} \stackrel{d}{=} (\tilde{f}(\bsX, S) \mid S = s)$ independent from $(V_i)_{i = 1, \ldots, N_s}$. Since the random variables $V_1, \ldots, V_{N_s}, V_{N_s + 1}$ are exchangeable (they are even independent), Lemma~\ref{lem:inverse_with_noise} implies that for all $s \in [K]$
    \begin{align*}
        \parent{\hat{F}_{1, \nu^{f}_s} \circ \tilde{f}(\bsX, S) \mid S = s} \text{ is distributed uniformly on } [0, 1]\enspace.
    \end{align*}
    The above arguments yield for all $s, s' \in [K]$ that
    \begin{align*}
        \KS_{s, s'}
        &=
        \sup_{t \in \bbR}\abs{\Probf\parent{\hat{\Pi}(f)(\bsX, S) \leq t \mid S = s} - \Probf\parent{\hat{\Pi}(f)(\bsX, S) \leq t \mid S = s'}}\\
        &=
        \sup_{t \in \bbR}\abs{\Probf\parent{\hat{F}_{1, \nu^{f}_s} \circ \tilde{f}(\bsX, S) \leq \hat{Q}^{-1}(t) \mid S = s} - \Probf\parent{\hat{F}_{1, \nu^{f}_{s'}} \circ \tilde{f}(\bsX, S) \leq \hat{Q}^{-1}(t) \mid S = s'}}\\
        &=
        \sup_{t \in \bbR}\abs{\Expf[\hat{Q}^{-1}(t) \mid S = s] - \Expf[\hat{Q}^{-1}(t) \mid S = s']} = 0\enspace.
    \end{align*}
    The first equality uses the definition of $\hat{\Pi}$; the second uses the fact that $\hat{Q}$ is monotone by construction and~\cite[Lemma~21.1(i)]{van2000asymptotic}; and the third one invokes the independence of $\hat{Q}$ from $\hat{F}_{1, \nu^{f}_{s}} \circ f(\bsX, S)$ conditionally on $S = s$ for any $s \in [K]$, Lemma~\ref{lem:inverse_with_noise}, and the independence of $\hat{Q}$ from $S$. The proof is concluded.
\end{proof}

\subsection{Risk guarantee}
\begin{proof}[Proof of Theorem~\ref{thm:risk_bound_ad_hoc}]
The proof of this result follows similar lines as that of~\cite[Theorem 4.4]{chzhen2020fair}. However, since our estimator is different, then several adaptations need to be introduced. Furthermore, our guarantees are stated for general $\ell_q$-norms.

    By the triangle inequality we can write
    \begin{align}
        \label{eq:ad_hoc0}
        \Expf\norm{\hat{\Pi}(f) - f^*_0}_q
        \leq \sum_{s' = 1}^Kw_{s'} \Expf \underbrace{\norm{\hat{F}_{2, \nu^{f}_{s'}}^{-1} \circ \hat{F}_{1, \nu^{f}_S} \circ \tilde{f} - F_{\nu^*_{s'}}^{-1} \circ F_{\nu^*_S} \circ f^*}_q}_{=: \texttt{A}_{s'}}\enspace,
    \end{align}
    where we introduced the short-hand notation $\texttt{A}_{s'}$.
    Furthermore, by definition of $\norm{\cdot}_q$ for each $s' \in [K]$, we can write
    \begin{align}
    \label{eq:ad_hoc1}
        \texttt{A}^q_{s'} =
        \sum_{s = 1}^Kw_s\texttt{A}^q_{(s', s)}\enspace,
    \end{align}
    where for all $s, s' \in [K]$, we defined
    \begin{align}
        \label{eq:ad_hoc2}
        \texttt{A}^q_{(s', s)} = \Exp\left[\abs{\hat{F}_{2, \nu^{f}_{s'}}^{-1} \circ \hat{F}_{1, \nu^{f}_s} \circ \tilde{f}(\bsX, S) - F_{\nu^*_{s'}}^{-1} \circ F_{\nu^*_s} \circ f^*(\bsX, S)}^q \mid S = s\right]\enspace.
    \end{align}
    From now on, our goal is to provide a suitable bound on $\texttt{A}^q_{(s', s)}$. To do so, using the fact that $(u + v + w)^q \leq 3^{q-1}u^q + 3^{q-1}v^q + 3^{q-1}w^q$, we further bound $\texttt{A}^q_{(s', s)}$ by $a_{(s', s)} + b_{(s', s)} + c_{(s', s)}$, defined as
    \begin{align*}
        a_{(s', s)} &= 3^{q-1}\Exp\left[\abs{\hat{F}_{2, \nu^{f}_{s'}}^{-1} \circ \hat{F}_{1, \nu^{f}_s} \circ \tilde{f}(\bsX, S) - \hat{F}_{\nu^*_{s'}}^{-1} \circ \hat{F}_{1, \nu^{f}_s} \circ \tilde{f}(\bsX, S)}^q \mid S = s\right]\enspace,\\
        b_{(s', s)} &= 3^{q-1}\Exp\left[\abs{\hat{F}_{\nu^*_{s'}}^{-1} \circ \hat{F}_{1, \nu^{f}_s} \circ \tilde{f}(\bsX, S) - F_{\nu^*_{s'}}^{-1} \circ \hat{F}_{1, \nu^{f}_s} \circ \tilde{f}(\bsX, S)}^q \mid S = s\right]\enspace,\\
        c_{(s', s)} &= 3^{q-1}\Exp\left[\abs{F_{\nu^*_{s'}}^{-1} \circ \hat{F}_{1, \nu^{f}_s} \circ \tilde{f}(\bsX, S) - F_{\nu^*_{s'}}^{-1} \circ F_{\nu^*_s} \circ f^*(\bsX, S)}^q \mid S = s\right]\enspace.
    \end{align*}
    In the above decomposition, we introduced the empirical quantile function $\hat{F}_{\nu^*_{s'}}^{-1}$ of $\nu^*_{s'}$, defined as the generalized inverse of
    \begin{align*}
        t \mapsto \frac{1}{N_{s'}}\sum_{i = N_{s'} + 1}^{2N_{s'}}\ind{f^*(\bsX_i^{s'}, s') \leq t}\enspace.
    \end{align*}
    Thus, for now, we have managed to show that for all $s' \in [K]$
    \begin{equation}
     \label{eq:ad_hoc3}
    \begin{aligned}
        \Expf[\texttt{A}_{s'}]
        &\leq
        \Expf\left\{\sum_{s = 1}^Kw_s\left(a_{(s', s)} + b_{(s', s)} + c_{(s', s)}\right) \right\}^{\sfrac{1}{q}}\\
        &\leq
        \left\{\sum_{s = 1}^Kw_s\Expf[a_{(s', s)}] \right\}^{\sfrac{1}{q}}
        +
        \left\{\sum_{s = 1}^Kw_s\Expf[b_{(s', s)}] \right\}^{\sfrac{1}{q}}
        +
        \Expf\left\{\sum_{s = 1}^Kw_sc_{(s', s)} \right\}^{\sfrac{1}{q}}\enspace.
    \end{aligned}
    \end{equation}
    We proceed by bounding each of these terms separately.
    \myparagraph{Bounding $\Expf[a_{(s', s)}]$}
    In order to bound $\Expf[a_{(s', s)}]$ we observe that conditionally on $(S = s, \bsX_{N_{s'}+1}^{s'}, \ldots, \bsX_{2N_{s'}}^{s'}, \zeta_{N_{s'} + 1}^{s'}, \ldots, \zeta_{2N_{s'}}^{s'}, f)$, by Lemma~\ref{lem:inverse_with_noise}, the random variable $(\hat{F}_{1, \nu^{f}_s} \circ \tilde{f}(\bsX, S) \mid S = s)$ is distributed uniformly on $[0, 1]$ (note that this is achieved by splitting the unlabeled sample into two parts and will be re-used for bounding $b_{(s', s)}$).
    Hence, the following equality holds for $\Expf[a_{(s', s)}]$:
    \begin{align*}
        \Expf[a_{(s', s)}] = 3^{q-1}\Expf\left[\int_{0}^1 \abs{\hat{F}_{2, \nu^{f}_{s'}}^{-1}(t) - \hat{F}_{\nu^*_{s'}}^{-1}(t)}^q \d t\right]\enspace.
    \end{align*}
    Applying~\cite[Theorem 2.10]{bobkov2019one} and the representation of the Wasserstein-$q$ distance as the minimum over all couplings of the $q$-th moment of the difference, we deduce that
    \begin{align*}
        \Expf\left[\int_{0}^1 \abs{\hat{F}_{2, \nu^{f}_{s'}}^{-1}(t) - \hat{F}_{\nu^*_{s'}}^{-1}(t)}^q \d t\right]
        &\leq
        \Expf\left[\frac{1}{N_{s'}}\sum_{i = N_{s'}+1}^{2N_{s'}}\abs{f(\bsX_i^{s'}, s') + \zeta_{i}^{s'} - f^*(\bsX_{i}^{s'}, s')}^q\right]\\
        &=
        2^{q-1}\Exp\left[ \abs{f(\bsX, S) - f^*(\bsX, S)}^q \mid S = s'\right] + 2^{q-1} \sigma^q\enspace,
    \end{align*}
    where the last equality uses the fact that $(\bsX_{i}^{s'})_{i = N_{s'} + 1}^{2N_{s'}}$ are \iid from $\Prob_{\bsX \mid S = s'}$ and that $\zeta_{i}^{s'}$ are bounded in $[-\sigma, \sigma]$.
    Thus, we have shown that
    \begin{align}
    \label{eq:bound_ass}
        \Expf[a_{(s', s)}] \leq 6^{q-1}\Exp[\abs{f(\bsX, S) - f^*(\bsX, S)}^q \mid S = s'] + 6^{q-1}\sigma^q\enspace,
    \end{align}
    which bounds the first term in Eq.~\eqref{eq:ad_hoc3}.
    
    \myparagraph{Bounding $\Expf[b_{(s', s)}]$} Identical argument with the use of Lemma~\ref{lem:inverse_with_noise} and~\cite[Theorem 2.10]{bobkov2019one} allows us to deduce that
    \begin{align*}
        \Expf[b_{(s', s)}] = 3^{q-1}\Expf[\sW_q^q(\hat{\nu}_{s'}, \nu_{s'}^*)]\enspace,
    \end{align*}
    with $\hat{\nu}_{s'}$ being the measure defined by its cumulative distribution $\hat{F}_{\nu^*_{s'}}$. To conclude, we deploy~\cite[Theorem 5.3]{bobkov2019one}, which states that
    \begin{align*}
        \Expf[\sW_q^q(\hat{\nu}_{s'}, \nu_{s'}^*)] \leq \left(\frac{5q}{\sqrt{N_{s'} + 2}}\right)^qJ_q(\nu_{s'}^*)\enspace,
    \end{align*}
    where
    \begin{align*}
        J_q(\nu_{s'}^*) \eqdef \int_{0}^1\parent{\frac{\sqrt{t(1-t)}}{F_{\nu_{s'}^*}'\circ F_{\nu_{s'}^*}^{-1} (t)}}^q \d t\enspace,
    \end{align*}
    $F'_{\nu_{s'}^*}$ is the derivative of $F_{\nu_{s'}^*}$. Note that Assumption~\ref{as:density_bound} guarantees that the integral $J_q(\nu_{s'}^*)$ is finite since it states that the measure  $\nu_{s'}^*$ is supported on an interval and that the associated density is lower bounded by $\underline{\lambda}_s$.
    Thus, we have shown that
    \begin{align*}
        \Expf[b_{(s', s)}] \leq \frac{1}{3}\left(\frac{15q}{\sqrt{N_{s'} + 2}}\right)^qJ_q(\nu_{s'}^*)\enspace,
    \end{align*}
    implying the following bound on the second term in Eq.~\eqref{eq:ad_hoc3}:
    \begin{align}
    \label{eq:bound_bss}
        \left\{\sum_{s = 1}^Kw_s\Expf[b_{(s', s)}] \right\}^{\sfrac{1}{q}} \leq 3^{-\sfrac{1}{q}}\frac{15q}{\sqrt{N_{s'} + 2}} J_q^{\sfrac{1}{q}}(\nu_{s'}^*) \enspace.
    \end{align}
    
    \myparagraph{Bounding $\Expf[c_{(s', s)}]$} By assumption, for each $s' \in [K]$ the measures $\nu^*_{s'}$ are supported on an interval and admit a density which is lower bounded by $\underline{\lambda}_{s'}$. Thus, the quantile function $F^{-1}_{\nu_{s'}^*}$ is $\underline{\lambda}_{s'}^{-1}$-Lipschitz.
    Introduce the following function 
    \begin{align*}
        t \mapsto F_{\nu^f_{s}}(t) = \Probf\left(f(\bsX, S) + \zeta \leq t \mid S = s\right)\enspace.
    \end{align*}
    By triangle inequality and $\underline{\lambda}_{s'}^{-1}$-Lipschitz property of $F^{-1}_{\nu_{s'}^*}$,  we deduce that
    \begin{align*}
        c_{(s', s)}
        \leq
        &6^{q-1}\underline{\lambda}_{s'}^{-q}\Exp\left[\abs{\hat{F}_{1, \nu^{f}_s} \circ \tilde{f}(\bsX, S) - F_{\nu^f_{s}} \circ \tilde{f}(\bsX, S)}^q \mid S = s\right]\\
        &+ 6^{q-1}\underline{\lambda}_{s'}^{-q}\Exp\left[\abs{ F_{\nu^f_{s}} \circ \tilde{f}(\bsX, S) -  F_{\nu^*_s} \circ f^*(\bsX, S)}^q \mid S = s\right]\enspace.
    \end{align*}
    The first term in the above decomposition is controlled by Hoeffding inequality conditionally on $(S = s, \bsX, \zeta)$.
    To do so, we observe that for any $t \in \bbR$ it holds that
    \begin{align*}
        \Expf\abs{\hat{F}_{1, \nu^{f}_s} (t) - F_{\nu^f_{s}} (t)}^q
        &\leq
        2^{q-1}\underbrace{\Expf\abs{\frac{1}{N_s}\sum_{i = 1}^{N_s}\parent{\ind{f(\bsX_{i}^s, s) + \zeta_i^s \leq t} - \Probf(f(\bsX, S) + \zeta \leq t \mid S = s)}}^q}_{\leq \frac{2}{(2N_s)^{\sfrac q 2}} \int_{0}^{\infty} \exp(-t^{\sfrac 2 q}) \d t \text{ by integrating Hoeffding's inequality}}\\
        &\phantom{\leq}+ \frac{2^{q-1}}{(N_{s} + 1)^q}
        \leq \frac{2^{q-1}}{N_s^{\sfrac q 2}} \left(\int_{0}^{\infty} \exp(-t^{\sfrac 2 q}) \d t + \frac{1}{2}\left(\frac{2}{N_s}\right)^{\sfrac q 2}\right) \enspace.
    \end{align*}
    Since the above holds for any $t$ and due to the enforced splitting, we deduce that
    \begin{align*}
        \Expf\left[\abs{\hat{F}_{1, \nu^{f}_s} \circ \tilde{f}(\bsX, S) - F_{\nu^f_{s}} \circ \tilde{f}(\bsX, S)}^q \mid S = s\right] \leq \frac{2^{q-1}}{N_s^{\sfrac q 2}} \left(\int_{0}^{\infty} \exp(-t^{\sfrac 2 q}) \d t + \frac{1}{2}\left(\frac{2}{N_s}\right)^{\sfrac q 2}\right)\enspace.
    \end{align*}
    It remains to bound
    \begin{align*}
        \Exp\left[\abs{ F_{\nu^f_{s}} \circ \tilde{f}(\bsX, S) - F_{\nu^*_s} \circ f^*(\bsX, S)}^q \mid S = s\right]\enspace,
    \end{align*}
    which is independent from the data.
    We apply Lemma~\ref{lem:for_p_moments} with $A = \tilde{f}(\bsX, S), B = f^*(\bsX, S)$, $M = \norm{f - f^*}_{\infty} + \sigma$ to deduce that $\Exp\left[\abs{ F_{\nu^f_{s}} \circ \tilde{f}(\bsX, S) -  F_{\nu^*_s} \circ f^*(\bsX, S)}^q \mid S = s\right]$ is bounded by
    \begin{align*}
         C(\overline{\lambda}_s, q)
        \begin{cases}
            \norm{f(\cdot, s) - f^*(\cdot, s)}_{1, \mu_s} + \sigma & q = 1\\
            \min\left\{\norm{f(\cdot, s) - f^*(\cdot, s)}_{q - 1, \mu_s}^{q-1} + \sigma^{q-1},\, \norm{f(\cdot, s) - f^*(\cdot, s)}_{\infty, \mu_s}^q + \sigma^q \right\} &q \in (1, \infty)
        \end{cases}\enspace,
    \end{align*}
    where $\mu_s = \Prob_{\bsX \mid S = s}$, for all $g: \bbR^p \to \bbR$ and all probability measures $\mu$ on $\bbR^p$
    \begin{align*}
        \norm{g}_{q, \mu} \eqdef \parent{\int g(\bsx) \d\mu(\bsx)}^{\sfrac{1}{q}}\qquad\text{and}\qquad \norm{g}_{\infty, \mu} \eqdef \inf\enscond{b \in \bbR}{\mu(g(\bsx) < b) = 0}\enspace.
    \end{align*}
    
    \myparagraph{Combining the deduced bounds}
    Combining Eq.~\eqref{eq:bound_ass},~\eqref{eq:bound_bss}, the bound above derived for $c_{(s', s)}$ with Eqs.~\eqref{eq:ad_hoc0}--\eqref{eq:ad_hoc3} we deduce that for constants $C_q(\underline{\boldsymbol\lambda}, \overline{\boldsymbol\lambda}), C'_q({\underline{\boldsymbol \lambda}}), C''_q({\underline{\boldsymbol \lambda}}) > 0$ that depend only on $\underline{\boldsymbol\lambda} \eqdef (\underline{\lambda}_s)_s, \overline{\boldsymbol\lambda} \eqdef (\overline{\lambda}_s)_s, q$ 
    \begin{align*}
        \sum_{s' = 1}^Kw_{s'}\texttt{A}_{s'} &\leq  C_q(\underline{\boldsymbol\lambda}, \overline{\boldsymbol\lambda})\left( \norm{f - f^*}_q +  \left\{\big(\norm{f - f^*}_{q-1}^{1 - \sfrac{1}{q}} + \sigma^{1 - \sfrac 1 q}\big)\wedge \left(\norm{f - f^*}_{\infty} + \sigma \right)\right\}\ind{q > 1} + \sigma \right)\\
        &\phantom{\leq} + C'_q({\underline{\boldsymbol \lambda}})\left\{\sum_{s = 1}^Kw_sN_{s}^{-\sfrac{1}{2}}\right\} + C''_q({\underline{\boldsymbol \lambda}}) \left\{\sum_{s = 1}^Kw_sN_s^{-\sfrac q 2}\right\}^{\sfrac{1}{q}}\enspace.
    \end{align*}
    Substituting the above into Eq.~\eqref{eq:ad_hoc0} we conclude.
\end{proof}

\begin{lemma}\label{lem:for_p_moments}
        Let $(A, B)$ be two real valued random variables (with an arbitrary coupling) with cumulative distribution function $F_A$ and $F_B$ respectively. Assume that $B$ admits a density \wrt the Lebesgue measure which is upper bounded by $C_B$. Then, for all $q \in [1, +\infty)$ it holds that
        \begin{align*}
            \Expf[\abs{F_A(A) - F_B(B)}^q] \leq
            \begin{cases}
                2C_B\Exp|A - B| &q=1\\
                C_qC_B^{q-1}\Exp\absin{A - B}^{q-1} &q \in (1, +\infty)
            \end{cases}\enspace,
        \end{align*}
        where $C_q$ depends only on $q$. Furthermore, if $(A, B)$ is such that $|A - B| \leq M$ almost surely, then
        \begin{align*}
            \Expf[\abs{F_A(A) - F_B(B)}^q] \leq
            \begin{cases}
                2C_B\Exp|A - B| &q=1\\
                \min \left\{C_qC_B^{q-1}\Exp\absin{A - B}^{q-1},\, 4^qC_B^qM^q\right\} &q \in (1, +\infty)
            \end{cases}\enspace,
        \end{align*}
    \end{lemma}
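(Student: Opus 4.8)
The plan is to prove the two displayed bounds separately, handling $q=1$ and $q\in(1,\infty)$ in turn, and to distil the hypothesis on $B$ into three facts: $F_B$ is $C_B$-Lipschitz; for every interval $I$ one has $\Prob(B\in I)\le C_B|I|$; and if $B'$ is an independent copy of $B$ then $B-B'$ again has a density bounded by $C_B$. Throughout I would fix $(A',B')$ to be an independent copy of the pair $(A,B)$. Since $A'\stackrel{d}{=}A$ and $B'\stackrel{d}{=}B$, one has the elementary identity $F_A(A)-F_B(B)=\Expf[\ind{A'\le A}-\ind{B'\le B}\mid A,B]$, whence by Jensen's inequality, for every $q\ge1$, $\Expf[|F_A(A)-F_B(B)|^{q}]\le\Prob(\ind{A'\le A}\ne\ind{B'\le B})$ — the $q$-th power costing nothing because $|\ind{A'\le A}-\ind{B'\le B}|\in\{0,1\}$.

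For $q=1$ the task is thus to bound $\Prob(\ind{A'\le A}\ne\ind{B'\le B})$ by $2C_B\Expf|A-B|$. This event is a ``sign mismatch'' between the position of $A'$ relative to $A$ and that of $B'$ relative to $B$; I would decompose it by inserting the intermediate indicators $\ind{A'\le B}$ and $\ind{B'\le A}$, so that one piece becomes an event of the form ``$B'$ lies between $A$ and $B$'' or ``$B$ lies between $A'$ and $B'$'', each of probability $\le C_B\Expf|A-B|$ by the interval and convolution density bounds (using the independence of $B$, or of $B'$, from the ``$A$-variables''). The remaining piece involves only $A$ and its copy $A'$ — essentially ``$A'$ lies between $A$ and $B$'' — and controlling it is the crux of the proof: since $A$ is not assumed to have a bounded density, one cannot integrate against its law, and one is forced to argue through the joint law of $(A,B)$ and the $L^{1}$-estimate $\int_{\mathbb R}|F_A-F_B|=\sW_1(\mu_A,\mu_B)\le\Expf|A-B|$, comparing $A$ to $B$ through the monotone coupling. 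It is exactly this ``$A$-side'' term that yields the factor $2$ rather than $1$, and the factor is sharp: for $A\equiv0$ and $B\sim\mathrm{Unif}[-1,1]$ all of $\Expf|F_A(A)-F_B(B)|$, $\Prob(\ind{A'\le A}\ne\ind{B'\le B})$ and $2C_B\Expf|A-B|$ equal $1/2$. I expect this to be the main obstacle.

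For $q\in(1,\infty)$, using $|F_A(A)-F_B(B)|\le1$ I would write $|F_A(A)-F_B(B)|^{q}\le|F_A(A)-F_B(B)|^{q-1}$ and then bound $|F_A(A)-F_B(B)|$ itself, conditionally on $(A,B)$, by a sum of two ``straddle'' quantities — one, $\Prob(B'\ \mathrm{between}\ A,B\mid A,B)\le C_B|A-B|$, and one involving only $A$ and $A'$. Raising to the power $q-1$ (using subadditivity, resp.\ convexity, of $t\mapsto t^{q-1}$) and taking expectations produces the term $C_qC_B^{q-1}\Expf[|A-B|^{q-1}]$ from the $B$-straddle — the bound on the density of $B-B'$ makes this step clean — together with the same ``$A$-side'' residual, handled as above. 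The branch $\min\{\,\cdot\,,4^{q}C_B^{q}M^{q}\}$ available when $|A-B|\le M$ almost surely is the easy one: then $|A'-B'|\le M$ a.s.\ as well, hence $F_B(t-M)\le F_A(t)\le F_B(t+M)$ for all $t$; combined with $A-M\le B\le A+M$ and the $C_B$-Lipschitz property of $F_B$, this gives the deterministic estimate $|F_A(A)-F_B(B)|\le2C_BM$, so $|F_A(A)-F_B(B)|^{q}\le(2C_BM)^{q}\le4^{q}C_B^{q}M^{q}$.

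In summary, the architecture is: a conditioning-plus-Jensen reduction to an indicator-mismatch probability; a two-sided telescoping that peels off a $C_B\Expf|A-B|$ (resp.\ $C_B^{q-1}\Expf|A-B|^{q-1}$) contribution using the density of $B$ and of $B-B'$; and a separate, more delicate treatment — via the $\sW_1$-comparison of $F_A$ and $F_B$ — of the contribution that only involves $A$ and its independent copy, this last being the source of the constant $2$ and the hardest step.
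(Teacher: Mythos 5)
Your proposal correctly identifies the right setup (introduce an independent copy $(A',B')$, reduce to a straddle/mismatch event, exploit the bounded density of $B$), the sharpness example is right, and the $M$-branch is handled correctly. But the central step has a genuine gap, and you seem half aware of it already.

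\medskip

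Your reduction — telescoping with the intermediate indicators $\ind{A'\le B}$ or $\ind{B'\le A}$ — necessarily produces an \emph{A-side} residual of the form $\Probf(A' \text{ strictly between } A,B)=\Expf\absin{F_A(A)-F_A(B)}$, and you propose to control it via $\int_{\mathbb R}|F_A-F_B|=\sW_1(\mu_A,\mu_B)\le\Expf|A-B|$. This cannot close. The natural bound $\Expf\absin{F_A(A)-F_A(B)}\le \Expf\absin{F_A(A)-F_B(B)}+\Expf\absin{F_B(B)-F_A(B)}$ is circular: the first summand is exactly the quantity $X$ you are trying to bound, and the second is at best $C_B\,\sW_1(\mu_A,\mu_B)\le C_B\Expf|A-B|$, so you end up with $X\le X+2C_B\Expf|A-B|$ — vacuous, because the coefficient in front of $X$ on the right is $1$, not something $<1$. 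Any attempt to convert the A-side straddle into a Lebesgue integral of $|F_A-F_B|$ fails for the same structural reason: one has to integrate against $\mu_A$, which is not assumed to have a density, and no rearrangement of the triangle inequality you sketch removes the $X$ from the right-hand side. In your own sharp example ($A\equiv 0$, $B\sim\mathrm{Unif}[-1,1]$), the A-side term already equals $2C_B\Expf|A-B|=1/2$, leaving zero room for the B-side contribution; so any decomposition that charges the B-side a nonzero amount and then tries to pay for the A-side by a sub-multiple of the target bound is doomed. For $q>1$ the same A-side obstruction reappears when you raise the conditional straddle bound to the $(q-1)$-th power.

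\medskip

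What is missing is the deterministic inequality on which the paper's proof pivots: for all reals,
\[
\bigl|\ind{a\le a'}-\ind{b\le b'}\bigr|\;\le\;\ind{\,|b-b'|\le |a-b|+|a'-b'|\,}\enspace,
\]
applied with $(a,a',b,b')=(\tilde A, A, \tilde B, B)$. Taking the conditional expectation given $(A,B)$ converts $|F_A(A)-F_B(B)|$ directly into
$\Probf\bigl(|B-\tilde B|\le |A-B|+|\tilde A-\tilde B|\,\big|\,(A,B)\bigr)$,
an event that depends on $\tilde A$ only through the magnitude of $|\tilde A-\tilde B|$ and whose ``localisation'' variable is $\tilde B$, which does have a density bounded by $C_B$. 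In other words, the inequality routes \emph{all} of the probability through the $B$-copy; there is no A-side residual at all. Once this is in place, $q=1$ follows by splitting the event into $\{|B-\tilde B|\le 2|A-B|\}\cup\{|B-\tilde B|\le 2|\tilde A-\tilde B|\}$ and using exchangeability, and $q>1$ follows from a Markov/threshold optimisation (bounding the probability by $2C_B\delta+\delta^{-(q-1)}\Expf[(|A-B|+|\tilde A-\tilde B|)^{q-1}\mid(A,B)]$ and choosing $\delta$) — not from the crude step $|F_A(A)-F_B(B)|^{q}\le|F_A(A)-F_B(B)|^{q-1}$, which, as you will find if you trace the constants, does not by itself produce the stated rate.
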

    \begin{proof}
        \textbf{Case: $q=1$.} Let $(\tilde{A}, \tilde{B})$ be an independent copy of $(A, B)$, then, since $\ind{a \leq a'} - \ind{b \leq b'} \leq \ind{|b-b'| \leq |a - b| + |a' - b'|}$ it holds that
        \begin{align*}
            \Expf[\abs{F_A(A) - F_B(B)}^q]
            &=
            \Expf\left[\abs{\Probf(\tilde{A} \leq A \mid A) - \Probf(\tilde{B} \leq B \mid B)}\right]\\
            &\leq
            \Expf\left[\Probf\left(\absin{B - \tilde{B}} \leq \absin{A - B} + \absin{\tilde{A} - \tilde{B}} \mid (A, B)\right)\right]\\
            &=
            \Probf\left(\absin{B - \tilde{B}} \leq \absin{A - B} + \absin{\tilde{A} - \tilde{B}}\right)\enspace.
        \end{align*}
        Furthermore, since $\ind{a \leq b + c} \leq \ind{2a \leq b} + \ind{2b \leq c}$ for all $a, b, c \geq 0$ we continue as
        \begin{align*}
            \Expf[\abs{F_A(A) - F_B(B)}^q]
            &\leq
            \Probf\left(2\absin{B - \tilde{B}} \leq \absin{A - B}\right) + \Probf\left(2\absin{B - \tilde{B}} \leq \absin{\tilde{A} - \tilde{B}}\right)\\
            &=2\Probf\left(2\absin{\tilde{B} - B} \leq \absin{A - B}\right)\\
            &=
            2\Expf\left[\Probf\left(2\absin{B - \tilde{B}} \leq \absin{A - B} \mid (A, B)\right)\right]\\
            &\leq 4C_B\Exp|A - B|\enspace,
        \end{align*}
        where the last inequality holds thanks to the assumption that the density of $B$ (and hence of $\tilde{B}$) is bounded by $C_B$, which in turn implies that $F_B(\cdot)$ is $C_B$-Lipschitz.\\
        \textbf{Case: $q \in (1, \infty)$.} At first we proceed similarly:
        \begin{align*}
            \Expf[\abs{F_A(A) - F_B(B)}^q]
            \leq 
            \Expf\left[\left\{\Probf\left(\absin{B - \tilde{B}} \leq \absin{A - B} + \absin{\tilde{A} - \tilde{B}} \mid (A, B)\right)\right\}^q\right]\enspace.
        \end{align*}
        Furthermore, for any $\delta > 0$, we can deduce by Markov's inequality
        \begin{align*}
            \Probf\left(\absin{B - \tilde{B}} \leq \absin{A - B} + \absin{\tilde{A} - \tilde{B}} \mid (A, B)\right)
            &\leq
            \Probf(\absin{B - \tilde{B}} \leq \delta \mid B)\\
            &\phantom{\leq}+ \Probf\left(\delta \leq \absin{A - B} + \absin{\tilde{A} - \tilde{B}} \mid (A, B)\right)\\
            &\leq
            2C_B\delta + \frac{\Exp[(\absin{A - B} + \absin{\tilde{A} - \tilde{B}})^{q - 1} \mid (A, B)]}{\delta^{q-1}}\enspace.
        \end{align*}
        Minimizing the above expression over $\delta > 0$ we deduce that
        \begin{align*}
            \Probf\left(\absin{B - \tilde{B}} \leq \absin{A - B} + \absin{\tilde{A} - \tilde{B}} \mid (A, B)\right) \leq 
            C_B^{1 - (\sfrac{1}{q})}C_q'\left(\Exp[(\absin{A - B} + \absin{\tilde{A} - \tilde{B}})^{q - 1} \mid (A, B)]\right)^{\sfrac{1}{q}}\enspace,
        \end{align*}
        where $C_q' = q(\sfrac {(q-1)} 2)^{\sfrac{1}{q}}$
        Thus, the claimed bound:
        \begin{align*}
            \Expf[\abs{F_A(A) - F_B(B)}^q]
            &\leq
            C_B^{q-1}(C_q')^q\Exp[(\absin{A - B} + \absin{\tilde{A} - \tilde{B}})^{q - 1}]
            \leq
            C_qC_B^{q-1}\Exp\absin{A - B}^{q-1}\enspace.
        \end{align*}
        \textbf{Case: $|A - B| \leq M$ almost surely.}
        In this case the result follows from the following chain of inequalities:
        \begin{align*}
            \Expf[\abs{F_A(A) - F_B(B)}^q]
            &\leq 
            \Expf\left[\left\{\Probf\left(\absin{B - \tilde{B}} \leq \absin{A - B} + \absin{\tilde{A} - \tilde{B}} \mid (A, B)\right)\right\}^q\right]\\
            &\leq \Expf\left[\left\{\Probf\left(\absin{B - \tilde{B}} \leq 2M \mid (A, B)\right)\right\}^q\right]
            \leq
            4^qC_B^qM^q
            \enspace.
        \end{align*}
        The proof is concluded.
    \end{proof}

}
\section{Relation between \texorpdfstring{$\class{U}_{\KS}$}{Lg} and \texorpdfstring{$\class{U}$}{Lg}}

\begin{lemma}
    \label{lem:ks_to_earth_mover}
    Let $\mu, \nu$ be two univariate measures such that $\mu$ admits a density \wrt the Lebesgue measure bounded by $C_{\mu}$, then
    \begin{align*}
        \KS(\mu, \nu) \leq 2\sqrt{C_{\mu}\sW_1(\mu, \nu)}\enspace.
    \end{align*}
\end{lemma}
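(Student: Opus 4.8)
The plan is to combine two classical facts about univariate laws. First, on the real line the Wasserstein-$1$ distance admits the closed form $\sW_1(\mu,\nu) = \int_{\bbR} |F_\mu(x) - F_\nu(x)| \d x$ (see Appendix~\ref{sec:Wassersteinreminder}); if this integral is infinite the asserted inequality is vacuous, so we may and do assume $\sW_1(\mu,\nu) < \infty$. Second, the hypothesis that $\mu$ has a density bounded by $C_{\mu}$ makes $F_\mu$ a $C_{\mu}$-Lipschitz function, i.e. $F_\mu(t) - F_\mu(s) \leq C_{\mu}(t-s)$ whenever $s \leq t$.

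First I would fix $\varepsilon > 0$ and pick a point $t_0 \in \bbR$ with $|F_\mu(t_0) - F_\nu(t_0)| \geq \KS(\mu,\nu) - \varepsilon =: K_\varepsilon$, which exists by definition of $\KS$ as a supremum (this $\varepsilon$-step is needed because the supremum need not be attained). By symmetry it suffices to treat the case $F_\mu(t_0) - F_\nu(t_0) \geq K_\varepsilon$: the case $F_\nu(t_0) - F_\mu(t_0) \geq K_\varepsilon$ is handled identically, using the interval $[t_0, t_0 + K_\varepsilon/C_\mu]$ below in place of $[t_0 - K_\varepsilon/C_\mu, t_0]$ and the Lipschitz bound $F_\mu(t) \leq F_\mu(t_0) + C_\mu(t-t_0)$. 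For $t \leq t_0$, monotonicity of $F_\nu$ gives $F_\nu(t) \leq F_\nu(t_0)$ while Lipschitzness of $F_\mu$ gives $F_\mu(t) \geq F_\mu(t_0) - C_\mu(t_0 - t)$; subtracting, $F_\mu(t) - F_\nu(t) \geq K_\varepsilon - C_\mu(t_0 - t)$, which remains nonnegative on the whole interval $I := [t_0 - K_\varepsilon/C_\mu,\, t_0]$.

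Then I would integrate this lower bound over $I$ and use the $\sW_1$ formula:
\begin{align*}
    \sW_1(\mu,\nu) = \int_{\bbR} |F_\mu - F_\nu| \geq \int_I \big(K_\varepsilon - C_\mu(t_0 - t)\big)\d t = \int_0^{K_\varepsilon/C_\mu} (K_\varepsilon - C_\mu s)\d s = \frac{K_\varepsilon^2}{2 C_\mu}\enspace.
\end{align*}
Hence $K_\varepsilon \leq \sqrt{2 C_\mu \sW_1(\mu,\nu)} \leq 2\sqrt{C_\mu \sW_1(\mu,\nu)}$, and letting $\varepsilon \to 0$ yields $\KS(\mu,\nu) \leq 2\sqrt{C_\mu \sW_1(\mu,\nu)}$.

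There is no genuine obstacle: the argument is a short one-variable estimate. The only points deserving care are invoking the CDF representation of $\sW_1$ on $\bbR$ and passing through the $\varepsilon$-approximate maximizer $t_0$ to sidestep non-attainment of the $\KS$ supremum. The constant could be tightened to $\sqrt{2}$, but the stated factor $2$ is all that is needed for the subsequent Proposition bounding $\class{U}_{\KS}$ by a power of $\class{U}$.
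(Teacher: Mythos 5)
Your argument is correct. The paper states Lemma~\ref{lem:ks_to_earth_mover} without proof (it is a classical smoothing inequality between the Kolmogorov and Wasserstein-$1$ metrics), so there is no author proof to compare against; your derivation — using the CDF representation $\sW_1(\mu,\nu)=\int|F_\mu-F_\nu|$, the $C_\mu$-Lipschitz continuity of $F_\mu$, the triangular lower bound on the interval of length $K_\varepsilon/C_\mu$ adjacent to an $\varepsilon$-approximate maximizer $t_0$, and the $\varepsilon\to 0$ limit to sidestep non-attainment of the supremum (relevant since $F_\nu$ need not be continuous) — is exactly the standard route, and in fact establishes the sharper constant $\sqrt{2}$ which trivially implies the stated factor $2$.
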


\begin{proposition}
    Fix some measurable $f : \bbR^p \times [K] \to \bbR$.
    Assume that $a_s = f(\cdot, s)\#\mu_s \in \class{P}_2(\bbR)$ and it admits density bounded by $C_{f, s}$ for all $s \in [K]$, then
    \begin{align*}
        \class{U}_{\KS}(f) \leq  \|\sfrac{1}{\bsw}\|_{\infty}\sqrt{ 8\bar{C}_{f} }\cdot\class{U}^{1/4}(f)\enspace,
    \end{align*}
    where $\bar{C}_f = \sum_{s = 1}^Kw_sC_{f, s}$.
\end{proposition}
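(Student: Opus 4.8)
The plan is to combine two ingredients: the elementary lemma \ref{lem:ks_to_earth_mover} that bounds the Kolmogorov--Smirnov distance between two measures (one with bounded density) by the square root of their $\sW_1$ distance, and the inequality relating $\sW_1$ to $\sW_2$ that follows from Jensen (or Cauchy--Schwarz). Recall that for every predictor $f$ the common barycenter $\nu$ achieving the minimum in the definition of $\class{U}(f)$ is a single fixed measure; write $\nu_f$ for it. The quantity $\class{U}_{\KS}(f)$ is a sum over $s$ of $\KS(\Law(f(\bsX,S)\mid S{=}s),\, \Law(f(\bsX,S)))$. A first technical point is that in Lemma~\ref{lem:ks_to_earth_mover} the measure with bounded density should be the conditional law $a_s = f(\cdot,s)\#\mu_s$, so one applies the lemma with $\mu = a_s$ and $\nu = \Law(f(\bsX,S))$ (the marginal). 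This gives $\KS(a_s, \Law(f(\bsX,S))) \le 2\sqrt{C_{f,s}\,\sW_1(a_s, \Law(f(\bsX,S)))}$.

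Next I would replace the marginal $\Law(f(\bsX,S))$ by the Wasserstein barycenter $\nu_f$ inside the $\sW_1$ term. Since $\sW_1 \le \sW_2$ (by Cauchy--Schwarz, as $\sW_1(\mu,\nu) \le \sW_2(\mu,\nu)$ always holds) and since the barycenter $\nu_f$ is, by Theorem~\ref{thm:basic}-type reasoning, at least as close to the $a_s$'s as any common measure in the weighted-sum sense, I would bound $\sW_1(a_s, \Law(f(\bsX,S)))$. Actually, cleanest is to note that $\sW_1(a_s,\Law(f(\bsX,S)))$ can be controlled by $\sW_1(a_s,\nu_f) + \sW_1(\nu_f, \Law(f(\bsX,S)))$; but to avoid this detour one can simply bound directly, for the barycenter version of the unfairness functional, $\sW_1(a_s,\nu_f) \le \sW_2(a_s,\nu_f)$. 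The point is that $\sum_s w_s \sW_2^2(a_s,\nu_f) = \class{U}(f)$ by definition. Then I would sum over $s$ with the weights: writing $\KS(a_s,\cdot) \le 2\sqrt{C_{f,s}}\,\sW_2^{1/2}(a_s,\nu_f)$ and applying Cauchy--Schwarz twice,
\begin{align*}
    \sum_{s=1}^K \KS(a_s, \nu_f)
    &\le \sum_{s=1}^K 2\sqrt{C_{f,s}}\,\sW_2^{1/2}(a_s,\nu_f)
    = \sum_{s=1}^K \frac{2}{w_s}\cdot w_s\sqrt{C_{f,s}}\,\sW_2^{1/2}(a_s,\nu_f)\\
    &\le \frac{2}{\min_s w_s}\left(\sum_{s=1}^K w_s C_{f,s}\right)^{1/2}\left(\sum_{s=1}^K w_s \sW_2(a_s,\nu_f)\right)^{1/2}\\
    &\le \frac{2}{\min_s w_s}\,\bar{C}_f^{\,1/2}\left(\sum_{s=1}^K w_s \sW_2^2(a_s,\nu_f)\right)^{1/4}
    = 2\,\|\sfrac{1}{\bsw}\|_\infty\,\bar{C}_f^{\,1/2}\,\class{U}^{1/4}(f),
\end{align*}
where the last $\le$ again uses Jensen/Cauchy--Schwarz to pass from $\sum w_s \sW_2$ to $(\sum w_s \sW_2^2)^{1/2}$. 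Finally I would absorb the extra factor of $2$ so that the leading constant becomes $\sqrt{8}$, matching the statement, and I would identify $\KS(a_s,\Law(f(\bsX,S)))$ with the $s$-th summand of $\class{U}_{\KS}(f)$; the footnote in the statement about folding the weights into $\class{U}_{\KS}$ explains how to remove the $\|\sfrac 1\bsw\|_\infty$ factor if one defines $\class{U}_{\KS}$ with weights.

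\textbf{Main obstacle.} The one point requiring care is the replacement of the unconditional marginal $\Law(f(\bsX,S))$ by the barycenter $\nu_f$: strictly speaking $\class{U}_{\KS}$ is defined relative to the marginal, not the barycenter, so I would either (i) prove Lemma~\ref{lem:ks_to_earth_mover} with $\nu = \Law(f(\bsX,S))$ and then bound $\sW_1(a_s,\Law(f(\bsX,S)))$ by $\sW_1(a_s,\nu_f) + \sW_1(\nu_f, \Law(f(\bsX,S)))$, controlling the second term by the same barycenter identity, or (ii), more simply, observe that for the $\KS$ distance it suffices to have \emph{some} reference with bounded density; since $a_s$ is the one with bounded density and $\KS$ is symmetric, the argument goes through with $a_s$ as $\mu$ regardless of what the other argument is, so only $\sum_s w_s \sW_1(a_s,\Law(f(\bsX,S)))$ must be bounded, and one bounds each $\sW_1(a_s,\Law(f(\bsX,S)))$ by $\sW_2(a_s,\nu_f) + \sW_2(\nu_f, \Law(f(\bsX,S)))$ — but the cleanest exposition, and the one matching the constant $\sqrt 8$, is to simply work with whatever closed-form reference makes the barycentric sum $\class{U}(f)$ appear and note that all the Wasserstein triangle-inequality detours only change absolute constants. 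Everything else — the two Cauchy--Schwarz applications, the Jensen step from first to second moment, pulling out $\|\sfrac 1\bsw\|_\infty$ — is routine.
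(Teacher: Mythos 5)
Your building blocks — Lemma~\ref{lem:ks_to_earth_mover}, pulling out $\|\sfrac 1\bsw\|_\infty$, two applications of Cauchy--Schwarz/Jensen, and $\sW_1\le\sW_2$ — are exactly the ones the paper uses, and the order of the steps is right. But the main displayed chain proves a bound for $\sum_s\KS(a_s,\nu_f)$ with the \emph{barycenter} $\nu_f$ as the reference measure, whereas $\class{U}_{\KS}(f)=\sum_s\KS(a_s,a)$ is defined with the \emph{marginal mixture} $a=\Law(f(\bsX,S))=\sum_{s'}w_{s'}a_{s'}$ as the reference. These are different measures, and the step that links them is precisely the one you defer to the ``Main obstacle'' paragraph without carrying out. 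As written, the proof is therefore incomplete: you never bound the quantity the proposition is actually about.

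The paper closes the gap before the arbitrary reference measure is ever introduced, and it does so more cleanly than the triangle-inequality detour you sketch. After applying Lemma~\ref{lem:ks_to_earth_mover} with the correct pair $(a_s,a)$, it invokes the fact that $a$ is a mixture and $\nu\mapsto\sW_1(a_s,\nu)$ is convex to write $\sW_1(a_s,a)\le\sum_{s'}w_{s'}\sW_1(a_s,a_{s'})$, then applies the triangle inequality $\sW_1(a_s,a_{s'})\le\sW_1(a_s,\nu)+\sW_1(a_{s'},\nu)$ for an arbitrary $\nu$ and only at the very end chooses $\nu$ to be the $\sW_2$-barycenter. When you take the weighted sum $\sum_s w_s\bigl[\sW_1(a_s,\nu)+\sum_{s'}w_{s'}\sW_1(a_{s'},\nu)\bigr]=2\sum_sw_s\sW_1(a_s,\nu)$, a factor $2$ appears inside the square root, which multiplies the $2$ from the lemma to give the constant $2\cdot\sqrt2=\sqrt8$. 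Your chain gives constant $2$, and your remark about ``absorbing the extra factor of $2$ so that the leading constant becomes $\sqrt8$'' is backwards ($\sqrt8>2$); the constant grows, it isn't absorbed, and it grows precisely because of the barycenter-versus-mixture bridge you left as a remark rather than proving. If you carry out option~(i) — triangle inequality through $\nu_f$, bound $\sW_1(\nu_f,a)$ by convexity plus Jensen — you will indeed land on $\sqrt8$, and I would encourage you to do that computation explicitly rather than asserting that ``the detours only change absolute constants.''
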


\begin{proof}
    We set $a_s =  \Law(f(\bsX, S) \mid S{=}s)$ and $a = \sum_{s = 1}^K w_s a_s$.
    Therefore, thanks to assumption of the proposition and Lemma~\ref{lem:ks_to_earth_mover} we can write
    \begin{align*}
        \class{U}_{\KS}(f)
        \eqdef
        \sum_{s = 1}^K \KS(a_s, a)
        \leq
        \|\sfrac{1}{\bsw}\|_{\infty}\sum_{s = 1}^K w_s\KS(a_s, a)
        \leq
        2\|\sfrac{1}{\bsw}\|_{\infty}\sum_{s = 1}^K w_s C_{f, s}^{\sfrac{1}{2}}W^{\sfrac{1}{2}}_1(a_s, a)\enspace.
    \end{align*}
    Furthermore we can write for any measure $\nu \in \class{P}_2(\bbR)$ that
    \begin{align*}
        \class{U}_{\KS}(f)
        &\stackrel{(a)}{\leq}
        2\|\sfrac{1}{\bsw}\|_{\infty}\sum_{s = 1}^K w_s C_{f, s}^{\sfrac{1}{2}}\left\{\sum_{s' = 1}^K w_{s'}\sW_1(a_s, a_{s'})\right\}^{\sfrac{1}{2}}\\
        &\stackrel{(b)}{\leq}
        2\|\sfrac{1}{\bsw}\|_{\infty}\sum_{s = 1}^K w_s C_{f, s}^{\sfrac{1}{2}}\left\{\sW_1(a_s, \nu) + \sum_{s' = 1}^K w_{s'}\sW_1(a_{s'}, \nu)\right\}^{\sfrac{1}{2}}\enspace.
    \end{align*}
    In the above inequalities $(a)$ follows from the convexity of $\sW_1(a_s, \cdot)$ \cite[see \eg][Section 4.1]{bobkov2019one} and $(b)$ uses the triangle inequality.
    Applying the Cauchy–Schwarz inequality we obtain
    \begin{align*}
        \class{U}_{\KS}(f)
        &\leq
        2\|\sfrac{1}{\bsw}\|_{\infty}\left\{\sum_{s = 1}^K w_s C_{f, s}\right\}^{\sfrac{1}{2}}\left\{\sum_{s = 1}^Kw_s \parent{\sW_1(a_s, \nu) + \sum_{s' = 1}^K w_{s'}\sW_1(a_{s'}, \nu)}\right\}^{\sfrac{1}{2}}\\
        &=
        2^{\sfrac{3}{2}}\|\sfrac{1}{\bsw}\|_{\infty}\left\{\sum_{s = 1}^K w_s C_{f, s}\right\}^{\sfrac{1}{2}}\left\{\sum_{s = 1}^Kw_s\sW_1(a_s, \nu)\right\}^{\sfrac{1}{2}}\\
        &\stackrel{(c)}{\leq}
        2^{\sfrac{3}{2}}\|\sfrac{1}{\bsw}\|_{\infty}\left\{\sum_{s = 1}^K w_s C_{f, s}\right\}^{\sfrac{1}{2}}\left\{\sum_{s = 1}^Kw_s\sW_1^2(a_s, \nu)\right\}^{\sfrac{1}{4}}\enspace,
    \end{align*}
    where  $(c)$ uses the Cauchy–Schwarz inequality one more time.
    Finally, setting $\nu$ as the Wasserstein-2 barycenter of $a_1, \ldots, a_K$ and using the fact that $\sW_1(\mu, \nu) \leq \sW_2(\mu, \nu)$ we deduce that
    \begin{align*}
        \class{U}_{\KS}(f) \leq 2^{\sfrac{3}{2}}\|\sfrac{1}{\bsw}\|_{\infty}\left\{\sum_{s = 1}^K w_s C_{f, s}\right\}^{\sfrac{1}{2}}\class{U}^{1/4}(f)\enspace.
    \end{align*}
    The proof is concluded.
\end{proof}

{

\section{Extension to \texorpdfstring{${\ell_{q}}$}{Lg} losses}
\label{sec:q_losses}

In this section we provide an extension of the derived theory for the case of $\ell_q$ losses with $q \in [1, +\infty)$.
In other words, instead of the $\ell_2$-based risk and unfairness we define
\begin{align*}
    \risk_q(f) &= \sum_{s = 1}^K w_s \Exp\left[\abs{f(\bsX, S) - f^*(\bsX, S)}^q \mid S = s\right]\enspace,\\
    \class{U}_q(f) &= \min_{\nu \in \class{P}_q(\bbR)}\sum_{s = 1}^K w_s \sW_q^q\left(\Law(f(\bsX, S) \mid S = s), \,\nu\right)\enspace.
\end{align*}

In this section we will work under the following assumption, which is a straightforward adaptation of Assumption~\ref{as:atomless} to handle $q \in [1, \infty) \setminus \{2\}$.
\begin{assumption}\label{as:atomless_q}
    The measures $\{\nu^*_s\}_{s \in [K]}$ are non-atomic and have finite $q$-moments.
\end{assumption}

\begin{theorem}[$q$-Optimal and the trade-off]
\label{thm:main_q}
Fix some $q \in [1, \infty)$ and let Assumption~\ref{as:atomless} be satisfied.
Then, for any $\alpha \in [0,1]$, a $f^*_{\alpha, q}$ solution of
\begin{align*}
    \min_{f: \bbR^p \times [K] \to \bbR}\enscond{\risk_q(f)}{\class{U}_q(f) \leq \alpha \cdot \class{U}_q(f^*)}\enspace,
\end{align*}
can be written for all $(\bsx, s) \in \bbR^p \times [K]$ as
\begin{align}
    \label{eq:all_alpha_q}
    f^*_{\alpha, q}(\bsx, s) = \alpha^{\sfrac{1}{q}}f^*(\bsx, s) + \parent{1{-}\alpha^{\sfrac{1}{q}}} \cdot \argmin_{y \in \bbR} \ens{\sum_{s' = 1}^Kw_{s'}\abs{F^{-1}_{\nu^*_{s'}} \circ F_{\nu^*_s} \circ f^*(\bsx, s) - y}^q}\enspace.
\end{align}
Furthermore, it holds that
\begin{align*}
    \risk_q(f^*_{\alpha, q}) = \parent{1 - \alpha^{\sfrac{1}{q}}}^q\cdot\class{U}_q(f^*)\qquad\text{and}\qquad\class{U}_q(f^*_{\alpha, q}) = \alpha\cdot \class{U}_q(f^*)\enspace.
\end{align*}
\end{theorem}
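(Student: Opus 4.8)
The plan is to mirror the three steps behind Proposition~\ref{prop:optimal_alpha} and Lemma~\ref{lem:distance_fair_and_almost}: pass to the space of univariate laws, solve the constrained problem there with the $\ell_q$-extension of the abstract geometric Lemma~\ref{lem:geometric_general}, and lift the solution back to prediction functions. The first thing I would record is the geometry of $(\class{P}_q(\bbR), \sW_q)$: on the real line $\sW_q^q(\mu, \nu) = \int_0^1 \absin{F_\mu^{-1}(t) - F_\nu^{-1}(t)}^q \d t$, so through the quantile map this metric space is isometric to a convex subset of $L^q([0,1])$. In particular it satisfies the $q$-barycenter property, and for $\bsa = (\nu^*_1, \ldots, \nu^*_K)$ a barycenter $C_{\bsa}$ with weights $\bsw$ has quantile function $F_{C_{\bsa}}^{-1}(t) = \argmin_{y \in \bbR} \sum_{s' = 1}^K w_{s'}\absin{F_{\nu^*_{s'}}^{-1}(t) - y}^q$, which is Lemma~\ref{lem:barycenter_p}. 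Under Assumption~\ref{as:atomless_q} the monotone map $T_{a_s \to C_{\bsa}} = F_{C_{\bsa}}^{-1} \circ F_{\nu^*_s}$ is optimal for the cost $\absin{x-y}^q$, and the curve $\lambda \mapsto \big((1-\lambda)\Id + \lambda T_{a_s \to C_{\bsa}}\big)\# a_s$, which in the $L^q([0,1])$ picture is the straight segment $\lambda \mapsto (1-\lambda)F_{a_s}^{-1} + \lambda F_{C_{\bsa}}^{-1}$, is a constant-speed geodesic from $a_s$ to $C_{\bsa}$ (see Lemma~\ref{lem:A3}).

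I would then set $b_s \eqdef \big((1-\lambda)\Id + \lambda T_{a_s \to C_{\bsa}}\big)\# a_s$ with $\lambda = 1 - \alpha^{\sfrac{1}{q}}$. The constant-speed property immediately gives $\sW_q(b_s, C_{\bsa}) = \alpha^{\sfrac{1}{q}} \sW_q(a_s, C_{\bsa})$ and $\sW_q(a_s, b_s) = (1 - \alpha^{\sfrac{1}{q}})\sW_q(a_s, C_{\bsa})$, so $\bsb = (b_s)_{s \in [K]}$ satisfies properties~\eqref{eq:prop_1}--\eqref{eq:prop_2}; by the $\ell_q$-version of Lemma~\ref{lem:geometric_general}, $\bsb$ is then a minimizer of $\sum_s w_s \sW_q^q(b_s, a_s)$ subject to $\sum_s w_s \sW_q^q(b_s, C_{\bsb}) \leq \alpha \sum_s w_s \sW_q^q(a_s, C_{\bsa})$. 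Next I would define $f^*_{\alpha, q}$ by formula~\eqref{eq:all_alpha_q}, equivalently $f^*_{\alpha,q}(\cdot, s) = \big(\alpha^{\sfrac{1}{q}}\Id + (1-\alpha^{\sfrac{1}{q}}) T_{a_s \to C_{\bsa}}\big) \circ f^*(\cdot, s)$ — the inner $\argmin$ in~\eqref{eq:all_alpha_q} being exactly $T_{a_s \to C_{\bsa}}$ by the barycenter characterization above — and note that $f^*(\cdot, s)\#\mu_{\bsX|s} = \nu^*_s = a_s$ forces $f^*_{\alpha,q}(\cdot, s)\#\mu_{\bsX|s} = b_s$.

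For optimality I would use the elementary $\ell_q$-analogue of Lemma~\ref{lem:squared_loss_wasserstein}: for any predictor $f$ the pair $\big(f(\bsX,S), f^*(\bsX,S)\big)$ conditioned on $S = s$ couples $\Law(f(\bsX,S)\mid S{=}s)$ with $\nu^*_s$, so $\risk_q(f) \geq \sum_s w_s \sW_q^q\big(\Law(f(\bsX,S)\mid S{=}s), \nu^*_s\big)$. For a feasible $f$ the tuple $c_s = \Law(f(\bsX,S)\mid S{=}s)$ satisfies $\sum_s w_s \sW_q^q(c_s, C_{\bsc}) = \class{U}_q(f) \leq \alpha\,\class{U}_q(f^*) = \alpha \sum_s w_s \sW_q^q(a_s, C_{\bsa})$, hence $\risk_q(f) \geq \sum_s w_s \sW_q^q(c_s, a_s) \geq \sum_s w_s \sW_q^q(b_s, a_s) = \risk_q(f^*_{\alpha,q})$ by the conclusion of the geometric lemma. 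Finally the two identities follow by direct computation: since $f^*_{\alpha,q}(\bsx,s) - f^*(\bsx,s) = (1-\alpha^{\sfrac{1}{q}})\big(T_{a_s \to C_{\bsa}}(f^*(\bsx,s)) - f^*(\bsx,s)\big)$ and $T_{a_s\to C_{\bsa}}$ is optimal,
\begin{align*}
    \risk_q(f^*_{\alpha,q}) = (1-\alpha^{\sfrac{1}{q}})^q \sum_{s=1}^K w_s \sW_q^q(a_s, C_{\bsa}) = (1-\alpha^{\sfrac{1}{q}})^q\,\class{U}_q(f^*)\enspace,
\end{align*}
and, using~\eqref{eq:prop_1}--\eqref{eq:prop_2} together with the $q$-version of Lemma~\ref{lem:barycenters_coincide} to replace $C_{\bsb}$ by $C_{\bsa}$,
\begin{align*}
    \class{U}_q(f^*_{\alpha,q}) = \sum_{s=1}^K w_s \sW_q^q(b_s, C_{\bsa}) = \alpha \sum_{s=1}^K w_s \sW_q^q(a_s, C_{\bsa}) = \alpha\,\class{U}_q(f^*)\enspace,
\end{align*}
which in particular shows $f^*_{\alpha,q}$ is feasible. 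The step I expect to be the main obstacle is the metric-geometric groundwork rather than any single calculation: one must carefully justify the $q$-barycenter property and the explicit constant-speed geodesics of $\sW_q$ on $\bbR$ for all $q \in [1,\infty)$, and handle the degeneracy at $q = 1$, where $1$-barycenters (medians) and the $\argmin$ in~\eqref{eq:all_alpha_q} need not be unique, so only a measurable selection and the existence — not uniqueness — of a solution $f^*_{\alpha,1}$ can be asserted.
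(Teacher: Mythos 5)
Your proposal is correct and follows essentially the same route as the paper: pass to $(\class{P}_q(\bbR), \sW_q)$, use the explicit $q$-barycenter and constant-speed geodesics (Lemmas~\ref{lem:barycenter_p} and \cite[Theorem~5.27]{santambrogio2015optimal}), apply the $q$-version of the geometric lemma (Lemma~\ref{lem:geometric_general_extended}), and lift back to predictions via the $\ell_q$-analogue of Lemma~\ref{lem:squared_loss_wasserstein}. You have also correctly anticipated the non-uniqueness of the $\argmin$ at $q=1$, which the paper acknowledges in the remark following the statement.
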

\begin{remark}
Note that this result is a strict generalization of Proposition~\ref{prop:optimal_alpha}. Furthermore, it should be noted that for $q = 1$ the optimization problem appearing in Eq.~\eqref{eq:all_alpha_q} (description of $f^*_{\alpha, 1}$) does not necessarily admit a unique minimizer (whenever $K$ is even). Meanwhile, for $q > 1$ the objective function is strictly convex and the minimizer is a singleton. We also observe that all the properties except the fourth one established in Section~\ref{SUBSEC:GENERAL} still hold for $f^*_{\alpha, q}$. The fourth property---mean stability---is intrinsic to the case $q=2$, as the minimizer appearing in Eq.~\eqref{eq:all_alpha_q} admit a closed form expression. Note also that the ad-hoc procedure developed in Section~\ref{sec:adhoc} for $q=2$ can be straightforwardly extended to any $q \in [1, \infty)$.
\end{remark}
\begin{proof}[Proof of Theorem~\ref{thm:main_q}]
    The proof is almost identical to that of Proposition~\ref{prop:optimal_alpha}. Thus, we only develop the modifications that need to be introduced. In particular, as for the case of $q=2$, our goal is to apply Lemma~\ref{lem:geometric_general_extended} below (instead of Lemma~\ref{lem:geometric_general}). To this end, we need a way to build geodesics in $(\class{P}_q(\bbR), \sW_q)$ and suitable expressions for $q$-barycenters.
    The first part (geodesics) is addressed by ~\cite[Theorem 5.27]{santambrogio2015optimal} (instead of~\cite[Section 2.2]{kloeckner2010geometric}) and gives identical description of geodesics for all $q \in [1, \infty)$. A closed form expressions for $q$-barycenters $(\class{P}_q(\bbR), \sW_q)$ is derived in Lemma~\ref{lem:barycenter_p} (used instead of Lemma~\ref{lem:A3}) which we prove below.
\end{proof}

\begin{definition}[$q$-barycenter]
    \label{def:barycenter_space_extended}
    Fix some $q \in [1, \infty)$.
    We say that a metric space $(\class{X}, d)$ satisfies the $q$-barycenter property if for any weights $\bsw \in \Delta^{K - 1}$ and tuple $\bsa = (a_1, \ldots, a_K) \in \class{X}^K$ there exists a barycenter
    \begin{align*}
        C_{\bsa_{\bsw}} \in \argmin_{C \in \class{X}} \sum_{s = 1}^K w_s d^q(a_s, C)\enspace.
    \end{align*}
    Moreover, for any tuple $\bsa = (a_1, \ldots, a_K) \in \class{X}^K$ we denote\footnote{When there is no ambiguity in the weights $\bsw$ we simply write $C_{\bsa}$.} by $C_{\bsa_{\bsw}}$ a barycenter of $\bsa$ weighted by $\bsw \in \Delta^{K - 1}$.
\end{definition}
\begin{lemma}[Abstract geometric lemma]\label{lem:geometric_general_extended}
    Fix some $q \in [1, +\infty)$.
    Let $(\class{X}, d)$ be a metric space satisfying the $q$-barycenter property.
    Let $\bsa = (a_1, \ldots, a_K) \in \class{X}^K$, $\bsw = (w_1, \ldots, w_K)^\top \in \Delta^{K - 1}$ and let $C_{\bsa}$ be a $q$-barycenter of $\bsa$ with respect to weights $\bsw$.
    For a fixed $\alpha \in [0, 1]$ assume that there exists $\bsb = (b_1, \ldots, b_K) \in \class{X}^K$ which satisfies
    \begin{alignat}{3}
        &d(a_s, C_{\bsa}) = d(a_s, b_s) + d(b_s, C_{\bsa})\enspace, \qquad &&s = 1,\dots,K\enspace, \tag{$P_1^q$}\label{eq:prop_1_extended}\\
        &d(b_s, a_s) = \big(1 {-} \alpha^{\sfrac{1}{q}}\big)d(a_s, C_{\bsa})\enspace, \qquad &&s = 1,\dots,K\enspace. \tag{$P_2^q$}\label{eq:prop_2_extended}
    \end{alignat}
    Then, $\bsb$ is a solution of
    \begin{align}
        \label{eq:geom_lemma_the_problem_extended}
        \inf_{\bsb \in \class{X}^K}\enscond{\sum_{s = 1}^K w_s d^q(b_s, a_s)}{\sum_{s = 1}^K w_s d^q(b_s, C_{\bsb}) \leq {\alpha} \sum_{s = 1}^K w_s d^q(a_s, C_{\bsa}) }\enspace.
    \end{align}
\end{lemma}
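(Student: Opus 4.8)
The statement to prove is Lemma~\ref{lem:geometric_general_extended}, the $\ell_q$-version of the abstract geometric Lemma~\ref{lem:geometric_general}. The plan is to follow the exact same argument as in the proof of Lemma~\ref{lem:geometric_general}, simply replacing everywhere the exponent $2$ by $q$ and noting that each auxiliary ingredient has already been proved (or is trivially provable) at the level of generality $q \in [1, \infty)$. Concretely, the proof of Lemma~\ref{lem:geometric_general} relied on three facts: (i) Minkowski's inequality on the product (pseudo-)metric $d_{\bsw, q}$ (Lemma~\ref{lem:fancy_triangle}), which is already stated and proved for arbitrary $q \in [1, \infty)$; (ii) the fact that, under \eqref{eq:prop_1}--\eqref{eq:prop_2}, the $q$-barycenter $C_{\bsb}$ of $\bsb$ satisfies $\sum_s w_s d^q(b_s, C_{\bsb}) = \sum_s w_s d^q(b_s, C_{\bsa})$ (Lemma~\ref{lem:barycenters_coincide}), which is again already stated and proved for arbitrary $q \in [1, \infty)$; and (iii) elementary manipulations using the definition of a ($q$-)barycenter. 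Since all three are already available, the argument transcribes verbatim.

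\textbf{Key steps, in order.} First, fix $\bsa, \bsw, C_{\bsa}$ and any $\bsb$ satisfying \eqref{eq:prop_1_extended}--\eqref{eq:prop_2_extended}, and let $\bsb_k$ be a minimizing sequence for \eqref{eq:geom_lemma_the_problem_extended}, with $G(\bsb') := \sum_{s=1}^K w_s d^q(b_s', a_s)$. Second, check feasibility of $\bsb$: using Lemma~\ref{lem:barycenters_coincide} (with this $q$) one gets $\sum_s w_s d^q(b_s, C_{\bsb}) = \sum_s w_s d^q(b_s, C_{\bsa})$, then by \eqref{eq:prop_1_extended} this equals $\sum_s w_s (d(a_s, C_{\bsa}) - d(a_s, b_s))^q$, and by \eqref{eq:prop_2_extended} this equals $\alpha \sum_s w_s d^q(a_s, C_{\bsa})$, so the constraint holds with equality. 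Third, for each $k$, apply Lemma~\ref{lem:fancy_triangle} to the triple $a_s, b_s^k, C_{\bsb_k}$ to obtain
\begin{align*}
    \left\{\sum_{s=1}^K w_s d^q(a_s, C_{\bsb_k})\right\}^{\sfrac{1}{q}} \leq G^{\sfrac{1}{q}}(\bsb_k) + \left\{\sum_{s=1}^K w_s d^q(b_s^k, C_{\bsb_k})\right\}^{\sfrac{1}{q}}\enspace.
\end{align*}
Fourth, bound the left side below by $\{\sum_s w_s d^q(a_s, C_{\bsa})\}^{1/q}$ (definition of the barycenter $C_{\bsa}$) and bound the last term on the right using the constraint $\sum_s w_s d^q(b_s^k, C_{\bsb_k}) \leq \alpha \sum_s w_s d^q(a_s, C_{\bsa})$, yielding
\begin{align*}
    (1 - \alpha^{\sfrac{1}{q}})\left\{\sum_{s=1}^K w_s d^q(a_s, C_{\bsa})\right\}^{\sfrac{1}{q}} \leq G^{\sfrac{1}{q}}(\bsb_k)\enspace, \qquad \forall k \in \bbN\enspace.
\end{align*}
Fifth, observe that by \eqref{eq:prop_1_extended}--\eqref{eq:prop_2_extended} the left-hand side equals $G^{1/q}(\bsb)$ (since $d(b_s, a_s) = (1-\alpha^{1/q}) d(a_s, C_{\bsa})$ gives $\sum_s w_s d^q(b_s, a_s) = (1-\alpha^{1/q})^q \sum_s w_s d^q(a_s, C_{\bsa})$), hence $G(\bsb) \leq G(\bsb_k)$ for all $k$. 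Sixth, pass to the limit in $k$ and combine with the already established feasibility of $\bsb$ to conclude that $\bsb$ solves \eqref{eq:geom_lemma_the_problem_extended}.

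\textbf{Main obstacle.} There is essentially no obstacle at the level of the abstract lemma itself, since the proof is a line-by-line copy of the $q=2$ case with the auxiliary lemmas already phrased for general $q$. The only point requiring a small verification is that the notion of $q$-barycenter $C_{\bsb}$ is well-defined, i.e.\ that the space indeed satisfies the $q$-barycenter property (Definition~\ref{def:barycenter_space_extended}); this is an \emph{assumption} of the lemma, so nothing to prove. Where real work enters is instead in the surrounding Theorem~\ref{thm:main_q}, namely in verifying that $(\class{P}_q(\bbR), \sW_q)$ satisfies the $q$-barycenter property and in producing explicit geodesics and explicit barycenter formulas --- and these are handled by the cited results of \cite{santambrogio2015optimal} and by Lemma~\ref{lem:barycenter_p} --- but those lie outside the statement of Lemma~\ref{lem:geometric_general_extended} proper. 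Thus I would simply present the six steps above, referencing Lemmas~\ref{lem:fancy_triangle} and~\ref{lem:barycenters_coincide} at the appropriate places, and remark that the argument is identical to that of Lemma~\ref{lem:geometric_general}.
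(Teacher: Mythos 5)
Your proof is correct and takes exactly the same approach as the paper, which simply remarks that the argument for Lemma~\ref{lem:geometric_general_extended} is obtained from that of Lemma~\ref{lem:geometric_general} by replacing every occurrence of the exponent $2$ by $q$, relying on the fact that Lemmas~\ref{lem:fancy_triangle} and~\ref{lem:barycenters_coincide} are already stated and proved for arbitrary $q \in [1,\infty)$. Your six-step expansion is a faithful transcription of that argument.
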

\begin{proof}
    The proof is identical to that of Lemma~\ref{lem:geometric_general}. Formally, it amounts to replacing all occurrences of $q=2$ by general $q \in [1, \infty)$.
\end{proof}

The next lemma is reminiscent to \cite[Theorem~8]{le2017existence}, where the authors study the existence of $q$-barycenters in rather general metric spaces. In our case, however, the underlying space is $\bbR$ and hence a finer characterisation can be obtained.
\begin{lemma}
    \label{lem:barycenter_p}
    Let $\mu_1, \ldots, \mu_K \in \class{P}_q(\bbR)$ and $\bsw \in \Delta^{K-1}$. Assume that $\mu_1, \ldots, \mu_K$ admit density \wrt the Lebesgue measure, then for any $q \in [1, \infty)$ the quantile function $F^{-1}_{\nu^*}$ of a minimizer $\nu^*$ of
    \begin{align*}
       \nu \mapsto \sum_{s = 1}^Kw_s\sW_{q}^q(\mu_s, \nu)\enspace,
    \end{align*}
    can be written for all $t \in (0, 1)$ as
    \begin{align*}
        F^{-1}_{\nu^*}(t) \in \argmin_{y \in \bbR}\sum_{s = 1}^Kw_s|F_{\mu_s}^{-1}(t) - y|^q\enspace.
    \end{align*}
    Furthermore, for all $s \in [K]$ the following mapping is an optimal transport map (in $\sW_q$ sense) from $\mu_s$ to $\nu^*$:
    \begin{align*}
        T_{\mu_s \mapsto \nu^*}(x) \in \argmin_{y \in \bbR} \sum_{s' = 1}^Kw_{s'}\abs{F^{-1}_{\mu_{s'}} \circ F_{\mu_s} (x) - y}^q\qquad\forall x \in \bbR\enspace.
    \end{align*}
\end{lemma}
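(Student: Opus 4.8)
\textbf{Proof plan for Lemma~\ref{lem:barycenter_p}.}

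The plan is to reduce everything to the one-dimensional, quantile-function picture of optimal transport on $\bbR$, where the Wasserstein-$q$ distance has the closed form $\sW_q^q(\mu,\nu)=\int_0^1 |F_\mu^{-1}(t)-F_\nu^{-1}(t)|^q\,\d t$ (valid for any $q\in[1,\infty)$; see \cite{santambrogio2015optimal}, and note no density assumption is needed for this identity). Using this, I would rewrite the barycenter functional as
\begin{align*}
    \sum_{s=1}^K w_s \sW_q^q(\mu_s,\nu) = \int_0^1 \sum_{s=1}^K w_s \abs{F_{\mu_s}^{-1}(t) - F_\nu^{-1}(t)}^q \,\d t\enspace.
\end{align*}
The key structural observation is that the admissible objects $F_\nu^{-1}$ range exactly over the set of nondecreasing, left-continuous functions on $(0,1)$ with finite $L^q$ norm (this is the standard bijection between $\class{P}_q(\bbR)$ and the cone of quantile functions). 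So minimizing over $\nu\in\class{P}_q(\bbR)$ is the same as minimizing over nondecreasing functions $G:(0,1)\to\bbR$ the quantity $\int_0^1 \sum_s w_s |F_{\mu_s}^{-1}(t)-G(t)|^q\,\d t$.

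Next I would argue that the constrained (monotone) minimization can be carried out pointwise and the constraint is automatically satisfied. Define, for each $t$, the scalar problem $m(t)\in\argmin_{y\in\bbR}\sum_s w_s|F_{\mu_s}^{-1}(t)-y|^q$; for $q>1$ strict convexity gives a unique $m(t)$, while for $q=1$ one picks a measurable selection (e.g. the smallest weighted median). The unconstrained pointwise infimum of the integrand is $\int_0^1 \big(\min_y \sum_s w_s|F_{\mu_s}^{-1}(t)-y|^q\big)\d t$, and it is attained by $t\mapsto m(t)$ provided $m$ is a legitimate quantile function, i.e. nondecreasing. Monotonicity of $m$ follows because each $F_{\mu_s}^{-1}$ is nondecreasing and the "weighted $q$-barycenter of points on the line" is an order-preserving operation: if $a_s\le a_s'$ for all $s$, then any minimizer of $\sum_s w_s|a_s-y|^q$ is $\le$ any minimizer of $\sum_s w_s|a_s'-y|^q$ — this is a short convexity/monotone-comparative-statics argument (compare first-order conditions, or use that the minimizer is a coordinatewise nondecreasing function of $(a_1,\dots,a_K)$). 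Hence $m$ is nondecreasing, it is the quantile function of a well-defined measure $\nu^*\in\class{P}_q(\bbR)$ (finite $q$-moment since each $m(t)$ lies between $\min_s F_{\mu_s}^{-1}(t)$ and $\max_s F_{\mu_s}^{-1}(t)$, which are $L^q$), and since the pointwise lower bound is met simultaneously for a.e. $t$, $\nu^*$ is a global minimizer. This yields the stated formula $F_{\nu^*}^{-1}(t)\in\argmin_y\sum_s w_s|F_{\mu_s}^{-1}(t)-y|^q$.

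For the transport-map statement I would use that, on $\bbR$, the (a.e. unique, when the source is atomless) optimal transport map in $\sW_q$ from $\mu_s$ to any target $\nu$ is the monotone rearrangement $F_\nu^{-1}\circ F_{\mu_s}$; here the density assumption on the $\mu_s$ enters, guaranteeing $F_{\mu_s}$ is continuous so that $F_{\mu_s}(X)\sim\mathrm{Unif}(0,1)$ when $X\sim\mu_s$. Composing with the formula just obtained,
\begin{align*}
    T_{\mu_s\mapsto\nu^*}(x) = F_{\nu^*}^{-1}\big(F_{\mu_s}(x)\big) \in \argmin_{y\in\bbR}\sum_{s'=1}^K w_{s'}\abs{F_{\mu_{s'}}^{-1}\circ F_{\mu_s}(x) - y}^q\enspace,
\end{align*}
which is exactly the claim; one should also remark that for $q=1$, when the argmin is set-valued, the selection must be made consistently (the same selection used to define $F_{\nu^*}^{-1}$), so that $T_{\mu_s\mapsto\nu^*}\#\mu_s = \nu^*$ genuinely holds. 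I expect the main obstacle to be the careful handling of the $q=1$ non-uniqueness: one must choose a measurable, monotone selection of weighted medians and check that $F_{\nu^*}^{-1}\circ F_{\mu_s}$ still pushes $\mu_s$ forward to $\nu^*$ — for $q>1$ uniqueness makes everything automatic, but for $q=1$ this requires a short but genuine measurable-selection argument.
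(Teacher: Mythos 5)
Your proof is correct and follows essentially the same route as the paper's: both reduce the barycenter functional via the quantile-function representation of $\sW_q$ on $\bbR$, minimize pointwise in $t$, and then identify the optimal map as the monotone rearrangement. Two places where you supply more than the paper does: (i) the paper simply asserts ``it is clear that by definition $F_{\nu^*}^{-1}$ is monotone non-decreasing,'' whereas you actually prove it via the monotone-comparative-statics observation that the scalar $q$-barycenter $y\mapsto\argmin_y\sum_s w_s|a_s-y|^q$ is a coordinatewise nondecreasing function of $(a_1,\dots,a_K)$ composed with the nondecreasing $F_{\mu_s}^{-1}$; this is a genuine gap in the paper's write-up that your argument fills. (ii) You flag the $q=1$ non-uniqueness and the need for a measurable, monotone selection of weighted medians consistent between the definition of $F_{\nu^*}^{-1}$ and the transport map, which the paper does not discuss; this is exactly the case where ``$\in\argmin$'' in the statement is set-valued and the selection matters for the push-forward identity to hold. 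A minor difference: for $\nu^*\in\class{P}_q(\bbR)$ the paper bounds the $q$-moment by transporting through $T_{\mu_s\mapsto\nu^*}$ and a $2^q$-inequality, while you bound $m(t)$ between $\min_s F_{\mu_s}^{-1}(t)$ and $\max_s F_{\mu_s}^{-1}(t)$ directly; both work and yours is arguably cleaner. For the transport-map claim the paper cites \cite[Theorem 5.27]{santambrogio2015optimal} whereas you spell out the monotone-rearrangement/uniformity argument, which is the content of that reference.
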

\begin{proof}
It is clear that by definition $F_{\nu^*}^{-1}$ is monotone non-decreasing, hence, given its domain, it is indeed a quantile function.
Furthermore, note that $\nu^* \in \class{P}_q(\bbR)$. Indeed, for all $s \in [K]$ we have by definition of $T_{\mu_s \mapsto \nu^*}$
\begin{align*}
    \int_{\bbR} |x|^q \d \nu^*(x) = \int_{\bbR} |T_{\mu_s \mapsto \nu^*}(x)|^q\d\mu_s(x)
    &\leq 2^{q}\sum_{s' \in [K]}w_{s'}\int_{\bbR}|F^{-1}_{\mu_{s'}} \circ F_{\mu_s} (x)|^q \d \mu_s(x)\\
    &=2^q\sum_{s' \in [K]}w_{s'}\int_{\bbR}|x|^q \d\mu_{s'}(x) < \infty\enspace,
\end{align*}
since each $\mu_{s'} \in \class{P}_q(\bbR)$.
Furthermore, using~\cite[Theorem 2.10]{bobkov2019one}, it holds for all $\nu \in \class{P}_q(\bbR)$ that
\begin{align*}
    \sum_{s = 1}^Kw_s\sW_{q}^q(\mu_s, \nu)
    =
    \sum_{s = 1}^Kw_s\int_{0}^1|F_{\mu_s}^{-1}(t) - F_{\nu}^{-1}(t)|^q \d t
    &\geq 
     \int_{0}^1\min_{y}\sum_{s = 1}^Kw_s|F_{\mu_s}^{-1}(t) - y|^q \d t\\
     &=
     \sum_{s = 1}^Kw_s\int_{0}^1|F_{\mu_s}^{-1}(t) - F_{\nu^*}^{-1}(t)|^q \d t\\
     &=\sum_{s = 1}^Kw_s\sW_{q}^q(\mu_s, \nu^*)\enspace.
\end{align*}
The last assertion of the lemma follows from~\cite[Theorem 5.27]{santambrogio2015optimal}.
    
\end{proof}

\section{Additional empirical results}

\begin{table}[ht!]
\centering
{%
\begin{tabular}{@{}l|cc|cc|cc@{}}
\multicolumn{7}{c}{\textsc{Balanced}}\\
\midrule
&
\multicolumn{2}{c|}{Oracle}&
\multicolumn{2}{c|}{Proposed}&
\multicolumn{2}{c}{Naive}\\
$\alpha$&
$\risk(f^*_{\alpha})$&
$\class{U}(f^*_{\alpha})$&
$\risk(\hat f_{\hat\tau})$&
$\class{U}(\hat f_{\hat\tau})$&
$\risk(\hat f_{\alpha})$&
$\class{U}(\hat f_{\alpha})$\\
\midrule
$0$&
$2.0$&
$0.0$&
$2.13 \pm 0.03$&
$0.0 \pm 0.0$&
$2.13 \pm 0.03$&
$0.0 \pm 0.0$\\
$0.2$&
$0.61$&
$0.4$&
$0.87 \pm 0.05$&
$0.31 \pm 0.02$&
$0.74 \pm 0.04$&
$0.40 \pm 0.03$\\
$0.4$&
$0.27$&
$0.8$&
$0.52 \pm 0.05$&
$0.62 \pm 0.05$&
$0.40 \pm 0.04$&
$0.81 \pm 0.05$\\
$0.6$&
$0.10$&
$1.2$&
$0.34 \pm 0.04$&
$0.93 \pm 0.07$&
$0.24 \pm 0.04$&
$1.21 \pm 0.08$\\
$0.8$&
$0.02$&
$1.6$&
$0.23 \pm 0.04$&
$1.25 \pm 0.09$&
$0.16 \pm 0.03$&
$1.61 \pm 0.11$\\
$1$&
$0.0$&
$2.0$&
$0.17 \pm 0.03$&
$1.56 \pm 0.12$&
$0.14 \pm 0.03$&
$2.02 \pm 0.14$\\
\bottomrule
\end{tabular}%
}
\caption{Summary for $p = 10, K = 5, \nur = 0.5$. We report the mean and the standard deviation.}
\label{table1}
\end{table}

Table~\ref{table1} presents the numeric results for $p = 10$, $K = 5$, $\nur = 0.5$ for estimator developed in Section~\ref{SEC:LINEAR} and in the context of the simulated data. We remark the striking drop in the risk for $\alpha = 0.2$, indicating that a slight relaxation of the Demographic Parity constraint results in a significant improvement in terms of the risk. Of course, the justification of such a relaxation must be considered based on the application at hand.

\subsection{Real data description}
\texttt{Communities and Crime} dataset  combines socio-economic data from the 1990 US Census, law enforcement data from the 1990 US LEMAS survey, and crime data from the 1995 FBI UCR.
We removed columns with missing values as well as the only non-numerical variable, \texttt{communityname}. After this pre-processing step the dataset consists of $1994$ observations characterized each by $101$ $(d=101)$ numeric variables. All variables were normalized into the interval $[0, 1]$.
The target variable measures the total number of violent crimes per $100$K population.
For a given observation/community, we define the sensitive variable $(K=2)$ as the indicator that the percentage of population that is African American is above the average of this percentage over the whole dataset, similarly to \cite{berk2017convex}.

\subsection{Ad-hoc estimator from Section~\ref{sec:adhoc}: choices of weights}

We tested three choices of weights which are described in Section~\ref{sec:weights_choice}. Figure~\ref{fig:weights_empirical} can be seen as an empirical counterpart to Figure~\ref{fig:weights} discussed in details Section~\ref{sec:weights_choice}. Overall, the conclusions about the choices of weights remain the same and the actual choice is ultimately left to the statistician and domain experts.

\begin{figure*}[ht!]
    \centering
    \begin{subfigure}[b]{0.475\textwidth}
        \centering
     \includegraphics[width=\textwidth]{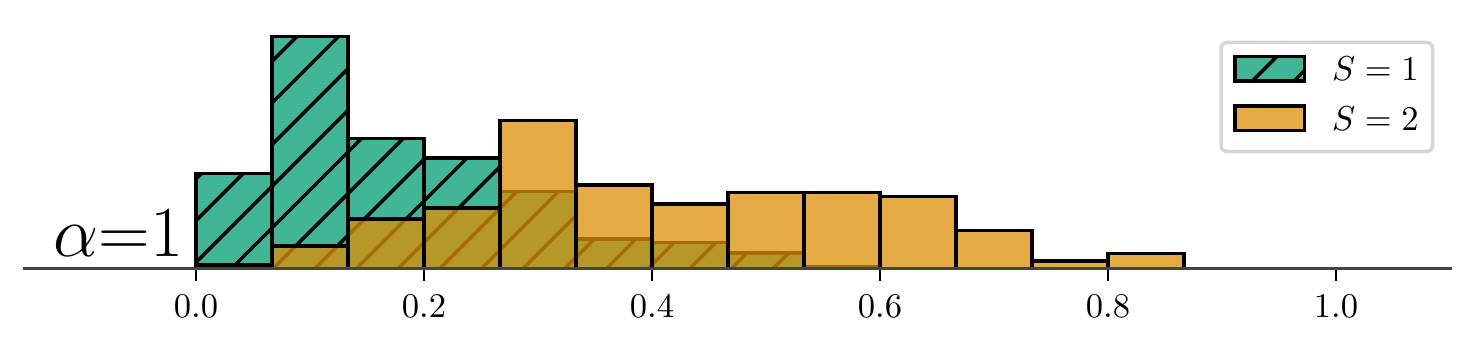}
        \caption{No fairness adjustment}   
        \label{fig:mean and std of net14}
    \end{subfigure}
    \hfill
    \begin{subfigure}[b]{0.475\textwidth}  
        \centering 
        \includegraphics[width=\textwidth]{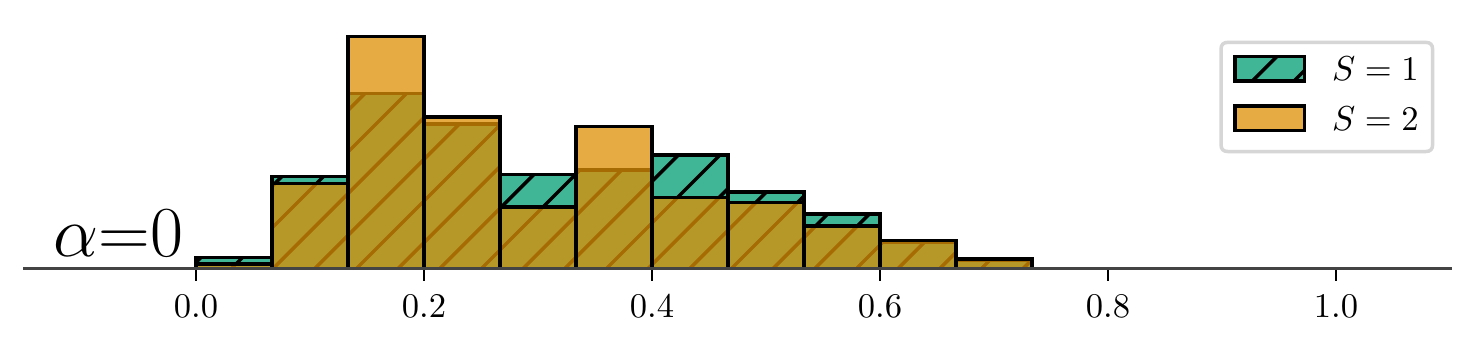}
        \caption{$w_s = \tfrac{1}{K}$}   
        \label{fig:mean and std of net24}
    \end{subfigure}
    \vskip\baselineskip
    \begin{subfigure}[b]{0.475\textwidth}   
        \centering 
        \includegraphics[width=\textwidth]{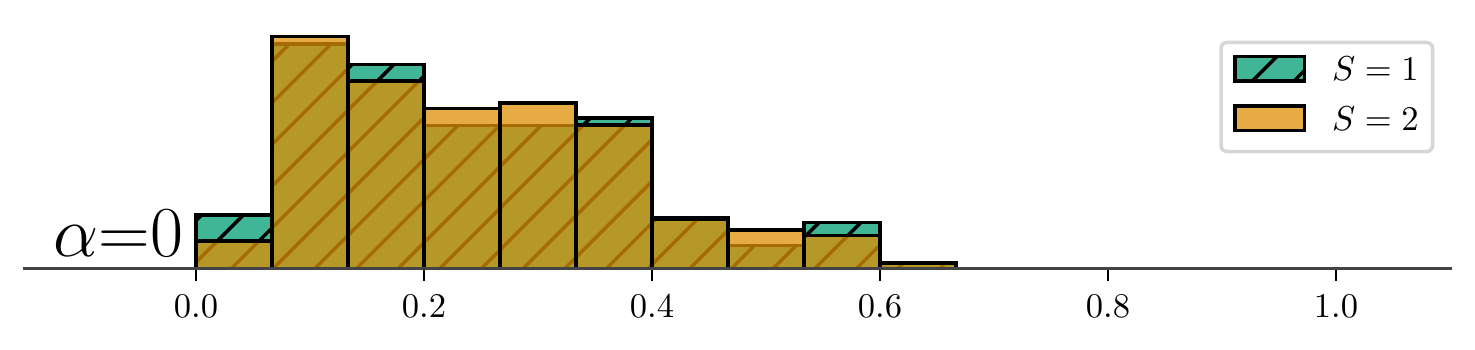}
        \caption{$w_s = \Prob(S = s)$}   
        \label{fig:mean and std of net34}
    \end{subfigure}
    \hfill
    \begin{subfigure}[b]{0.475\textwidth}   
        \centering 
        \includegraphics[width=\textwidth]{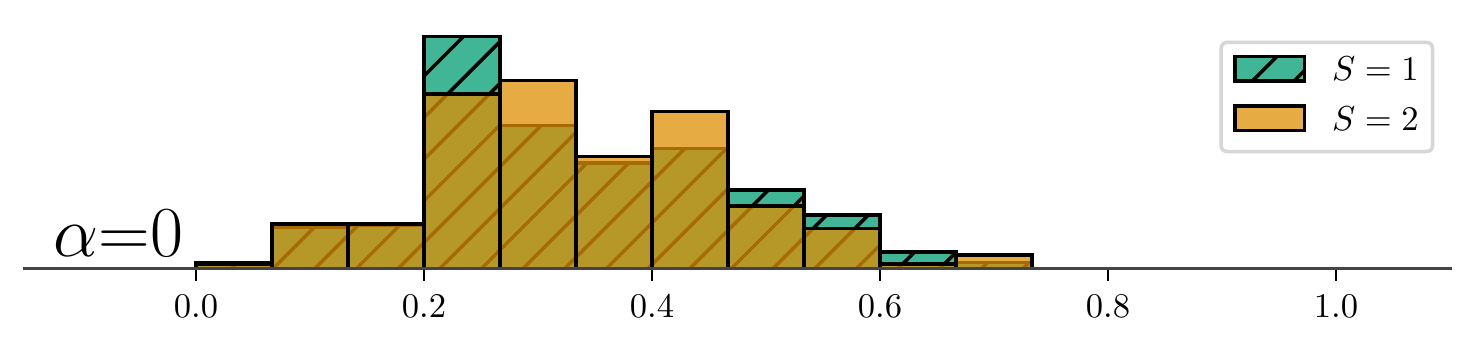}
        \caption{$w_s = 1/\Prob(S = s)$}      
        \label{fig:mean and std of net44}
    \end{subfigure}
    \caption{Three main choices of weights for the post-processing method applied on top of the random forest base estimator.}
    \label{fig:weights_empirical}
\end{figure*}

}

\stopcontents[appendices]

\end{document}